\newtheorem{theorem}{Theorem}[section]
\newtheorem{corollary}[theorem]{Corollary}
\newtheorem{definition}[theorem]{Definition}
\newtheorem{example}[theorem]{Example}
\newtheorem{lemma}[theorem]{Lemma}
\newtheorem{proposition}[theorem]{Proposition}
\newtheorem{remark}[theorem]{Remark}
\newenvironment{proof}[1][Proof]{\noindent\textbf{#1.} }{\rule{0.5em}{0.5em}}
\newenvironment{acknowledgement}[1][Acknowledgement]{\textbf{#1.} }{ }
\def\RM{\rm}
\def\limfunc#1{\mathop{\mathrm{#1}}}
\def\func#1{\mathop{\mathrm{#1}}\nolimits}
\def\tbigcup{\mathop{\textstyle \bigcup }}
\def\tbigcap{\mathop{\textstyle \bigcap }}
\def\enddoc{

\begin{document}

\title{Isotropic Markov semigroups on ultra-metric spaces\thanks{%
Version of May 09, 2014. Mathematics Subject Classification: 05C05, 47S10,
60J25, 81Q10}}
\author{Alexander Bendikov\thanks{%
Supported by the Polish Government Scientific Research Fund, Grant
2012/05/B/ST 1/00613} \and Alexander Grigor'yan\thanks{%
Supported by SFB 701 of German Research Council} \and Christophe Pittet%
\thanks{%
Supported by the CNRS, France} \and Wolfgang Woess\thanks{%
Supported by Austrian Science Fund projects FWF W1230-N13 and FWF P24028-N18}%
}
\date{Dedicated to the memories of\\
V.S. Vladimirov (1923--2012) and M.H. Taibleson (1929--2004)}
\maketitle

\begin{abstract}
Let $(X,d)$ be a separable ultra-metric space with compact balls. Given a
reference measure $\mu $ on $X$ and a distance distribution function $\sigma 
$ on $[0\,,\,\infty )$, we construct a symmetric Markov semigroup $%
\{P^{t}\}_{t\geq 0}$ acting in $L^{2}(X,\mu )$. Let $\{\mathcal{X}_{t}\}$ be
the corresponding Markov process. We obtain upper and lower bounds of its
transition density and its Green function, give a transience criterion,
estimate its moments and describe the Markov generator $\mathcal{L}$ and its
spectrum which is pure point. In the particular case when $X=\mathbb{Q}%
_{p}^{n}$, where $\mathbb{Q}_{p}$ is the field of $p$-adic numbers, our
construction recovers the Taibleson Laplacian (spectral multiplier), and we
can also apply our theory to the study of the Vladimirov Laplacian. Even in
this well established setting, several of our results are new. We also
elaborate the relation between our processes and Kigami's jump processes on
the boundary of a tree which are induced by a random walk. In conclusion, we
provide examples illustrating the interplay between the fractional
derivatives and random walks.
\end{abstract}

\tableofcontents

\section{Introduction}

\label{Intro}

\setcounter{equation}{0}In the past three decades there has been an
increasing interest in various constructions of Markov chains on
ultra-metric spaces, such as the Cantor set or the field of $p$-adic
numbers. In this paper we introduce and study a class of symmetric Markov
semigroups and their generators on ultra-metric spaces. Our construction is
very transparent, and it leads to a number of new results as well as to a
better understanding of previously known results.

Let $\left( X,d\right) $ be a metric space. The metric $d$ is called an 
\emph{ultra-metric} if it satisfies the ultra-metric inequality%
\begin{equation}
d(x,y)\leq \max \{d(x,z),d(z,y)\},  \label{um}
\end{equation}%
that is obviously stronger than the usual triangle inequality. In this case $%
\left( X,d\right) $ is called an ultra-metric space.

We will always assume in addition that the ultra-metric space $\left(
X,d\right) $ in question is separable, and that every closed ball 
\begin{equation}
B_{r}(x)=\{y\in X:d(x,y)\leq r\}  \label{Br}
\end{equation}
is compact. The latter implies that $\left( X,d\right) $ is complete.

The ultra-metric property (\ref{um}) implies that the balls in an
ultra-metric space $\left( X,d\right) $ look very differently from familiar
Euclidean balls. In particular, any two ultra-metric balls of the same
radius are either disjoint or identical. Consequently, the collection of all
distinct balls of the same radius $r$ forms a partition of $X$.

One of the best known examples of an ultra-metric space is the field $%
\mathbb{Q}_{p}$ of $p$-adic numbers endowed with the $p$-adic norm $\Vert
x\Vert _{p}$ and the $p$-adic ultra-metric $d(x,y)=\Vert x-y\Vert _{p}\,$.
Moreover, for any integer $n\geq 1$, the $p$-adic $n$-space $\mathbb{Q}%
_{p}^{n}=\mathbb{Q}_{p}\times ...\times \mathbb{Q}_{p}$ is also an
ultra-metric space with the ultra-metric $d_{n}(x,y)$ defined as 
\begin{equation*}
d_{n}(x,y)=\max \{d(x_{1},y_{1}),...,d(x_{n},y_{n})\}.
\end{equation*}

If the group of isometries of an ultra-metric space $(X,d)$ acts
transitively on $X,$ \noindent then $(X,d)$ is in fact a locally compact
Abelian group, which in particular is the case for $\mathbb{Q}_{p}^{n}.$

In literature one distinguishes the following two subclasses of ultra-metric
spaces:

\begin{itemize}
\item $(X,d)$ is discrete and infinite.

\item $(X,d)$ is perfect (that is, $X$ contains no isolated point).
\end{itemize}

Various constructions of Markov processes on non-compact perfect locally
compact Abelian groups have been developed by Evans~\cite{Evans1989}, Haran~%
\cite{Haran1990}, \cite{Haran1993}, Ismagilov~\cite{Ismagilov1991}, Kochubei 
\cite{Kochubei1991}, \cite{Kochubei2001}, Albeverio and Karwowski \cite{AK0}%
, \cite{AK1}, Albeverio and Zhao~\cite{AlbevZhao2000}, Del Muto and Fig\'{a}%
-Talamanca~\cite{Figa-Tal1}, \cite{Figa-Tal2}, Rodrigues-Vega and
Zuniga-Galindo \cite{ZuGa1}, \cite{ZuGa2}. They studied $X$-valued
infinitely divisible random variables and processes by using tools of
Fourier analysis; for general references, see Hewitt and Ross~\cite%
{HewittRoss}, Taibleson~\cite{TAI75} and Kochubei \cite{Kochubei2001}.
Indeed, Taibleson's spectral multipliers on $\mathbb{Q}_{p}^{n}$ are early
forerunners of the Laplacians that we are considering here.

Pearson and Bellissard~\cite{PearsonBellissard} and Kigami~\cite{Ki}, \cite%
{Kigami2} considered random walks on the Cantor set, resp. the Cantor set
minus one point. In \cite{Ki}, \cite{Kigami2}, a main focus is on the
interplay between random walks on trees and jump process on their
boundaries. In this context, we also mention Aldous and Evans~\cite%
{AldousEvans} and Chen, Fukushima and Ying \cite{ChFuYi}. We shall come back
to Kigami's work in the last three sections of this paper.

An entirely different approach was developed by Vladimirov, Volovich and
Zelenov \cite{Vladimirov}, \cite{Vladimirov94}. They were concerned with $p$%
-adic analysis (Bruhat distributions, Fourier transform etc.) related to the
concept of $p$-adic Quantum Mechanics, and introduced a class of
pseudo-differential operators on $\mathbb{Q}_{p}$ and on $\mathbb{Q}_{p}^{n}$%
. In particular, they considered the $p$-adic Laplacian defined on $\mathbb{Q%
}_{p}^{3}$ and studied the corresponding $p$-adic Schr\"{o}dinger equation.
Among other results, they explicitly computed (as series expansions) certain
heat kernels as well as the Green function of the $p$-adic Laplacian. In
connection with the theory of pseudo-differential operators on general
totally disconnected groups we mention here the pioneering work of
Saloff-Coste~\cite{Saloff-Coste1986}.

Discrete ultra-metric spaces $(X,d)$ were treated by Bendikov, Grigor'yan
and Pittet~\cite{BGP1}, the direct forerunner of the present work. Among the
examples of such spaces we mention the class of locally finite groups: a
countable group $G$ is locally finite if any of its finite subsets generates
a finite subgroup. Every locally finite group $G$ is the union of an
increasing sequence of finite subgroups $\{G_{n}\}$. An ultra-metric $d$ in $%
G$ can be defined as follows: $d(x,y)$ is the minimal value of $n$ such that 
$x$ and $y$ belong to a common coset of $G_{n}\,$.

Since locally finite groups are not finitely generated, the basic notions of
geometric group theory such as the word metric, volume growth, isoperimetric
inequalities, etc. (cf. e.g. \cite{CoulhonGP}, \cite{Gromov}, \cite%
{Saloff-Coste2001}, \cite{SALOFF-PITTET1999}, \cite{SALOFF-PITTET2000}, \cite%
{SALOFF-PITTET2002}, , \cite{VaropoulosSaloff-CosteCoulhon}, \cite{Woess2000}%
), do not apply in this setting. The notion of an ultra-metric can be used
instead of the word metric in this setting (see \cite{B27}, \cite{BGP1}, 
\cite{BBP2010}).

Selecting a set of generators for each subgroup $G_{n}\,$of a locally finite
group $G$, one defines thereby a random walk, that is, a Markov kernel on $%
G_{n}$. Taking a convex combination of the Markov kernels across all $G_{n}$%
, one obtains a Markov kernel on $G$ that determines a random walk on $G\,$.
Such random walks have been studied by Darling and Erd\"{o}s~ \cite%
{DarlingErdos}, Kesten and Spitzer~\cite{KestenSpitzer}, Flatto and Pitt~%
\cite{FlattoPitt}, Fereig and Molchanov~\cite{Molchanov1978}, Kasymdzhanova~%
\cite{Kasym1981}, Cartwright~\cite{Car1988}, Lawler~\cite{Lawler1995},
Brofferio and Woess~ \cite{BrofWoess}, see also Bendikov and Saloff-Coste 
\cite{BenSal-some}. In particular, \cite{Lawler1995} has a remarkable
general criterion of recurrence of such random walks. Further results on
Markov processes on ultra-metric spaces can be found in \cite{DMM96}, \cite%
{DMM09}, \cite{Evans93}, \cite{Evans01}, \cite{MRM07}, \cite{RTV86}.

Many of the results in the above-mentioned literature are subsumed by our
approach via ultra-metrics. We develop tools to analyse a class of very
natural Markov processes on ultra-metric spaces without assuming any group
structure. In particular, the nature of our argument allows us to bring into
consideration an arbitrary Radon measure $\mu $ on $X$ (instead of the Haar
measure in the case of groups), that is used as a speed measure for a Markov
process.

So, given an ultra-metric space $\left( X,d\right) $, fix a Radon measure $%
\mu $ on $X$ with full support and define the family $\left\{ \mathrm{Q}%
_{r}\right\} _{r>0}$ of averaging operators acting on non-negative or
bounded Borel functions $f:X\rightarrow \mathbb{R}$ by 
\begin{equation}
\mathrm{Q}_{r}f(x)=\frac{1}{\mu \left( B_{r}(x)\right) }\int_{B_{r}(x)}f\,d%
\mu .  \label{Averager}
\end{equation}%
Note that $0<\mu \left( B_{r}\left( x\right) \right) <\infty $ for all $x\in
X$ and $r>0$. The operator $\mathrm{Q}_{r}$ has the kernel 
\begin{equation}
K_{r}(x,y)=\frac{1}{\mu \left( B_{r}(x)\right) }\,\mathbf{1}_{B_{r}(x)}(y).
\label{K_B kernel}
\end{equation}%
It is symmetric in $x,y$ because $B_{r}(x)=B_{r}(y)\ $for\ any$\;y\in
B_{r}(x).$ Clearly, $\mathrm{Q}_{r}$ is a Markov operator on the space $%
\mathcal{B}_{b}\left( X\right) $ of bounded Borel functions on $X$, that is, 
$\mathrm{Q}_{r}f\geq 0$ if $f\geq 0$ and $\mathrm{Q}_{r}1=1$. Hence, $%
\mathrm{Q}_{r}$ extends to a bounded self-adjoint operator in $L^{2}\left(
X,\mu \right) .$

Let us choose a function $\sigma $ that satisfies the following assumptions:%
\begin{equation}
\left. 
\begin{array}{l}
\sigma :[0,\infty ]\rightarrow \left[ 0,1\right] \ \text{\emph{is\ a\
strictly\ monotone\ increasing\ left-continuous\ function}} \\ 
\text{\emph{such that} }\sigma \left( 0+\right) =0\ \text{\emph{and}}\
\sigma \left( \infty \right) =1.%
\end{array}%
\right.  \label{sigma}
\end{equation}%
Then the operator 
\begin{equation}
Pf=\int_{0}^{\infty }\mathrm{Q}_{r}f\,d\sigma (r)  \label{convexcomb}
\end{equation}%
is also a Markov operator in $\mathcal{B}_{b}\left( X\right) $ as well as a
bounded self-adjoint operator in $L^{2}\left( X,\mu \right) $.

The operator $P$ determines a discrete time Markov chain $\left\{ \mathcal{X}%
_{n}\right\} _{n\in \mathbb{N}}$ on $X$ with the following transition rule: $%
\mathcal{X}_{n+1}$ is $\mu $-uniformly distributed in $B_{r}(\mathcal{X}%
_{n}) $ where the radius $r$ is chosen at random according to the
probability distribution $\sigma $. For that reason we refer to $\sigma $ as
the \emph{distance distribution function}.

Note that $P$ is determined by the triple $\left( d,\mu ,\sigma \right) .$
We refer to $P$ as an \emph{isotropic Markov} operator associated with $%
\left( d,\mu ,\sigma \right) $. The isotropic Markov operator $P$ has some
unique features arising from the ultra-metric property. First of all, there
is the simple identity 
\begin{equation}  \label{QrQs}
\mathrm{Q}_{r} \, \mathrm{Q}_{s} = \mathrm{Q}_{s} \, \mathrm{Q}_{r} =\mathrm{%
Q}_{\max\{r,s\}}.
\end{equation}
Indeed, for any ball $B$ of radius $r$, \emph{any} point $x\in B$ is a
center of $B$. Since the value $\mathrm{Q}_{r}f\left( x\right) $ is the
average of $f$ in $B$, we see that $\mathrm{Q}_{r}f\left( x\right) $ does
not depend on $x\in B$; that is, $\mathrm{Q}_{r}f=\func{const}$ on $B$. Now,
if $s \le r$ then the application of $\mathrm{Q}_s$ to $\mathrm{Q}_rf$ does
not change this constant, whence we obtain $\mathrm{Q}_{r}\mathrm{Q}_{s}f=%
\mathrm{Q}_{r}f.$ On the other hand, if $s > r$ then any ball $B$ of radius $%
s$ is the disjoint union of finitely many balls $B_{j}$ of radius $r$. Then $%
\mathrm{Q}_{r}f=\func{const}_{j}$ on $B_{j}\,$, and by a one-line
computation, $\mathrm{Q}_{r}\mathrm{Q}_{s}f=\mathrm{Q}_{s}f.$

Since by (\ref{QrQs}) $\mathrm{Q}_{r}^{2}=\mathrm{Q}_{r}$, we obtain that $%
\mathrm{Q}_{r}$ is an \emph{orthoprojector}\footnote{%
Let us mention for comparison, that the analogous averaging operator in $%
\mathbb{R}^{n}$ is also bounded and self-adjoint, but it has a non-empty
negative part of the spectrum. In particular, it is not an orthoprojector.}
in $L^{2}.$ In particular, $\func{spec}\mathrm{Q}_{r}\subset \left[ 0,1%
\right] .$

It follows from (\ref{convexcomb}) that the spectral projectors in the
spectral decomposition of $P$ are the averaging operators $\mathrm{Q}_{r}$,
up to a change of variables (cf. (\ref{Ela}). The fact that the spectral
projectors are themselves Markov operators brings up a new insight, new
technical possibilities, and a new type of results, that have no analogue in
other commonly used settings.

In particular, the Markov operator $P$ is non-negative definite, which
allows us to define the powers $P^{t}$ for all $t\geq 0$. Then $\left\{
P^{t}\right\} _{t\geq 0}$ is a symmetric strongly continuous Markov
semigroup. It follows from (\ref{convexcomb}) that $P^{t}$ admits for $t>0$
the following representation: 
\begin{equation}  \label{convexcomb-Pt}
P^{t}f(x)=\int_{0}^{\infty }\mathrm{Q}_{r}f(x)\,d\sigma ^{t}(r)\,.
\end{equation}
In a more elementary way, one can also \emph{define} $P^{t}$ by (\ref%
{convexcomb-Pt}) and use formula (\ref{QrQs}) to derive that $P^{s}P^{t} =
P^{s+t}$.

The semigroup $\left\{ P^{t}\right\} _{t\geq 0}$ determines a continuous
time Markov process $\left\{ \mathcal{X}_{t}\right\} _{t\geq 0}.$ Since the $%
n$-step transition operator of the discrete time Markov chain $\left\{ 
\mathcal{X}_{n}\right\} _{n\in \mathbb{N}}$ is $P^{n}$, we see that the
discrete time Markov chain coincides with the restriction of the continuous
time Markov process $\{\mathcal{X}_{t}\}$ to integer values of $t$. This
allows us to concentrate on the study of the continuous time process $%
\left\{ \mathcal{X}_{t}\right\} _{t\geq 0}$ only.

We refer to the Markov semigroup $\left\{ P^{t}\right\} _{t\geq 0}$ defined
by (\ref{Averager})-(\ref{convexcomb-Pt}) as an \emph{isotropic semigroup},
and to the process $\left\{ \mathcal{X}_{t}\right\} _{t\geq 0}$ as an \emph{%
isotropic jump process, }associated with the triple $\left( d,\mu ,\sigma
\right) $.

Let us briefly describe the content of the present paper that is devoted to
the study of isotropic semigroups.

In Section \ref{heat} we construct the isotropic semigroup as mentioned
above and provide explicit formulas for its heat kernel $p\left(
t,x,y\right) $ (=the transition density of the process $\left\{ \mathcal{X}%
_{t}\right\} $). As indicated above, our approach is based upon the
observation that the building blocks of the operator $P$, namely, the
averaging operators $\mathrm{Q}_{r}$ of (\ref{Averager}), are orthogonal
projectors in $L^{2}(X,\mu ),$ which enables us to engage at an early stage
the methods of spectral theory and functional calculus.

We establish some basic properties of the heat kernel, for example, its
continuity away from the diagonal, and prove upper and lower bounds in terms
of $t$ and $d\left( x,y\right) $.

For example, in $\mathbb{Q}_{p}$ with the $p$-adic ultra-metric $\left\Vert
x-y\right\Vert _{p}$ and the Haar measure $\mu $, the most natural choice of
the distance distribution function is 
\begin{equation}
\sigma \left( r\right) =\exp \left( -\left( \frac{p}{r}\right) ^{\alpha
}\right)  \label{sa}
\end{equation}%
for $\alpha >0$, when the associated heat kernel admit the estimate%
\begin{equation}
p_{t}\left( x,y\right) \simeq \frac{\,t}{(t^{1/\alpha }+\left\Vert
x-y\right\Vert _{p})^{1+\alpha }}  \label{ptp}
\end{equation}%
for all $t>0$ and $x,y\in \mathbb{Q}_{p}$. Note that the estimate (\ref{ptp}%
) is similar to the heat kernel bound for a symmetric stable process in $%
\mathbb{R}$ of index $\alpha $.

We also obtain explicit expression for the Green function of the isotropic
semigroup and provide a transience criterion in terms of the volume growth.
Unlike the previously known transience criteria (cf. \cite{Lawler1995}),
ours does not assume any group structure.

In Section \ref{generator} we are concerned with the spectral properties of
the \emph{isotropic Laplacian} $\mathcal{L}$ that is the (positive definite)
generator of the isotropic semigroup. We provide a full description of the
spectrum, in particular, show that the spectrum is pure point, and list
explicitly all the eigenfunctions by means of ultra-metric balls. Also, we
show that the spectra of the extensions of $\mathcal{L}$ in the spaces $%
L^{p},$ $1\leq p<\infty $, do not depend on $p$.

A striking property of the isotropic Laplacian $\mathcal{L}$ is that, for
any increasing bijection $\psi :[0,\infty )\rightarrow \lbrack 0,\infty )$,
the operator $\psi \left( \mathcal{L}\right) $ is also an isotropic
Laplacian (for another distance distribution function). In particular, $%
\mathcal{L}^{\alpha }$ is an isotropic Laplacian for any $\alpha >0$. Recall
for comparison that, for a general symmetric Markov generator $\mathcal{L}$,
the operator $\mathcal{L}^{\alpha }$ generates a Markov semigroup only for $%
0<\alpha \leq 1$.

In Section \ref{moments} we obtain two sided estimates of moments of the
Markov process $\left\{ \mathcal{X}_{t}\right\} $, which is a pure jump
process.

In the case when $X$ is a locally compact group, our results apply with an
arbitrary Radon measure $\mu $ instead of the Haar measure. Some of the
aforementioned questions are particularly sensitive to the choice of the
measure $\mu $, for example, the heat kernel and Green function estimates.
On the other hand, the spectrum of the Laplacian and escape rate bounds do
not depend on $\mu .$ These quantities depend strongly on the choice of the
ultra-metric $d$, whereas the eigenfunctions depend both on $d$ and $\mu $.

In Section \ref{SecQp} we compare our isotropic Laplacian with other
previously known \textquotedblleft differential\textquotedblright\ operators
in $\mathbb{Q}_{p}$ and $\mathbb{Q}_{p}^{n}$. The notion of fractional
derivative $\mathfrak{D}^{\alpha }$ on functions on $\mathbb{Q}_{p}$ was
introduced by Vladimirov~\cite{Vladimirov} by means of Fourier transform in $%
\mathbb{Q}_{p}$, which coincides with the operator of Taibleson \cite{TAI75}%
, introduced in the quite different context of Riesz multipliers on $\mathbb{%
Q}_{p}^{n}$. We show that $\mathfrak{D}^{\alpha }$ coincides with our
isotropic Laplacian $\mathcal{L}_{\alpha }$ associated with the distance
distribution function (\ref{sa}). In particular, this implies that the heat
kernel of $\mathfrak{D}^{\alpha }$ satisfies the estimate (\ref{ptp}). Note
that previously only an upper bound for the heat kernel of $\mathfrak{D}%
^{\alpha }$ was known (cf. Kochubei \cite[Ch.4.1, Lemma 4.1]{Kochubei2001}).
We also give a simple proof for a previously known explicit formula for the
fundamental solution of $\mathfrak{D}^{\alpha }$.

Using functional calculus of the operator $\mathfrak{D}^{1}$, we give a full
description of the class of all rotation invariant Markov generators on $%
\mathbb{Q}_{p}$, which includes but is not restricted to the isotropic
Laplacians. As a consequence, we obtain that the class of all rotation
invariant Markov processes in $\mathbb{Q}_{p}$ coincides with the class of
Markov processes constructed by Albeverio and Karwowski \cite{AK1} by use of
much more involved technical tools.

Next we consider \textquotedblleft differential\textquotedblright\ operators
on $\mathbb{Q}_{p}^{n}$. The $p$-adic Laplace operator of Vladimirov on $%
\mathbb{Q}_{p}^{n}$ is defined as a direct sum of the operators $\mathfrak{D}%
^{\alpha }$ acting separately on each coordinate. Although this operator is
not an isotropic Laplacian, it can be studied within our setting, which
gives simple direct proofs of many results of \cite{Vladimirov94}, without
using Fourier Analysis and the theory of Bruhat distributions.

Another multidimensional generalization of $\mathfrak{D}^{\alpha }$ is the
Taibleson operator $\mathfrak{T}^{\alpha }$ in $\mathbb{Q}_{p}^{n}$ that is
defined by means of Fourier transform in $\mathbb{Q}_{p}^{n}.$ We show that
the operator $\mathfrak{T}^{\alpha }$ is an isotropic Laplacian, which
allows to obtain very detailed analytic results.

In Section \ref{trees} we use the fact that every locally compact
ultra-metric space arises as the boundary of a locally finite tree. Using
that we relate random walks\footnote{%
Discrete time random walks of nearest neighbour type on a tree are very well
understood -- see the book by Woess~\cite[Ch. 9]{W-Markov}} on the tree with
isotropic jump processes on its boundary. In recent work, Kigami~\cite{Ki}
starts with a transient nearest neighbour random walk on a tree and
constructs a naturally associated jump process on the boundary of the tree:
given the Dirichlet form of the random walk on the vertex set of the tree,
the boundary process is induced by the Dirichlet form that reproduces the
power (\textquotedblleft energy\textquotedblright ) of a harmonic function
on the tree via its boundary values. This is analogous to the well-known
Douglas integral \cite{Doug} on the unit disk. Using this approach, \cite{Ki}
undertakes a detailed analysis of the process on the boundary.

Restricting attention at first to the compact case, in Section \ref{duality}
we answer the obvious question how the approaches of Kigami and of the
present paper are related. The relation is basically one-to-one: 
%
every boundary process induced by a random walk is an isotropic process of
our setting. Conversely, we show that up to a unique linear time change,
every isotropic jump process on the boundary of a tree arises from a
uniquely determined random walk as the process of \cite{Ki}. In addition, we
explain how the boundary processes on a tree transforms into an isotropic
jump process on the non-compact ultra-metric space given by a punctured
boundary of the tree. This should be compared with the very recent work \cite%
{Kigami2}.

Finally, in Section \ref{SecFrac} we elaborate the specific examples of the $%
p$-adic fractional derivative on the (compact) group of $p$-adic integers
and the corresponding random walk on the associated rooted tree, as well as
the random walk corresponding to the fractional derivative on the whole of $%
\mathbb{Q}_{p}$.

\begin{acknowledgement}
This work was begun and finished at Bielefeld University under support of
SFB 701 of the German Research Council. The authors thank S.~Albeverio,
J.~Bellissard, P.~Diaconis, W.~Herfort, A.N.~Kochubei, S.A.~Molchanov,
L.~Saloff-Coste, I.V.~Volovich and E.I.~Zelenov for fruitful discussions and
valuable comments.
\end{acknowledgement}

\section{Heat semigroup and heat kernel}

\label{heat}\setcounter{equation}{0}Throughout this paper, $\left(
X,d\right) $ is an ultra-metric space which is separable, and such that all $%
d$-balls $B_{r}\left( x\right) $ are compact.

\subsection{Averaging operator}

Recall that for any $r>0$, 
\begin{equation*}
\mathrm{Q}_{r}f(x)=\frac{1}{\mu \left( B_{r}(x)\right) }\int_{B_{r}(x)}f\,d%
\mu
\end{equation*}%
is an orthoprojector in $L^{2}\equiv L^{2}\left( X,\mu \right) $ (cf. (\ref%
{Averager})), and the image of $\mathrm{Q}_{r}$ is the subspace $\mathcal{V}%
_{r}$ of $L^{2}$ that consists of all functions taking constant values on
each ball radius $r.$

Clearly, the family $\left\{ \mathcal{V}_{r}\right\} _{r>0}$ is monotone
decreasing with respect to set inclusion. It follows that there exists the
limit%
\begin{equation*}
\mathrm{Q}_{\infty }:=s\text{-}\lim_{r\rightarrow \infty }\mathrm{Q}_{r}
\end{equation*}%
in the strong operator topology, which is an orthoprojector onto $\mathcal{V}%
_{\infty }=\tbigcap_{r>0}\mathcal{V}_{r}$. It follows that $\mathcal{V}%
_{\infty }$ consists of constant functions. If $\mu \left( X\right) =\infty $
then $\mathcal{V}_{\infty }=\left\{ 0\right\} $ and $\mathrm{Q}_{\infty }=0$%
, while in the case $\mu \left( X\right) <\infty $ we have $\dim \mathcal{V}%
_{\infty }=1$ and 
\begin{equation}
\mathrm{Q}_{\infty }f=\frac{1}{\mu \left( X\right) }\int_{X}fd\mu .
\label{Qi}
\end{equation}%
Set also $\mathrm{Q}_{0}:=\func{id}.$

\begin{lemma}
The family $\left\{ \mathrm{Q}_{r}\right\} _{r\in \lbrack 0,\infty )}$ of
orthoprojectors is strongly right continuous in $r$.
\end{lemma}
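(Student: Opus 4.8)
The plan is to prove strong right continuity at an arbitrary fixed point $r_0\in[0,\infty)$ by combining the general monotone convergence of orthogonal projections with the elementary observation that the nested balls $B_r(x_0)$ shrink down to $B_{r_0}(x_0)$ as $r\downarrow r_0$. First I would exploit the monotonicity already recorded in the excerpt: since $\{\mathcal{V}_r\}$ is decreasing in $r$, the subspaces $\mathcal{V}_r$ with $r>r_0$ \emph{increase} as $r\downarrow r_0$. The orthoprojectors onto an increasing family of closed subspaces always converge strongly to the orthoprojector onto the closure of their union; hence the strong right limit $s\text{-}\lim_{r\downarrow r_0}\mathrm{Q}_r$ exists and equals the orthoprojector $\mathrm{Q}_{r_0+}$ onto $\mathcal{W}:=\overline{\bigcup_{r>r_0}\mathcal{V}_r}$. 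It therefore suffices to show $\mathcal{W}=\mathcal{V}_{r_0}$.

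The inclusion $\mathcal{W}\subseteq\mathcal{V}_{r_0}$ is immediate, because each $\mathcal{V}_r$ with $r>r_0$ sits inside the closed subspace $\mathcal{V}_{r_0}$. The reverse inclusion is the substantive point, and it is where the ball geometry enters. For $r_0>0$ I would argue on a dense spanning set: since $X$ is separable, the balls of radius $r_0$ form a countable partition $\{A_j\}$, and, as balls are compact and hence of finite measure, the indicators $\mathbf{1}_{A_j}$ span a dense subspace of $\mathcal{V}_{r_0}$. Fix such a ball $A=B_{r_0}(x_0)$ with $x_0\in A$. By the ultrametric partition property, $B_r(x_0)$ is a single cell of the radius-$r$ partition, so $\mathbf{1}_{B_r(x_0)}\in\mathcal{V}_r$ for every $r>r_0$. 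Since $\bigcap_{r>r_0}B_r(x_0)=B_{r_0}(x_0)=A$ and all these nested balls lie in the compact set $B_{r_0+1}(x_0)$ of finite measure, continuity of $\mu$ from above gives $\|\mathbf{1}_{B_r(x_0)}-\mathbf{1}_A\|_2^2=\mu(B_r(x_0))-\mu(A)\to 0$ as $r\downarrow r_0$. Hence $\mathbf{1}_A\in\mathcal{W}$, and therefore $\mathcal{V}_{r_0}\subseteq\mathcal{W}$.

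For the boundary case $r_0=0$ one has $\mathcal{V}_0=L^2$ (as $\mathrm{Q}_0=\func{id}$), and the ball-by-ball argument above degenerates because a single point may have zero measure. Here I would instead observe directly that the ball indicators $\mathbf{1}_{B_r(x)}$ with $r>0$ all lie in $\bigcup_{r>0}\mathcal{V}_r$, while their linear span is dense in $L^2(X,\mu)$ (the balls generate the Borel $\sigma$-algebra and have finite measure). This yields $\mathcal{W}=L^2=\mathcal{V}_0$ and hence $\mathrm{Q}_{0+}=\func{id}=\mathrm{Q}_0$. Combining the two cases, $\mathrm{Q}_{r_0+}=\mathrm{Q}_{r_0}$ for every $r_0\in[0,\infty)$, which is the asserted strong right continuity.

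I expect the main obstacle to be precisely the reverse inclusion $\mathcal{V}_{r_0}\subseteq\mathcal{W}$: a priori a function constant on the fine partition into radius-$r_0$ balls need not be reachable from the coarser partitions into radius-$r$ balls with $r>r_0$. What rescues the argument is the ultrametric structure, which makes each $B_r(x_0)$ a single partition cell (so that its indicator genuinely belongs to $\mathcal{V}_r$), together with the right continuity $B_r(x_0)\downarrow B_{r_0}(x_0)$ of the nested balls; continuity of the measure from above, guaranteed by compactness of balls, then upgrades this set-theoretic convergence to convergence in $L^2$. Everything else—the existence of the monotone strong limit and the density of ball indicators—is standard.
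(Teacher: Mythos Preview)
Your proof is correct and takes a genuinely different route from the paper's. The paper argues pointwise: for a continuous compactly supported $f$ it observes that $r\mapsto \mathrm{Q}_r f(x)$ is right continuous at each $x$ (because the closed balls satisfy $\bigcap_{s>r}B_s(x)=B_r(x)$), invokes dominated convergence to upgrade this to $L^2$-convergence, and then passes to all of $L^2$ by density of $C_c(X)$. The case $r=0$ is handled the same way, using $\mathrm{Q}_s f(x)\to f(x)$. By contrast, you exploit the orthoprojector structure: monotonicity of $r\mapsto \mathcal{V}_r$ guarantees existence of the strong right limit as the projection onto $\mathcal{W}=\overline{\bigcup_{r>r_0}\mathcal{V}_r}$, and you then identify $\mathcal{W}=\mathcal{V}_{r_0}$ by approximating ball indicators $\mathbf{1}_{B_{r_0}(x_0)}$ by $\mathbf{1}_{B_r(x_0)}\in\mathcal{V}_r$, using continuity of $\mu$ from above. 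Your argument cleanly separates the abstract spectral step from the single geometric input (right continuity of the nested balls), and it makes transparent that the limit is again an orthoprojector; the paper's argument is shorter and avoids any discussion of the subspaces $\mathcal{V}_r$, but uses the explicit averaging formula and dominated convergence twice. Both ultimately rest on the same underlying fact that closed balls are right continuous in the radius.
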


\begin{proof}
Let us first show that $r\mapsto \mathrm{Q}_{r}$ is strongly continuous at $%
r=0$, that is,%
\begin{equation}
s\text{-}\lim_{s\rightarrow 0+}\mathrm{Q}_{s}=\func{id}.  \label{sr0}
\end{equation}%
Let $f$ be a continuous function on $X$ with compact support. Then, for any $%
x\in X$,%
\begin{equation*}
\mathrm{Q}_{s}f\left( x\right) \rightarrow f\left( x\right) \ \text{as }%
s\rightarrow 0.
\end{equation*}%
Since the family $\left\{ \mathrm{Q}_{s}f\right\} _{s\in \left( 0,1\right) }$
is uniformly bounded by $\sup \left\vert f\right\vert $ and is uniformly
compactly supported, it follows by the dominated convergence theorem that 
\begin{equation}
\left\Vert \mathrm{Q}_{s}f-f\right\Vert _{L^{2}}\rightarrow 0\ \text{as\ }%
s\rightarrow 0.  \label{Qe}
\end{equation}%
Since the space of continuous functions with compact support is dense in $%
L^{2}$, by a standard approximation argument (\ref{Qe}) extends to all $f\in
L^{2}$, whence (\ref{sr0}) follows.

Next, let us prove that $r\mapsto \mathrm{Q}_{r}$ is strongly right
continuous at any $r>0$, that is,%
\begin{equation}
s\text{-}\lim_{s\rightarrow r+}\mathrm{Q}_{s}=\mathrm{Q}_{r}.  \label{srr}
\end{equation}%
It suffices to show that, for any continuous function $f$ with compact
support,%
\begin{equation}
\left\Vert \mathrm{Q}_{s}f-\mathrm{Q}_{r}f\right\Vert _{L^{2}}\rightarrow 0\
\ \text{as }s\rightarrow r\!+.  \label{Qg}
\end{equation}%
Indeed, for any $x\in X$, the function $r\mapsto \mathrm{Q}_{r}f\left(
x\right) $ is right continuous by (\ref{Averager}) as the balls are closed,
whence (\ref{Qg}) follows by the dominated convergence theorem.
\end{proof}

For any $\lambda \in \mathbb{R}$ set%
\begin{equation}
E_{\lambda }=\left\{ 
\begin{array}{l}
\mathrm{Q}_{1/\lambda },\ \lambda >0, \\ 
0,\ \ \ \ \ \lambda \leq 0.%
\end{array}%
\right.  \label{Ela}
\end{equation}%
Note that $E_{0+}=\mathrm{Q}_{\infty }$. It follows from the above
properties of $\mathrm{Q}_{r}$ that the family $\left\{ E_{\lambda }\right\} 
$ of orthoprojectors in $L^{2}$ is a left-continuous spectral resolution.
Consequently, for any Borel function $\varphi :[0,\infty )\rightarrow 
\mathbb{R}$, the integral%
\begin{equation*}
\int_{\lbrack 0,\infty )}\varphi \left( \lambda \right) dE_{\lambda }
\end{equation*}%
determines a self-adjoint non-negative definite operator, which is bounded
if and only if $\varphi $ is bounded.

\subsection{Basic properties of heat semigroup}

Consider now the operator $P$ defined by (\ref{convexcomb}) with a function $%
\sigma $ as in (\ref{sigma}). Observe that the integral in (\ref{convexcomb}%
) converges in the strong operator topology since, for any $f\in L^{2}$,%
\begin{equation*}
\int_{0}^{\infty }\left\Vert Q_{r}f\right\Vert _{L^{2}}d\sigma \left(
r\right) <\infty .
\end{equation*}%
On the other hand, for any $f\in \mathcal{B}_{b}\left( X\right) $, the
integral (\ref{convexcomb}) converges pointwise. Moreover, in this case the
function $Pf$ is continuous, because the function $x\mapsto
\int_{\varepsilon }^{\infty }\mathrm{Q}_{r}f\left( x\right) d\sigma \left(
r\right) $ is for any $\varepsilon >0$ locally constant and, hence,
continuous and it converges uniformly to $Pf\left( x\right) $ as $%
\varepsilon \rightarrow 0$.

As it was already observed, $P$ is a self-adjoint operator in \ $L^{2}$ and $%
\func{spec}P\subset \left[ 0,1\right] .$ In particular, for any $t>0$, the
power $P^{t}$ is well defined. Set also $P^{0}:=\func{id}$. In the next
statement we collect basic properties of $P^{t}$.

\begin{theorem}
\label{heat kernel}

\begin{itemize}
\item[$\left( a\right) $] The family $\{P^{t}\}_{t\geq 0}$ is a strongly
continuous symmetric Markov semigroup on $L^{2}(X,\mu )$.

\item[$\left( b\right) $] For any $t>0$, the operator $P^{t}$ has the
representation \emph{(\ref{convexcomb-Pt})}, that is,%
\begin{equation*}
P^{t}f=\int_{[0,\infty )}\mathrm{Q}_{r}f\,d\sigma ^{t}(r)\,.
\end{equation*}

\item[$\left( c\right) $] For any $t>0$, the operator $P^{t}$ admits an
integral kernel $p(t,x,y)$, that is, for all $f\in \mathcal{B}_{b}$ and $%
f\in L^{2}$, 
\begin{equation}
P^{t}f(x)=\int_{X}p(t,x,y)f(y)d\mu (y),  \label{Pt}
\end{equation}%
where $p\left( t,x,y\right) $ is given by%
\begin{equation}
p(t,x,y)=\int_{[d(x,y),\infty )}\frac{d\sigma ^{t}(r)}{\mu \left(
B_{r}(x)\right) }.  \label{p=id}
\end{equation}
\end{itemize}
\end{theorem}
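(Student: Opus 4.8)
The plan is to read everything off the spectral resolution $\{E_\lambda\}$ of (\ref{Ela}) together with the functional calculus. First I would record the spectral representation of $P$. Writing $\mathrm{Q}_r=E_{1/r}$ and $\mathrm{Q}_rf=\int_{[0,1/r]}dE_\lambda f$ in (\ref{convexcomb}), a Fubini interchange between the scalar spectral measure $d\langle E_\lambda f,g\rangle$ and the Stieltjes measure $d\sigma$ gives
\[
P=\int_{[0,\infty)}\varphi(\lambda)\,dE_\lambda,\qquad \varphi(\lambda)=\sigma(1/\lambda),
\]
with the convention $\varphi(0)=\sigma(\infty)=1$ to account for the atom of $\{E_\lambda\}$ at $\lambda=0$ (that is, for $\mathrm{Q}_\infty$). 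Since $0\le\varphi\le1$, this is consistent with $\func{spec}P\subset[0,1]$ and presents $P$ through the functional calculus of $\{E_\lambda\}$.

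For part $(b)$ I would apply the functional calculus to the bounded Borel symbol $\lambda\mapsto\varphi(\lambda)^t$: by definition of the power of a non-negative self-adjoint operator, $P^t=\int_{[0,\infty)}\varphi(\lambda)^t\,dE_\lambda$. Because $\sigma$ satisfies (\ref{sigma}), so does $\sigma^t$ — it is again strictly increasing and left-continuous with $\sigma^t(0+)=0$, $\sigma^t(\infty)=1$ — so $\varphi(\lambda)^t=\sigma^t(1/\lambda)$ has exactly the same form as $\varphi$. Reversing the Fubini step that produced the spectral representation, now with $\sigma^t$ in place of $\sigma$, yields $P^tf=\int_{[0,\infty)}\mathrm{Q}_rf\,d\sigma^t(r)$, which is (\ref{convexcomb-Pt}). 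As the text remarks, one may alternatively take this formula as the definition of $P^t$ and deduce the semigroup law from (\ref{QrQs}): the double Stieltjes integral for $P^sP^t$ produces $\int\!\int\mathrm{Q}_{\max\{r,r'\}}\,d\sigma^s(r)\,d\sigma^t(r')$, and since the distribution function of the maximum of two independent variables with laws $\sigma^s$ and $\sigma^t$ is the product $\sigma^s\sigma^t=\sigma^{s+t}$, this collapses to $\int_{[0,\infty)}\mathrm{Q}_\rho\,d\sigma^{s+t}(\rho)=P^{s+t}$.

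With $(b)$ available, part $(a)$ is bookkeeping. Self-adjointness of $P^t$ and the semigroup law $P^sP^t=P^{s+t}$ are immediate from the functional calculus, the symbol $\varphi^t$ being real and multiplicative in $t$. Strong continuity, including at $t=0$, follows from
\[
\|P^tf-P^{t_0}f\|_{L^2}^2=\int_{[0,\infty)}\bigl(\varphi(\lambda)^t-\varphi(\lambda)^{t_0}\bigr)^2\,d\|E_\lambda f\|^2\longrightarrow0
\]
by dominated convergence, using $0\le\varphi\le1$. The Markov property is read off from $(b)$: each $\mathrm{Q}_r$ preserves positivity and $d\sigma^t\ge0$, so $f\ge0$ forces $P^tf\ge0$; while $\mathrm{Q}_r1=1$ and $\int_{[0,\infty)}d\sigma^t(r)=\sigma^t(\infty)-\sigma^t(0+)=1$ give $P^t1=1$ (in the $\mathcal{B}_b$ sense when $\mu(X)=\infty$).

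Finally, for $(c)$ I would insert the explicit kernel (\ref{K_B kernel}) of $\mathrm{Q}_r$ into $(b)$ and interchange the $\mu$- and $\sigma^t$-integrals:
\[
P^tf(x)=\int_{[0,\infty)}\frac{1}{\mu(B_r(x))}\int_X\mathbf{1}_{B_r(x)}(y)f(y)\,d\mu(y)\,d\sigma^t(r)=\int_Xf(y)\Bigl(\int_{[0,\infty)}\frac{\mathbf{1}_{\{d(x,y)\le r\}}}{\mu(B_r(x))}\,d\sigma^t(r)\Bigr)d\mu(y),
\]
and the inner integral is precisely $p(t,x,y)$ of (\ref{p=id}), because $\mathbf{1}_{B_r(x)}(y)=1$ iff $r\ge d(x,y)$. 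The ultra-metric identity $B_r(x)=B_r(y)$ for $r\ge d(x,y)$ makes $p$ symmetric, and off the diagonal $p(t,x,y)\le\mu(B_{d(x,y)}(x))^{-1}<\infty$, so the kernel is well defined there. The main obstacle throughout is not any single computation but the rigorous justification of these interchanges: one must reconcile the left-continuity and the atoms of $\sigma$ (and the atom of $\{E_\lambda\}$ at $\lambda=0$ coming from $\mathrm{Q}_\infty$) so that the endpoint conventions in the Stieltjes integrals agree, and one must secure the Fubini hypotheses in $(c)$ uniformly enough to pass from nice $f$ (continuous with compact support) to all $f\in\mathcal{B}_b$ and all $f\in L^2$.
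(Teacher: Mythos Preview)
Your proposal is correct and follows essentially the same route as the paper: derive the spectral representation $P=\int_{[0,\infty)}\sigma(1/\lambda)\,dE_\lambda$, use functional calculus to get $P^t=\int_{[0,\infty)}\sigma^t(1/\lambda)\,dE_\lambda$, reverse the derivation to obtain (\ref{convexcomb-Pt}), and then apply Fubini with the explicit kernel of $\mathrm{Q}_r$ for part $(c)$. The only cosmetic difference is that the paper obtains the spectral representation of $P$ via an integration by parts in (\ref{convexcomb}) (producing $Pf=\mathrm{Q}_\infty f-\int_{(0,\infty)}\sigma(r)\,d\mathrm{Q}_rf$ before the change of variable $\lambda=1/r$), whereas you go through a Fubini interchange between $d\sigma$ and the scalar spectral measure; these are equivalent manipulations, and your caveats about endpoint conventions and atoms are exactly the points one has to watch.
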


The function $p\left( t,x,y\right) $ is called the \emph{heat kernel} of the
semigroup $\left\{ P^{t}\right\} $. It is clear from (\ref{p=id}) that $%
p\left( t,x,y\right) <\infty $ for all $t>0$ and $x\neq y$, whereas under
certain conditions $p\left( t,x,x\right) $ can be equal to $\infty $.

For $f\in \mathcal{B}_{b}$ the identity (\ref{Pt}) holds pointwise, that is,
for all $x\in X$, whereas for $f\in L^{2}$ (\ref{Pt}) is an identity of two $%
L^{2}$-functions, that it, it holds for $\mu $-almost all $x$.

\begin{proof}
It follows from (\ref{convexcomb}) by integrations by parts that, for any $%
f\in L^{2}$,%
\begin{equation}
Pf=\int_{[0,\infty )}\mathrm{Q}_{r}f~d\sigma (r)=Q_{\infty
}f-\int_{(0,\infty )}\sigma \left( r\right) d\mathrm{Q}_{r}f.  \label{ccc}
\end{equation}%
Changing $\lambda =1/r$ and using (\ref{Ela}), we obtain%
\begin{equation*}
Pf=\left( E_{0+}\right) f+\int_{(0,\infty )}\sigma \left( 1/\lambda \right)
dE_{\lambda }f=\int_{[0,\infty )}\sigma \left( 1/\lambda \right) dE_{\lambda
}f,
\end{equation*}%
using the convention $\sigma \left( \infty \right) =1$. Hence, we obtain the
spectral resolution of $P$ in the following form:%
\begin{equation}
P=\int_{[0,\infty )}\sigma \left( 1/\lambda \right) dE_{\lambda }.
\label{PEla}
\end{equation}%
It follows that 
\begin{equation}
P^{t}=\int_{[0,\infty )}\sigma ^{t}\left( 1/\lambda \right) dE_{\lambda }.
\label{PtEla}
\end{equation}

$\left( a\right) $ The semigroup identity $P^{t}P^{s}=P^{t+s}$ is a
straightforward consequence of(\ref{QrQs}), as observed in the Introduction.
It remains to show that%
\begin{equation*}
s\text{-}\lim_{t\rightarrow 0+}P^{t}=\func{id},
\end{equation*}%
which easily follows from (\ref{PtEla}) because $\sigma \left( 1/\lambda
\right) >0$ for $\lambda \in \lbrack 0,\infty )$.

$\left( b\right) $ Reversing the argument in the derivation of (\ref{PtEla})
from (\ref{ccc}), we obtain that (\ref{PtEla}) implies%
\begin{equation*}
P^{t}f=\int_{[0,\infty )}\mathrm{Q}_{r}f~d\sigma ^{t}(r).
\end{equation*}

$\left( c\right) $ It follows from $\left( b\right) $, (\ref{Averager}) and
Fubini that, for any $f\in \mathcal{B}_{b}$, 
\begin{eqnarray*}
P^{t}f\left( x\right) &=&\int_{[0,\infty )}\left( \frac{1}{\mu \left(
B_{r}(x)\right) }\int_{X}\mathbf{1}_{B_{r}(x)}\left( y\right) f(y)d\mu
(y)\right) d\sigma ^{t}(r) \\
&=&\int_{X}\left( \int_{[d\left( x,y\right) ,\infty )}\frac{1}{\mu \left(
B_{r}(x)\right) }d\sigma ^{t}(r)\right) f(y)d\mu (y)\, \\
&=&\int_{X}p\left( t,x,y\right) f\left( y\right) d\mu \left( y\right) .
\end{eqnarray*}%
For $f\in L^{2}$ it follows by approximation argument.
\end{proof}

\begin{remark}
\RM In the proof of Theorem \ref{heat kernel} we have not used at full
strength the fact that $\sigma $ is \emph{strictly} monotone increasing (cf.
(\ref{sigma})). For that theorem, it suffices to assume that $\sigma $ is
monotone increasing and $\sigma \left( r\right) >0$ for $r>0$.
\end{remark}

\begin{remark}
\RM If one takes (\ref{convexcomb-Pt}) as definition of the operator $P^{t}$%
, then one can prove the semigroup identity $P^{t}P^{s}=P^{t+s}$ by means of
(\ref{QrQs}). Indeed, for any given $s,t>0$ and $f\in L^{2}$, we have 
\begin{eqnarray*}
P^{s}P^{t}f(x) &=&\int_{0}^{\infty }d\sigma ^{s}(r)\int_{0}^{\infty }d\sigma
^{t}(r^{\prime })\,\mathrm{Q}_{r}\mathrm{Q}_{r^{\prime }}f(x)= \\
&=&\int_{0}^{\infty }d\sigma ^{s}(r)\int_{0}^{\infty }d\sigma ^{t}(r^{\prime
})\,\mathrm{Q}_{\max \{r,r^{\prime }\}}f(x).
\end{eqnarray*}%
Let $\xi _{1}$ and $\xi _{2}$ be two independent random variables with
distributions $\sigma ^{s}$ and $\sigma ^{t}$, respectively. Then the
distribution of the random variable $\xi =\max \{\xi _{1},\xi _{2}\}$ is $%
\sigma ^{t+s}.$ It follows that 
\begin{equation*}
P^{s}P^{t}f(x)=\mathbb{E}\left( \mathrm{Q}_{\max \{\xi _{1},\xi
_{2}\}}f(x)\right) =\int_{0}^{\infty }\mathrm{Q}_{r}f(x)\,d\sigma
^{t+s}(r)=P^{t+s}f(x).
\end{equation*}
\end{remark}

\begin{corollary}
\label{Corpmin}For all $x,y\in X$ and $t>0,$ we have $p\left( t,x,y\right)
>0,$%
\begin{equation*}
p\left( t,x,y\right) =p\left( t,y,x\right) ,
\end{equation*}%
and%
\begin{equation}
p(t,x,y)\leq \min \{p(t,x,x),p(t,y,y)\}.  \label{ptmin}
\end{equation}
\end{corollary}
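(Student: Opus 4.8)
The plan is to read everything off the explicit kernel formula (\ref{p=id}),
\[
p(t,x,y)=\int_{[d(x,y),\infty )}\frac{d\sigma ^{t}(r)}{\mu \left(
B_{r}(x)\right) },
\]
which already encodes all three assertions. All that is needed is to exploit (i) the positivity and finiteness of the integrand $1/\mu(B_r(x))$, (ii) the ultra-metric ball coincidence $B_r(x)=B_r(y)$ for $r\ge d(x,y)$, and (iii) the strict monotonicity of $\sigma$ recorded in (\ref{sigma}). None of these steps is deep; the whole content is careful bookkeeping with the Stieltjes measure $d\sigma^t$.

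For positivity, I would fix $t>0$ and $x,y\in X$ and note that $0<\mu(B_r(x))<\infty$ for every $r>0$, so the integrand is strictly positive on the range of integration. Since $\sigma$ is strictly increasing with $\sigma(\infty)=1$, the same holds for $\sigma^{t}$, and hence $\sigma^{t}(d(x,y))<\sigma^{t}(\infty)=1$ for the finite value $d(x,y)$; thus the $\sigma^{t}$-measure of $[d(x,y),\infty)$ is strictly positive. A strictly positive integrand integrated against a measure of positive mass yields $p(t,x,y)>0$. For symmetry, I would observe that whenever $r\ge d(x,y)$ we have $y\in B_r(x)$, whence $B_r(x)=B_r(y)$ and in particular $\mu(B_r(x))=\mu(B_r(y))$; combining this with $d(x,y)=d(y,x)$ shows that the integral representation of $p(t,x,y)$ and that of $p(t,y,x)$ have the same lower limit and the same integrand on the whole range $[d(x,y),\infty)$, so $p(t,x,y)=p(t,y,x)$.

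For the domination inequality (\ref{ptmin}) I would use monotonicity of the integral in its domain. Since $d(x,y)\ge 0$, one has $[d(x,y),\infty)\subseteq[0,\infty)$, and because the integrand $1/\mu(B_r(x))$ is non-negative and $d(x,x)=0$,
\[
p(t,x,y)=\int_{[d(x,y),\infty )}\frac{d\sigma ^{t}(r)}{\mu \left(
B_{r}(x)\right) }\;\le\;\int_{[0,\infty )}\frac{d\sigma ^{t}(r)}{\mu \left(
B_{r}(x)\right) }=p(t,x,x).
\]
Applying the symmetry just proved, $p(t,x,y)=p(t,y,x)$, and the analogous estimate with the roles of $x$ and $y$ interchanged (now using the representation with $\mu(B_r(y))$ in the denominator) gives $p(t,x,y)\le p(t,y,y)$. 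Taking the smaller of the two bounds yields (\ref{ptmin}). The only step requiring genuine attention is the interchange $\mu(B_r(x))=\mu(B_r(y))$ on the relevant range, which is precisely where the ultra-metric property enters and which makes the otherwise asymmetric-looking formula (\ref{p=id}) symmetric; beyond that, the argument is a routine comparison of Stieltjes integrals.
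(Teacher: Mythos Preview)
Your proof is correct and follows essentially the same approach as the paper: positivity from strict monotonicity of $\sigma$ and positivity of the integrand in (\ref{p=id}); symmetry from the ultra-metric identity $B_r(x)=B_r(y)$ for $r\ge d(x,y)$; and the domination inequality by enlarging the domain of integration to $[0,\infty)$ and then invoking symmetry. The paper's proof is just a slightly terser version of what you wrote.
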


\begin{proof}
The strict positivity of $p\left( t,x,y\right) $ follows from (\ref{p=id})
and the strict monotonicity of $\sigma $.

In the integral in (\ref{p=id}) we have $r\geq d\left( x,y\right) $ whence
it follows that $B_{r}\left( x\right) =B_{r}\left( y\right) $ and $p\left(
t,x,y\right) =p\left( t,y,x\right) $. Alternatively, the symmetry of the
heat kernel follows also from the fact that $P^{t}$ is self-adjoint.

By (\ref{p=id}) we have%
\begin{equation*}
p(t,x,y)=\int_{[d(x,y),\infty )}\frac{d\sigma ^{t}(r)}{\mu \left(
B_{r}(x)\right) }\leq \int_{\lbrack 0,\infty )}\frac{d\sigma ^{t}(r)}{\mu
\left( B_{r}(x)\right) }=p(t,x,x),
\end{equation*}%
whence (\ref{ptmin}) follows.
\end{proof}

Note that in general, heat kernels only satisfy the estimate%
\begin{equation*}
p\left( t,x,y\right) \leq \sqrt{p\left( t,x,x\right) p\left( t,y,y\right) }.
\end{equation*}%
The estimate (\ref{ptmin}) is obviously stronger, which reflects a special
feature of ultra-metricity.

\begin{corollary}
\label{RemF(p)}For any $t>0,$ the function%
\begin{equation}
x,y\mapsto \left\{ 
\begin{array}{ll}
\frac{1}{p\left( t,x,y\right) }, & x\neq y, \\ 
0, & x=y,%
\end{array}%
\right.   \label{1/p}
\end{equation}%
is an ultra-metric. 
\end{corollary}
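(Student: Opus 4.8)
The plan is to verify the three defining properties of an ultra-metric for the function in (\ref{1/p}), which I will write as $\rho(x,y)$. Positive-definiteness and symmetry will be essentially free, and the whole content will lie in the strong (ultra-metric) triangle inequality.

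First, positive-definiteness: by definition $\rho(x,x)=0$, while for $x\neq y$ the observation made just after Theorem \ref{heat kernel} guarantees $0<p(t,x,y)<\infty$, so $\rho(x,y)=1/p(t,x,y)$ is a well-defined strictly positive number; thus $\rho(x,y)=0$ if and only if $x=y$. Symmetry $\rho(x,y)=\rho(y,x)$ is immediate from the symmetry $p(t,x,y)=p(t,y,x)$ recorded in Corollary \ref{Corpmin}.

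The heart of the matter is the ultra-metric inequality $\rho(x,y)\leq\max\{\rho(x,z),\rho(z,y)\}$. Since all the relevant values of $p$ are positive and finite, taking reciprocals shows that this is equivalent to the reverse-direction inequality for the heat kernel,
\[
p(t,x,y)\geq\min\{p(t,x,z),\,p(t,z,y)\}.
\]
To establish this I would use the ultra-metric inequality (\ref{um}) for $d$ itself: assuming without loss of generality that $d(x,z)\leq d(z,y)$, it yields $d(x,y)\leq\max\{d(x,z),d(z,y)\}=d(z,y)$. I would then compare the integral representations (\ref{p=id}) of $p(t,x,y)$ and $p(t,z,y)$ directly.

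The delicate point — and the step I expect to be the main obstacle — is that the integrand $1/\mu(B_r(x))$ in (\ref{p=id}) depends on the base point, so $p$ cannot be compared by distance alone. The resolution is purely ultra-metric: once $r\geq d(z,y)$ we also have $r\geq d(x,y)$, and therefore $B_r(x)=B_r(y)=B_r(z)$, so the two integrands coincide on the common range $[d(z,y),\infty)$. Since in addition $d(x,y)\leq d(z,y)$, the domain of integration for $p(t,x,y)$ contains that for $p(t,z,y)$, and the additional contribution is non-negative. Hence $p(t,x,y)\geq p(t,z,y)\geq\min\{p(t,x,z),p(t,z,y)\}$, which is exactly the desired inequality. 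The degenerate cases in which two of $x,y,z$ coincide are trivial, since then one of the three $\rho$-values vanishes and the inequality holds automatically.
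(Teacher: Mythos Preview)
Your proof is correct and follows essentially the same route as the paper: both arguments rest on the two facts that for $r\geq d(x,y)$ the balls $B_r(x)$ and $B_r(y)$ coincide (so the integrand in (\ref{p=id}) is base-point independent in that range) and that shrinking $d(x,y)$ only enlarges the domain of integration. The paper merely packages this by introducing the auxiliary function $F(x,r)=\left(\int_{[r,\infty)}\tfrac{d\sigma^t(s)}{\mu(B_s(x))}\right)^{-1}$ and isolating these two properties abstractly, but the substance is identical to what you wrote.
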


\begin{proof}
Set 
\begin{equation*}
F\left( x,r\right) =\left( \int_{[r,+\infty )}\frac{d\sigma ^{t}(s)}{\mu
\left( B_{s}(x)\right) }\right) ^{-1}\ \ \text{for }r>0,
\end{equation*}%
$F\left( x,0\right) =0$, and observe the following two properties of $F$:

\begin{itemize}
\item[$\left( a\right) $] $r\mapsto F\left( x,r\right) $ is monotone
increasing in $r$;

\item[$\left( b\right) $] $F\left( x,r\right) =F\left( y,r\right) $ wherever 
$r\geq d\left( x,y\right) $ as in this case $B_{s}\left( x\right)
=B_{s}\left( y\right) $ for all $s\geq r$.
\end{itemize}

For any function $F$ with these properties, $\rho \left( x,y\right)
:=F\left( x,d\left( x,y\right) \right) $ is an ultra-metric, as the symmetry
follows from $\left( b\right) $, while the ultra-metric inequality (\ref{um}%
) follows from $\left( a\right) $ and $\left( b\right) $: if $d\left(
x,y\right) \leq d\left( x,z\right) $ then%
\begin{equation*}
\rho \left( x,y\right) =F\left( x,d\left( x,y\right) \right) \leq F\left(
x,d\left( x,z\right) \right) =\rho \left( x,z\right) ,
\end{equation*}%
and if $d\left( x,y\right) \leq d\left( y,z\right) $ then%
\begin{equation*}
\rho \left( x,y\right) =F\left( y,d\left( x,y\right) \right) \leq F\left(
y,d\left( y,z\right) \right) =\rho \left( y,z\right) .
\end{equation*}
\end{proof}

\subsection{Spectral distribution function}

\label{spectral}For the Markov semigroup $P$ associated with the triple $%
(d,\mu ,\sigma )$, define the \emph{intrinsic ultra-metric} $d_{\ast }$ by 
\begin{equation}
\frac{1}{d_{\ast }(x,y)}=\log \frac{1}{\sigma (d\left( x,y\right) )}.
\label{d*}
\end{equation}%
Since $d_{\ast }$ is expressed as a strictly monotone increasing function of 
$d$, which vanishes at $0$, it follows that $d_{\ast }$ is an ultra-metric
on $X$. Denote by $B_{r}^{\ast }\left( x\right) $ the metric balls of $%
d_{\ast }\,$.

\begin{lemma}
\label{star-balls}For any $r\geq 0$ set 
\begin{equation*}
s=\frac{1}{\log \frac{1}{\sigma \left( r\right) }}.
\end{equation*}%
Then the following identity holds for all $x\in X$: 
\begin{equation*}
B_{s}^{\ast }(x)=B_{r}(x).
\end{equation*}%
Consequently, the metrics $d$ and $d_{\ast }$ determine the same set of
balls and the same topology.
\end{lemma}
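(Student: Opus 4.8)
The plan is to recognise $d_\ast$ as a fixed monotone reparametrisation of $d$ and then to read the ball identity directly off the order-reflecting property of that reparametrisation. Define $g\colon[0,\infty)\to[0,\infty)$ by $g(0)=0$ and $g(\rho)=\bigl(\log\tfrac{1}{\sigma(\rho)}\bigr)^{-1}$ for $\rho>0$. By the definition (\ref{d*}) of the intrinsic ultra-metric one has $d_\ast(x,y)=g\bigl(d(x,y)\bigr)$, and the radius $s$ in the statement is exactly $s=g(r)$. Since $\sigma$ is strictly increasing with $\sigma(0)=0$ and $\sigma(\infty)=1$, we have $\sigma(\rho)\in(0,1)$ for every $\rho\in(0,\infty)$, so $g$ is well defined; being a composition of strictly increasing maps it is itself strictly increasing, with $g(0)=0$ and $g(\rho)\to\infty$ as $\rho\to\infty$, and it is left-continuous because $\sigma$ is.

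The key step is the elementary equivalence that, because $g$ is \emph{strictly} increasing, $g(\rho)\le g(r)$ holds if and only if $\rho\le r$. Taking $\rho=d(x,y)$ with $x$ and $r$ fixed gives
\[
y\in B_s^\ast(x)\iff d_\ast(x,y)\le s\iff g\bigl(d(x,y)\bigr)\le g(r)\iff d(x,y)\le r\iff y\in B_r(x),
\]
which is precisely the asserted identity $B_s^\ast(x)=B_r(x)$. Here the strictness of the monotonicity in (\ref{sigma}) is exactly what supplies the nontrivial implication $g(\rho)\le g(r)\Rightarrow\rho\le r$; under mere monotonicity (as in the Remark after Theorem \ref{heat kernel}) one would only obtain the inclusion $B_r(x)\subseteq B_s^\ast(x)$.

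For the concluding assertion I would check that the two families of balls coincide. The identity just proved shows that every $d$-ball $B_r(x)$ is the $d_\ast$-ball $B_{g(r)}^\ast(x)$. For the reverse inclusion, fix $x$ and an arbitrary $s\ge0$ and set $r=\sup\{\rho\ge0:g(\rho)\le s\}$; this supremum is finite because $g(\rho)\to\infty$, and left-continuity of $g$ forces $g(r)\le s$, so that $\{\rho:g(\rho)\le s\}=[0,r]$ and hence $B_s^\ast(x)=\{y:d(x,y)\le r\}=B_r(x)$ is again a $d$-ball. Thus the collections $\{B_r(x)\}$ and $\{B_s^\ast(x)\}$ agree, and since the closed balls form a basis for the respective metric topologies, $d$ and $d_\ast$ induce the same topology. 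The lemma is essentially immediate once the factorisation $d_\ast=g\circ d$ is in hand, so there is no genuinely hard step; the one place that calls for care is this reverse inclusion, where a $d_\ast$-ball whose radius $s$ need not lie in the range of $g$ must still be realised as a $d$-ball, and it is precisely the left-continuity of $\sigma$ (hence of $g$) that guarantees the threshold set $\{\rho:g(\rho)\le s\}$ is closed rather than half-open.
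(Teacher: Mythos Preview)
Your proof is correct and follows essentially the same route as the paper's: both deduce the ball identity from the equivalence $d_\ast(x,y)\le s\iff d(x,y)\le r$, which rests precisely on the strict monotonicity of $\sigma$ (the paper writes it directly as $\sigma(d(x,y))\le\sigma(r)\iff d(x,y)\le r$, you phrase it via the composite $g$). Your handling of the ``Consequently'' clause is in fact more thorough than the paper's, which states the conclusion without the explicit argument that an arbitrary $d_\ast$-radius $s$ still yields a $d$-ball via left-continuity.
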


\begin{proof}
We have 
\begin{eqnarray*}
B_{s}^{\ast }\left( x\right) &=&\{y\in X:d_{\ast }\left( x,y\right) \leq s\}
\\
&=&\{y\in X:\sigma \left( d\left( x,y\right) \right) \leq \sigma \left(
r\right) \} \\
&=&\{y\in X:d\left( x,y\right) \leq r\} \\
&=&B_{r}\left( x\right) ,
\end{eqnarray*}%
where we have used that $\sigma $ is strictly monotone increasing.
\end{proof}

\begin{definition}
\label{def-spectral-d}\RM For any $x\in X$ we define the \emph{spectral
distribution function} $N\left( x,\cdot \right) :[0\,,\,\infty )\rightarrow
\lbrack 0\,,\,\infty )$ as 
\begin{equation}
N(x,\tau )=\frac{1}{\mu \left( B_{1/\tau }^{\ast }(x)\right) }.  \label{Ndef}
\end{equation}
\end{definition}

See Figures \ref{pic1}, \ref{pic2} and \ref{pic3} for qualitative graphs of $%
N\left( x,\cdot \right) .$

\FRAME{ftbphFU}{3.8744in}{1.8758in}{0pt}{\Qcb{The graph of the function $%
\protect\tau \mapsto N\left( x,\protect\tau \right) $ in the case when $%
\protect\mu \left( X\right) <\infty $}}{\Qlb{pic1}}{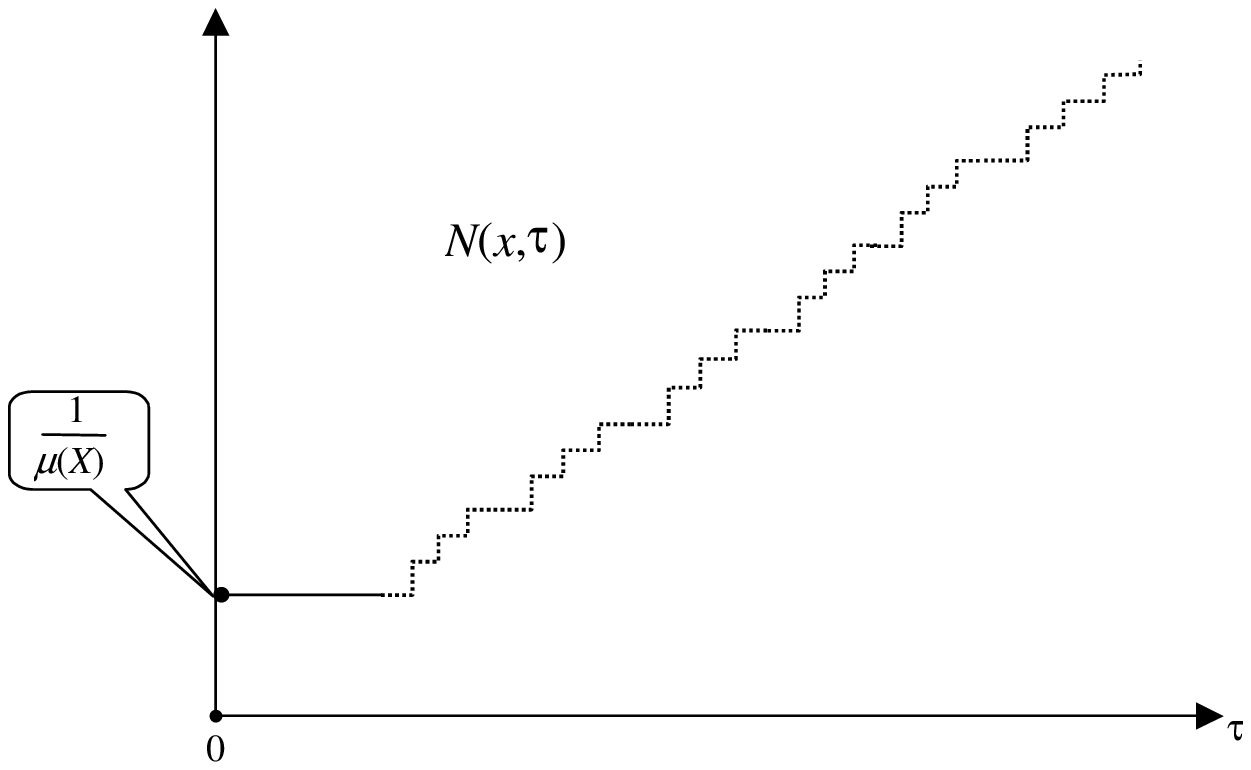}{\special%
{language "Scientific Word";type "GRAPHIC";maintain-aspect-ratio
TRUE;display "USEDEF";valid_file "F";width 3.8744in;height 1.8758in;depth
0pt;original-width 6.1037in;original-height 2.9332in;cropleft "0";croptop
"1";cropright "1";cropbottom "0";filename 'pic1.eps';file-properties
"XNPEU";}}

\FRAME{ftbphFU}{3.8398in}{2.1793in}{0pt}{\Qcb{The graph of the function $%
\protect\tau \mapsto N\left( x,\protect\tau \right) $ in the case, when $%
\protect\mu \left( x\right) >0$}}{\Qlb{pic2}}{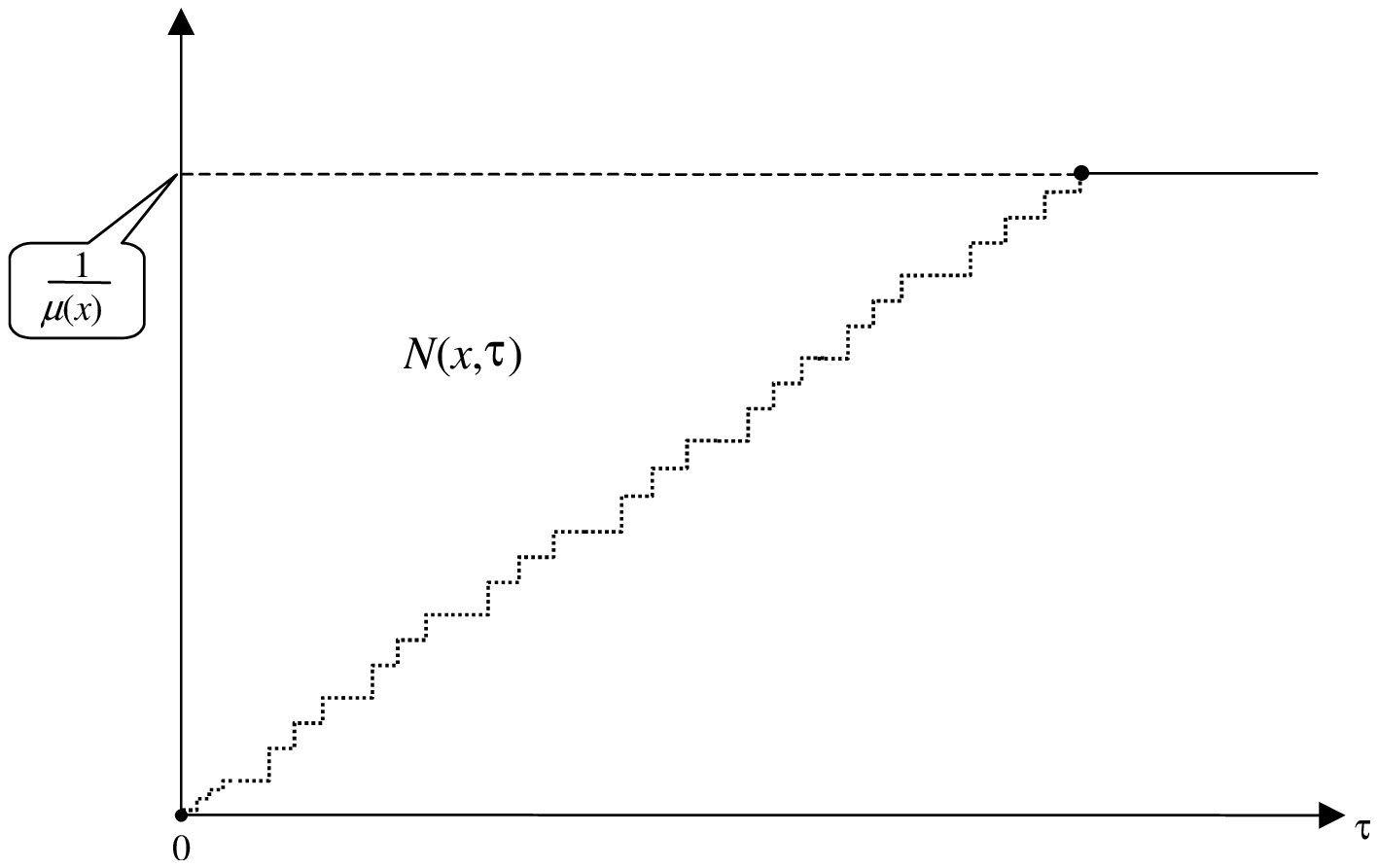}{\special{language
"Scientific Word";type "GRAPHIC";maintain-aspect-ratio TRUE;display
"USEDEF";valid_file "F";width 3.8398in;height 2.1793in;depth
0pt;original-width 6.0506in;original-height 3.4138in;cropleft "0";croptop
"1";cropright "1";cropbottom "0";filename 'pic2.eps';file-properties
"XNPEU";}}

\FRAME{ftbphFU}{3.8398in}{1.8939in}{0pt}{\Qcb{The graph of the function $%
\protect\tau \mapsto N\left( x,\protect\tau \right) $ in the case when $%
\protect\mu \left( x\right) =0$ and $\protect\mu \left( X\right) =\infty $}}{%
\Qlb{pic3}}{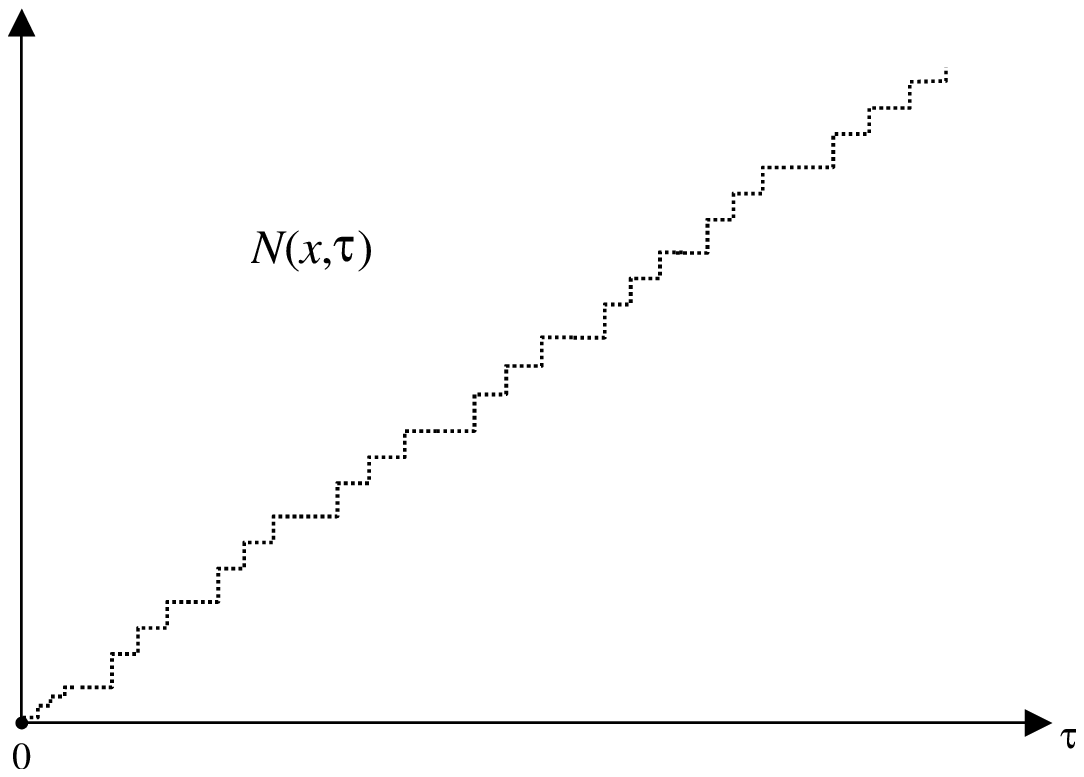}{\special{language "Scientific Word";type
"GRAPHIC";maintain-aspect-ratio TRUE;display "USEDEF";valid_file "F";width
3.8398in;height 1.8939in;depth 0pt;original-width 6.0506in;original-height
3.0826in;cropleft "0";croptop "1";cropright "1";cropbottom "0";filename
'pic3.eps';file-properties "XNPEU";}}

Let us define $\sigma _{\ast }\left( r\right) $ as the distribution function
of \textquotedblleft inverse exponential distribution\textquotedblright ,
that is, set%
\begin{equation}
\sigma _{\ast }\left( r\right) =\exp \left( -\frac{1}{r}\right) ,\ \ r>0.
\label{invexp}
\end{equation}%
As a distance distribution function, $\sigma _{\ast }$ will play an
important role in what follows.

\begin{definition}
\RM\label{def:standard}An isotropic Markov operator $P$ associated with a
triple $\left( d,\mu ,\sigma _{\ast }\right) $ will be referred to as a 
\emph{standard} Markov operator, associated with $\left( d,\mu \right) $.
\end{definition}

\begin{theorem}
\label{p-laplace}Let $d_{\ast }$ and $\sigma _{\ast }$ be defined by \emph{(%
\ref{d*})} and \emph{(\ref{invexp})}.

\begin{itemize}
\item[$\left( a\right) $] The triples $\left( d,\mu ,\sigma \right) $ and $%
\left( d_{\ast },\mu ,\sigma _{\ast }\right) $ induce the same isotropic
Markov operators.

\item[$\left( b\right) $] The heat kernel $p\left( t,x,y\right) $ associated
with the triple $\left( d,\mu ,\sigma \right) $ satisfies for all $x,y\in X$
and $t>0$ the following identities:%
\begin{equation}
p(t,x,y)=\int_{0}^{t/d_{\ast }\left( x,y\right) }N\left( x,\frac{s}{t}%
\right) e^{-s}ds  \label{p=N}
\end{equation}%
and 
\begin{equation}
p(t,x,y)=t\int_{0}^{1/d_{\ast }(x,y)}N(x,\tau )\exp (-\tau t)\,d\tau .
\label{p=Nt}
\end{equation}%
Consequently, $p\left( t,x,y\right) $ is a finite continuous function of $%
t,x,y$ for all $t>0$ and $x\neq y$.
\end{itemize}
\end{theorem}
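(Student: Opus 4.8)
The plan is to reduce both assertions to the special ``standard'' data $(d_{\ast },\mu ,\sigma _{\ast })$, where the distance distribution function is the explicit inverse exponential, and then to read off the heat kernel directly from the representation (\ref{p=id}) by an elementary change of variables. The whole argument rests on the single observation that the radius substitution of Lemma \ref{star-balls} matches not only the balls but also the two distribution functions.

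For part (a), write $s=s(r)=1/\log \tfrac{1}{\sigma (r)}$. Lemma \ref{star-balls} gives $B_{s(r)}^{\ast }(x)=B_{r}(x)$ for every $x$, so by (\ref{K_B kernel}) the averaging operators coincide, $\mathrm{Q}_{s(r)}^{\ast }=\mathrm{Q}_{r}$. Moreover a one-line computation gives $\sigma _{\ast }(s(r))=\exp (-1/s(r))=\exp (\log \sigma (r))=\sigma (r)$. Since $\sigma$ satisfies (\ref{sigma}), the map $r\mapsto s(r)$ is strictly increasing; combined with $\sigma _{\ast }(s(r))=\sigma (r)$ this says precisely that the push-forward of the Lebesgue--Stieltjes measure $d\sigma$ under $r\mapsto s$ equals $d\sigma _{\ast }$ (the two distribution functions agree after reparametrisation, atoms included). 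Substituting into (\ref{convexcomb}) then yields $\int_{0}^{\infty }\mathrm{Q}_{s}^{\ast }f\,d\sigma _{\ast }(s)=\int_{0}^{\infty }\mathrm{Q}_{r}f\,d\sigma (r)=Pf$, which is the assertion that $(d,\mu ,\sigma )$ and $(d_{\ast },\mu ,\sigma _{\ast })$ induce the same isotropic Markov operator, hence the same semigroup $\{P^{t}\}$.

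For part (b), I would first invoke (a) to rewrite (\ref{p=id}) in terms of the standard data: since the semigroups agree their integral kernels agree, so $p(t,x,y)=\int_{[d_{\ast }(x,y),\infty )}d\sigma _{\ast }^{t}(s)/\mu (B_{s}^{\ast }(x))$. Now $\sigma _{\ast }^{t}(s)=e^{-t/s}$ is smooth, whence $d\sigma _{\ast }^{t}(s)=(t/s^{2})e^{-t/s}\,ds$, and by Definition \ref{def-spectral-d} one has $1/\mu (B_{s}^{\ast }(x))=N(x,1/s)$. The substitution $\tau =1/s$ maps $[d_{\ast }(x,y),\infty )$ onto $(0,1/d_{\ast }(x,y)]$ and turns $(t/s^{2})\,ds$ into $t\,d\tau$, producing (\ref{p=Nt}); a further substitution $s=\tau t$ (the variable of (\ref{p=N})) then gives (\ref{p=N}).

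Finally, for finiteness and continuity away from the diagonal I would argue from (\ref{p=Nt}). When $x\neq y$ the integration runs over the bounded interval $(0,1/d_{\ast }(x,y)]$, and since $B_{1/\tau }^{\ast }(x)\supseteq B_{d_{\ast }(x,y)}^{\ast }(x)$ whenever $\tau \leq 1/d_{\ast }(x,y)$, the integrand is dominated by $t/\mu (B_{d_{\ast }(x,y)}^{\ast }(x))<\infty$, giving finiteness. For continuity the decisive point is ultra-metricity: perturbing $x$ and $y$ within a sufficiently small $d_{\ast }$-ball changes neither the endpoint $d_{\ast }(x,y)$ nor the balls $B_{1/\tau }^{\ast }(x)$, hence neither $N(x,\tau )$, so $p(t,x,y)$ is locally constant in $(x,y)$ off the diagonal and joint continuity reduces to the evident continuity in $t$. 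The only genuinely delicate bookkeeping is in part (a): one must verify that the radius change of variables transports $d\sigma$ to $d\sigma _{\ast }$ \emph{as measures}, not merely that it matches the balls; but the identity $\sigma _{\ast }(s(r))=\sigma (r)$, read as equality of the two distribution functions, makes even this transparent.
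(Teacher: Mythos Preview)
Your proof is correct and follows essentially the same route as the paper: the key substitution $s(r)=1/\log\frac{1}{\sigma(r)}$ together with Lemma \ref{star-balls} and the identity $\sigma_{\ast}(s(r))=\sigma(r)$, followed by the changes of variable $\tau=1/s$ and $s=\tau t$ in the heat-kernel integral. The only cosmetic difference is that you establish (a) at the level of the operator $P$ via (\ref{convexcomb}), whereas the paper verifies it at the level of the heat kernel via (\ref{p=id}); your added justification of continuity through local constancy is more explicit than what the paper provides.
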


As it follows from $\left( a\right) $, any isotropic Markov operator is at
the same time the standard Markov operator, associated with $\left( d_{\ast
},\mu \right) $.

\begin{proof}
$\left( a\right) $ It suffices to show that 
\begin{equation}
p\left( t,x,y\right) =\int_{[d_{\ast }\left( x,y\right) ,\infty )}\frac{%
d\sigma _{\ast }^{t}\left( u\right) }{\mu \left( B_{u}^{\ast }\left(
x\right) \right) },  \label{p*}
\end{equation}%
where by Theorem \ref{heat kernel} the right hand side represents the heat
kernel associated with the triple $\left( d_{\ast },\mu ,\sigma _{\ast
}\right) $. Consider the function 
\begin{equation*}
u\left( r\right) =\frac{1}{\log \frac{1}{\sigma \left( r\right) }},\ \ r\in
\lbrack 0,\infty )
\end{equation*}%
and observe that

\begin{enumerate}
\item $u\left( d\left( x,y\right) \right) =d_{\ast }\left( x,y\right) $, $\
\ u\left( \infty \right) =\infty ;$

\item $\sigma _{\ast }\left( u\left( r\right) \right) =\exp \left( -\frac{1}{%
u\left( r\right) }\right) =\sigma \left( r\right) ;$

\item $B_{u\left( r\right) }^{\ast }\left( x\right) =B_{r}\left( x\right) $
by Lemma \ref{star-balls}.
\end{enumerate}

Making the change $u=u\left( r\right) $ in the integral in (\ref{p*}), we
obtain%
\begin{equation*}
\int_{\lbrack d_{\ast }\left( x,y\right) ,\infty )}\frac{d\sigma _{\ast
}^{t}\left( u\right) }{\mu \left( B_{u}^{\ast }\left( x\right) \right) }%
=\int_{[d\left( x,y\right) ,\infty )}\frac{d\sigma ^{t}\left( r\right) }{\mu
\left( B_{r}\left( x\right) \right) },
\end{equation*}%
which together with (\ref{p=id}) implies (\ref{p*}). Clearly, (\ref{ptxx})
follows from (\ref{p=N}) as $d_{\ast }\left( x,x\right) =0$.

\smallskip

$\left( b\right) $ The change $s=t/u$ in (\ref{p*}) yields%
\begin{eqnarray*}
p\left( t,x,y\right) &=&\int_{[d_{\ast }\left( x,y\right) ,\infty )}\frac{%
d\exp \left( -\frac{t}{u}\right) }{\mu \left( B_{u}^{\ast }\left( x\right)
\right) } \\
&=&\int_{t/d_{\ast }\left( x,y\right) }^{0}\frac{de^{-s}}{\mu \left(
B_{t/s}^{\ast }\left( x\right) \right) } \\
&=&\int_{0}^{t/d_{\ast }\left( x,y\right) }N\left( x,\frac{s}{t}\right)
e^{-s}ds
\end{eqnarray*}%
which proves (\ref{p=N}). Another change $s=t\tau $ transforms (\ref{p=N})
to (\ref{p=Nt}).
\end{proof}

\smallskip

In the case $x=y$ we obtain from (\ref{p=N}) and (\ref{p=Nt}) 
\begin{equation}
p(t,x,x)=\int_{0}^{\infty }N\left( x,\frac{s}{t}\right)
e^{-s}ds=t\int_{0}^{\infty }N(x,\tau )\exp (-\tau t)\,d\tau .  \label{ptxx}
\end{equation}%
Depending on the function $N\left( x,\tau \right) $, the on-diagonal value $%
p\left( t,x,x\right) $ can be equal to $\infty $. For any $x\in X$ set%
\begin{equation}
T\left( x\right) :=\limsup_{\tau \rightarrow \infty }\frac{\log N(x,\tau )}{%
\tau }\,.  \label{critical-time}
\end{equation}

\begin{corollary}
\label{critical time}The function $t\mapsto p\left( t,x,x\right) $ is
monotone decreasing and $p(t,x,x)<\infty \;$for all$\ t>T\left( x\right) \,. 
$
\end{corollary}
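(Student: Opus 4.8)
The plan is to read off both assertions directly from the two representations of the on-diagonal heat kernel in (\ref{ptxx}), the decisive structural fact being that $\tau\mapsto N(x,\tau)$ is monotone increasing. Indeed, by (\ref{Ndef}) we have $N(x,\tau)=1/\mu\bigl(B_{1/\tau}^{\ast}(x)\bigr)$, and as $\tau$ grows the radius $1/\tau$ shrinks, so the nested balls $B_{1/\tau}^{\ast}(x)$ have decreasing measure; hence $N(x,\cdot)$ is nondecreasing. I would establish this monotonicity first, since it drives both parts.

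For the monotonicity of $t\mapsto p(t,x,x)$, I would use the first equality in (\ref{ptxx}), namely $p(t,x,x)=\int_{0}^{\infty}N\bigl(x,s/t\bigr)e^{-s}\,ds$. For each fixed $s>0$, increasing $t$ decreases the argument $s/t$, and since $N(x,\cdot)$ is nondecreasing this decreases $N\bigl(x,s/t\bigr)$. Thus the integrand decreases pointwise in $t$, so for $t_{1}<t_{2}$ we get $N\bigl(x,s/t_{1}\bigr)\ge N\bigl(x,s/t_{2}\bigr)$ for every $s$, and integrating yields $p(t_{1},x,x)\ge p(t_{2},x,x)$. This holds in $[0,\infty]$ with no integrability assumption, so the monotone decrease is immediate.

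For finiteness when $t>T(x)$, I would use the second equality in (\ref{ptxx}), $p(t,x,x)=t\int_{0}^{\infty}N(x,\tau)e^{-\tau t}\,d\tau$, and simply verify convergence of this Laplace-type integral. Fix $\varepsilon>0$ with $T(x)+\varepsilon<t$. By the definition (\ref{critical-time}) of $T(x)$ as a $\limsup$, there is $\tau_{0}$ such that $N(x,\tau)\le e^{(T(x)+\varepsilon)\tau}$ for all $\tau\ge\tau_{0}$; hence the tail $\int_{\tau_{0}}^{\infty}N(x,\tau)e^{-\tau t}\,d\tau$ is dominated by $\int_{\tau_{0}}^{\infty}e^{-(t-T(x)-\varepsilon)\tau}\,d\tau<\infty$. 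On the bounded range $[0,\tau_{0}]$ the monotonicity of $N(x,\cdot)$ gives $N(x,\tau)\le N(x,\tau_{0})<\infty$, so that part contributes at most $N(x,\tau_{0})\,\tau_{0}$. Adding the two pieces shows the integral is finite, whence $p(t,x,x)<\infty$.

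The computation is entirely routine; the only point requiring a moment's care — and the one I would flag as the \emph{main obstacle}, though it is mild — is the integrability of $N(x,\tau)e^{-\tau t}$ near $\tau=0$. This is handled by the same monotonicity: since $B_{1/\tau}^{\ast}(x)$ increases to $X$ as $\tau\to0+$, the value $N(x,0+)=1/\mu(X)$ is finite (indeed $0$ when $\mu(X)=\infty$), so $N(x,\cdot)$ stays bounded on $[0,\tau_{0}]$ and no singularity arises at the origin.
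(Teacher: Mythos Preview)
Your proof is correct and follows exactly the approach the paper takes: monotonicity from the first identity in (\ref{ptxx}) via the fact that $N(x,\cdot)$ is nondecreasing, and finiteness for $t>T(x)$ from the second identity by a standard Laplace-integral tail estimate. The paper's proof is a terse two-line version of what you wrote; your added remarks on the boundedness of $N(x,\cdot)$ near $\tau=0$ simply make explicit what the paper leaves to the reader.
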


\begin{proof}
The monotonicity of $p\left( t,x,x\right) $ follows from the first identity
in (\ref{ptxx}), while the second claim follows from the second identity in (%
\ref{ptxx}). Observe also that if $\lim_{\tau \rightarrow \infty }\frac{\log
N(x,\tau )}{\tau }$ exists and hence is equal to $T\left( x\right) $ then $%
p\left( t,x,x\right) =\infty $ for $t<T\left( x\right) $.
\end{proof}

\begin{proposition}
Assume that $T(x)<\infty $ for some $x\in X$.

\begin{itemize}
\item[$\left( a\right) $] \label{heat at infty-comp}For all $y\in X$, 
\begin{equation*}
\lim_{t\rightarrow \infty }p(t,x,y)=\frac{1}{\mu (X)},
\end{equation*}%
where the convergence is locally uniform in $y\in X.$

\item[$\left( b\right) $] \label{heat-infty-noncomp}For all $y\in X$, 
\begin{equation*}
\lim_{t\rightarrow \infty }\frac{p(t,x,y)}{p(t,x,x)}=1,
\end{equation*}%
where the convergence is locally uniform in $y\in X$.
\end{itemize}
\end{proposition}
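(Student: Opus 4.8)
The plan is to work from the Laplace-type representations (\ref{p=N}) and (\ref{ptxx}) of the heat kernel, exploiting two elementary features of the spectral distribution function: by (\ref{Ndef}) the map $\tau\mapsto N(x,\tau)$ is monotone increasing, and $N(x,0+)=1/\mu(X)$ since $B_{1/\tau}^{\ast}(x)\nearrow X$ as $\tau\to 0+$. The hypothesis $T(x)<\infty$, via (\ref{critical-time}), is used to fix for every $T'>T(x)$ a constant $C$ with $N(x,\tau)\le Ce^{T'\tau}$ for all $\tau\ge 0$; this exponential control is what makes the relevant integrals dominated, and it is essentially the only place where the assumption enters (it also guarantees $p(t,x,x)<\infty$ for $t>T(x)$, so that the quotient in $(b)$ is well defined). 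Throughout, ``locally uniform in $y$'' means uniform for $y$ in a fixed ball $B_{R}(x)$; by Lemma \ref{star-balls} and the monotonicity of $d\mapsto d_{\ast}$ we have $d_{\ast}(x,y)\le R^{\ast}$ on such a ball for some finite $R^{\ast}$.

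For part $(a)$ I would write, using (\ref{p=N}) and $1/\mu(X)=\int_{0}^{\infty}\mu(X)^{-1}e^{-s}\,ds$,
\[
p(t,x,y)-\frac{1}{\mu(X)}=\int_{0}^{t/d_{\ast}(x,y)}\Bigl(N(x,\tfrac{s}{t})-\tfrac{1}{\mu(X)}\Bigr)e^{-s}\,ds-\frac{1}{\mu(X)}e^{-t/d_{\ast}(x,y)}.
\]
Both the integrand of the first term (by monotonicity of $N$ and $N(x,0+)=1/\mu(X)$) and the second term are nonnegative. The first term is bounded above, \emph{independently of $y$}, by the same integral extended to $[0,\infty)$, namely $p(t,x,x)-1/\mu(X)$; by dominated convergence (using $N(x,\tfrac{s}{t})e^{-s}\le Ce^{-s/2}$ for $t\ge 2T'$, together with the pointwise limit $N(x,\tfrac{s}{t})\to N(x,0+)$) this tends to $0$. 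The second term is at most $\mu(X)^{-1}e^{-t/R^{\ast}}$ for $y\in B_{R}(x)$, hence tends to $0$ uniformly. When $\mu(X)=\infty$ the argument is even shorter: $p(t,x,y)\le p(t,x,x)\to 0$ by (\ref{ptmin}) and the same dominated-convergence computation on the diagonal.

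For part $(b)$ I would start from $p(t,x,x)-p(t,x,y)=\int_{t/d_{\ast}(x,y)}^{\infty}N(x,\tfrac{s}{t})e^{-s}\,ds$, which follows by comparing (\ref{p=N}) with (\ref{ptxx}), so that
\[
1-\frac{p(t,x,y)}{p(t,x,x)}=\frac{1}{p(t,x,x)}\int_{t/d_{\ast}(x,y)}^{\infty}N(x,\tfrac{s}{t})e^{-s}\,ds .
\]
The exponential bound on $N$ gives, for $y\in B_{R}(x)$ and $t\ge 2T'$, a tail estimate $\int_{t/d_{\ast}(x,y)}^{\infty}N(x,\tfrac{s}{t})e^{-s}\,ds\le 2Ce^{-t/(2R^{\ast})}$. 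For the denominator I would restrict the integral (\ref{ptxx}) to a fixed window $s\in[\tfrac{1}{2}t\tau_{2},\,t\tau_{2}]$ and use monotonicity of $N$ to obtain $p(t,x,x)\ge N(x,\tfrac{\tau_{2}}{2})\,e^{-t\tau_{2}/2}\bigl(1-e^{-t\tau_{2}/2}\bigr)$, a lower bound that decays at the rate $\tau_{2}/2$, which can be made as small as we please. Choosing $\tau_{2}$ with $\tau_{2}/2<1/(2R^{\ast})$ makes the denominator decay strictly slower than the tail, whence the quotient tends to $0$ uniformly for $y\in B_{R}(x)$, giving the claim. In the compact case $\mu(X)<\infty$ this last step is immediate, since then $p(t,x,x)\to 1/\mu(X)>0$ is bounded below while the tail tends to $0$.

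The main obstacle is the non-compact case $\mu(X)=\infty$ in part $(b)$, where numerator and denominator both vanish as $t\to\infty$ and one must compare their decay rates rather than merely pass to a limit. The decisive point is that the off-diagonal defect is supported on $\{s\ge t/d_{\ast}(x,y)\}$ and therefore decays at a rate bounded below by $1/d_{\ast}(x,y)$ (this is exactly where $T(x)<\infty$ is essential, to control $N$ on the tail), whereas the full on-diagonal value $p(t,x,x)$ can be forced to decay arbitrarily slowly by localizing near $\tau=0$ in (\ref{ptxx}); separating these two rates is precisely what yields the limit $1$.
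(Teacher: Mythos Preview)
Your argument is correct and follows essentially the same route as the paper's proof: both parts rest on the representation (\ref{p=N}), the exponential domination $N(x,\tau)\le Ce^{T'\tau}$ coming from $T(x)<\infty$, and dominated convergence for $(a)$, while $(b)$ compares the tail $\int_{t/r}^{\infty}N(x,s/t)e^{-s}\,ds$ against a lower bound for $p(t,x,x)$ obtained by restricting the integral in (\ref{ptxx}). The only cosmetic difference is in the lower bound for $p(t,x,x)$ in part $(b)$: the paper restricts to $[t/(4r),\infty)$ with $r=d_{\ast}(x,y)$, getting $p(t,x,x)\ge N(x,\tfrac{1}{4r})e^{-t/(4r)}$, whereas you use a fixed window $[t\tau_{2}/2,\,t\tau_{2}]$ with $\tau_{2}<1/R^{\ast}$; both choices produce an exponential lower bound decaying strictly more slowly than the tail, and both yield the locally uniform conclusion.
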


\begin{proof}
$\left( a\right) $ As $t\rightarrow \infty $ we have 
\begin{equation*}
N\left( x,\frac{s}{t}\right) \rightarrow N\left( x,0\right) =\frac{1}{\mu
\left( X\right) }
\end{equation*}%
and $t/d_{\ast }\left( x,y\right) \rightarrow \infty .$ Hence, we obtain
from (\ref{p=N})%
\begin{equation*}
\lim_{t\rightarrow \infty }p\left( t,x,y\right) =\int_{0}^{\infty }\frac{1}{%
\mu \left( X\right) }e^{-s}ds=\frac{1}{\mu \left( X\right) },
\end{equation*}%
provided we justify that the integral and $\lim $ are interchangeable. The
latter follows from the dominated convergence theorem, because the
hypothesis $T\left( x\right) <\infty $ implies that, for some $A,a>0$ and
all $\tau >0$, 
\begin{equation}
N\left( x,\tau \right) \leq A\exp \left( a\tau \right)  \label{NAa}
\end{equation}%
whence%
\begin{equation}
N\left( x,\frac{s}{t}\right) e^{-s}\leq A\exp \left( \left( \frac{a}{t}%
-1\right) s\right) \leq A\exp \left( -\frac{1}{2}s\right)  \label{1/2s}
\end{equation}%
for $t>2a$, so that the domination condition is satisfied.

\smallskip

$\left( b\right) $ Set $r=d_{\ast }\left( x,y\right) $. It follows from (\ref%
{p=N}) and ~(\ref{ptxx}) that%
\begin{equation*}
p\left( t,x,x\right) -p\left( t,x,y\right) =\int_{t/r}^{\infty }N\left( x,%
\frac{s}{t}\right) e^{-s}ds.
\end{equation*}%
Assuming $t>2a$ and applying (\ref{1/2s}), we obtain%
\begin{equation*}
p\left( t,x,x\right) -p\left( t,x,y\right) \leq A\int_{t/r}^{\infty }e^{-%
\frac{1}{2}s}ds=2A\exp \left( -\frac{t}{2r}\right) ,
\end{equation*}%
whereas 
\begin{equation}
p(t,x,x)\geq \int_{\tfrac{t}{4r}}^{\infty }N\left( x,\frac{s}{t}\right)
e^{-s}ds\geq N\left( x,\frac{1}{4r}\right) \exp \left( -\frac{t}{4r}\right) .
\notag
\end{equation}%
It follows that%
\begin{equation*}
\frac{p(t,x,x)-p(t,x,y)}{p(t,x,x)}\leq \frac{2A\exp \left( -\frac{t}{4r}%
\right) }{N\left( x,\frac{1}{4r}\right) }\rightarrow 0\ \ \ \text{as }%
t\rightarrow \infty .
\end{equation*}
\end{proof}

\subsection{Estimates of the heat kernel}

\label{estimates}The purpose of this section is to provide some estimates of
the isotropic heat kernel. Recall that by Theorem \ref{p-laplace} 
\begin{equation}
p(t,x,y)=\int_{0}^{t/d_{\ast }\left( x,y\right) }N\left( x,\frac{s}{t}%
\right) e^{-s}ds.  \label{pform}
\end{equation}

\begin{definition}
\label{def-doubling}\RM A monotone increasing function $\Phi :\mathbb{R}%
_{+}\rightarrow \mathbb{R}_{+}$ is said to satisfy the \emph{doubling
property} if there exists a constant $D>0$ such that%
\begin{equation*}
\Phi (2s)\leq D\Phi (s)\quad \text{for all}\;s>0.
\end{equation*}
\end{definition}

It is known (Potter's theorem) that if $\Phi $ is doubling then 
\begin{equation}
\Phi (s_{2})\leq D\left( \frac{s_{2}}{s_{1}}\right) ^{\gamma }\Phi
(s_{1})\quad \text{for all}\;0<s_{1}<s_{2}\,,\;\text{where}\;\gamma =\log
_{2}D.  \label{propert-doubling}
\end{equation}

\begin{theorem}
\label{heat-k-doubling}Suppose that, for some $x\in X$, the function $\tau
\mapsto N(x,\tau )$ is doubling. Then 
\begin{equation}
\frac{c\,t}{t+d_{\ast }(x,y)}\,N\!\left( x,\frac{1}{t+d_{\ast }(x,y)}\right)
\leq p(t,x,y)\leq \frac{C\,t}{t+d_{\ast }(x,y)}\,N\!\left( x,\frac{1}{%
t+d_{\ast }(x,y)}\right)  \label{p2}
\end{equation}%
for all $t>0$, $y\in X$ and some constants $C,c>0$ depending on the doubling
constant.
\end{theorem}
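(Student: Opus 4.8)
The plan is to start from the representation (\ref{pform}), equivalently its rescaled form (\ref{p=Nt}),
\[
p(t,x,y) = t\int_{0}^{1/d_{\ast}(x,y)} N(x,\tau)\,e^{-t\tau}\,d\tau,
\]
and to estimate this integral by comparing the integrand with its value at the scale $\tau = 1/(t+d_{\ast}(x,y))$. Throughout I abbreviate $r = d_{\ast}(x,y)$ and $m = t+r$, and I record the elementary facts $1/m \le 1/r$, $1/m \le 1/t$, and $m \asymp \max\{t,r\}$. By the doubling hypothesis these comparabilities upgrade to comparabilities of $N$: the values $N(x,1/m)$, $N(x,1/t)$ and $N(x,1/r)$ differ only by constants depending on $D$ whenever their arguments are comparable. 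The two tools I would use are the monotonicity of $\tau\mapsto N(x,\tau)$ and the polynomial growth bound (\ref{propert-doubling}) from Potter's theorem, $N(x,\tau_2)\le D(\tau_2/\tau_1)^{\gamma}N(x,\tau_1)$ for $\tau_1<\tau_2$, with $\gamma=\log_2 D$.

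For the lower bound I would simply discard most of the integral and keep the contribution of a single dyadic scale. Writing $\varrho=\max\{t,r\}$ and restricting to $\tau\in[1/(2\varrho),1/\varrho]$, I note that this interval lies inside $[0,1/r]$ (since $\varrho\ge r$) and that there $t\tau\le t/\varrho\le 1$, so $e^{-t\tau}\ge e^{-1}$. Monotonicity together with one application of the doubling inequality gives $N(x,\tau)\ge N(x,1/(2\varrho))\ge D^{-1}N(x,1/\varrho)$ on this interval, whose length is $1/(2\varrho)$. Integrating yields $p(t,x,y)\ge c\,(t/\varrho)\,N(x,1/\varrho)$, and replacing $\varrho$ by $m$ via comparability gives the required lower bound in (\ref{p2}).

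For the upper bound the interplay between the exponential $e^{-t\tau}$ and the \emph{finite} upper limit $1/r$ forces a case distinction, and this is the only genuinely delicate point; naively extending the integral to $\infty$ and applying Potter loses a factor $(m/t)^{\gamma}$ that is fatal when $r\gg t$. When $t<r$ (so $m\asymp r$) the whole range satisfies $t\tau\le t/r\le 1$, hence $e^{-t\tau}\le 1$, and monotonicity alone gives $p(t,x,y)\le t\int_0^{1/r}N(x,1/r)\,d\tau=(t/r)\,N(x,1/r)$, already of the desired form. When $t\ge r$ (so $m\asymp t$) I would split the integral at $\tau=1/t$: on $[0,1/t]$ monotonicity and $e^{-t\tau}\le 1$ contribute at most $N(x,1/t)$, while on $(1/t,1/r]$ I insert $N(x,\tau)\le D(t\tau)^{\gamma}N(x,1/t)$ and use the exponential decay to reduce the estimate to the convergent Gamma integral $\int_1^{\infty}u^{\gamma}e^{-u}\,du$, giving a further contribution $\lesssim N(x,1/t)$. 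In each case, replacing the evaluation scale ($1/r$ or $1/t$) by $1/m$ through the doubling comparability of $N$ produces the upper bound in (\ref{p2}). The heart of the matter is recognising that the effective cutoff of the integral sits at the scale $1/\max\{t,r\}\asymp 1/m$, supplied by the exponential when $t$ dominates and by the geometric truncation $1/r$ when $d_{\ast}(x,y)$ dominates.
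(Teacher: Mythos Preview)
Your argument is correct. The lower bound is essentially the paper's: the paper restricts to the interval $[a/2,a]$ in the variable $s=t\tau$ with $a=\min\{1,t/r\}$ (Lemma~\ref{1st estimates}(b)), which is exactly your dyadic window $[1/(2\varrho),1/\varrho]$ after the change of variables, and then invokes doubling just as you do.

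For the upper bound your route differs in organisation from the paper's. The paper does not split the integral at $\tau=1/t$ and apply Potter directly; instead it proves (Lemma~\ref{heat-k-estimates}) the pointwise bound
\[
p(t,x,y)\le 2e\,\frac{t}{t+d_{\ast}(x,y)}\,p\!\left(\tfrac{t+d_{\ast}(x,y)}{2},x,x\right),
\]
using in the case $t>d_{\ast}(x,y)$ the ultra-metric inequality $p(t,x,y)\le p(t,x,x)$ (Corollary~\ref{Corpmin}) together with the monotonicity of $p(\cdot,x,x)$, and then invokes Lemma~\ref{ondiag-doubling} to bound $p(s,x,x)\le C\,N(x,1/s)$ from the doubling hypothesis. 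Your Gamma-integral computation is precisely the content of that last step (the implication $(ii)\Rightarrow(i)$ in Lemma~\ref{ondiag-doubling}), but you apply it directly to $p(t,x,y)$ rather than passing through the on-diagonal kernel. The paper's detour has the advantage of isolating Lemma~\ref{ondiag-doubling} as an independent \emph{equivalence} between doubling of $N(x,\cdot)$ and the two-sided on-diagonal estimate, and of highlighting the specifically ultra-metric ingredient $p(t,x,y)\le p(t,x,x)$; your direct approach is more self-contained and never needs that comparison.
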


In what follows we will use the relation $f\simeq g$ between two positive
function $f,g$, which means that the ratio $f/g$ is bounded from above and
below by positive constants, for a specified range of the variables. In
particular, we can write (\ref{p2}) shortly in the form%
\begin{equation}
p\left( t,x,y\right) \simeq \frac{t}{t+d_{\ast }(x,y)}\,N\!\left( x,\frac{1}{%
t+d_{\ast }(x,y)}\right)  \label{p21}
\end{equation}%
for a fixed $x$ and all $y\in X,t>0.$

\begin{example}
\label{example1}\label{ex-ab}\RM Assume that, for some $x\in X$ and $\alpha
>0,$ 
\begin{equation*}
N\left( x,\tau \right) \simeq \tau ^{\alpha }\ \ \ \text{for all }\tau >0%
\text{.}
\end{equation*}%
Then by (\ref{p21}) 
\begin{equation*}
p(t,x,y)\simeq \frac{t}{\left( t+d_{\ast }(x,y)\right) ^{1+\alpha }}\simeq 
\frac{t}{\left( t^{2}+d_{\ast }(x,y)^{2}\right) ^{\frac{1+\alpha }{2}}},
\end{equation*}%
that is, $p\left( t,x,y\right) $ behaves like the Cauchy distribution in
\textquotedblleft $\alpha $-dimensional\textquotedblright\ space.
\end{example}

\begin{example}
\label{example2}\label{ex-discrete}\RM More generally, assume that, for some 
$\alpha ,\beta \geq 0,$ 
\begin{equation}
N(x,\tau )\simeq \left\{ 
\begin{array}{cc}
\tau ^{\alpha }, & 0<\tau \leq 1, \\ 
\tau ^{\beta }, & \tau >1.%
\end{array}%
\right.  \label{Nab}
\end{equation}%
Then we obtain by (\ref{p21})%
\begin{equation}
p(t,x,y)\simeq \left\{ 
\begin{array}{ll}
\dfrac{t}{\left( t+d_{\ast }(x,y)\right) ^{1+\beta }}\,, & t+d_{\ast
}(x,y)\leq 1, \\[12pt] 
\dfrac{t}{\left( t+d_{\ast }(x,y)\right) ^{1+\alpha }}\,, & t+d_{\ast
}(x,y)>1.%
\end{array}%
\right.  \label{pab}
\end{equation}%
For example, let $X$ be a discrete locally finite group, like $%
X=\bigoplus_{k=1}^{\infty }\mathbb{Z}\left( n_{k}\right) $, and $\mu $ be
the Haar measure, normalized to $\mu \left( x\right) =1.$ With the discrete
ultra-metric \thinspace we obtain by (\ref{Ndef}) that $N\left( x,\tau
\right) \simeq 1$ for large enough $\tau $. Assuming additionally that 
\begin{equation*}
N\left( x,\tau \right) \simeq \tau ^{\alpha }\ \text{for small }\tau ,
\end{equation*}%
we see that (\ref{Nab}) and, hence, (\ref{pab}) hold with $\beta =0$ (cf. 
\cite{Car1988}).
\end{example}

\begin{example}
\label{Exlog}\RM Assume that $\tau \mapsto N\left( x,\tau \right) $ is
doubling and, for some $\alpha >0$, 
\begin{equation*}
N\left( x,\tau \right) \simeq \left( \log \frac{1}{\tau }\right) ^{-\alpha
}\ \ \ \text{for }\tau <\frac{1}{2}.
\end{equation*}%
Then by (\ref{p21}) 
\begin{equation*}
p\left( t,x,y\right) \simeq \frac{t}{\left( t+d_{\ast }(x,y)\right) \log
^{\alpha }\left( t+d_{\ast }\left( x,y\right) \right) }
\end{equation*}%
provided $t+d_{\ast }\left( x,y\right) >2.$
\end{example}

\begin{example}
\label{Exexp}\RM Assume that, for some $\alpha >0$, 
\begin{equation*}
N\left( x,\tau \right) \simeq \exp \left( -\tau ^{-\alpha }\right) .
\end{equation*}%
In this case Theorem \ref{heat-k-doubling} does not apply. An ad hoc method
of estimating the integral in (\ref{pform}) yields in this case 
\begin{equation*}
p\left( t,x,y\right) \leq \frac{C_{3}t}{t+d_{\ast }\left( x,y\right) }\exp
\left( -c_{3}\left( t^{\frac{\alpha }{\alpha +1}}+d_{\ast }\left( x,y\right)
^{\alpha }\right) \right)
\end{equation*}%
and 
\begin{equation*}
p\left( t,x,y\right) \geq \frac{C_{4}t}{t+d_{\ast }\left( x,y\right) }\exp
\left( -c_{4}\left( t^{\frac{\alpha }{\alpha +1}}+d_{\ast }\left( x,y\right)
^{\alpha }\right) \right) ,
\end{equation*}%
for all $x,y\in X,$ $t>0$ and some positive constants $%
C_{3,}C_{4},c_{3},c_{4}$.\label{rem: details}
\end{example}

For the proof of Theorem \ref{heat-k-doubling} we need a sequence of lemmas.

\begin{lemma}
\label{1st estimates} For all $x,y\in X$ and $t>0$ the following estimates
hold.

\begin{enumerate}
\item[$\left( a\right) $] 
\begin{equation}
p(t,x,y)\leq \frac{t}{d_{\ast }(x,y)}\,N\!\left( x,\frac{1}{d_{\ast }(x,y)}%
\right) .  \label{p<N}
\end{equation}

\item[$\left( b\right) $] 
\begin{equation}
p(t,x,y)\geq \frac{1}{2e}\left\{ 
\begin{array}{ll}
\dfrac{t}{d_{\ast }(x,y)}\,N\!\left( x,\dfrac{1}{2d_{\ast }(x,y)}\right) ,\ 
& \ t\leq d_{\ast }\left( x,y\right) , \\[12pt] 
N\left( x,\dfrac{1}{2t}\right) , & \ t\geq d_{\ast }\left( x,y\right) .%
\end{array}%
\right.  \label{p>N}
\end{equation}

\item[$\left( c\right) $] 
\begin{equation}
p(t,x,x)\geq \frac{1}{e}N\left( x,\frac{1}{t}\right) .  \label{ptxx>N}
\end{equation}
\end{enumerate}
\end{lemma}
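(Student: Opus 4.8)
The plan is to read off all three estimates directly from the two integral representations of the heat kernel obtained in Theorem~\ref{p-laplace},
\begin{equation*}
p(t,x,y)=\int_{0}^{t/d_{\ast}(x,y)}N\!\left(x,\tfrac{s}{t}\right)e^{-s}\,ds
=t\int_{0}^{1/d_{\ast}(x,y)}N(x,\tau)e^{-\tau t}\,d\tau ,
\end{equation*}
combined with the single structural fact that $\tau\mapsto N(x,\tau)$ is monotone increasing. This monotonicity is immediate from Definition~\ref{def-spectral-d}: as $\tau$ grows the radius $1/\tau$ shrinks, the ball $B_{1/\tau}^{\ast}(x)$ becomes smaller, its measure decreases, and hence $N(x,\tau)=1/\mu(B_{1/\tau}^{\ast}(x))$ increases. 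Every bound below is then obtained by replacing $N(x,\cdot)$ by a constant value on a well-chosen subinterval and integrating the remaining elementary exponential.

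For the upper bound (a) I would use the $s$-representation: on the whole range $s\le t/d_{\ast}(x,y)$ we have $s/t\le 1/d_{\ast}(x,y)$, so $N(x,s/t)\le N(x,1/d_{\ast}(x,y))$; pulling this constant out leaves $\int_{0}^{t/d_{\ast}(x,y)}e^{-s}\,ds=1-e^{-t/d_{\ast}(x,y)}\le t/d_{\ast}(x,y)$, which is exactly (a). For the on-diagonal lower bound (c) I would use the $\tau$-representation (with upper limit $\infty$ since $d_{\ast}(x,x)=0$), restrict the integral to $\tau\ge 1/t$ where $N(x,\tau)\ge N(x,1/t)$, and evaluate $t\int_{1/t}^{\infty}e^{-\tau t}\,d\tau=e^{-1}$; this gives $p(t,x,x)\ge e^{-1}N(x,1/t)$.

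The substantive part is the off-diagonal lower bound (b), which I would split according to the sign of $t-d_{\ast}(x,y)$. In the regime $t\ge d_{\ast}(x,y)$ I would use the $\tau$-representation: since then $1/t\le 1/d_{\ast}(x,y)$, the interval $[1/(2t),1/t]$ lies inside the range of integration, and on it $N(x,\tau)\ge N(x,1/(2t))$; integrating the exponential over $[1/(2t),1/t]$ produces the factor $e^{-1/2}-e^{-1}$, and the elementary inequality $\sqrt{e}\ge 3/2$ gives $e^{-1/2}-e^{-1}\ge 1/(2e)$, which is the claimed constant. In the complementary regime $t\le d_{\ast}(x,y)$ I would instead use the $s$-representation and restrict to $s\in[\,t/(2d_{\ast}(x,y)),\,t/d_{\ast}(x,y)\,]$; there $s\le t/d_{\ast}(x,y)\le 1$ so $e^{-s}\ge e^{-1}$, while $s/t\ge 1/(2d_{\ast}(x,y))$ gives $N(x,s/t)\ge N(x,1/(2d_{\ast}(x,y)))$, and the interval has length $t/(2d_{\ast}(x,y))$; multiplying these three factors delivers precisely $\tfrac{1}{2e}\,\tfrac{t}{d_{\ast}(x,y)}\,N(x,1/(2d_{\ast}(x,y)))$.

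I do not anticipate a genuine obstacle, since each step reduces to monotonicity of $N(x,\cdot)$ together with one explicit one-variable integral. The only points demanding care are the choice of truncation interval in each of the two cases of (b) and the verification that the resulting numerical constants match the constant $1/(2e)$ in the statement, the crux being the inequality $e^{-1/2}-e^{-1}\ge 1/(2e)$.
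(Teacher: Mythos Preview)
Your proof is correct and follows essentially the same approach as the paper: all three parts rest on the monotonicity of $N(x,\cdot)$ and restriction to a suitable subinterval in one of the two integral representations from Theorem~\ref{p-laplace}. The only notable difference is in (b): the paper handles both regimes uniformly in the $s$-representation by setting $a=\min\{1,\,t/d_\ast(x,y)\}$ and integrating over $[a/2,a]$, which yields the constant $1/(2e)$ directly, whereas you switch to the $\tau$-representation when $t\ge d_\ast(x,y)$ and then need the extra numerical check $e^{-1/2}-e^{-1}\ge 1/(2e)$; both arguments are equally valid, the paper's being marginally more streamlined.
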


\begin{proof}
$\left( a\right) $ Inequality (\ref{p<N}) follows from (\ref{pform}) using
the monotonicity of $\tau \mapsto N\left( x,\tau \right) $ that yields%
\begin{equation*}
N\left( x,\frac{s}{t}\right) e^{-s}\leq N\left( x,\frac{1}{d_{\ast }\left(
x,y\right) }\right) .
\end{equation*}

$\left( b\right) $ Set $a=\min \left( 1,\frac{t}{d_{\ast }\left( x,y\right) }%
\right) $. It follows from (\ref{pform}) that%
\begin{equation*}
p(t,x,y)\geq \int_{a/2}^{a}N(x,\frac{s}{t})e^{-s}\,ds\geq N\left( x,\frac{a}{%
2t}\right) \frac{a}{2e}\,,
\end{equation*}%
which is equivalent to (\ref{p>N}).

\smallskip

$\left( c\right) $ We have by (\ref{ptxx})%
\begin{equation*}
p(t,x,x)\geq \int_{1}^{\infty }N(x,\frac{s}{t})e^{-s}ds\geq N\!\left( x,%
\frac{1}{t}\right) \int_{1}^{\infty }e^{-s}ds,
\end{equation*}%
whence (\ref{ptxx>N}) follows.
\end{proof}

\begin{lemma}
\label{heat-k-estimates}The following inequalities hold for all $x,y\in X$
and $t>0$: 
\begin{equation}
p(t,x,y)\geq \frac{1}{2e}\,\frac{t}{t+d_{\ast }(x,y)}\,N\!\left( x,\frac{1}{%
2\left( t+d_{\ast }(x,y)\right) }\right) ,  \label{ptxy>}
\end{equation}%
and%
\begin{equation}
p(t,x,y)\leq 2e\,\frac{t}{t+d_{\ast }(x,y)}\,p\!\left( \frac{t+d_{\ast }(x,y)%
}{2},x,x\right) .  \label{ptxy<}
\end{equation}
\end{lemma}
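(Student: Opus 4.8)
The plan is to derive both inequalities from the sharper, case-split estimates of Lemma \ref{1st estimates}, splitting on the sign of $t-d_{\ast}(x,y)$ and using the monotonicity of $\tau \mapsto N(x,\tau)$ to absorb the case distinction into the single argument $t+d_{\ast}(x,y)$. Throughout I write $r=d_{\ast}(x,y)$ and record the elementary comparisons $r\le t+r\le 2r$ when $t\le r$, and $t\le t+r\le 2t$ when $t\ge r$.

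For the lower bound (\ref{ptxy>}) I would invoke Lemma \ref{1st estimates}$(b)$ directly. In the regime $t\le r$ it gives $p(t,x,y)\ge \frac{1}{2e}\frac{t}{r}N(x,\frac{1}{2r})$; since $r\le t+r$ forces $\frac{t}{r}\ge\frac{t}{t+r}$ and $\frac{1}{2r}\ge\frac{1}{2(t+r)}$, monotonicity of $N$ yields the claimed bound. In the regime $t\ge r$ it gives $p(t,x,y)\ge \frac{1}{2e}N(x,\frac{1}{2t})$, and since $\frac{t}{t+r}\le 1$ and $N(x,\frac{1}{2(t+r)})\le N(x,\frac{1}{2t})$, the product $\frac{t}{t+r}N(x,\frac{1}{2(t+r)})$ is dominated by $N(x,\frac{1}{2t})$, again giving (\ref{ptxy>}). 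No obstacle arises here beyond keeping the monotonicity directions straight.

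For the upper bound (\ref{ptxy<}) I would again split. When $t\le r$, combine Lemma \ref{1st estimates}$(a)$, which gives $p(t,x,y)\le \frac{t}{r}N(x,\frac{1}{r})$, with the on-diagonal lower bound Lemma \ref{1st estimates}$(c)$ applied at time $\frac{t+r}{2}$, namely $p(\frac{t+r}{2},x,x)\ge \frac{1}{e}N(x,\frac{2}{t+r})$; since $t+r\le 2r$ gives both $\frac{2}{t+r}\ge \frac{1}{r}$ (so $N(x,\frac{2}{t+r})\ge N(x,\frac{1}{r})$) and $2\frac{t}{t+r}\ge \frac{t}{r}$, one reads off $p(t,x,y)\le 2e\frac{t}{t+r}p(\frac{t+r}{2},x,x)$. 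When $t\ge r$ a softer argument is cleaner: here $\frac{t+r}{2}\le t$, so the monotone decrease of $s\mapsto p(s,x,x)$ (Corollary \ref{critical time}) together with $p(t,x,y)\le p(t,x,x)$ from (\ref{ptmin}) gives $p(\frac{t+r}{2},x,x)\ge p(t,x,x)\ge p(t,x,y)$, while $\frac{t}{t+r}\ge \frac12$; hence $2e\frac{t}{t+r}p(\frac{t+r}{2},x,x)\ge e\,p(t,x,y)\ge p(t,x,y)$.

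The degenerate case $x=y$, i.e.\ $r=0$, is covered by the $t\ge r$ branch in both arguments. The step requiring the most care is the upper bound in the regime $t\le r$, where the two distinct inputs $(a)$ and $(c)$ must be matched so that the shifts of the spectral-function argument from $\frac{1}{r}$ and $\frac{2}{t+r}$ line up correctly under monotonicity of $N$; the remainder is bookkeeping of constants, and no genuinely hard estimate is needed, since the integral formula (\ref{pform}) has already been exploited inside Lemma \ref{1st estimates}.
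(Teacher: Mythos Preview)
Your proof is correct and follows essentially the same route as the paper: both arguments reduce immediately to Lemma \ref{1st estimates} and split on the sign of $t-d_{\ast}(x,y)$, using the monotone decrease of $p(\cdot,x,x)$ together with (\ref{ptmin}) for the easy case $t\ge r$. The only cosmetic difference is in the upper bound for $t\le r$: the paper applies (\ref{ptxx>N}) at time $r$ to get $p(t,x,y)\le e\,\tfrac{t}{r}\,p(r,x,x)$ and then invokes monotonicity of $p(\cdot,x,x)$ to pass from $r$ to $\tfrac{t+r}{2}$, whereas you apply (\ref{ptxx>N}) directly at time $\tfrac{t+r}{2}$ and instead use monotonicity of $N$ to compare $N(x,\tfrac{2}{t+r})$ with $N(x,\tfrac{1}{r})$; the two are interchangeable.
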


\begin{proof}
The lower bound (\ref{ptxy>}) follows immediately from (\ref{p>N}). To prove
(\ref{ptxy<}), observe that by (\ref{p<N}) and (\ref{ptxx>N})%
\begin{equation*}
p\left( t,x,y\right) \leq e\,\frac{t}{d_{\ast }\left( x,y\right) }\,p\left(
d_{\ast }\left( x,y\right) ,x,x\right) ,
\end{equation*}%
which yields (\ref{ptxy<}) in the case $t\leq d_{\ast }\left( x,y\right) $
as the function $p\left( \cdot ,x,x\right) $ is monotone decreasing. In the
case $t>d_{\ast }\left( x,y\right) $ (\ref{ptxy<}) follows trivially from (%
\ref{ptmin}), that is, from 
\begin{equation*}
p\left( t,x,y\right) \leq p\left( t,x,x\right) ,
\end{equation*}%
using again the monotonicity of $p\left( \cdot ,x,x\right) $.
\end{proof}

\begin{lemma}
\label{ondiag-doubling}For any given $x\in X$, the following two properties
are equivalent.

\begin{enumerate}
\item[$\left( i\right) $] For some constant $C$ and all $t>0$, 
\begin{equation}
p(t,x,x)\leq CN\left( x,\frac{1}{t}\right) .  \label{ptxxtwo}
\end{equation}

\item[$\left( ii\right) $] The function $\tau \mapsto N(x,\tau )$ is
doubling, that is, for some constant $D$,%
\begin{equation*}
N\left( x,2\tau \right) \leq DN\left( x,\tau .\right)
\end{equation*}
\end{enumerate}
\end{lemma}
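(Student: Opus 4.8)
The plan is to prove the two implications separately, using throughout the integral representation (\ref{ptxx}), namely $p(t,x,x)=\int_{0}^{\infty }N\left( x,s/t\right) e^{-s}\,ds$, together with the elementary fact that $\tau \mapsto N(x,\tau )$ is monotone increasing. The latter is immediate from (\ref{Ndef}): increasing $\tau$ shrinks the ball $B_{1/\tau }^{\ast }(x)$, hence shrinks its measure, so $N(x,\tau )=1/\mu (B_{1/\tau }^{\ast }(x))$ grows. Note also that $N(x,1/t)$ and $N(x,2/t)$ are finite for every $t>0$, since balls of positive radius have positive measure.

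For the implication $\left( ii\right) \Rightarrow \left( i\right) $, I would split the integral in (\ref{ptxx}) at $s=1$. On $(0,1]$ monotonicity gives $N(x,s/t)\leq N(x,1/t)$, while on $(1,\infty )$ I would apply Potter's inequality (\ref{propert-doubling}) with $s_{1}=1/t$ and $s_{2}=s/t$ (so $s_{1}<s_{2}$ precisely because $s>1$), obtaining $N(x,s/t)\leq Ds^{\gamma }N(x,1/t)$ with $\gamma =\log _{2}D$. Substituting into (\ref{ptxx}) yields
\[
p(t,x,x)\leq N\!\left( x,\tfrac{1}{t}\right) \left( \int_{0}^{1}e^{-s}\,ds+D\int_{1}^{\infty }s^{\gamma }e^{-s}\,ds\right) .
\]
Since $\int_{1}^{\infty }s^{\gamma }e^{-s}\,ds\leq \Gamma (\gamma +1)<\infty $, the bracket is a finite constant $C$, which is exactly (\ref{ptxxtwo}).

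For the implication $\left( i\right) \Rightarrow \left( ii\right) $, I would discard most of the integral in (\ref{ptxx}) and retain only the tail $s\geq 2$, where monotonicity gives $N(x,s/t)\geq N(x,2/t)$. Hence $p(t,x,x)\geq N(x,2/t)\int_{2}^{\infty }e^{-s}\,ds=e^{-2}N(x,2/t)$. Combining this with the hypothesis (\ref{ptxxtwo}) gives $e^{-2}N(x,2/t)\leq CN(x,1/t)$, and writing $\tau =1/t$ this reads $N(x,2\tau )\leq Ce^{2}N(x,\tau )$, i.e.\ doubling with constant $D=Ce^{2}$.

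The routine work is minor, and the only step requiring genuine care is the direction $\left( ii\right) \Rightarrow \left( i\right) $: there the crude monotonicity bound is insufficient because $N(x,s/t)$ may grow without bound as $s\to \infty $, so one must invoke Potter's theorem to control the range $s>1$ by a polynomial factor and then verify convergence of the resulting $\Gamma$-type integral. The reverse direction, by contrast, needs only a single one-sided tail estimate.
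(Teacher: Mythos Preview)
Your proof is correct and follows essentially the same approach as the paper: for $(ii)\Rightarrow(i)$ the paper bounds the ratio $N(x,s/t)/N(x,1/t)$ by $D\max\{1,s^{\gamma}\}$ in a single stroke, which is exactly your split at $s=1$ written more compactly, and for $(i)\Rightarrow(ii)$ the paper uses the identical tail estimate $p(t,x,x)\ge \int_{2}^{\infty}N(x,s/t)e^{-s}\,ds\ge e^{-2}N(x,2/t)$.
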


\begin{proof}
$\left( ii\right) \Rightarrow \left( i\right) $. The estimate (\ref{ptxxtwo}%
) follows from (\ref{ptxx}) and (\ref{propert-doubling}) as follows: 
\begin{eqnarray*}
p(t,x,x) &=&N\left( x,\frac{1}{t}\right) \int_{0}^{\infty }\frac{N\left( x,%
\frac{s}{t}\right) }{N\left( x,\frac{1}{t}\right) }e^{-s}ds \\
&\leq &DN\left( x,\frac{1}{t}\right) \int_{0}^{\infty }\max \{1,s^{\gamma
}\}e^{-s}ds \\
&=&CN\left( x,\frac{1}{t}\right) .
\end{eqnarray*}

$\left( i\right) \Rightarrow \left( ii\right) $. The upper bound (\ref%
{ptxxtwo}) implies, for any $t>0$, 
\begin{eqnarray*}
CN\!\left( x,\frac{1}{t}\right) &\geq &p(t,x,x)\geq \int_{2}^{\infty }N(x,%
\frac{s}{t})e^{-s}ds \\
&\geq &e^{-2}\,N\!\left( x,\frac{2}{t}\right) ,
\end{eqnarray*}%
whence the doubling property of $N\left( x,\cdot \right) $ follows.
\end{proof}

\smallskip

\begin{proof}[Proof of Theorem \protect\ref{heat-k-doubling}]
The lower bound in (\ref{p2}) follows from (\ref{ptxy>}), the upper bound
follows from (\ref{ptxy<}) and (\ref{ptxxtwo}).
\end{proof}

\smallskip

In conclusion of this section we provide practicable conditions for the
validity of the doubling property of $N\left( x,\cdot \right) $.

\begin{definition}
\label{Reverse-doubling}\RM A monotone increasing function $\Psi :\mathbb{R}%
_{+}\rightarrow \mathbb{R}_{+}$ is said to satisfy the \emph{reverse
doubling property,} if there is a constant $\delta \in \left( 0,1\right) $
such that for all $r>0$%
\begin{equation*}
\Psi (r)\geq 2\Psi (\delta r).
\end{equation*}
\end{definition}

\begin{proposition}
\label{Ndoubling} Fix some $x\in X$. The function $\tau \mapsto N(x,\tau )$
is doubling provided the following two conditions hold:

\begin{enumerate}
\item[$\left( i\right) $] The function $\Psi (r)=-1/\log \sigma (r)$
satisfies the reverse doubling property.

\item[$\left( ii\right) $] The volume function $r\mapsto \mu \left(
B_{r}(x)\right) $ satisfies the doubling property.
\end{enumerate}
\end{proposition}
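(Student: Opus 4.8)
The plan is to reduce the claimed doubling of $N(x,\cdot)$ to the doubling of the \emph{intrinsic volume function} $V^\ast(s):=\mu\bigl(B^\ast_s(x)\bigr)$, and then to transfer the two hypotheses through the change of variable induced by $\Psi$. By the definition (\ref{Ndef}) we have $N(x,\tau)=1/V^\ast(1/\tau)$, so the inequality $N(x,2\tau)\le D\,N(x,\tau)$ is, after the substitution $s=1/(2\tau)$, literally equivalent to $V^\ast(2s)\le D\,V^\ast(s)$ for all $s>0$. Thus it suffices to prove that $V^\ast$ is doubling in the sense of Definition \ref{def-doubling}.

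Next I would express $V^\ast$ in terms of the original volume function $V(r):=\mu(B_r(x))$. By (\ref{d*}) the intrinsic metric satisfies $d_\ast=\Psi\circ d$ with $\Psi(r)=-1/\log\sigma(r)$, which is strictly increasing and left-continuous because $\sigma$ is. Writing $\Phi(s):=\sup\{r\ge 0:\Psi(r)\le s\}$ for the generalized inverse of $\Psi$, left-continuity gives $\Psi(\Phi(s))\le s$ and, for every $r>\Phi(s)$, $\Psi(r)>s$; hence the level-set identity $B^\ast_s(x)=\{y:\Psi(d(x,y))\le s\}=B_{\Phi(s)}(x)$ holds for \emph{all} $s\ge 0$ (this also re-proves the ball identity of Lemma \ref{star-balls}). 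Consequently $V^\ast=V\circ\Phi$, and the problem is reduced to estimating $V(\Phi(2s))$ against $V(\Phi(s))$.

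The key step is to convert the reverse doubling of $\Psi$ (hypothesis $(i)$) into ordinary doubling of $\Phi$. Let $\delta\in(0,1)$ be as in Definition \ref{Reverse-doubling}, so $\Psi(\rho)\ge 2\Psi(\delta\rho)$ for all $\rho>0$. I claim $\Phi(2s)\le \delta^{-1}\Phi(s)$: indeed, if $\rho>\delta^{-1}\Phi(s)$ then $\delta\rho>\Phi(s)$, whence $\Psi(\delta\rho)>s$ and therefore $\Psi(\rho)\ge 2\Psi(\delta\rho)>2s$, so $\rho$ lies outside $\{r:\Psi(r)\le 2s\}$ and cannot exceed $\Phi(2s)$. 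Feeding this into $V^\ast=V\circ\Phi$ and using that $V$ is increasing gives $V^\ast(2s)=V(\Phi(2s))\le V(\delta^{-1}\Phi(s))$. Finally, hypothesis $(ii)$ (doubling of $V$ with some constant $D_0$) yields $V(\delta^{-1}\Phi(s))\le D_0^{\,k}\,V(\Phi(s))$ with $k=\lceil\log_2(1/\delta)\rceil$, since $\delta^{-1}\le 2^{k}$; thus $V^\ast(2s)\le D_0^{\,k}\,V^\ast(s)$ with $D:=D_0^{\,k}$, completing the argument. The main point requiring care is getting the direction right in the reverse-doubling-to-doubling passage, together with the bookkeeping for the generalized inverse and the left-continuity used to pin down $B^\ast_s(x)=B_{\Phi(s)}(x)$; once $V^\ast=V\circ\Phi$ and the doubling of $\Phi$ are in place, the rest is a one-line monotonicity estimate.
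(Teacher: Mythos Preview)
Your proof is correct and follows essentially the same route as the paper: reduce doubling of $N(x,\cdot)$ to doubling of $s\mapsto\mu(B^\ast_s(x))$, then use $B^\ast_s(x)=B_{\Phi(s)}(x)$ together with doubling of $\Phi$ (from reverse doubling of $\Psi$) and doubling of the original volume. The only differences are cosmetic: you work with the generalized inverse and prove $\Phi(2s)\le\delta^{-1}\Phi(s)$ by hand, while the paper simply writes $\Phi=\Psi^{-1}$ and invokes Potter's inequality (\ref{propert-doubling}) in place of your iterated doubling step.
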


\begin{proof}
We use the following short notation for the balls centered at $x$: $%
B_{r}=B_{r}\left( x\right) $ and $B_{r}^{\ast }=B_{r}^{\ast }\left( x\right) 
$. It follows from the Definition \ref{def-spectral-d} of the spectral
distribution function that $\tau \mapsto N(x,\tau )$ is doubling if and only
if the function $s\mapsto \mu (B_{s}^{\ast })$ is doubling. Set $\Phi =\Psi
^{-1}$ and observe that the reverse doubling property for $\Psi $ is
equivalent to the doubling property for $\Phi $. By Lemma \ref{star-balls}
we have $B_{\Psi \left( r\right) }^{\ast }=B_{r}$ which implies that $%
B_{s}^{\ast }=B_{\Phi \left( s\right) }$. Using the hypotheses $\left(
ii\right) $ and (\ref{propert-doubling}) for the function $\mu \left(
B_{r}\right) $, we obtain%
\begin{equation*}
\mu \left( B_{2s}^{\ast }\right) =\mu \left( B_{\Phi \left( 2s\right)
}\right) \leq D\left( \frac{\Phi \left( 2s\right) }{\Phi \left( s\right) }%
\right) ^{\gamma }\mu \left( B_{\Phi \left( s\right) }\right) \leq \func{%
const}\mu \left( B_{s}^{\ast }\right) ,
\end{equation*}%
which was to be proved.
\end{proof}

\subsection{Heat kernels in $\mathbb{Q}_{p}$}

\label{Sec p-adic-doubling copy} Given a prime $p$, the $p$-adic norm on $%
\mathbb{Q}$ is defined as follows: if $x=p^{n}\,\frac{a}{b}$, where $a,b$
are integers not divisible by $p$, then 
\begin{equation*}
\left\Vert x\right\Vert _{p}:=p^{-n}.
\end{equation*}%
If $x=0$ then $\left\Vert x\right\Vert _{p}:=0.$ The $p$-adic norm on $%
\mathbb{Q}$ satisfies the ultra-metric inequality. Indeed, if $y=p^{m}\frac{c%
}{d}$ and $m\leq n$ then 
\begin{equation*}
x+y=p^{m}\left( \frac{p^{n-m}a}{b}+\frac{c}{d}\right)
\end{equation*}%
whence%
\begin{equation*}
\left\Vert x+y\right\Vert _{p}\leq p^{-m}=\max \left\{ \left\Vert
x\right\Vert _{p},\left\Vert y\right\Vert _{p}\right\} .
\end{equation*}%
Hence, $\mathbb{Q}$ with the metric $d\left( x,y\right) =\left\Vert
x-y\right\Vert _{p}$ is an ultra-metric space, and so is its completion $%
\mathbb{Q}_{p}$ -- the field of $p$-adic numbers.

Every $p$-adic number $x$ has a representation%
\begin{equation}
x=\sum_{k=-N}^{\infty }a_{k}p^{k}=...a_{k}...a_{2}a_{1}a_{0\cdot
}a_{-1}a_{-2}...a_{-N}  \label{xa}
\end{equation}%
where $N\in \mathbb{N}$ and $a_{k} \in \{ 0, \dots, p-1 \}$ are $p$-adic
digits. The rational number $0.a_{-1}...a_{-N}=\sum_{k=-N}^{k=-1}a_{k}p^{k}$
is called the fractional part of $x$ and the rest $\sum_{k=0}^{\infty
}a_{k}p^{k}$ is the integer part of $x$.

For any $n\in \mathbb{Z}$, the $d$-ball $B_{p^{-n}}\left( x\right) $
consists of all numbers%
\begin{equation*}
y=\sum_{k=-N}^{\infty }b_{k}p^{k}=...b_{k}...b_{2}b_{1}b_{0\cdot
}b_{-1}b_{-2}...b_{-N}
\end{equation*}%
such that $b_{k}$ are arbitrary for $k\geq n$ and $b_{k}=a_{k}$ for $k<n$.
It follows that $B_{p^{-n}}\left( x\right) $ decomposes into a disjoint
union of $p$ balls of radii $p^{-\left( n+1\right) }$ depending on the
choice of $b_{n}$.

For example, $B_{1}\left( 0\right) $ coincides with the set $\mathbb{Z}_{p}$
of all $p$-adic integers, that is, any $y\in B_{1}\left( 0\right) $ has the
form%
\begin{equation*}
y=...b_{k}...b_{2}b_{1}b_{0}
\end{equation*}%
with arbitrary $p$-adic digits $b_{k}$. For any fixed $c=0,1,...,p-1$, the
additional restriction $b_{0}=c$ determines a ball of radius $1/p$ centered
at $c$, so that $B_{1}\left( 0\right) $ is a disjoint union of $p$ such
balls, as on the following diagram, where every cell renders one of the
balls $B_{1/p}\left( c\right) $:

\begin{equation*}
\begin{tabular}{|c|c|c|c|}
\hline
$...b_{k}...b_{2}b_{1}0$ & $...b_{k}...b_{2}b_{1}1$ & $\ \ \ \ \ \ \ \ \
...\ \ \ \ \ \ \ \ $ & $...b_{k}...b_{2}b_{1}\left( p-1\right) $ \\ \hline
\end{tabular}%
\end{equation*}

Let $\mu $ be the additive Haar measure on $\mathbb{Q}_{p}$ normalized so
that $\mu \left( B_{1}\left( 0\right) \right) =1$. Since 
\begin{equation*}
B_{r}\left( x\right) =x+B_{r}\left( 0\right)
\end{equation*}%
and $\mu $ is translation invariant, we obtain that $\mu \left( B_{r}\left(
x\right) \right) $ does not depend on $x$. The above argument with the
decomposition of the ball $B_{p^{-n}}\left( x\right) $ implies that%
\begin{equation*}
\mu \left( B_{p^{-n}}\left( x\right) \right) =p\mu \left( B_{p^{-\left(
n+1\right) }}\left( x\right) \right) ,
\end{equation*}%
whence it follows that%
\begin{equation}
\mu \left( B_{p^{-n}}\left( x\right) \right) =p^{-n}.  \label{p-n}
\end{equation}%
For any $r>0$, the ball $B_{r}\left( x\right) $ coincides with $%
B_{p^{-n}}\left( x\right) $, where $n\in \mathbb{Z}$ is such that $%
p^{-n}\leq r<p^{-\left( n-1\right) },$ which implies that, for all $r>0$,%
\begin{equation}
r/p<\mu \left( B_{r}\left( x\right) \right) \leq r.  \label{rp}
\end{equation}

\begin{example}
\label{p-adic-doubling}\RM Let $\left( X,d,\mu \right) $ be $\mathbb{Q}_{p}$
with $p$-adic distance and the Haar measure $\mu $. Consider the distance
distribution function 
\begin{equation*}
\sigma (r)=\exp \left( -(b/r)^{\alpha }\right) ,
\end{equation*}%
where $\alpha ,b>0.$ Since 
\begin{equation*}
\Psi (r):=\frac{1}{\log \frac{1}{\sigma \left( r\right) }}=\left( r/b\right)
^{\alpha },
\end{equation*}%
we obtain by (\ref{d*})%
\begin{equation}
d_{\ast }(x,y)=\Psi \left( d\left( x,y\right) \right) =\left( \frac{%
\left\Vert x-y\right\Vert _{p}}{b}\right) ^{\alpha }.  \label{d*p}
\end{equation}%
By Lemma \ref{star-balls}, we have%
\begin{equation*}
B_{s}^{\ast }\left( x\right) =B_{\Psi ^{-1}\left( s\right) }\left( x\right) ,
\end{equation*}%
which together with (\ref{rp}) yields 
\begin{equation}
\quad \mu \left( B_{s}^{\ast }(x)\right) \simeq \,s^{1/\alpha }\,.
\label{muBp}
\end{equation}%
Consequently, we obtain%
\begin{equation*}
N(x,\tau )\simeq \tau ^{1/\alpha }.
\end{equation*}%
Since this function is doubling, Theorem \ref{heat-k-doubling} (cf. also
Example \ref{example1}) yields the estimate%
\begin{equation*}
p(t,x,y)\simeq \frac{t}{\left( t+d_{\ast }(x,y)\right) ^{1+1/\alpha }}\simeq 
\frac{\,t}{(t^{1/\alpha }+\left\Vert x-y\right\Vert _{p})^{1+\alpha }}.
\end{equation*}%
In particular, for all $t>0$ and $x\in X$%
\begin{equation*}
p\left( t,x,x\right) \simeq t^{1/\alpha }.
\end{equation*}
\end{example}

\begin{example}
\RM Let $X=\mathbb{Z}_{p}$, that is, $X$ is the unit ball $B_{1}\left(
0\right) $ in $\mathbb{Q}_{p}$, with the $p$-adic distance and the Haar
measure $\mu $. Consider the distance distribution function 
\begin{equation*}
\sigma \left( r\right) =\exp \left( 1-\exp r^{-\alpha }\right) ,
\end{equation*}%
for some $\alpha >0$. Since for $r\leq 1$%
\begin{equation*}
\Psi (r):=\frac{1}{\log \frac{1}{\sigma \left( r\right) }}=\frac{1}{\exp
r^{-\alpha }-1}\simeq \exp \left( -r^{-\alpha }\right) ,
\end{equation*}%
we obtain that%
\begin{equation*}
d_{\ast }(x,y)=\Psi \left( d\left( x,y\right) \right) \simeq \exp \left(
-\left\Vert x-y\right\Vert _{p}^{-\alpha }\right) .
\end{equation*}%
By Lemma \ref{star-balls} and (\ref{rp}), we have, for all $s\leq \frac{1}{2}
$,%
\begin{equation*}
\mu \left( B_{s}^{\ast }\left( x\right) \right) =\mu \left( B_{\Psi
^{-1}\left( s\right) }\left( x\right) \right) \simeq \Psi ^{-1}\left(
s\right) \simeq \frac{1}{\log ^{1/\alpha }\frac{1}{s}},
\end{equation*}%
whereas for $s>\frac{1}{2}$ we have $\mu \left( B_{s}^{\ast }\left( x\right)
\right) \simeq 1$. Therefore, we obtain, for all $\tau >0$,%
\begin{equation*}
N\left( x,\tau \right) =\frac{1}{\mu \left( B_{1/\tau }^{\ast }\left(
x\right) \right) }\simeq \log ^{1/\alpha }\left( 2+\tau \right) .
\end{equation*}%
Hence, the function $N\left( x,\tau \right) $ is doubling, and we obtain by (%
\ref{p21}) that%
\begin{equation*}
p(t,x,y)\simeq \frac{t}{t+\exp \left( -\left\Vert x-y\right\Vert
_{p}^{-\alpha }\right) }\log ^{1/\alpha }\left( 2+\frac{1}{t+\exp \left(
-\left\Vert x-y\right\Vert _{p}^{-\alpha }\right) }\right) .
\end{equation*}
\end{example}

\begin{example}
\RM\label{ExQp-frac}Let $X$ be the subset of $\mathbb{Q}_{p}$ consisting of
all $p$-adic fractions, that is, the numbers of the form $%
x=0.a_{-1}....a_{-N}$. Then the $p$-adic distance $d$ on $X$ takes only
integer values so that $\left( X,d\right) $ is a discrete space. Let $\mu $
be the counting measure on $X$, that is, $\mu \left( x\right) =1$ for any $%
x\in X$. Consider the following distance distribution function%
\begin{equation}
\sigma \left( r\right) =\exp \left( -\frac{1}{\log ^{\alpha }\left(
2r\right) }\right) \ \ \text{for }r\geq 1,  \label{sigmalog}
\end{equation}%
that is arbitrarily extended to $r<1$ to be strictly monotone increasing and
to have $\sigma \left( 0\right) =0$. Since 
\begin{equation*}
\Psi (r):=\frac{1}{\log \frac{1}{\sigma \left( r\right) }}=\log ^{\alpha
}\left( 2r\right) \ \ \text{for }r\geq 1,
\end{equation*}%
we obtain, for $x\neq y$, 
\begin{equation}
d_{\ast }(x,y)=\Psi \left( d\left( x,y\right) \right) =\log ^{\alpha }\left(
2\left\Vert x-y\right\Vert _{p}\right) .  \label{d*log}
\end{equation}%
For $s\geq s_{0}:=\log ^{\alpha }2$, we have%
\begin{equation}
\mu \left( B_{s}^{\ast }\left( x\right) \right) =\mu \left( B_{\Psi
^{-1}\left( s\right) }\left( x\right) \right) \simeq \Psi ^{-1}\left(
s\right) =\frac{1}{2}\exp \left( s^{1/\alpha }\right) ,  \label{muBexp}
\end{equation}%
whereas for $s<s_{0}$ we have $\mu \left( B_{s}^{\ast }\left( x\right)
\right) \simeq \mu \left( x\right) =1$. We see that (\ref{muBexp}) holds for
all $s>0$. It follows that, for all $\tau >0$,%
\begin{equation}
\quad N\left( x,\tau \right) =\frac{1}{\mu \left( B_{1/\tau }^{\ast }\left(
x\right) \right) }\simeq \exp \left( -\tau ^{-1/\alpha }\right) .
\label{Nexp}
\end{equation}%
By Example \ref{Exexp}, we obtain 
\begin{equation*}
p\left( t,x,y\right) \leq \frac{Ct}{t+\log _{+}^{\alpha }\left( 2\left\Vert
x-y\right\Vert _{p}\right) }\exp \left( -c\left( t^{\frac{1}{\alpha +1}%
}+\log _{+}\left( 2\left\Vert x-y\right\Vert _{p}\right) \right) \right) ,
\end{equation*}%
and a similar lower bound.
\end{example}

\subsection{Green function and transience}

\label{Green}Given an isotropic heat semigroup $\left\{ P_{t}\right\} $,
define the Green operator $G$ on non-negative Borel functions $f$ on $X$ by%
\begin{equation*}
Gf\left( x\right) =\int_{0}^{\infty }P_{t}f\left( x\right) dt.
\end{equation*}%
Of course, the value of $Gf\left( x\right) $ could be $\infty $. By Fubini's
theorem, we obtain%
\begin{equation*}
Gf\left( x\right) =\int_{X}g\left( x,y\right) f\left( y\right) d\mu \left(
y\right)
\end{equation*}%
where 
\begin{equation*}
g\left( x,y\right) =\int_{0}^{\infty }p\left( t,x,y\right) dt.
\end{equation*}%
Substituting the heat kernel from (\ref{p=Nt}) and using again Fubini's
theorem, we obtain%
\begin{equation}
g(x,y)=\int_{0}^{1/d_{\ast }(x,y)}\frac{N(x,\tau )\,d\tau }{\tau ^{2}}%
=\int_{d_{\ast }\left( x,y\right) }^{\infty }\frac{ds}{\mu \left(
B_{s}^{\ast }\left( x\right) \right) },  \label{green function}
\end{equation}%
where the second identity follows from (\ref{Ndef}). The function $g\left(
x,y\right) $ is called the \emph{Green function} of the semigroup $\left\{
P_{t}\right\} $. Note that the Green function can be identically equal to $%
\infty $. For example, this is the case when $\mu \left( X\right) <\infty $
(cf. Figure \ref{pic1}) and the second integral (\ref{green function})
diverges at $\infty $.

\begin{definition}
\label{Transient semigroup-def}\RM The process $\{X_{t}\}$ and the semigroup 
$\{P^{t}\}$ are called \emph{transient} if $Gf$ is a bounded function
whenever $f$ is bounded and has compact support, and \emph{recurrent}
otherwise.
\end{definition}

\begin{theorem}
\label{Transient semigroup}The following statements are equivalent.

\begin{enumerate}
\item[$\left( i\right) $] The semigroup $\{P^{t}\}$ is transient.

\item[$\left( ii\right) $] $g(x,y)<\infty $ for some/all distinct $x,y\in X.$

\item[$\left( iii\right) $] For some/all $x\in X,$ 
\begin{equation}
\int^{\infty }\frac{ds}{\mu \left( B_{s}^{\ast }\left( x\right) \right) }%
<\infty .  \label{Bx}
\end{equation}
\end{enumerate}
\end{theorem}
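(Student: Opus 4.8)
The plan is to reduce every assertion to the convergence at infinity of the single improper integral in (\ref{Bx}), exploiting the closed form of the Green function recorded in (\ref{green function}). First I would settle $(ii)\Leftrightarrow(iii)$: for distinct $x,y$ the lower limit $d_{\ast}(x,y)$ in (\ref{green function}) is strictly positive and finite, while the integrand $1/\mu(B_{s}^{\ast}(x))$ is bounded on every finite $s$-interval bounded away from $0$ (since $s\mapsto\mu(B_{s}^{\ast}(x))$ is increasing and positive). Hence the finiteness of $g(x,y)$ is governed solely by the behaviour of the integral as $s\to\infty$, which is precisely condition (\ref{Bx}). The \textquotedblleft some/all\textquotedblright\ clauses then come for free from ultra-metricity: for any two points one has $B_{s}^{\ast}(x)=B_{s}^{\ast}(y)$ as soon as $s\geq d_{\ast}(x,y)$ (any point of a ball is a centre, and by Lemma \ref{star-balls} the $d$- and $d_{\ast}$-balls coincide), so the tails of $\int^{\infty}ds/\mu(B_{s}^{\ast}(x))$ and $\int^{\infty}ds/\mu(B_{s}^{\ast}(y))$ agree and converge or diverge together. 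Thus (\ref{Bx}) holds for one $x$ iff for all, and likewise $g$ is finite off the diagonal for one pair iff for all.

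For $(iii)\Rightarrow(i)$ I would estimate the potential of a ball directly. Given $f$ bounded with compact support, I enclose its support in a single ball $B=B_{\rho_{\ast}}^{\ast}(x_{0})$ (a compact subset of an ultra-metric space has finite diameter, hence lies in a ball), so it suffices to bound $\int_{B}g(x,y)\,d\mu(y)$ uniformly in $x$. Rewriting the condition $s\geq d_{\ast}(x,y)$ as $y\in B_{s}^{\ast}(x)$ and applying Fubini to (\ref{green function}) gives
\[
\int_{B}g(x,y)\,d\mu(y)=\int_{0}^{\infty}\frac{\mu\bigl(B\cap B_{s}^{\ast}(x)\bigr)}{\mu\bigl(B_{s}^{\ast}(x)\bigr)}\,ds.
\]
The nesting of ultra-metric balls makes the integrand transparent: if $x\in B$ then $B_{s}^{\ast}(x)\subseteq B$ for $s\leq\rho_{\ast}$ (integrand $=1$) and $B\subseteq B_{s}^{\ast}(x)$ for $s>\rho_{\ast}$ (integrand $=\mu(B)/\mu(B_{s}^{\ast}(x))$); if $x\notin B$ the integrand vanishes until $B_{s}^{\ast}(x)$ swallows $B$ and equals $\mu(B)/\mu(B_{s}^{\ast}(x))$ thereafter. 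In either case the tail is dominated by $\mu(B)\int_{\rho_{\ast}}^{\infty}ds/\mu(B_{s}^{\ast}(x_{0}))$, where one uses once more that $B_{s}^{\ast}(x)=B_{s}^{\ast}(x_{0})$ for large $s$; this is a finite constant by (\ref{Bx}). Hence $\int_{B}g(x,y)\,d\mu(y)$ is bounded uniformly in $x$, so $Gf$ is bounded and the semigroup is transient in the sense of Definition \ref{Transient semigroup-def}.

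For the converse I argue by contraposition. If (\ref{Bx}) fails, the integral in (\ref{green function}) diverges for every $x$, so $g(x,y)=+\infty$ for all $x,y$; taking $f=\mathbf{1}_{B}$ for a ball $B$ of positive measure forces $Gf(x)=\int_{B}g(x,y)\,d\mu(y)=+\infty$ at every $x$, so $Gf$ is unbounded and the semigroup is recurrent. I expect the only genuine work to lie in $(iii)\Rightarrow(i)$: the Fubini interchange and the case split are routine, but producing a single bound valid \emph{simultaneously} for $x$ inside and outside the support, uniformly over all of $X$, is the delicate point, and it rests entirely on the ultra-metric dichotomy that any two balls are nested or disjoint, together with the coincidence $B_{s}^{\ast}(x)=B_{s}^{\ast}(x_{0})$ of ball-tails for large $s$.
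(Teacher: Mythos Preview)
Your proof is correct and follows essentially the same approach as the paper: the equivalence $(ii)\Leftrightarrow(iii)$ via the tail identity $B_s^{\ast}(x)=B_s^{\ast}(y)$ for $s\ge d_{\ast}(x,y)$, and the implication $(iii)\Rightarrow(i)$ via the same Fubini computation $\int_B g(x,y)\,d\mu(y)=\int_0^{\infty}\mu(B\cap B_s^{\ast}(x))/\mu(B_s^{\ast}(x))\,ds$ followed by the ultra-metric nesting dichotomy. The only cosmetic differences are that the paper handles $(i)\Rightarrow(ii)$ directly rather than by your contrapositive, and for $x$ outside the support it simply invokes the monotone decay of $g(x,y)$ in $d_{\ast}(x,y)$ instead of your explicit case split; your version is in fact slightly more self-contained on that point.
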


The inequality (\ref{Bx}) is equivalent to 
\begin{equation}
\int_{0}\frac{N(x,\tau )\,d\tau }{\tau ^{2}}<\infty \,.  \label{Nx}
\end{equation}%
Observe that, in the transient case, the function $x,y\mapsto \frac{1}{%
g\left( x,y\right) }$ determines an ultra-metric on $X$, which is proved
similarly to Corollary \ref{RemF(p)}.

\begin{proof}
The validity of the condition (\ref{Bx}) is independent of the choice of $x$
because for any two $x,x^{\prime }\in X$ the balls $B_{s}^{\ast }\left(
x\right) $ and $B_{s}^{\ast }\left( x^{\prime }\right) $ are identical
provided $s\geq d\left( x,x^{\prime }\right) $. The finiteness of the second
integral in (\ref{green function}) for $x\neq y$ is clearly equivalent to (%
\ref{Bx}), whence the equivalence $\left( ii\right) \Leftrightarrow \left(
iii\right) $ follows, with all combinations of some/all options.

The finiteness of $Gf$ for any bounded function $f$ with compact support
clearly implies that $g\left( x,y\right) \not\equiv \infty $, that is, $%
\left( i\right) \Rightarrow \left( ii\right) $. So, it remains to prove $%
\left( iii\right) \Rightarrow \left( i\right) $. It suffices to show that $%
Gf $ is bounded for $f=\mathbf{1}_{A}$ where $A$ is a bounded Borel subset
of $X $. Let $R$ be the diameter of $A$ with respect to the distance $%
d^{\ast }$. Then we have $A\subset B_{R}^{\ast }\left( x\right) $ for any $%
x\in A$ whence by (\ref{green function}) 
\begin{eqnarray*}
Gf\left( x\right) &=&\int_{A}g\left( x,y\right) d\mu \left( y\right) \leq
\int_{B_{R}^{\ast }\left( x\right) }g\left( x,y\right) d\mu \left( y\right)
\\
&=&\int_{B_{R}^{\ast }\left( x\right) }\int_{0}^{\infty }\mathbf{1}%
_{[d_{\ast }\left( x,y\right) ,\infty )}\left( s\right) \frac{ds}{\mu \left(
B_{s}^{\ast }\left( x\right) \right) }d\mu \left( y\right) \\
&=&\int_{0}^{\infty }\frac{1}{\mu \left( B_{s}^{\ast }\left( x\right)
\right) }\left( \int_{B_{R}^{\ast }\left( x\right) }\mathbf{1}_{\left[ 0,s%
\right] }\left( d_{\ast }\left( x,y\right) \right) d\mu \left( y\right)
\right) ds \\
&=&\int_{0}^{\infty }\frac{1}{\mu \left( B_{s}^{\ast }\left( x\right)
\right) }\mu \left( B_{R}^{\ast }\left( x\right) \cap B_{s}^{\ast }\left(
x\right) \right) ds.
\end{eqnarray*}%
For $s\geq R$ the integrand is equal to $\frac{1}{\mu \left( B_{s}^{\ast
}\left( x\right) \right) }\mu \left( B_{R}^{\ast }\left( x\right) \right) $
so that the convergence at $\infty $ follows from (\ref{Bx}). The
convergence is clearly uniform in $x\in A$ because $\mu \left( B_{R}^{\ast
}\left( x\right) \right) $ and $\mu \left( B_{s}^{\ast }\left( x\right)
\right) \ $are independent of $x\in A$ for $s\geq R$. For $s\leq R$ the
integrand is equal to%
\begin{equation*}
\frac{1}{\mu \left( B_{s}^{\ast }\left( x\right) \right) }\mu \left(
B_{s}^{\ast }\left( x\right) \right) =1,
\end{equation*}%
whence the uniform convergence at $0$ follows. Hence, $\sup_{A}Gf\left(
x\right) <\infty $. That $\sup_{X}Gf\left( x\right) <\infty $ follows from
the decay of $g\left( x,y\right) $ in $d_{\ast }\left( x,y\right) $.
\end{proof}

Let us note that if $X$ is a locally finite group with the Haar measure $\mu 
$, then the transience criterion $\left( iii\right) $ of Theorem \ref%
{Transient semigroup} coincides with the general sufficient condition of
transience of \cite{Lawler1995}.

Now let us provide some estimate of the Green function. Set 
\begin{equation}
V(x,r)=\mu \left( B_{r}^{\ast }(x)\right) .  \label{volume}
\end{equation}

\begin{theorem}
\label{Green-f asymptotic}Assume that there exist constants $1<c<c^{\prime
}<c^{\prime \prime }$ such that for all $r>r_{0}\geq 0$ and some $x\in X$%
\begin{equation}
c^{\prime }\leq \frac{V\left( x,cr\right) }{V\left( x,r\right) }\leq
c^{\prime \prime }.  \label{N-Tauberian}
\end{equation}%
Then the semigroup $\left\{ P_{t}\right\} $ is transient and, for all $y\in
X $ such that $r:=d^{\ast }\left( x,y\right) >r_{0}$, we have%
\begin{equation*}
g\left( x,y\right) \simeq \frac{r}{V\left( x,r\right) }.
\end{equation*}
\end{theorem}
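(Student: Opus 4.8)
The plan is to argue directly from the Green function representation (\ref{green function}), namely
\[
g(x,y)=\int_{d_{\ast}(x,y)}^{\infty}\frac{ds}{V(x,s)},
\]
where $V(x,s)=\mu\!\left(B_{s}^{\ast}(x)\right)$ as in (\ref{volume}), and to estimate this integral by splitting the half-line $[r,\infty)$, with $r=d_{\ast}(x,y)>r_{0}$, into the geometric annuli $[c^{n}r,c^{n+1}r]$, $n\geq 0$, where $c$ is the scaling factor appearing in the hypothesis (\ref{N-Tauberian}). Using the decomposition at scale $c$ (rather than at scale $2$ as in the usual doubling arguments) is what makes the two halves of (\ref{N-Tauberian}) fit the computation seamlessly.

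First I would iterate the lower bound in (\ref{N-Tauberian}). Since $c^{k}r\geq r>r_{0}$ for every $k\geq 0$, repeated application of $V(x,c\rho)\geq c^{\prime}V(x,\rho)$ at the points $\rho=r,cr,\dots,c^{n-1}r$ gives $V(x,c^{n}r)\geq (c^{\prime})^{n}V(x,r)$. Combining this with the monotonicity of $s\mapsto V(x,s)$ and the elementary length $|[c^{n}r,c^{n+1}r]|=(c-1)c^{n}r$, the $n$-th annulus is controlled by
\[
\int_{c^{n}r}^{c^{n+1}r}\frac{ds}{V(x,s)}\leq\frac{(c-1)c^{n}r}{(c^{\prime})^{n}V(x,r)}=\frac{(c-1)r}{V(x,r)}\left(\frac{c}{c^{\prime}}\right)^{n}.
\]
Summing over $n$ produces a convergent geometric series precisely because $c<c^{\prime}$, yielding $g(x,y)\leq C\,r/V(x,r)$ with $C=(c-1)/(1-c/c^{\prime})$. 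In particular $\int^{\infty}ds/V(x,s)<\infty$, so condition (\ref{Bx}) of Theorem \ref{Transient semigroup} holds for this $x$, and the semigroup is transient.

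For the matching lower bound I would keep only the first annulus. By monotonicity and the upper bound in (\ref{N-Tauberian}),
\[
g(x,y)\geq\int_{r}^{cr}\frac{ds}{V(x,s)}\geq\frac{(c-1)r}{V(x,cr)}\geq\frac{(c-1)r}{c^{\prime\prime}V(x,r)},
\]
which gives $g(x,y)\geq c_{1}\,r/V(x,r)$ with $c_{1}=(c-1)/c^{\prime\prime}$. Together with the upper estimate this is the asserted $g(x,y)\simeq r/V(x,r)$.

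The only genuinely delicate point is the correct pairing of the two sides of (\ref{N-Tauberian}): the lower bound $c^{\prime}$ forces the volume $V(x,\cdot)$ to grow fast enough that the tail series converges, which simultaneously delivers transience and the upper estimate, and this is exactly where the structural requirement $c<c^{\prime}$ is indispensable; the upper bound $c^{\prime\prime}$ is used only once, to compare a single annulus, and yields the lower estimate. There is no serious analytic obstacle beyond verifying that every application of (\ref{N-Tauberian}) is made at radii exceeding $r_{0}$, which holds automatically since all the points $c^{k}r$ dominate $r>r_{0}$.
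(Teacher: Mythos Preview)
Your proof is correct and follows essentially the same approach as the paper: splitting $[r,\infty)$ into the geometric annuli $[c^{n}r,c^{n+1}r]$, using the iterated lower volume bound $V(x,c^{n}r)\geq (c')^{n}V(x,r)$ to obtain a convergent geometric series with ratio $c/c'$ for the upper estimate (and transience), and using the single annulus $[r,cr]$ together with the upper volume bound $c''$ for the lower estimate. The paper's only cosmetic difference is that it rewrites each annulus via the substitution $s\mapsto c^{k}s$ before bounding, but the content is identical.
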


Note that the condition $\frac{V\left( x,cr\right) }{V\left( x,r\right) }%
\leq c^{\prime \prime }$ is equivalent to the doubling property of $r\mapsto
V\left( x,r\right) $ (cf. Definition \ref{def-doubling}), whereas the
condition $\frac{V\left( x,cr\right) }{V\left( x,r\right) }\geq c^{\prime }$
with $c^{\prime }>c$ is somewhat stronger than the reverse doubling property
(cf. Definition \ref{Reverse-doubling}). For example, (\ref{N-Tauberian})
holds for $V\left( x,r\right) \simeq r^{\alpha }$ if and only if $\alpha >1$.

\begin{proof}
Set for simplicity of notation $V\left( s\right) :=V\left( x,s\right) $. For 
$r>r_{0}$ we have%
\begin{equation*}
g\left( x,y\right) =\int_{r}^{\infty }\frac{ds}{V\left( s\right) }%
=\sum_{k=0}^{\infty }\int_{c^{k}r}^{c^{k+1}r}\frac{ds}{V\left( s\right) }%
=\sum_{k=0}^{\infty }c^{k}\int_{r}^{cr}\frac{ds}{V\left( c^{k}s\right) }
\end{equation*}%
Using the lower bound in (\ref{N-Tauberian}), we obtain%
\begin{equation*}
\int_{r}^{\infty }\frac{ds}{V\left( s\right) }\leq \sum_{k=0}^{\infty
}c^{k}\int_{r}^{cr}\frac{\left( c^{\prime }\right) ^{-k}ds}{V\left( s\right) 
}\leq \sum_{k=0}^{\infty }\left( \frac{c}{c^{\prime }}\right) ^{k}\frac{cr}{%
V\left( r\right) }\leq \func{const}\frac{r}{V\left( r\right) },
\end{equation*}%
where the series converges due to $c^{\prime }>c$. Similarly, using the
upper bound in (\ref{N-Tauberian}), we obtain%
\begin{equation*}
\int_{r}^{\infty }\frac{ds}{V\left( s\right) }\geq \int_{r}^{cr}\frac{ds}{%
V\left( s\right) }\geq \frac{\left( c-1\right) r}{V\left( cr\right) }\geq 
\func{const}\frac{r}{V\left( r\right) },
\end{equation*}%
which finishes the proof.
\end{proof}

\begin{example}
\RM\label{p-adic-transient} Let $\left( X,d,\mu \right) $ and $\sigma $ be
as in Example \ref{p-adic-doubling}, that is, $X=\mathbb{Q}_{p}$ is the
field of $p$-adic numbers with ultra-metric $d(x,y)=\left\Vert
x-y\right\Vert _{p}$ and $\sigma (r)=\exp \left( -(b/r)^{\alpha }\right) $.
Then by (\ref{d*p}) we have 
\begin{equation*}
d_{\ast }\left( x,y\right) =\func{const}\left\Vert x-y\right\Vert
_{p}^{\alpha }
\end{equation*}%
and by (\ref{muBp}) 
\begin{equation*}
V(x,r)\simeq r^{1/\alpha }.
\end{equation*}%
Therefore, by Theorem \ref{Transient semigroup}, the semigroup $\{P^{t}\}$
is transient if and only if $\alpha <1.$ Moreover, the condition (\ref%
{N-Tauberian}) is fulfilled also if and only if $\alpha <1$, and in this
case we obtain by Theorem \ref{Green-f asymptotic} that, for all $x,y,$%
\begin{equation*}
g(x,y)\simeq d_{\ast }\left( x,y\right) ^{1-\frac{1}{\alpha }}\simeq
\left\Vert x-y\right\Vert _{p}^{\alpha -1}.
\end{equation*}
\end{example}

\begin{example}
\RM Let $\left( X,d,\mu \right) $ and $\sigma $ be as in Example \ref%
{ExQp-frac}, that is, $X$ is the set of fractional $p$-adic numbers and $%
\sigma $ is given by (\ref{sigmalog}). By (\ref{d*log}) we have, for $x\neq
y $, 
\begin{equation*}
d_{\ast }(x,y)=\log ^{\alpha }\left( 2\left\Vert x-y\right\Vert _{p}\right)
\end{equation*}%
and by (\ref{muBexp}) 
\begin{equation*}
V\left( x,r\right) \simeq \exp \left( r^{1/\alpha }\right) .
\end{equation*}%
By Theorem \ref{Transient semigroup} we conclude that the semigroup $\left\{
P_{t}\right\} $ is transient. Theorem \ref{Green-f asymptotic} does not
apply in this case, by a direct estimate of the integral in (\ref{green
function}) yields, for $r:=d^{\ast }\left( x,y\right) ,$%
\begin{equation*}
g\left( x,y\right) =\int_{r}^{\infty }\frac{ds}{V\left( x,s\right) }\simeq
\int_{r}^{\infty }\exp \left( -s^{1/\alpha }\right) ds\simeq r^{1-1/\alpha
}\exp \left( -r^{1/\alpha }\right) ,
\end{equation*}%
whence, for $x\neq y$,%
\begin{equation*}
g\left( x,y\right) \simeq \left\Vert x-y\right\Vert _{p}^{-1}\log ^{\alpha
-1}\left( 2\left\Vert x-y\right\Vert _{p}\right) .
\end{equation*}
\end{example}

\section{The Laplacian and its spectrum}

\label{generator}\setcounter{equation}{0}In this section we are concerned
with the properties of the generator of the isotropic semigroup $\left\{
P^{t}\right\} $. By definition, the generator $\mathcal{L}$ of a strongly
continuous semigroup $\left\{ P_{t}\right\} _{t\geq 0}$ in a Banach space is
defined by%
\begin{equation*}
\mathcal{L}f=s\text{-}\lim_{t\rightarrow 0}\frac{f-P_{t}f}{t}
\end{equation*}%
and the domain $\func{dom}_{\mathcal{L}}$ consists of those $f$ for which
the above limit exists. Since the isotropic semigroup $\left\{ P^{t}\right\} 
$ is symmetric and acts in a Hilbert space $L^{2}\left( X,\mu \right) $, the
above definition is equivalent to the following: $\mathcal{L}$ is a
self-adjoint (unbounded) operator in $L^{2}\left( X,\mu \right) $ such that 
\begin{equation*}
P^{t}=\exp \left( -t\mathcal{L}\right) \ \text{for all }t>0.
\end{equation*}%
Obviously, this is equivalent to $P=\exp \left( -\mathcal{L}\right) $, which
leads to the identity%
\begin{equation*}
\mathcal{L}=\log \frac{1}{P}~,
\end{equation*}%
where the right hand side is understood in the sense of functional calculus
of self-adjoint operators. We refer to $\mathcal{L}$ as an \emph{isotropic
Laplace operator} associated with $\left( d,\mu ,\sigma \right) $.

\subsection{Subordination}

Using the spectral decomposition (\ref{PEla}) of $P$, we obtain that 
\begin{equation*}
\mathcal{L}=\int_{[0,+\infty )}\log \frac{1}{\sigma \left( 1/\lambda \right) 
}dE_{\lambda }
\end{equation*}%
where $\left\{ E_{\lambda }\right\} $ is the spectral resolution defined by (%
\ref{Ela}). Denote for simplicity 
\begin{equation}
\varphi \left( \lambda \right) :=\log \frac{1}{\sigma \left( 1/\lambda
\right) }  \label{fisi}
\end{equation}%
so that 
\begin{equation}
\mathcal{L}=\int_{[0,+\infty )}\varphi \left( \lambda \right) dE_{\lambda }~.
\label{L=}
\end{equation}%
The domain $\func{dom}_{\mathcal{L}}$ is then given by%
\begin{equation*}
\func{dom}_{\mathcal{L}}=\left\{ f\in L^{2}:\int_{0}^{\infty }\varphi \left(
\lambda \right) ^{2}d\left( E_{\lambda }f,f\right) <\infty \right\} .
\end{equation*}%
Observe that the function $\varphi $ has the following properties that
follow from the assumptions (\ref{sigma}) about $\sigma $: 
\begin{equation}
\left. 
\begin{array}{l}
\varphi :[0,\infty ]\rightarrow \left[ 0,\infty \right] \ \text{\emph{is\ a\
strictly\ monotone\ increasing\ right-continuous\ function}} \\ 
\text{\emph{such that} }\varphi \left( 0\right) =0\ \text{\emph{and}}\
\varphi \left( \infty -\right) =\infty .%
\end{array}%
\right.   \label{fi}
\end{equation}%
Conversely, any function $\varphi $ satisfying (\ref{fi}) determines the
function 
\begin{equation*}
\sigma \left( \lambda \right) =\exp \left( -\varphi \left( 1/\lambda \right)
\right) 
\end{equation*}%
that satisfies (\ref{sigma}). This observation leads us to the following
interesting subordination property of isotropic Laplacians.

\begin{theorem}
\label{subord}Let $\mathcal{L}$ be an isotropic Laplacian associated with $%
\left( d,\mu ,\sigma \right) $. Let $\psi $ be any function satisfying \emph{%
(\ref{fi})}. Then $\psi \left( \mathcal{L}\right) $ is also an isotropic
Laplacian associated with $\left( d,\mu ,\widetilde{\sigma }\right) $ for
some other distance distribution function $\widetilde{\sigma }.$
\end{theorem}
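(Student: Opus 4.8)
The plan is to exploit the fact that the spectral resolution $\left\{ E_{\lambda }\right\} $ appearing in (\ref{L=}) is built from the averaging operators $\mathrm{Q}_{r}$ alone (via (\ref{Ela})), and hence depends only on the pair $\left( d,\mu \right) $, not on the distance distribution function. Consequently every isotropic Laplacian arising from a triple of the form $\left( d,\mu ,\cdot \right) $ has the shape $\int_{[0,\infty )}\varphi \left( \lambda \right) dE_{\lambda }$ for one and the same family $\left\{ E_{\lambda }\right\} $, the only freedom being the choice of the symbol $\varphi $. By the observation immediately preceding the theorem, the admissible symbols are exactly the functions satisfying (\ref{fi}): each such $\varphi $ comes from the distance distribution function $\sigma \left( \lambda \right) =\exp \left( -\varphi \left( 1/\lambda \right) \right) $, which then satisfies (\ref{sigma}). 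Thus the whole problem reduces to manipulating symbols.

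First I would apply the functional calculus for self-adjoint operators to (\ref{L=}) to write
\begin{equation*}
\psi \left( \mathcal{L}\right) =\int_{[0,\infty )}\psi \left( \varphi \left( \lambda \right) \right) dE_{\lambda },
\end{equation*}
with domain $\left\{ f\in L^{2}:\int_{0}^{\infty }\psi \left( \varphi \left( \lambda \right) \right) ^{2}d\left( E_{\lambda }f,f\right) <\infty \right\} $. It therefore suffices to produce a distance distribution function $\widetilde{\sigma }$ whose associated symbol equals the composite $\psi \circ \varphi $, that is, $\log \frac{1}{\widetilde{\sigma }\left( 1/\lambda \right) }=\psi \left( \varphi \left( \lambda \right) \right) $; the isotropic Laplacian attached to $\left( d,\mu ,\widetilde{\sigma }\right) $ will then coincide with $\psi \left( \mathcal{L}\right) $, domains included, since both are defined by the very same spectral integral.

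Next I would verify that $\psi \circ \varphi $ again satisfies (\ref{fi}). Both $\varphi $ and $\psi $ are strictly increasing maps $[0,\infty ]\rightarrow \lbrack 0,\infty ]$, so their composite is strictly increasing; moreover $\left( \psi \circ \varphi \right) \left( 0\right) =\psi \left( 0\right) =0$, and $\left( \psi \circ \varphi \right) \left( \infty -\right) =\infty $ because $\varphi \left( \lambda \right) \rightarrow \infty $ as $\lambda \rightarrow \infty $ while $\psi \left( s\right) \rightarrow \infty $ as $s\rightarrow \infty $. For right-continuity, if $\lambda _{n}\downarrow \lambda $ then $\varphi \left( \lambda _{n}\right) \downarrow \varphi \left( \lambda \right) $ by monotonicity and right-continuity of $\varphi $, and hence $\psi \left( \varphi \left( \lambda _{n}\right) \right) \rightarrow \psi \left( \varphi \left( \lambda \right) \right) $ by right-continuity of $\psi $. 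Invoking the $\varphi \leftrightarrow \sigma $ correspondence once more, I would then set $\widetilde{\sigma }\left( \lambda \right) =\exp \left( -\left( \psi \circ \varphi \right) \left( 1/\lambda \right) \right) $, which satisfies (\ref{sigma}) and realizes $\psi \circ \varphi $ as its symbol, completing the identification.

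The computations are entirely routine; the only point demanding care — the mild obstacle — is the bookkeeping that the composition $\psi \circ \varphi $ preserves the precise regularity required by (\ref{fi}) (strict monotonicity, right-continuity, and the two boundary values), since it is exactly this that guarantees $\widetilde{\sigma }$ is a \emph{genuine} distance distribution function rather than merely a monotone function. Once this is secured, the chain $\psi \left( \mathcal{L}\right) =\int_{[0,\infty )}\left( \psi \circ \varphi \right) \left( \lambda \right) dE_{\lambda }=$ the isotropic Laplacian of $\left( d,\mu ,\widetilde{\sigma }\right) $ is immediate.
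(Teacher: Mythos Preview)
Your proposal is correct and follows essentially the same route as the paper: apply functional calculus to (\ref{L=}) to get $\psi(\mathcal{L})=\int_{[0,\infty)}(\psi\circ\varphi)(\lambda)\,dE_{\lambda}$, observe that $\psi\circ\varphi$ again satisfies (\ref{fi}), and read off $\widetilde{\sigma}(r)=\exp\bigl(-\psi(\log\tfrac{1}{\sigma(r)})\bigr)$. The paper's proof is terser (it asserts rather than checks that $\psi\circ\varphi$ satisfies (\ref{fi})), but the content is identical.
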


\begin{proof}
It follows from (\ref{L=}) that 
\begin{equation*}
\psi \left( \mathcal{L}\right) =\int_{[0,+\infty )}\psi \circ \varphi \left(
\lambda \right) dE_{\lambda }.
\end{equation*}%
Since the composition $\psi \circ \varphi $ also satisfies (\ref{fi}), we
obtain that $\psi \left( \mathcal{L}\right) $ is an isotropic Laplacian.
Moreover, using (\ref{fisi}), we obtain the following formula for $%
\widetilde{\sigma }$:%
\begin{equation*}
\widetilde{\sigma }\left( r\right) =\exp \left( -\psi \left( \log \frac{1}{%
\sigma \left( r\right) }\right) \right) .
\end{equation*}
\end{proof}

\begin{remark}
\label{Bochner property} \RM Any a non-negative definite, self-adjoint
operator $\mathcal{L}$ in $L^{2}$ generates a semigroup $\left\{ e^{-t%
\mathcal{L}}\right\} _{t\geq 0}$. We refers to $\mathcal{L}$ as a Laplacian
if the semigroup $\left\{ e^{-t\mathcal{L}}\right\} $ is Markovian. In
general, by Bochner's theorem, for any Laplacian $\mathcal{L}$, the operator 
$\psi (\mathcal{L})$ is again a Laplacian, provided $\psi $ is a Bernstein
function (see, for example, Schilling, Song and Vondra{\v{c}}ek~\cite%
{Schilling2012}). It is known that $\psi (\lambda )=\lambda ^{\alpha }$ is a
Bernstein function if and only if $0<\alpha \leq 1.$ Thus, for a general
Laplacian $\mathcal{L}$, the power $\mathcal{L}^{\alpha }$ is guaranteed a
Laplacian only for $\alpha \leq 1$. For example, for the classical Laplace
operator $\mathcal{L}=-\Delta $ in $\mathbb{R}^{n}$, the power $\left(
-\Delta \right) ^{\alpha }$ with $\alpha >1$ is not a Laplacian. In a
striking contrast to that, by Theorem \ref{subord}, the powers $\mathcal{L}%
^{\alpha }$ of the isotropic Laplacian are again Laplacians for all $\alpha
>0$.
\end{remark}

\subsection{The $L^{2}$-spectrum of the Laplacian}

Our next goal in this section is to give an explicit expression for $%
\mathcal{L}f$ and to describe the spectrum of $\mathcal{L}$. Recall that by
Theorem \ref{p-laplace} the triples $\left( d,\mu ,\sigma \right) $ and $%
\left( d_{\ast },\mu ,\sigma _{\ast }\right) $ induce the same Markov
operator $P$ and, hence, the same Laplace operator $\mathcal{L}$, where $%
d_{\ast }$ is the intrinsic ultra-metric defined by (\ref{d*}) and 
\begin{equation*}
\sigma _{\ast }\left( r\right) =\exp \left( -\frac{1}{r}\right)
\end{equation*}%
From now on we will use only the metric $d_{\ast }$ and $\sigma _{\ast }$.
Let the spectral resolution $\left\{ E_{\lambda }\right\} $ be also defined
using the metric $d_{\ast }$, which means that in the definition (\ref{Ela})
of $E_{\lambda }$ we now use the averaging operator $\mathrm{Q}_{r}$ with
respect to the metric $d_{\ast }$. The function $\varphi _{\ast }$
associated with $\sigma _{\ast }$ by (\ref{fisi}) has especially simple
form: $\varphi _{\ast }\left( \lambda \right) =\lambda $. Therefore, we
obtain from (\ref{L=}) the spectral decomposition of $\mathcal{L}$ in the
classical form%
\begin{equation}
\mathcal{L}=\int_{[0,+\infty )}\lambda dE_{\lambda }=\int_{\left( 0,\infty
\right) }\lambda dE_{\lambda }.  \label{LEla}
\end{equation}%
The change $s=\frac{1}{\lambda }$ gives%
\begin{equation*}
\mathcal{L}=-\int_{\left( 0,\infty \right) }\frac{1}{s}d\mathrm{Q}_{s}.
\end{equation*}%
For any $x\in X$, denote by $\Lambda \left( x\right) $ the set of values of $%
d_{\ast }\left( x,y\right) $ for all $y\in X,$ $y\neq x$, that is,%
\begin{equation}
\Lambda \left( x\right) =\left\{ d\left( x,y\right) :y\in X\setminus \left\{
x\right\} \right\} .  \label{Lambda}
\end{equation}

\begin{lemma}
\label{LemLam}The set $\Lambda \left( x\right) $ has no accumulation point
in $\left( 0,\infty \right) $. Consequently, $\Lambda \left( x\right) $ is
at most countable.
\end{lemma}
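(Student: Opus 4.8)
The plan is to prove the stronger quantitative statement that for every $0<\varepsilon\le R<\infty$ the set $\Lambda(x)\cap[\varepsilon,R]$ is \emph{finite}. Both assertions of the lemma follow immediately from this: writing $(0,\infty)=\bigcup_{n\ge 1}[1/n,n]$ exhibits $\Lambda(x)$ as a countable union of finite sets, hence at most countable; and any accumulation point of $\Lambda(x)$ lying in $(0,\infty)$ would belong to some $[1/n,n]$ and force that piece to be infinite, contradicting finiteness. (Here I work with $d_{\ast}$, as in the surrounding spectral discussion; since $d_{\ast}$ and $d$ have the same balls by Lemma \ref{star-balls}, the identical argument applies to $d$, reconciling the two descriptions of $\Lambda(x)$ in \eqref{Lambda}.)

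To prove finiteness of $\Lambda(x)\cap[\varepsilon,R]$ I would argue by contradiction, using compactness of balls. Suppose this set is infinite; then I can select distinct values $r_n\in\Lambda(x)\cap[\varepsilon,R]$ and points $y_n$ realizing them, i.e. $d_{\ast}(x,y_n)=r_n$. All the $y_n$ lie in $B_{R}^{\ast}(x)$, which is compact because $d_{\ast}$-balls coincide with $d$-balls (Lemma \ref{star-balls}) and the latter are compact by standing assumption. Passing to a subsequence gives $y_{n_k}\to y_{\ast}$ for some $y_{\ast}\in B_{R}^{\ast}(x)$, and continuity of the metric yields $d_{\ast}(x,y_{\ast})=\lim_k r_{n_k}=:r_0\in[\varepsilon,R]$, so in particular $r_0>0$.

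The decisive step is the \emph{isosceles} property of ultra-metrics: whenever $d_{\ast}(y_{\ast},y_{n_k})<r_0=d_{\ast}(x,y_{\ast})$ one has $d_{\ast}(x,y_{n_k})=r_0$ exactly. Indeed, the ultra-metric inequality gives $d_{\ast}(x,y_{n_k})\le\max\{d_{\ast}(x,y_{\ast}),d_{\ast}(y_{\ast},y_{n_k})\}=d_{\ast}(x,y_{\ast})$, while $d_{\ast}(x,y_{\ast})\le\max\{d_{\ast}(x,y_{n_k}),d_{\ast}(y_{n_k},y_{\ast})\}$ cannot be attained by the second term (which is strictly smaller than $r_0$), forcing $d_{\ast}(x,y_{\ast})\le d_{\ast}(x,y_{n_k})$, and the two inequalities give equality. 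Since $y_{n_k}\to y_{\ast}$ we have $d_{\ast}(y_{\ast},y_{n_k})\to 0<r_0$, so $r_{n_k}=d_{\ast}(x,y_{n_k})=r_0$ for all large $k$, contradicting the distinctness of the $r_n$. Hence $\Lambda(x)\cap[\varepsilon,R]$ is finite.

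I expect the only subtle point to be precisely this isosceles step: one must extract the \emph{exact} equality $d_{\ast}(x,y_{n_k})=r_0$ from ultra-metricity, rather than the mere approximate equality $r_{n_k}\to r_0$ that continuity already supplies. Everything else — reduction to a compact interval, extraction of a convergent subsequence, and continuity of the metric — is routine.
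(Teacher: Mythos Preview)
Your proof is correct and takes essentially the same approach as the paper: both extract a convergent subsequence $y_{n_k}\to y_{\ast}$ via compactness of balls and then use the ultra-metric isosceles property to force $d_{\ast}(x,y_{n_k})=d_{\ast}(x,y_{\ast})$ for all large $k$, contradicting distinctness of the $r_{n_k}$. Your packaging as ``$\Lambda(x)\cap[\varepsilon,R]$ is finite'' is a minor reformulation of the paper's direct ``no accumulation point in $(0,\infty)$,'' but the substance is identical.
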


\begin{proof}
Let $r\in \left( 0,\infty \right) $ be an accumulation point of $\Lambda
\left( x\right) $, that is, there is a sequence $\left\{ r_{k}\right\} $
from $\Lambda \left( x\right) \setminus \left\{ r\right\} $ such that $%
r_{k}\rightarrow r$ as $k\rightarrow \infty $. Then $r_{k}=d_{\ast }\left(
x,y_{k}\right) $ for some $y_{k}\in X$. Since the sequence $\left\{
y_{k}\right\} $ is bounded, by the compactness of all balls in $X$ it has a
convergent subsequence. Without loss of generality, we can then assume that $%
\left\{ y_{k}\right\} $ converges, say to $y\in X$. Then we have $r=d\left(
x,y\right) $. Since $r>0$, we have for large enough $k$ that $r_{k}>r/2$ and 
$d\left( y,y_{k}\right) <r/2$. Then we obtain by the ultra-metric inequality
that%
\begin{equation*}
r_{k}\leq \max \left( r,d\left( y,y_{k}\right) \right) =r
\end{equation*}%
and analogously%
\begin{equation*}
r\leq \max \left( r_{k},d\left( y,y_{k}\right) \right) =r_{k}
\end{equation*}%
whence $r_{k}=r$, which contradicts the assumptions.
\end{proof}

\begin{definition}
\RM For any ball $B$ in $X$ denote by $\rho \left( B\right) $ the minimal $%
d_{\ast }$-radius of $B$.
\end{definition}

Note that $\rho \left( B\right) $ exists because all balls are defined as
closed balls.

\begin{lemma}
If $\rho \left( B\right) >0$ then $\rho \left( B\right) \in \Lambda \left(
x\right) $ for any $x\in B$. Conversely, any number in $\Lambda \left(
x\right) $ is equal to $\rho \left( B\right) $ for some ball $B$ containing $%
x$.
\end{lemma}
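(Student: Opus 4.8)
The plan is to base both directions on a single identity: for every ball $B$ and every $x\in B$,
\begin{equation*}
\rho(B)=\max_{y\in B}d_{\ast}(x,y).
\end{equation*}
I would establish this first. Fix $x\in B$ and write $B=B_{s_0}^{\ast}(x)$ for some radius $s_0$, which is legitimate because every point of an ultra-metric ball is a center. Since all balls are compact and $y\mapsto d_{\ast}(x,y)$ is continuous, the quantity $M:=\max_{y\in B}d_{\ast}(x,y)$ is attained, say at $y^{\ast}\in B$. Every $y\in B$ satisfies $d_{\ast}(x,y)\le s_0$, so $M\le s_0$ and hence $B_{M}^{\ast}(x)\subseteq B_{s_0}^{\ast}(x)=B$; the reverse inclusion $B\subseteq B_{M}^{\ast}(x)$ holds by the definition of $M$, so $B_{M}^{\ast}(x)=B$. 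No radius $s<M$ represents $B$ centered at $x$, since then $y^{\ast}$ would be excluded while lying in $B$. Thus $M$ is the minimal radius of a representation of $B$ centered at $x$, and a short application of the ultra-metric inequality (\ref{um}) shows $M$ does not depend on the chosen center $x\in B$, so $M=\rho(B)$.

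Granting the identity, the first assertion is immediate. If $\rho(B)>0$ and $x\in B$, then the maximizer $y^{\ast}$ above satisfies $d_{\ast}(x,y^{\ast})=\rho(B)>0$, so $y^{\ast}\neq x$ and therefore $\rho(B)=d_{\ast}(x,y^{\ast})\in\Lambda(x)$.

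For the converse I would argue directly from the definition. Take $r\in\Lambda(x)$, so that $r=d_{\ast}(x,y)$ for some $y\neq x$, and set $B:=B_{r}^{\ast}(x)$. Then $x\in B$, every $z\in B$ satisfies $d_{\ast}(x,z)\le r$, while $y\in B$ attains $d_{\ast}(x,y)=r$; by the identity this yields $\rho(B)=\max_{z\in B}d_{\ast}(x,z)=r$, as required.

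The only point requiring genuine care is the identity $\rho(B)=\max_{y\in B}d_{\ast}(x,y)$, and inside it the two facts that (i) the maximum is actually attained, which is exactly where compactness of the balls enters, and (ii) its value is independent of the chosen center $x\in B$, which follows from the ultra-metric inequality in the same way as the standard fact that every point of an ultra-metric ball is a center. Once these are settled, both halves of the lemma follow in a line each, so I do not expect any substantial obstacle beyond formalizing this identity.
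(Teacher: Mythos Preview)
Your proof is correct. Both directions follow cleanly once you have the identity $\rho(B)=\max_{y\in B}d_{\ast}(x,y)$, and your justification of that identity via compactness of $B$ and continuity of $d_{\ast}(x,\cdot)$ (legitimate since $d$ and $d_{\ast}$ induce the same topology, by Lemma~\ref{star-balls}) is sound; the center-independence of $M$ is indeed a one-line consequence of the ultra-metric inequality.

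The route differs slightly from the paper's. For the first assertion the paper argues by contradiction: assuming $d_{\ast}(x,y)<\rho(B)$ for all $y\in B$, it invokes Lemma~\ref{LemLam} (that $\Lambda(x)$ has no accumulation point in $(0,\infty)$) to extract a maximal element $r'<\rho(B)$ of $\{d_{\ast}(x,y):y\in B\setminus\{x\}\}$, whence $B\subset B_{r'}^{\ast}(x)$ contradicts minimality of $\rho(B)$. You bypass Lemma~\ref{LemLam} entirely by applying the extreme value theorem directly on the compact ball. Your argument is thus more self-contained, while the paper's approach exploits the structural result about $\Lambda(x)$ that it has just established and will use again. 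For the converse direction the two arguments are essentially identical.
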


\begin{proof}
Set $r=\rho \left( B\right) $ so that $B=B_{r}^{\ast }\left( x\right) $. For
any $y\in B$ we have $d_{\ast }\left( x,y\right) \leq r$, and we have to
show that $d_{\ast }\left( x,y\right) =r$ for some $y$. Assume that $d_{\ast
}\left( x,y\right) <r$ for all $y\in B$. Then the set $\left\{ d_{\ast
}\left( x,y\right) :y\in B\setminus \left\{ x\right\} \right\} $ is a subset
of $\left( 0,r\right) \cap \Lambda \left( x\right) $. By Lemma \ref{LemLam},
the latter set has a maximal element, say $r^{\prime }$. Then $B\subset
B_{r^{\prime }}^{\ast }\left( x\right) $, which contradicts the minimality
of radius $r$. Conversely, if $r\in \Lambda \left( x\right) $ then the ball $%
B=B_{r}\left( x\right) $ has $\rho \left( B\right) =r$ since there exists $%
y\in X$ with $d\left( x,y\right) =r$.
\end{proof}

\begin{definition}
\label{predecessor}\RM Let $B,C$ be two balls in $X$ such that $C\subset B$.
We say that $C$ is a \emph{child} or \emph{successor} of $B$ (and $B$ is a 
\emph{parent} or \emph{predecessor} of $C$) if $C\neq B$ and, for any ball $%
A $, such that $C\subset A\subset B$ we have $A=C$ or $A=B$. In other words, 
$B $ is a minimal ball containing $C$ as a proper subset. If $C$ is a child
of $B$ then we write $C\prec B.$
\end{definition}

Denote by $\mathcal{K}$ be the family of all balls $C$ in $X$ with positive
radii. If $C=B_{r}^{\ast }\left( x\right) $ is a ball from $\mathcal{K}$
with $r>0$ then for the minimal radius $\rho \left( C\right) $ we have two
possibilities:

\begin{enumerate}
\item either $\rho \left( C\right) >0$,

\item or $\rho \left( C\right) =0$ and the center of $C$ is an isolated
point of $X$.
\end{enumerate}

\begin{lemma}
\label{Lemchildren}For any ball $C\in \mathcal{K}$ such that $C\neq X$ there
is a unique parent ball $B$. For any ball $B$ with $\rho \left( B\right) >0$
the number $\deg \left( B\right) $ of its children satisfies $2\leq \deg
\left( B\right) <\infty .$ Moreover, all the children of $B$ are disjoint
and their union is equal to $B$.
\end{lemma}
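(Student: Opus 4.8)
The plan is to build everything on the two structural facts available to us. First, the ultra-metric inequality (\ref{um}) forces any two balls to be either nested or disjoint, so in particular the balls containing a fixed point are \emph{totally ordered} by inclusion; this is what lets me phrase minimality cleanly. Second, by Lemma \ref{LemLam} the distance set $\Lambda(x)$ has no accumulation point in $(0,\infty)$, while all $d_{\ast}$-balls are compact (the same sets as the $d$-balls, by Lemma \ref{star-balls}). Throughout I work with $d_{\ast}$ and write a ball through $x$ as $B_{r}^{\ast}(x)$, using that every point of a ball is a center.

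For existence and uniqueness of the parent, I fix $C\in\mathcal{K}$ with $C\neq X$, pick $x\in C$, and set
\[
r'=\inf\{\,d_{\ast}(x,y):y\in X\setminus C\,\}.
\]
Since $X\setminus C=\{y:d_{\ast}(x,y)>\rho(C)\}$, this infimum is taken over a subset of $\Lambda(x)$ bounded away from $0$ (when $C$ is a singleton its center is isolated, so $\Lambda(x)$ itself is bounded away from $0$); hence by discreteness together with compactness of balls it is attained and strictly positive. I then claim $B:=B_{r'}^{\ast}(x)$ is the parent. Clearly $C\subsetneq B$. If $A$ is a ball with $C\subseteq A\subseteq B$, then $x\in A$, so $A=B_{s}^{\ast}(x)$; if $A$ contains no point at distance $>\rho(C)$ from $x$ then $A\subseteq C$ and $A=C$, whereas if $A$ contains such a point $w$ then $d_{\ast}(x,w)\geq r'$ by the choice of $r'$ and $\leq r'$ since $A\subseteq B$, forcing $s\geq r'$, hence $B\subseteq A$ and $A=B$. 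Uniqueness is then immediate: two parents both contain $x$, so they are nested, and the minimality in Definition \ref{predecessor} forces them to coincide.

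For the partition statement I first note that distinct children are disjoint by the same no-intermediate-ball argument: if children $C_{1}\neq C_{2}$ met they would be nested, say $C_{1}\subseteq C_{2}\subsetneq B$, and applying $C_{1}\prec B$ to $A=C_{2}$ would give $C_{2}=C_{1}$ or $C_{2}=B$, a contradiction. To show the children cover $B$, I fix $y\in B$, write $\rho=\rho(B)$, and exhibit the child through $y$: if $\Lambda(y)\cap(0,\rho)\neq\emptyset$ it has a largest element $r''$ by discreteness, and $B_{r''}^{\ast}(y)$ is a child containing $y$; if $\Lambda(y)\cap(0,\rho)=\emptyset$ then $y$ is isolated and the singleton $\{y\}$ is a child. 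In both cases the verification is the same computation as for the parent, using that $\rho\in\Lambda(y)$ for every $y\in B$ (the preceding lemma, valid since $\rho(B)>0$).

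Finally, for the count: $\rho(B)>0$ forces $B$ to have at least two points, and a point at $d_{\ast}$-distance exactly $\rho(B)$ from the center (available because $\rho(B)\in\Lambda(x)$) lies in a child different from the one through the center, so $\deg(B)\geq 2$. The only genuine obstacle is finiteness, and I expect to handle it by compactness rather than by any measure bound. In an ultra-metric space every closed ball is also open, since each of its points carries an open ball of the same radius still contained in it; thus the children form a cover of the compact set $B$ by pairwise disjoint nonempty open sets. A finite subcover then exists, and disjointness forces every child to occur in it, so $\deg(B)<\infty$.
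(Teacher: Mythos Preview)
Your proof is correct and follows essentially the same route as the paper's: both arguments exploit the discreteness of $\Lambda(x)$ (Lemma \ref{LemLam}) to locate the minimum of $\Lambda(x)\cap(\rho(C),\infty)$ for the parent and the maximum of $\Lambda(y)\cap(0,\rho(B))$ for the child through $y$, use the nesting of ultra-metric balls for uniqueness and disjointness, and finish the finiteness by observing that the children form a disjoint open cover of the compact ball $B$. The only cosmetic differences are that you phrase the parent radius as an attained infimum rather than directly as a minimum, and that for $\deg(B)\geq 2$ you exhibit two distinct children explicitly whereas the paper argues by contradiction that a single child would have to coincide with $B$.
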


\begin{proof}
Fix some $x\in C$. It follows from Lemma \ref{LemLam} and the definition of $%
\mathcal{K}$ that the set $\left( \rho \left( C\right) ,\infty \right) \cap
\Lambda \left( x\right) $ has a minimum that we denote by $r$. Then the ball 
$B_{r}^{\ast }\left( x\right) $ is a parent of $C$. The uniqueness of the
parent follows from definition.

If $C_{1}$ and $C_{2}$ are two distinct children of $B$ then $C_{1}$ and $%
C_{2}$ are disjoint. Indeed, if they intersect then one of them contains the
other, say $C_{1}\subset C_{2}$. By definition of a parent/child, we must
have then $C_{2}=C_{1}$ or $C_{2}=B$, whence $C_{1}=C_{2}$ follows.

Let us show that for any $x\in B$ there is a ball $C$ such that $x\in C\prec
B$. Indeed, if the set $\left( 0,\rho \left( B\right) \right) \cap \Lambda
\left( x\right) $ is empty, then $C=B_{0}^{\ast }\left( x\right) =\left\{
x\right\} $ is the child of $B$. If the set $\left( 0,\rho \left( B\right)
\right) \cap \Lambda \left( x\right) $ is non-empty then by Lemma \ref%
{LemLam} is has a maximum, say $r$. Then $C=B_{r}^{\ast }\left( x\right) $
is a child of $B$. Hence, the set of all children of $B$ is a covering of $B$%
.

Each child $C$ of $B$ is an open set (being also a closed ball) because $C$
coincides with an open ball of radius $\rho \left( B\right) $. Since $B$ is
compact, it follows that the set of its children is finite, that is, $\deg
\left( B\right) <\infty $. Finally, $\deg \left( B\right) $ cannot be equal
to $1$ since then $B$ would coincide with its only child. Hence, $\deg
\left( B\right) \geq 2$.
\end{proof}

\smallskip

For any $C\in \mathcal{K}$ define the function $f_{C}$ on $X$ as follows. If 
$C $ is a proper subset of $X$ then, denoting by $B$ the parent of $C$, set%
\begin{equation}
f_{C}=\frac{1}{\mu (C)}\mathbf{1}_{C}-\frac{1}{\mu (B)}\mathbf{1}_{B}
\label{fC}
\end{equation}%
(note that always $\mu \left( C\right) >0$). Set also $\lambda \left(
C\right) :=1/\rho \left( B\right) $. If $C=X$ (which can only be the case
when $X$ is compact), then set $f_{C}\equiv 1$ and $\lambda \left( C\right)
=0.$

\begin{theorem}
\label{Eigenvalues-thm} {For any }$C\in \mathcal{K}$ the function $f_{C}$ is
an eigenfunction of $\mathcal{L}$ with the eigenvalue $\lambda \left(
C\right) $. The family $\{f_{C}:C\in \mathcal{K}\}$ is complete (its linear
span is dense) in $L^{2}\left( X,\mu \right) .$ Consequently, the operator $%
\mathcal{L}$ has a complete system of compactly supported eigenfunctions.
\end{theorem}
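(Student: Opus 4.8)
The plan is to prove the eigenfunction claim by a direct computation inside the spectral resolution $\{E_\lambda\}$ of (\ref{LEla}), and to prove completeness by a telescoping argument that reconstructs every ball-indicator from the $f_C$. Recall from (\ref{LEla}) and (\ref{Ela}) that $\mathcal{L}=\int_{(0,\infty)}\lambda\,dE_\lambda$ with $E_\lambda=\mathrm{Q}_{1/\lambda}$, the averaging taken with respect to $d_\ast$. Fix $C\in\mathcal{K}$ with $C\neq X$ and parent $B$, so that by (\ref{fC}) the function $f_C=\mu(C)^{-1}\mathbf{1}_C-\mu(B)^{-1}\mathbf{1}_B$ is supported in $B$ and satisfies $\int_X f_C\,d\mu=0$.

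The core of part one is to compute $\mathrm{Q}_s f_C$ for every $s>0$ and to show that $s\mapsto\mathrm{Q}_s f_C$ has exactly one jump, at $s=\rho(B)$. For $s\geq\rho(B)$ any ball $B_s^\ast(y)$ meeting $B$ must contain $B$ (ultra-metric nesting, since $s\geq\rho(B)$), so the average of $f_C$ over $B_s^\ast(y)$ vanishes -- either because $\operatorname{supp}f_C=B\subseteq B_s^\ast(y)$ and $\int_B f_C=0$, or because $B_s^\ast(y)\cap B=\emptyset$; hence $\mathrm{Q}_s f_C=0$. For $s<\rho(B)$ one has $f_C\in\mathcal{V}_s$, because $f_C$ is constant on $C$, equal to $-\mu(B)^{-1}$ on every other child of $B$, and zero off $B$, so every ball of radius $s<\rho(B)$ lies in a set on which $f_C$ is constant; hence $\mathrm{Q}_s f_C=f_C$. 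Translating through $\lambda=1/s$ gives $E_\lambda f_C=0$ for $\lambda\leq\lambda(C)$ and $E_\lambda f_C=f_C$ for $\lambda>\lambda(C)$, where $\lambda(C)=1/\rho(B)$. Since $\{E_\lambda\}$ is left-continuous, $dE_\lambda f_C$ is a single atom of weight $f_C$ at $\lambda(C)$, whence $\mathcal{L}f_C=\lambda(C)f_C$. The case $C=X$ (possible only when $X$ is a ball) is immediate: $\mathrm{Q}_s\mathbf{1}=\mathbf{1}$ for all $s$ forces $\mathcal{L}f_X=0=\lambda(X)f_X$.

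For completeness I would show that the closed linear span $\mathcal{S}$ of $\{f_C\}$ contains the indicator of every ball. The key local identity is that, for a ball $B$ with children $C_1,\dots,C_k$ (finitely many, $k\geq2$, by Lemma \ref{Lemchildren}), one has $\mathbf{1}_{C_i}=\mu(C_i)f_{C_i}+\tfrac{\mu(C_i)}{\mu(B)}\mathbf{1}_B$, so $\mathbf{1}_{C_1},\dots,\mathbf{1}_{C_k}\in\operatorname{span}\{\mathbf{1}_B,f_{C_1},\dots,f_{C_k}\}$. Since the chain joining any ball $D\subseteq B_0$ to $B_0$ is finite (Lemma \ref{LemLam}), iterating this identity downward shows that $\operatorname{span}(\{\mathbf{1}_{B_0}\}\cup\{f_C:C\subseteq B_0\})$ contains $\mathbf{1}_D$ for every ball $D\subseteq B_0$. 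It then remains to put each $\mathbf{1}_{B_0}$ itself into $\mathcal{S}$. If $\mu(X)=\infty$ then $X$ is unbounded (a bounded ultra-metric space is a single compact ball, hence of finite measure), its parent chain $B_0\prec B_1\prec\cdots$ has $\mu(B_n)\to\mu(X)=\infty$, and telescoping upward $\mathbf{1}_{B_0}=\mu(B_0)\sum_{k=0}^{n-1}f_{B_k}+\tfrac{\mu(B_0)}{\mu(B_n)}\mathbf{1}_{B_n}$ leaves a remainder of $L^2$-norm $\mu(B_0)\,\mu(B_n)^{-1/2}\to0$, so $\mathbf{1}_{B_0}\in\mathcal{S}$. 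If instead $\mu(X)<\infty$ with $X$ a ball, then $\mathbf{1}_X=f_X\in\mathcal{S}$ and the downward iteration from $B_0=X$ already yields every $\mathbf{1}_D$. In either case $\mathcal{S}$ contains all ball-indicators, whose finite linear combinations are dense in $L^2(X,\mu)$ (ultra-metric balls are compact, clopen, and generate the Borel structure), so $\mathcal{S}=L^2(X,\mu)$.

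The main obstacle is the eigenvalue-zero mode: getting the ``top'' indicators $\mathbf{1}_{B_0}$ into the closed span. This is exactly what forces the dichotomy above and uses the geometric input (compactness of balls) that $\mu(X)=\infty$ entails an unbounded parent chain with volumes tending to infinity -- so that the constant function, which is orthogonal to every mean-zero $f_C$, is either eliminated by being outside $L^2$ or captured by $f_X$. Everything else is bookkeeping: part one reduces to locating the unique jump of $s\mapsto\mathrm{Q}_s f_C$, and completeness reduces to the one-line identity expressing $\mathbf{1}_{C_i}$ through $f_{C_i}$ and $\mathbf{1}_B$. The final assertion is automatic, since each $f_C$ is supported in the compact ball $B$ and is therefore compactly supported.
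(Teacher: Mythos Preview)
Your proof is correct. The eigenfunction part is essentially the paper's argument: both compute $\mathrm{Q}_s f_C$ for all $s>0$, find the single jump at $s=\rho(B)$, and read off $\mathcal{L}f_C=\lambda(C)f_C$ from the spectral representation.

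For completeness you take a genuinely different route. The paper argues by orthogonality: it assumes $f\perp f_C$ for all $C$, observes that then $\mathrm{Q}_r f\perp f_C$ as well, and deduces from the identity $(f,f_C)=0$ that the average of $f$ over any ball equals the average over its parent; tracing parent chains shows $\mathrm{Q}_r f$ is constant, hence $f$ is constant, and the constant is forced to be zero. Your approach is constructive: you exhibit every ball indicator in the closed span via the local identity $\mathbf{1}_{C_i}=\mu(C_i)f_{C_i}+\tfrac{\mu(C_i)}{\mu(B)}\mathbf{1}_B$ (downward induction) together with the upward telescoping $\mathbf{1}_{B_0}=\mu(B_0)\sum_{k<n}f_{B_k}+\tfrac{\mu(B_0)}{\mu(B_n)}\mathbf{1}_{B_n}$, whose remainder vanishes in $L^2$ when $\mu(B_n)\to\infty$. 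The paper's argument is slicker and uses nothing beyond the projector property of $\mathrm{Q}_r$; yours is more explicit and gives a concrete approximation of each indicator by finite sums of eigenfunctions, which is sometimes useful in its own right. Both handle the dichotomy $\mu(X)=\infty$ versus $X$ a ball in the same way.
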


\begin{proof}
Fix a ball $C\in \mathcal{K}$ or radius $r=\rho \left( C\right) $, and let $%
B $ be the\ parent of radius $r^{\prime }=\rho \left( B\right) $. Any ball
of radius $s<r^{\prime }$ either is disjoint with $C$ or is contained in $C$%
, which implies that $\mathbf{1}_{C}$ is constant in any such ball. It
follows that, for any $s<r^{\prime }$, we have $\mathrm{Q}_{s}\mathbf{1}_{C}=%
\mathbf{1}_{C}$ and, similarly $\mathrm{Q}_{s}\mathbf{1}_{B}=\mathbf{1}_{B}$%
, whence%
\begin{equation*}
\mathrm{Q}_{s}f_{C}=f_{C}.
\end{equation*}%
For $s\geq r^{\prime }$ any ball of radius $s$ either contains both balls $%
C,B$ or is disjoint from $B$. Since the averages of the two functions $\frac{%
1}{\mu (C)}\mathbf{1}_{C}$ and $\frac{1}{\mu (B)}\mathbf{1}_{B}$ over any
ball containing $C$ and $B$ are equal, we obtain that in this case $\mathrm{Q%
}_{s}f_{C}=0$. It follows that%
\begin{equation*}
\mathcal{L}f_{C}=-\int_{\left( 0,\infty \right) }\frac{1}{s}\mathrm{Q}%
_{s}f_{C}~ds=\frac{1}{r^{\prime }}\,f_{C}=\lambda \left( C\right) f_{C},
\end{equation*}%
which proves that $f_{C}$ is an eigenfunction of $\mathcal{L}$ with the
eigenvalue $\lambda \left( C\right) $. In the case of compact $X$ we have $%
\mathrm{Q}_{s}f_{X}=f_{X}$ for all $s>0$, whence $\mathcal{L}f_{X}=0=\lambda
\left( X\right) $.

Let us show that the system $\left\{ f_{C}:C\in \mathcal{K}\right\} $ is
complete. We assume that some function $f\in L^{2}$ is orthogonal to all
functions $f_{C}\,$and prove that $f\equiv \func{const}.$ We have for any $%
r>0$, 
\begin{equation*}
\left( \mathrm{Q}_{r}f,f_{C}\right) _{L^{2}}=\left( f,\mathrm{Q}%
_{r}f_{C}\right) _{L^{2}}=\func{const}\left( f,f_{C}\right) _{L^{2}}=0,
\end{equation*}%
where we have used the fact that any eigenfunction of $\mathcal{L}$ is also
eigenfunction of $\mathrm{Q}_{r}$ with an eigenvalue that we denoted by $%
\func{const}$. Hence, $\mathrm{Q}_{r}f$ is also orthogonal to all $f_{C}$.
We will prove below that $\mathrm{Q}_{r}f=0$, which will imply by (\ref{sr0}%
) that $f=0$.

Since $\mathrm{Q}_{r}f$ is constant in any ball of radius $r$, by renaming $%
\mathrm{Q}_{r}f$ back to $f$ we can assume from now on that $f$ is constant
in any ball of radius $r$. Fix some ball $C\in \mathcal{K}$ and its parent $%
B $. It follows from (\ref{fC}) that $\left( f,f_{C}\right) _{L^{2}}=0$ is
equivalent to%
\begin{equation*}
\frac{1}{\mu (C)}\int_{C}f\,d\mu =\frac{1}{\mu (B)}\int_{B}f\,d\mu ,
\end{equation*}%
that is, the average value of $f$ over a ball is preserved when switching to
its parent. Starting with two balls $C_{1}$ and $C_{2}$ of radii $r$, we can
build a sequence of their predecessors which end up with the same (large
enough) ball. This implies that the averages of $f$ in $C_{1}$ and $C_{2}$
are the same. Since $f$ is constant in $C_{1}$ and $C_{2}$, it follows that
the values of these constants are the same. It follows that $f\equiv \func{%
const}$ on $X$. If $\mu \left( X\right) =\infty $ then we obtain $f\equiv 0$%
. If $\mu \left( X\right) <\infty $ then using the orthogonality of $f$ to $%
f_{X}\equiv 1$ we obtain again that $f\equiv 0.$
\end{proof}

For any ball $B$ with $\rho \left( B\right) >0$ define the subspace $%
\mathcal{H}_{B}$ of $L^{2}$ as follows: 
\begin{equation}
\mathcal{H}_{B}=\limfunc{span}\{f_{C}:C\prec B\}.  \label{H-span}
\end{equation}%
By Theorem \ref{Eigenvalues-thm}, all non-zero functions in $\mathcal{H}_{B}$
are the eigenfunctions of $\mathcal{L}$ with eigenvalue $\frac{1}{\rho
\left( B\right) }$.

It follows from Lemma \ref{Lemchildren} that the functions $\left\{ \mathbf{1%
}_{C}:C\prec B\right\} $ are linearly independent and%
\begin{equation*}
\sum_{C\prec B}\mathbf{1}_{C}=\mathbf{1}_{B}.
\end{equation*}%
This entails 
\begin{equation}
\sum_{C\prec B}\mu (C)f_{C}=0  \label{Eigen-functions relation}
\end{equation}%
and that this is the only dependence between functions $f_{C}$. Hence, we
obtain that%
\begin{equation}
\dim \mathcal{H}_{B}=\deg \left( B\right) -1.  \label{HB}
\end{equation}%
Clearly, the spaces $\mathcal{H}_{B}$ and $\mathcal{H}_{B^{\prime }}$ are
orthogonal provided the balls $B,B^{\prime }$ are disjoint.

Define the set%
\begin{equation}
\Lambda :=\left\{ d_{\ast }\left( x,y\right) :x,y\in X,\ x\neq y\right\}
=\tbigcup_{x\in X}\Lambda \left( x\right) .  \label{Lambdastar}
\end{equation}
Theorem \ref{Eigenvalues-thm} implies the following.

\begin{corollary}
\label{L^2-Spectrum} The spectrum $\func{spec}\mathcal{L}$ of the Laplacian $%
\mathcal{L}$ is pure point and 
\begin{equation*}
\func{spec}\mathcal{L}=\overline{\left\{ \frac{1}{r}:r\in \Lambda \,\right\} 
}\cup \left\{ 0\right\} .
\end{equation*}%
\\[4pt]
The space $L^{2}(X,\mu )$ decomposes into an orthogonal sum of
finite-dimensional (subspaces of) eigenspaces as follows: if $\mu \left(
X\right) =\infty $ then 
\begin{equation*}
L^{2}(X,\mu )=\bigoplus_{\rho \left( B\right) >0}\mathcal{H}_{B},
\end{equation*}%
and if $\mu \left( X\right) <\infty $ then 
\begin{equation*}
L^{2}(X,\mu )=\left\{ \func{const}\right\} \oplus \bigoplus_{\rho \left(
B\right) >0}\mathcal{H}_{B}~.
\end{equation*}
\end{corollary}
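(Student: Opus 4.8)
The plan is to read off the entire statement from the eigenfunction system $\{f_C:C\in\mathcal K\}$ of Theorem~\ref{Eigenvalues-thm}, once it is organized into the blocks $\mathcal H_B$ of (\ref{H-span}). Three things must be checked: first, that $\mathcal H_B\perp\mathcal H_{B'}$ for \emph{all} distinct balls $B,B'$ of positive radius, not only the disjoint ones treated in the excerpt; second, that the closed sum of these blocks, together with the constants when $\mu(X)<\infty$, exhausts $L^2$; and third, that the set of eigenvalues is exactly $\{1/r:r\in\Lambda\}\cup\{0\}$, so that the pure point property and the spectral formula follow from the general spectral theorem for self-adjoint operators.

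First I would upgrade the orthogonality. The excerpt already gives $\mathcal H_B\perp\mathcal H_{B'}$ for disjoint $B,B'$, so by ultra-metricity the only remaining case is a nested pair $B\subsetneq B'$. Fix $f_C\in\mathcal H_B$ with $C\prec B$ and $f_{C'}\in\mathcal H_{B'}$ with $C'\prec B'$. Since the children of $B'$ partition $B'$ (Lemma~\ref{Lemchildren}) and $B$ can never strictly contain a child of $B'$, the ball $B$ lies inside a single child $C_0'\prec B'$, so $\func{supp}f_C\subseteq B\subseteq C_0'$. On $C_0'$ the function $f_{C'}=\tfrac{1}{\mu(C')}\mathbf 1_{C'}-\tfrac{1}{\mu(B')}\mathbf 1_{B'}$ is \emph{constant} (equal to $-1/\mu(B')$ if $C'\ne C_0'$, and to $1/\mu(C_0')-1/\mu(B')$ if $C'=C_0'$). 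Because $\int_X f_C\,d\mu=0$ by (\ref{fC}), integrating $f_C$ against this constant yields $\left(f_C,f_{C'}\right)=0$, so $\mathcal H_B\perp\mathcal H_{B'}$ in all cases.

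Next I would assemble the decomposition. Every $C\in\mathcal K$ with $C\ne X$ has a unique parent $B$, which satisfies $\rho(B)>0$ since it has a child, and $\{f_C:C\prec B\}$ spans $\mathcal H_B$ by (\ref{H-span}); hence $\func{span}\{f_C:C\in\mathcal K,\ C\ne X\}=\sum_{\rho(B)>0}\mathcal H_B$, which by the previous step is the orthogonal sum $\bigoplus_{\rho(B)>0}\mathcal H_B$. By Theorem~\ref{Eigenvalues-thm} the family $\{f_C:C\in\mathcal K\}$ is complete. If $\mu(X)=\infty$ then $\mathbf 1\notin L^2$, every $f_C$ has mean zero, and completeness gives $L^2=\bigoplus_{\rho(B)>0}\mathcal H_B$. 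If $\mu(X)<\infty$ the family additionally contains the top function $f_X\equiv\mathbf 1$, which is orthogonal to every $\mathcal H_B$ because $\left(\mathbf 1,f_C\right)=\int_X f_C\,d\mu=0$; completeness then yields $L^2=\{\func{const}\}\oplus\bigoplus_{\rho(B)>0}\mathcal H_B$.

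Finally I would identify the spectrum. On each $\mathcal H_B$ the operator acts as $\tfrac{1}{\rho(B)}\func{id}$ by Theorem~\ref{Eigenvalues-thm}, while the lemma preceding Definition~\ref{predecessor} shows that the values $\rho(B)$, as $B$ ranges over balls with $\rho(B)>0$, are precisely the elements of $\Lambda$ (cf.\ (\ref{Lambdastar})); thus the nonzero eigenvalues of $\mathcal L$ are exactly $\{1/r:r\in\Lambda\}$, and $0$ is an eigenvalue (the constants) when $\mu(X)<\infty$. Since $L^2$ has an orthonormal basis of eigenfunctions by the previous step, $\mathcal L$ has pure point spectrum, and for a self-adjoint operator this forces $\func{spec}\mathcal L$ to be the closure of its eigenvalue set, namely $\overline{\{1/r:r\in\Lambda\}}\cup\{0\}$; here the isolated summand $\{0\}$ is needed in the compact case, where $\Lambda$ is bounded, and is absorbed into the closure when $\Lambda$ is unbounded. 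I expect the nested-ball orthogonality of the second step to be the one genuinely new computation, with the remaining care concentrated in the last step, where one must argue that the accumulation points of the eigenvalues, and the point $0$, do belong to $\func{spec}\mathcal L$.
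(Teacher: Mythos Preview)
Your argument is correct and supplies exactly the details the paper leaves implicit (the paper simply declares the corollary as an immediate consequence of Theorem~\ref{Eigenvalues-thm}). One small shortcut: in your second step, the nested case $B\subsetneq B'$ can be handled without computation by observing that $\rho(B)<\rho(B')$, so $\mathcal H_B$ and $\mathcal H_{B'}$ lie in eigenspaces of the self-adjoint operator $\mathcal L$ for distinct eigenvalues and are therefore automatically orthogonal; your direct verification via $\int f_C\,d\mu=0$ is of course also fine.
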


\begin{example}
\RM Let $\left( X,d,\mu \right) $ and be as in Example \ref{p-adic-doubling}%
, that is, $X=\mathbb{Q}_{p}$, $d\left( x,y\right) =\left\Vert
x-y\right\Vert _{p}$ is the $p$-adic distance and $\mu $ be the Haar
measure. Set for some $\alpha >0$%
\begin{equation*}
\sigma \left( r\right) =\exp \left( -\left( \frac{p}{r}\right) ^{\alpha
}\right) ,
\end{equation*}%
so that by (\ref{d*p})%
\begin{equation*}
d_{\ast }\left( x,y\right) =\left( \frac{\left\Vert x-y\right\Vert _{p}}{p}%
\right) ^{\alpha }.
\end{equation*}%
Since the set of non-zero values of $\left\Vert x-y\right\Vert _{p}$ is $%
\left\{ p^{k}\right\} _{k\in \mathbb{Z}}$, it follows that the set $\Lambda $
of all non-zero values of $d_{\ast }\left( x,y\right) $ is 
\begin{equation*}
\Lambda =\left\{ p^{\alpha k}:k\in \mathbb{Z}\right\} \ .
\end{equation*}%
Hence, 
\begin{equation*}
\func{spec}\mathcal{L}=\left\{ p^{\alpha k}:k\in \mathbb{Z}\right\} \cup
\left\{ 0\right\} .
\end{equation*}
\end{example}

\begin{corollary}
\label{generality of the spectrum} Let $(X,d)$ be a non-compact, proper
ultra-metric space. Let $M\subset \lbrack 0,\infty )$ be any closed set
(unbounded, if $X$ contains at least one non-isolated point) that
accumulates at $0$. Then the following is true.

$\left( a\right) $ There exists a proper ultra-metric $d^{\prime }$ on $X$
that generates the same topology as $d$ and the isotropic Laplacian $%
\mathcal{L}^{\prime }$ of the triple $\left( d^{\prime },\mu ,\sigma _{\ast
}\right) $ has the spectrum $\func{spec}\mathcal{L}^{\prime }=M$.{\ }

$\left( b\right) $ Suppose in addition that there exists a partition of $X$
into $d$-balls that consists of infinitely many non-singletons. Then the
ultra-metric $d^{\prime }$ of part $\left( a\right) $ can be chosen so that
the collections of $d$-balls and $d^{\prime }$-balls coincide.
\end{corollary}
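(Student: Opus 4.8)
\emph{Reduction.} Both parts reduce to prescribing the set of distances of $d'$. Since we use $\sigma_{\ast}$, the intrinsic metric of $(d',\mu,\sigma_{\ast})$ is $d'$ itself, because $1/d_{\ast}(x,y)=\log\bigl(1/\sigma_{\ast}(d'(x,y))\bigr)=1/d'(x,y)$ by (\ref{d*}) and (\ref{invexp}). Hence Corollary \ref{L^2-Spectrum} gives $\func{spec}\mathcal L'=\overline{\{1/r:r\in\Lambda'\}}\cup\{0\}$ with $\Lambda'=\{d'(x,y):x\neq y\}$. Writing $N=\{1/m:m\in M,\ m>0\}$, the map $t\mapsto 1/t$ being a homeomorphism of $(0,\infty)$ shows that $N$ is closed in $(0,\infty)$, is unbounded above (as $M$ accumulates at $0$), and accumulates at $0$ precisely when $M$ is unbounded, i.e.\ (by the standing hypothesis) precisely when $X$ has a non-isolated point. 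Since $M$ accumulates at $0$, one checks readily that $\func{spec}\mathcal L'=M$ is equivalent to $\Lambda'\subseteq N$ together with $\Lambda'$ dense in $N$. So the whole task is to build a proper ultra-metric $d'$ (with the prescribed topology in (a), and the prescribed ball collection in (b)) whose distance set is dense in $N$.

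\emph{Part (b): set-up.} Here $d'$ must have the same balls as $d$, so I would encode $d'$ by a radius function on the tree $\mathcal T$ of $d$-balls. By Lemma \ref{Lemchildren} each ball $\ne X$ has a unique parent and each non-singleton ball has finitely many ($\ge 2$) children, and since a non-compact separable proper space is $\sigma$-compact, $\mathcal T$ is a countable, locally finite tree. To give an ultra-metric with exactly these balls is the same as choosing $\rho'$ on the non-singleton balls, strictly increasing along inclusion, tending to $0$ along every infinite descending chain (necessarily towards a non-isolated point) and to $\infty$ along every ascending chain; then $d'(x,y):=\rho'(B_{x,y})$, with $B_{x,y}$ the smallest $d$-ball containing $x,y$, is such a metric, its distance set is $\Lambda'=\{\rho'(B):B\text{ non-singleton}\}$, and the two limit conditions are exactly what force the same topology and properness. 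Fix a cofinal ascending spine $B^{(1)}\subsetneq B^{(2)}\subsetneq\cdots$ through a basepoint; the hypothesis furnishes a partition with infinitely many non-singleton pieces $D_1,D_2,\dots$, and a key observation is that these escape every compact set: only finitely many $D_j$ can lie in a fixed $B^{(i)}$, since a partition of the compact ball $B^{(i)}$ into (cl)open balls is finite. In particular the levels $\operatorname{level}(D_j)=\min\{i:D_j\subseteq B^{(i)}\}$ are unbounded.

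\emph{Part (b): the assignment.} Because the $D_j$ are pairwise disjoint they are pairwise incomparable, so their $\rho'$-values are mutually unconstrained and may be prescribed independently; crucially the new radius of a ball bears no relation to its old size, only to its position in $\mathcal T$. I would therefore fix a countable dense subset of $N$ and realise it as $\{\rho'(D_j)\}$ by a matching that sends large values to $D_j$ of high level and smaller values to $D_j$ of lower level, arranging that $\sup\{\rho'(D_j):\operatorname{level}(D_j)\le i\}<\infty$ for each $i$ (possible precisely because the levels are unbounded). One then sets the spine radii above these suprema, increasing to $\infty$, assigns every ancestor a value in $N$ above its children, and fills in the descendants inside each $D_j$ with values in $N$ below $\rho'(D_j)$, tending to $0$ along infinite descending chains. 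Existence of all these interpolating values uses only that $N$ is unbounded above and, where infinite descending chains occur, accumulates at $0$. I expect this simultaneous interpolation inside the closed set $N$ to be the main obstacle: one must match the depth of a ball with the amount of room left below the intended value in $N$ (a genuine constraint in the discrete case, where $N$ may be bounded below), but deep balls naturally carry large radii and shallow balls radii near $\inf N$, so the two scarcities of room match and the book-keeping goes through with no new idea. The resulting $\Lambda'$ lies in $N$ and contains the chosen dense set, hence $\overline{\Lambda'}=N$ and $\func{spec}\mathcal L'=M$.

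\emph{Part (a).} Since only the topology must be preserved, the plan is to first replace $d$ by a topologically equivalent proper ultra-metric $d''$ that does admit a partition into infinitely many non-singleton balls, and then apply (b) to $(X,d'',\mu)$. To manufacture $d''$ I distinguish the cases of an infinite, non-compact, separable, locally compact, totally disconnected space: if $X$ has infinitely many isolated points, group them into disjoint compact-open doubletons; if $X$ has a non-isolated point, extract from a neighbourhood basis infinitely many pairwise disjoint non-singleton compact-open sets; an infinite non-compact $X$ always meets at least one case. Declaring these sets to be small balls and completing the radii along a cofinal ascending chain covering $X$ yields such a $d''$, and (b) then delivers $d'$ with $\func{spec}\mathcal L'=M$. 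Note that the clause ``$M$ unbounded when $X$ has a non-isolated point'' is not an extra demand but a necessity: a non-isolated point yields an infinite descending chain whose radii tend to $0$, forcing eigenvalues tending to $\infty$.
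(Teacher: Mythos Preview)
Your reduction is correct and matches the paper exactly: with $\sigma_{\ast}$ the intrinsic metric is $d'$ itself, so by Corollary \ref{L^2-Spectrum} the problem becomes prescribing the closure of the value set of $d'$ to be $D=\{r>0:1/r\in M\}\cup\{0\}$ (your $N\cup\{0\}$). The paper's proof stops here and cites Bendikov--Krupski \cite[\S 2]{BenKru} for both (a) and (b): that reference supplies precisely the metric-space statement that a proper ultra-metric $d'$ with the required topology (resp.\ the required ball collection) and value-set closure $D$ exists.

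What you do beyond the paper is sketch a direct construction of $d'$, i.e.\ you attempt to reprove the cited result. Your overall plan for (b) --- encode $d'$ by a strictly monotone labelling of the tree of $d$-balls with values in $N$, using the given partition $\{D_j\}$ as an infinite antichain on which a dense subset of $N$ can be placed freely, then interpolate above and below --- is the right shape, and your observation that only finitely many $D_j$ lie below each spine level is the key finiteness ingredient. However, the passage you yourself flag (``the book-keeping goes through with no new idea'') is exactly the non-trivial part of \cite{BenKru}: one must simultaneously ensure that every ball receives a value in $N$ strictly between those of its parent and its children, that along infinite descending chains the values tend to $0$, \emph{and} that every point of $N$ (or a dense set) is hit. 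In the discrete case where $N$ may be order-isomorphic to $\mathbb{N}$, the depth of a ball constrains the values available to it, and matching depths to values across the whole tree is a genuine combinatorial argument that you have not carried out. Your reduction of (a) to (b) via an auxiliary $d''$ is a reasonable strategy but similarly leaves the construction of $d''$ at the level of a plausibility sketch. So: the spectral reduction is complete and correct; the metric construction is outlined correctly but not proved, and the paper handles it by citation rather than by the direct argument you attempt.
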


\begin{proof}
The set 
\begin{equation*}
D=\{x\in (0,\infty ):x^{-1}\in M\}\cup \{0\}
\end{equation*}%
is a closed, unbounded subset of $[0\,,\,\infty )$ containing $0.$ The the
statement $\left( a\right) $ is equivalent to the existence of a proper
ultra-metric $d^{\prime }$ on $X$ that generates the same topology as $d$
and such that the closure of the value set $\left\{ d^{\prime }\left(
x,y\right) \right\} _{x,y\in X}$ of that metric coincides with $D$. This
metric property is proved by Bendikov and Krupski~\cite[\S 2]{BenKru}. Given 
$\mu $, the Laplacian associated with the triple $(d^{\prime },\mu ,\sigma
_{\ast })$ has the required property by Corollary \ref{L^2-Spectrum}. The
proof of $\left( b\right) $ follows in the same way from a result of \cite[%
\S 2]{BenKru}.
\end{proof}

\subsection{The Dirichlet form and jump kernel}

Let us construct a Dirichlet form $\left( \mathcal{E},\func{dom}_{\mathcal{E}%
}\right) $ associated with the isotropic semigroup $\left\{ P^{t}\right\} $.
It is well known that if $P^{t}1=1$, which is the case here, then%
\begin{equation*}
\mathcal{E}\left( f,f\right) =\lim_{t\rightarrow 0}\frac{1}{2t}%
\int_{X}\int_{X}p_{t}\left( x,y\right) \left( f\left( x\right) -f\left(
y\right) \right) ^{2} \,d\mu \left( x\right) \,d\mu \left( y\right)
\end{equation*}%
and 
\begin{equation*}
\func{dom}_{\mathcal{E}}=\left\{ f\in L^{2}:\mathcal{E}\left( f,f\right)
<\infty \right\}
\end{equation*}%
(see \cite{FOT}). Using the identity (\ref{p=Nt}), we obtain that 
\begin{equation*}
\frac{p\left( t,x,y\right) }{t}\nearrow \int_{0}^{1/d_{\ast }(x,y)}N(x,\tau
)\,d\tau \ \ \text{as }t\searrow 0.
\end{equation*}%
Setting 
\begin{equation}
J(x,y):=\int_{0}^{1/d_{\ast }(x,y)}N(x,\tau )\,d\tau =\int_{d_{\ast }\left(
x,y\right) }^{\infty }\frac{1}{V\left( x,s\right) }\,\frac{ds}{s^{2}},
\label{Jxy}
\end{equation}%
we obtain by the monotone convergence theorem that, for all $f\in L^{2}$,%
\begin{equation*}
\mathcal{E}\left( f,f\right) =\frac{1}{2}\int_{X}\int_{X}\left( f\left(
x\right) -f\left( y\right) \right) ^{2}J\left( x,y\right) \,d\mu \left(
x\right)\, d\mu \left( y\right) .
\end{equation*}%
Note that $0<J\left( x,y\right) =J\left( y,x\right) <\infty $ for all $x\neq
y$, while $J\left( x,x\right) =\infty $.

The polarization identity implies then, for all $f,g\in \func{dom}_{\mathcal{%
E}}$ that%
\begin{equation}
\mathcal{E}\left( f,g\right) =\frac{1}{2}\int_{X}\int_{X}\left( f\left(
x\right) -f\left( y\right) \right) \left( g\left( x\right) -g\left( y\right)
\right) J\left( x,y\right) d\mu \left( x\right) d\mu \left( y\right) .
\label{Edef}
\end{equation}%
The function $J$ is called the \emph{jump kernel} of the Dirichlet form $%
\mathcal{E}$. We show here that it can be used also to describe the
generator $\mathcal{L}$ of $\left\{ P^{t}\right\} $. Recall that by the
theory of Dirichlet forms, the generator $\mathcal{L}$ has the following
equivalent definition: it is the self-adjoint operator in $L^{2}$ with $%
\func{dom}_{\mathcal{L}}\subset \func{dom}_{\mathcal{E}}$ such that 
\begin{equation*}
\left( \mathcal{L}f,g\right) =\mathcal{E}\left( f,g\right) \ 
\end{equation*}%
for all $f\in \func{dom}_{\mathcal{L}}$ and $g\in \func{dom}_{\mathcal{E}}$.

Denote by $\mathcal{V}_{r}$ the image of the operator $\mathrm{Q}_{r}$
(defined with respect to $d_{\ast }$), that is, the space of all $L^{2}$%
-functions that are constant on each ball of radius $r$. Set also 
\begin{equation*}
\mathcal{V}:=\tbigcup_{r>0}\mathcal{V}_{r}
\end{equation*}%
and observe that $\mathcal{V}$ is a linear subspace of $L^{2}$. Observe also
that the space $\mathcal{V}_{c}$ of all locally constant functions with
compact support is contained in $\mathcal{V}$.

\begin{theorem}
\label{Thm-Dirichlet form/Laplacian}The space $\mathcal{V}$ is dense in $%
L^{2}$, it is a subset of $\func{dom}_{\mathcal{L}}$ and, for any $f\in 
\mathcal{V}$,%
\begin{equation}
\mathcal{L}f\left( x\right) =\int_{X}\left( f\left( x\right) -f\left(
y\right) \right) J\left( x,y\right) d\mu \left( y\right) .  \label{Ldef}
\end{equation}
\end{theorem}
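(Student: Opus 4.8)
My plan is to reduce every assertion to a single layer $\mathcal{V}_{r}$, exploiting that $f\in\mathcal{V}_{r}$ is constant on each $d_{\ast}$-ball of radius $r$, so that $f(x)=f(y)$ whenever $d_{\ast}(x,y)\le r$. Density is immediate: for any $f\in L^{2}$ and $s>0$ I would note $\mathrm{Q}_{s}f\in\mathcal{V}_{s}\subset\mathcal{V}$, while $\mathrm{Q}_{s}f\to f$ in $L^{2}$ as $s\to0+$ by (\ref{sr0}). For the domain inclusion I would fix $f\in\mathcal{V}_{r}$; since a function constant on balls of radius $r$ is constant on all smaller balls, $\mathrm{Q}_{s}f=f$ for $s\le r$, so by (\ref{Ela}) we get $E_{\lambda}f=\mathrm{Q}_{1/\lambda}f=f$ for all $\lambda\ge1/r$. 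Hence the scalar measure $\lambda\mapsto(E_{\lambda}f,f)$ is supported in $[0,1/r]$ and $\int\lambda^{2}\,d(E_{\lambda}f,f)\le r^{-2}\|f\|_{L^{2}}^{2}<\infty$, which by the spectral characterization of $\func{dom}_{\mathcal{L}}$ gives $f\in\func{dom}_{\mathcal{L}}$; in fact $\mathcal{L}$ restricts to a bounded operator of norm $\le1/r$ on $\mathcal{V}_{r}$.

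For the formula I would write $Af(x)$ for the right-hand side of (\ref{Ldef}) and use the decisive fact that $(f(x)-f(y))J(x,y)$ vanishes on $\{d_{\ast}(x,y)\le r\}$, exactly where $J$ is singular. Substituting $J(x,y)=\int_{d_{\ast}(x,y)}^{\infty}\frac{ds}{V(x,s)\,s^{2}}$ from (\ref{Jxy}) and applying Tonelli, the absolute integral becomes $\int_{r}^{\infty}\frac{1}{V(x,s)\,s^{2}}\int_{\{r<d_{\ast}(x,\cdot)\le s\}}|f(x)-f(y)|\,d\mu(y)\,ds$, and bounding the inner integral by $V(x,s)\,|f(x)|+\sqrt{V(x,s)}\,\|f\|_{L^{2}}$ via Cauchy--Schwarz shows it is finite. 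Thus $Af(x)$ is finite and Fubini applies; interchanging the two integrals and using $\int_{B_{s}^{\ast}(x)}(f(x)-f(y))\,d\mu(y)=V(x,s)\bigl(f(x)-\mathrm{Q}_{s}f(x)\bigr)$ collapses the expression to $Af(x)=\int_{0}^{\infty}s^{-2}\bigl(f(x)-\mathrm{Q}_{s}f(x)\bigr)\,ds$. The same estimate makes $s\mapsto s^{-2}(\mathrm{id}-\mathrm{Q}_{s})f$ Bochner-integrable on $[r,\infty)$ into $L^{2}$, so this identity holds simultaneously pointwise and in $L^{2}$.

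It then remains to identify $\int_{0}^{\infty}s^{-2}(\mathrm{id}-\mathrm{Q}_{s})\,ds$ with $\mathcal{L}$ on $\mathcal{V}$. On an eigenfunction $f_{C}$ this is transparent: by the proof of Theorem~\ref{Eigenvalues-thm}, $\mathrm{Q}_{s}f_{C}=f_{C}$ for $s<\rho(B)$ and $\mathrm{Q}_{s}f_{C}=0$ for $s\ge\rho(B)$, with $B$ the parent of $C$, so $Af_{C}=f_{C}\int_{\rho(B)}^{\infty}s^{-2}\,ds=\rho(B)^{-1}f_{C}=\mathcal{L}f_{C}$. For general $f\in\mathcal{V}_{r}\subset\func{dom}_{\mathcal{L}}$ the change of variable $\lambda=1/s$ turns $\int_{0}^{\infty}s^{-2}(\mathrm{id}-\mathrm{Q}_{s})\,ds$ into $\int_{0}^{\infty}(\mathrm{id}-E_{\lambda})\,d\lambda=\int_{(0,\infty)}\lambda\,dE_{\lambda}=\mathcal{L}$, the middle step being Fubini in the spectral parameter. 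Equivalently one may close weakly: since $f\in\func{dom}_{\mathcal{L}}$ the Dirichlet-form identity $(\mathcal{L}f,g)=\mathcal{E}(f,g)$ holds for all $g\in\func{dom}_{\mathcal{E}}$, while expanding the symmetric double integral in (\ref{Edef}) by Fubini gives $(Af,g)=\mathcal{E}(f,g)$ as well, so $\mathcal{L}f=Af$ in $L^{2}$.

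I expect the one genuinely delicate point to be this final identification: reconciling the explicit pointwise integral $Af(x)$ with $\mathcal{L}f$, which a priori is only the $L^{2}$-class produced by functional calculus. Every interchange leans on the vanishing of the integrand near the diagonal to tame the singularity of $J$ (equivalently of $s^{-2}$ at $s=0$); granting that, the two Fubini steps -- the spatial one reducing $Af$ to the $\mathrm{Q}_{s}$-integral, and the spectral one identifying the result with $\mathcal{L}$ -- are the heart of the matter, while the density and domain assertions are routine.
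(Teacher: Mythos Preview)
Your proof is correct. Density and the domain inclusion are handled essentially as in the paper (via $\mathrm{Q}_r f\to f$ and the observation that the spectral measure of $f\in\mathcal{V}_r$ is supported in $[0,1/r]$, which is equivalent to the paper's remark that $\mathcal{L}\mathrm{Q}_r=\mathbf{1}_{[0,1/r)}(\mathcal{L})\,\mathcal{L}$ is bounded).

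The identification $\mathcal{L}f=Af$ is where you take a genuinely different route. The paper first proves the pointwise bound $\int_{\{d_{\ast}(x,\cdot)>r\}}J(x,y)\,d\mu(y)\le 1/r$ and uses it, via Cauchy--Schwarz, to show that $u(x)=\int|f(x)-f(y)|J(x,y)\,d\mu(y)$ lies in $L^{2}$ with $\|u\|_{L^{2}}^{2}\le r^{-1}\mathcal{E}(f,f)$; it then verifies $(\!Af,g)=\mathcal{E}(f,g)$ for all $g\in\func{dom}_{\mathcal{E}}$ by the symmetrization Fubini argument. You instead unfold $J$ via (\ref{Jxy}) and Fubini to obtain the representation $Af=\int_{0}^{\infty}s^{-2}(\mathrm{id}-\mathrm{Q}_{s})f\,ds$, observe that this is an honest Bochner integral in $L^{2}$ (the integrand vanishes for $s<r$ and has norm $\le 2s^{-2}\|f\|_{L^{2}}$), and then identify it with $\mathcal{L}f$ by the spectral layer-cake formula $\int_{0}^{\infty}(\mathrm{id}-E_{\lambda})\,d\lambda=\int_{[0,\infty)}\lambda\,dE_{\lambda}$. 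Your approach makes the spectral structure of $J$ completely explicit and bypasses the Dirichlet form entirely; the paper's approach is closer to standard Dirichlet-form methodology and would transfer more readily to jump kernels not built from a spectral resolution. The weak closure you mention at the end as an alternative is precisely the paper's argument.
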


\begin{proof}
That $\mathcal{V}$ is dense in $L^{2}$ follows from (\ref{sr0}). In fact, $%
\mathcal{V}_{c}$ is also dense in $L^{2}$, which follows from the fact that
all the eigenfunctions of $\mathcal{L}$ lie in $\mathcal{V}_{c}$.

By (\ref{Ela}) and (\ref{LEla}) we have $\mathrm{Q}_{r}=\mathbf{1}%
_{[0,1/r)}\left( \mathcal{L}\right) $. Therefore, $\mathcal{L}\mathrm{Q}_{r}$
is a bounded operator, which implies that $\func{dom}_{\mathcal{L}}\supset 
\mathcal{V}_{r}$ and, hence, $\func{dom}_{\mathcal{L}}\supset \mathcal{V}.$

Fix a function $f\in \mathcal{V}_{r}$ with $r>0$, set%
\begin{equation*}
u\left( x\right) =\int_{X}\left\vert f\left( x\right) -f\left( y\right)
\right\vert J\left( x,y\right) \, d\mu \left( y\right) .
\end{equation*}%
We show that $u\in L^{2}$. Observe that $f\left( x\right) =f\left( y\right) $
whenever $d_{\ast }\left( x,y\right) \leq r$. Hence, we can restrict the
integration to the domain $\left\{ d_{\ast }\left( x,y\right) >r\right\} $.
We have by the Cauchy-Schwarz inequality%
\begin{equation}
u^{2}\left( x\right) \leq \left( \int_{X}\left\vert f\left( x\right)
-f\left( y\right) \right\vert ^{2}J\left( x,y\right)\, d\mu \left( y\right)
\right) \left( \int_{\left\{ y:d_{\ast }\left( x,y\right) >r\right\}
}J\left( x,y\right) \, d\mu \left( y\right) \right) .  \label{u2}
\end{equation}%
Let us show that%
\begin{equation*}
\int_{\left\{ y:d_{\ast }\left( x,y\right) >r\right\} }J\left( x,y\right)
d\mu \left( y\right) \leq \frac{1}{r}.
\end{equation*}%
Indeed, by (\ref{Jxy}) and Fubini's theorem, the latter integral is equal to 
\begin{eqnarray*}
\int_{\left\{ y:d_{\ast }\left( x,y\right) >r\right\} }\int_{\left\{ s:s\geq
d_{\ast }\left( x,y\right) \right\} }^{\infty }\frac{1}{V\left( x,s\right) }%
\,\frac{ds}{s^{2}}\,d\mu \left( y\right) &=&\int_{r}^{\infty }\frac{ds}{%
s^{2}V\left( x,s\right) }\int_{\left\{ y:r<d_{\ast }\left( x,y\right) \leq
s\right\} }d\mu \left( y\right) \\
&=&\int_{r}^{\infty }\frac{V\left( x,s\right) -V\left( x,r\right) }{%
s^{2}V\left( x,s\right) }\,ds \\
&\leq &\int_{r}^{\infty }\frac{ds}{s^{2}}=\frac{1}{r}.
\end{eqnarray*}%
It follows from (\ref{u2}) that%
\begin{equation*}
\int_{X}u^{2}\,d\mu \leq \frac{1}{r}\,\mathcal{E}\left( f,f\right) .
\end{equation*}%
Since $f\in \func{dom}_{\mathcal{L}}\subset \func{dom}_{\mathcal{E}}$, we
obtain that $u\in L^{2}.$ In particular, $u\left( x\right) <\infty $ for
almost all $x\in X$. Consequently, for almost all $x\in X$, the function 
\begin{equation*}
y\mapsto \left( f(x)-f(y)\right) J(x,y)
\end{equation*}%
is in $L^{1}$, and its integral 
\begin{equation*}
v\left( x\right) =\int_{X}\left( f(x)-f(y)\right) J(x,y)\,d\mu (y)
\end{equation*}%
is an $L^{2}$ function. We need to verify that $\mathcal{L}f=v$. For that
purpose it suffices to verify that, for any $g\in \func{dom}_{\mathcal{E}}$,%
\begin{equation*}
\left( v,g\right) _{L^{2}}=\mathcal{E}\left( f,g\right) .
\end{equation*}%
Indeed, using Fubini's theorem, we obtain%
\begin{eqnarray*}
\left( v,g\right) _{L^{2}} &=&\int_{X}\int_{X}\left( f(x)-f(y)\right)
g\left( x\right) J(x,y)\,d\mu (y)\,d\mu \left( x\right) \\
&=&\int_{X}\int_{X}\left( f(y)-f(x)\right) g\left( y\right) J(y,x)\,d\mu
(x)\, d\mu \left( y\right) \\
&=&\frac{1}{2}\int_{X}\int_{X}\left( f\left( x\right) -f\left( y\right)
\right) \left( g\left( x\right) -g\left( y\right) \right) J\left( x,y\right)
\,d\mu \left( x\right)\, d\mu \left( y\right) \\
&=&\mathcal{E}\left( f,g\right) ,
\end{eqnarray*}%
which was to be proved.
\end{proof}

\subsection{The $L^{p}$-spectrum of the Laplacian}

It is known that any continuous symmetric Markov semigroup can be extended
to all spaces $L^{p},$ $1\leq p<\infty ,$ as a continuous contraction
semigroup. In particular, this is true for the semigroup $\left\{
P^{t}\right\} .$ We use the same notation for the extended semigroup, while
we denote by $\mathcal{L}_{p}$ its infinitesimal generator and by $\func{dom}%
_{\mathcal{L}_{p}}$ its domain in $L^{p}$.

\begin{theorem}
\label{Lp-Lq thm} For all $1\leq p<\infty $ we have%
\begin{equation*}
\func{spec}\mathcal{L}_{p}=\func{spec}\mathcal{L}_{2}\,.
\end{equation*}
\end{theorem}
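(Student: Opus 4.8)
The plan is to exploit a feature special to the ultra-metric setting: the spectral projectors of $\mathcal{L}$ are the averaging operators $\mathrm{Q}_{r}=\mathbf{1}_{[0,1/r)}(\mathcal{L})$, which are Markov operators and hence contractions on \emph{every} $L^{p}$, $1\leq p<\infty$. I would establish the two inclusions $\func{spec}\mathcal{L}_{2}\subseteq\func{spec}\mathcal{L}_{p}$ and $\func{spec}\mathcal{L}_{p}\subseteq\func{spec}\mathcal{L}_{2}$ separately.

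For $\func{spec}\mathcal{L}_{2}\subseteq\func{spec}\mathcal{L}_{p}$ I would use the explicit eigenfunctions. Each $f_{C}$ from \emph{(\ref{fC})} is bounded with compact support, hence lies in $L^{p}$ for all $p$, and the eigenrelation $P^{t}f_{C}=e^{-\lambda(C)t}f_{C}$ holds $\mu$-a.e.; since $\tfrac{f_{C}-P^{t}f_{C}}{t}=\tfrac{1-e^{-\lambda(C)t}}{t}f_{C}\to\lambda(C)f_{C}$ in $L^{p}$, we get $f_{C}\in\func{dom}_{\mathcal{L}_{p}}$ with $\mathcal{L}_{p}f_{C}=\lambda(C)f_{C}$. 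Thus every eigenvalue $1/r$, $r\in\Lambda$, lies in the point spectrum of $\mathcal{L}_{p}$, and since the spectrum is closed, $\overline{\{1/r:r\in\Lambda\}}\subseteq\func{spec}\mathcal{L}_{p}$. The value $0$ is also captured: if $\mu(X)<\infty$ the constants form an $L^{p}$-eigenfunction of eigenvalue $0$, while if $\mu(X)=\infty$ then non-compactness together with compactness of balls forces $\Lambda$ to be unbounded (else $X$ would be one ball, hence compact), so $0$ is an accumulation point of eigenvalues. By Corollary \ref{L^2-Spectrum} this already yields all of $\func{spec}\mathcal{L}_{2}$.

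The reverse inclusion is the heart of the matter, and the main obstacle is handling spectral parameters lying in the gaps of $\func{spec}\mathcal{L}_{2}$ on the positive real axis. The key device is the resolvent representation
\[
(\mathcal{L}-z)^{-1}=\int_{0}^{\infty}\frac{\mathrm{Q}_{r}}{(1-zr)^{2}}\,dr,
\]
which I would verify by applying both sides to the complete system of eigenfunctions (on an eigenfunction of eigenvalue $\nu$ the right-hand side produces the factor $(\nu-z)^{-1}$ by a one-line computation). For $z\notin[0,\infty)$ the scalar kernel has no pole on $r>0$ and $\int_{0}^{\infty}|1-zr|^{-2}\,dr<\infty$, so, using $\Vert\mathrm{Q}_{r}\Vert_{p\to p}\leq 1$, the Bochner integral converges in $L^{p}$-operator norm and exhibits $z$ in the resolvent set of $\mathcal{L}_{p}$. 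For $z_{0}\in(0,\infty)\setminus\func{spec}\mathcal{L}_{2}$, the point $1/z_{0}$ lies outside $\overline{\Lambda}$, so the ball partition does not change across an interval $I_{\varepsilon}=(1/z_{0}-\varepsilon,1/z_{0}+\varepsilon)$ and $r\mapsto\mathrm{Q}_{r}$ is constant there, equal to $\Pi:=\mathrm{Q}_{1/z_{0}}$. I would then split
\[
(\mathcal{L}-z_{0})^{-1}=\int_{[0,\infty)\setminus I_{\varepsilon}}\frac{\mathrm{Q}_{r}}{(1-z_{0}r)^{2}}\,dr+\Pi\left[\frac{1}{z_{0}(1-z_{0}r)}\right]_{1/z_{0}-\varepsilon}^{1/z_{0}+\varepsilon},
\]
where the first integrand is now bounded away from its pole and converges in $L^{p}$, while the boundary term is the bounded operator $\Pi$ times a finite scalar; a direct eigenfunction check confirms this bounded operator $T(z_{0})$ agrees with the resolvent.

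It remains to identify $T(z_{0})$ with $(\mathcal{L}_{p}-z_{0})^{-1}$. I would verify that $z\mapsto T(z)$ is norm-continuous near $z_{0}$ (dominated convergence for the integral term, analyticity of the scalar term) and that $T(z)=(\mathcal{L}_{p}-z)^{-1}$ for $z\notin[0,\infty)$, and then pass to the limit using closedness of $\mathcal{L}_{p}$: from $(\mathcal{L}_{p}-z)T(z)=\func{id}$ one gets $\mathcal{L}_{p}T(z)f=f+zT(z)f\to f+z_{0}T(z_{0})f$ while $T(z)f\to T(z_{0})f$, so $T(z_{0})f\in\func{dom}_{\mathcal{L}_{p}}$ and $(\mathcal{L}_{p}-z_{0})T(z_{0})=\func{id}$; passing to the limit in $T(z)(\mathcal{L}_{p}-z)=\func{id}$ on $\func{dom}_{\mathcal{L}_{p}}$ gives the reverse identity. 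Hence $z_{0}$ belongs to the resolvent set of $\mathcal{L}_{p}$, so $\func{spec}\mathcal{L}_{p}\subseteq\func{spec}\mathcal{L}_{2}$, and combined with the first inclusion the theorem follows.
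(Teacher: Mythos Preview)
Your proof is correct and follows essentially the same strategy as the paper: both exploit that the spectral projectors $E_{\lambda}=\mathrm{Q}_{1/\lambda}$ are Markov operators and hence contractions on every $L^{p}$, and both handle a real $\lambda_{0}$ in a spectral gap by integration by parts so that only the absolutely convergent kernel $(\lambda-\lambda_{0})^{-2}$ and finite boundary terms over the gap remain. Your Bochner-integral formula $\int_{0}^{\infty}\mathrm{Q}_{r}(1-zr)^{-2}\,dr$ is just the paper's formula after the change of variable $r=1/\lambda$, and your antiderivative term over $I_{\varepsilon}$ corresponds to the paper's boundary terms $E_{a}/(a-\lambda_{0})-E_{b}/(b-\lambda_{0})$; the only cosmetic difference is that you work with operator-valued integrals and add an explicit closedness argument to identify $T(z_{0})$ with the $L^{p}$-resolvent, whereas the paper proceeds via the duality pairing $(Rf,g)$ and leaves that identification implicit.
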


\begin{proof}
Since by Theorem \ref{Eigenvalues-thm} all the eigenfunctions of $\mathcal{L}%
_{2}$ are compactly supported, they belong also to $L^{p}$, which implies
that 
\begin{equation*}
\func{spec}\mathcal{L}_{2}\subset \func{spec}\mathcal{L}_{p}.
\end{equation*}%
To prove the opposite inclusion, we choose $\lambda _{0}\notin \func{spec}%
\mathcal{L}_{2}$ and show that $\lambda _{0}\notin \func{spec}\mathcal{L}%
_{p} $. For that purpose it suffices to show that the resolvent operator 
\begin{equation*}
R:=\left( \mathcal{L}_{2}-\lambda _{0}\func{id}\right) ^{-1}
\end{equation*}%
being a bounded operator in $L^{2}$, extends to a bounded operator in $L^{p}$%
. The latter amounts to showing that, for any functions $f\in L^{2}\cap
L^{p} $ and $g\in L^{2}\cap L^{q}$, where $q=\frac{p}{p-1}$ is the H\"{o}%
lder conjugate of $p$, the following inequality holds:%
\begin{equation*}
\left\vert \left( Rf,g\right) _{L^{2}}\right\vert \leq C\left\Vert
f\right\Vert _{L^{p}}\left\Vert g\right\Vert _{L^{q}}
\end{equation*}%
with a constant $C$ that does not depend on $f,g$.

Let us restrict to the case $\lambda _{0}>0$ (the case when $\lambda _{0}<0$
is simpler). Choose $a,b>0$ such that $a<\lambda _{0}<b$ and $\left[ a,b%
\right] $ is disjoint from $\func{spec}\mathcal{L}_{2}$. Using the spectral
decomposition (\ref{LEla}), we obtain%
\begin{equation*}
R=\int_{\func{spec}\mathcal{L}_{2}}\frac{dE_{\lambda }}{\lambda -\lambda _{0}%
}=\int_{[0,a)}\frac{dE_{\lambda }}{\lambda -\lambda _{0}}+\int_{[b,\infty )}%
\frac{dE_{\lambda }}{\lambda -\lambda _{0}},
\end{equation*}%
whence%
\begin{equation*}
\left( Rf,g\right) =\int_{[0,a)}\frac{d\left( E_{\lambda }f,g\right) }{%
\lambda -\lambda _{0}}+\int_{[b,\infty )}\frac{d\left( E_{\lambda
}f,g\right) }{\lambda -\lambda _{0}}.
\end{equation*}%
Integration by parts gives%
\begin{eqnarray*}
\left( Rf,g\right) &=&\frac{\left( E_{a}f,g\right) }{a-\lambda _{0}}%
+\int_{[0,a)}\frac{\left( E_{\lambda }f,g\right) }{\left( \lambda -\lambda
_{0}\right) ^{2}}\,d\lambda \\
&&-\frac{\left( E_{b}f,g\right) }{b-\lambda _{0}}+\int_{[b,\infty )}\frac{%
\left( E_{\lambda }f,g\right) }{\left( \lambda -\lambda _{0}\right) ^{2}}%
\,d\lambda .
\end{eqnarray*}%
Since $E_{\lambda }=\mathrm{Q}_{1/\lambda }$ is a Markov operator, it
standardly extends to a bounded operator in $L^{p}$ with the norm bound $1$,
so that%
\begin{equation*}
\left\vert \left( E_{\lambda }f,g\right) \right\vert \leq \left\Vert
f\right\Vert _{L^{p}}\left\Vert g\right\Vert _{L^{q}}.
\end{equation*}%
It follows that%
\begin{equation*}
\left\vert \left( Rf,g\right) \right\vert \leq \left\Vert f\right\Vert
_{L^{p}}\left\Vert g\right\Vert _{L^{q}}\left( \frac{1}{\lambda _{0}-a}+%
\frac{1}{b-\lambda _{0}}+\int_{[0,a)\cup \lbrack b,\infty )}\frac{d\lambda }{%
\left( \lambda -\lambda _{0}\right) ^{2}}\right) ,
\end{equation*}%
which finishes the proof since the quantity in the large parentheses is
finite.
\end{proof}

\smallskip

The last theorem of this section concerns a Liouville property. Note that
the semigroup $\left\{ P^{t}\right\} $ defined by (\ref{convexcomb}) acts on
the space $\mathcal{B}_{b}$ of bounded Borel functions as a contraction
semigroup, but it is not continuous unless $X$ is discrete. Define
convergence of sequence in $\mathcal{B}_{b}$ as a bounded pointwise
convergence, that is, a sequence $\left\{ f_{k}\right\} \subset \mathcal{B}%
_{b}$ converges in $\mathcal{B}_{b}$ to a function $f$ if all sequence $%
\left\{ f_{k}\right\} $ is uniformly bounded and $f_{k}\left( x\right)
\rightarrow f\left( x\right) $ as $k\rightarrow \infty $ for all $x\in X$.
Define a weak infinitesimal generator $\mathcal{L}_{\infty }$ of the
semigroup $\left\{ P^{t}\right\} $ in $\mathcal{B}_{b}$ as follows: the
domain $\func{dom}_{\mathcal{L}_{\infty }}\ $consists of functions $f\in 
\mathcal{B}_{b}$ such that the limit 
\begin{equation*}
\mathcal{L}_{\infty }f:=\lim_{t\rightarrow 0}\frac{f-P^{t}f}{t}
\end{equation*}%
exists in the sense of convergence in $\mathcal{B}_{b}$. This yields $%
\mathcal{L}_{\infty }f\in \mathcal{B}_{b}$ for any $f\in \func{dom}_{%
\mathcal{L}_{\infty }}$.

\begin{theorem}[Strong Liouville property]
\label{Liouville} Any Borel function $f:X\rightarrow \lbrack 0\,,\,\infty )$
that satisfies $Pf=f$ must be constant.

Consequently, $0$ is an eigenvalue of $\mathcal{L}_{\infty }$ of
multiplicity $1$.
\end{theorem}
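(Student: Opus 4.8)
The plan is to reduce the statement to a one‑dimensional averaging identity for the ball–averages $s\mapsto \mathrm{Q}_{s}f(x)$ and to show that this identity forces these averages to be independent of $s$. By Theorem~\ref{p-laplace}$(a)$ I may replace the triple $\left( d,\mu ,\sigma \right) $ by $\left( d_{\ast },\mu ,\sigma _{\ast }\right) $ without changing $P$; so I assume from the start that $\sigma =\sigma _{\ast }$, $\sigma _{\ast }(r)=e^{-1/r}$. This $\sigma $ is \emph{continuous} and strictly increasing with $\sigma (0+)=0$ and $\sigma (\infty )=1$, so the Stieltjes measure $d\sigma $ has no atoms and $\sigma (s)<1$ for every finite $s$. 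Fix $x\in X$ and put $g(s):=\mathrm{Q}_{s}f(x)\in [0,\infty )$. Applying $\mathrm{Q}_{s}$ to the harmonicity relation $Pf=f$, using the definition (\ref{convexcomb}) of $P$, Tonelli's theorem (all integrands are non‑negative), and the identity (\ref{QrQs}) in the form $\mathrm{Q}_{s}\mathrm{Q}_{r}=\mathrm{Q}_{\max \{s,r\}}$, I obtain pointwise
\begin{equation*}
g(s)=\mathrm{Q}_{s}(Pf)(x)=\int_{[0,\infty )}\mathrm{Q}_{\max \{s,r\}}f(x)\,d\sigma (r)=\sigma (s)\,g(s)+\int_{(s,\infty )}g(r)\,d\sigma (r),
\end{equation*}
that is,
\begin{equation*}
(1-\sigma (s))\,g(s)=\int_{(s,\infty )}g(r)\,d\sigma (r)\qquad \text{for all }s>0. \tag{$\ast$}
\end{equation*}

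The heart of the argument is to deduce from $(\ast)$ that $g$ is constant. Identity $(\ast)$ says that $g(s)$ is the $d\sigma $‑average of its own values on $(s,\infty )$, which I turn into constancy as follows. Set $H(s):=\int_{(s,\infty )}g\,d\sigma $; since $d\sigma $ is non‑atomic, $H$ is continuous, with $H(0+)=\int_{(0,\infty )}g\,d\sigma =Pf(x)=f(x)<\infty $, and $(\ast)$ reads $H(s)=(1-\sigma (s))\,g(s)$ (in particular $g(s)=H(s)/(1-\sigma (s))<\infty $ for all $s$). The tail–integral structure gives $dH=-g\,d\sigma $, and substituting $g=H/(1-\sigma )$ yields the first‑order linear Stieltjes equation $dH=-\tfrac{H}{1-\sigma }\,d\sigma $. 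The positive function $1-\sigma $ solves exactly the same equation, so a Gronwall‑type uniqueness estimate on each compact subinterval of $(0,\infty )$ (on which $1/(1-\sigma )$ is bounded) gives $H=c\,(1-\sigma )$ for a constant $c$, whence $g\equiv c$; letting $s\to 0+$ identifies $c=H(0+)=f(x)$. Thus $\mathrm{Q}_{s}f(x)=f(x)$ for all $s>0$ and all $x\in X$.

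The geometric conclusion is now immediate: for a fixed radius $s$, the function $\mathrm{Q}_{s}f$ is constant on each ball of radius $s$ (being the average of $f$ over that ball), while the identity just proved says its value at every center $x$ equals $f(x)$; since every point of a ball is a center, $f$ is constant on each ball of radius $s$. Given distinct $x,y$, taking $s=d_{\ast }(x,y)>0$ and noting $y\in B_{s}(x)$ gives $f(x)=f(y)$, so $f$ is constant on $X$. For the final assertion, constants satisfy $P^{t}1=1$, so $\mathcal{L}_{\infty }1=0$ and $0$ is an eigenvalue. Conversely, if $f\in \func{dom}_{\mathcal{L}_{\infty }}$ with $\mathcal{L}_{\infty }f=0$, the Dynkin identity $P^{t}f-f=\int_{0}^{t}P^{s}\mathcal{L}_{\infty }f\,ds$ forces $P^{t}f=f$, in particular $Pf=f$; as $f$ is bounded, $f+c\geq 0$ for a suitable constant $c$ and $P(f+c)=f+c$, so $f+c$ is a non‑negative $P$‑harmonic function and hence constant by the first part. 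Therefore the $0$‑eigenspace consists precisely of the constants, i.e. the multiplicity is $1$.

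The main obstacle I expect is the passage from the averaging identity $(\ast)$ to the constancy of $g$: one has to control the solution near $s=\infty $, where $1-\sigma (s)\to 0$, and one cannot invoke $\mathrm{Q}_{\infty }$ or any $L^{2}$/integrability property, since $f$ is only assumed Borel and non‑negative. Passing first to the continuous $\sigma _{\ast }$ via Theorem~\ref{p-laplace}$(a)$ is exactly what removes the atoms and makes the Stieltjes/Gronwall uniqueness argument clean; everything else (deriving $(\ast)$ and the geometric step) is routine bookkeeping with (\ref{convexcomb}), (\ref{QrQs}) and the definition of $\mathrm{Q}_{s}$.
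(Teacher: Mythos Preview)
Your proof is correct and shares the paper's overall scaffold: pass to $\sigma_*$, derive the key identity $(1-\sigma(s))\,\mathrm{Q}_sf(x)=\int_{(s,\infty)}\mathrm{Q}_rf(x)\,d\sigma(r)$ (the paper's equation (\ref{1-s})), and conclude that $s\mapsto \mathrm{Q}_sf(x)$ is constant. Where you diverge is in this last step. The paper exploits a structural fact you do not use: by Lemma~\ref{LemLam}, the value set $\Lambda(x)$ is discrete in $(0,\infty)$, so $s\mapsto \mathrm{Q}_sf(x)$ is a step function; writing $(\ast)$ at two consecutive jump radii $r_k<r_{k+1}$ and subtracting gives $\mathrm{Q}_{r_k}f(x)=\mathrm{Q}_{r_{k+1}}f(x)$ by one line of algebra, with no analysis beyond finite telescoping. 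Your ODE/Gronwall route (equivalently, changing variables to $u=\sigma(s)$ and showing the absolutely continuous function $H/(1-\sigma)$ has a.e.\ derivative zero) is valid and has the virtue of not relying on the discreteness of $\Lambda(x)$, but it is analytically heavier than the paper's combinatorial argument. For the second statement, the paper avoids citing Dynkin's identity as a black box: it verifies directly that $P^s$ preserves bounded pointwise limits, so $\frac{P^sf-P^{s+t}f}{t}\to 0$, and combines this with the continuity of $s\mapsto P^sf(x)$ to conclude constancy; your appeal to Dynkin is correct in spirit but skips exactly these small verifications.
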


\begin{proof}
Since $P$ and $\mathrm{Q}_{r}$ commute, we obtain from $f=Pf$ and 
\begin{equation}
Pf=\int_{0}^{\infty }\mathrm{Q}_{s}f\,d\sigma _{\ast }(s),
\label{Harmonic-1}
\end{equation}%
that, for all $r\geq 0$,%
\begin{equation*}
\mathrm{Q}_{r}f=P\mathrm{Q}_{r}f=\int_{[0,\infty )}\mathrm{Q}_{s}\mathrm{Q}%
_{r}f\,d\sigma _{\ast }\left( s\right) .
\end{equation*}%
Observing that 
\begin{equation*}
\mathrm{Q}_{s}\mathrm{Q}_{r}=\mathrm{Q}_{\max \left( r,s\right) },
\end{equation*}
we obtain%
\begin{equation*}
\mathrm{Q}_{r}f=\int_{[0,r)}\mathrm{Q}_{r}f~d\sigma _{\ast }\left( s\right)
+\int_{[r,\infty )}\mathrm{Q}_{s}f\,d\sigma _{\ast }\left( s\right) .
\end{equation*}%
The first integral here is equal to $\sigma _{\ast }\left( r\right) \mathrm{Q%
}_{r}f$, which implies%
\begin{equation}
\left( 1-\sigma _{\ast }\left( r\right) \right) \mathrm{Q}%
_{r}f=\int_{[r,\infty )}\mathrm{Q}_{s}f\,d\sigma _{\ast }\left( s\right) .
\label{1-s}
\end{equation}%
Fix some $x\in X$. By Lemma \ref{LemLam}, the set $\Lambda \left( x\right) $
of all values $d_{\ast }\left( x,y\right) $ for $y\neq x$ has no
accumulation point in $(0,+\infty )$. Choose $r_{0}$ as follows: if $\Lambda
\left( x\right) $ does not accumulate to $0$, then $r_{0}=0$, and if $%
\Lambda \left( x\right) $ accumulates at $0$ then $r_{0}$ is any value from $%
\Lambda \left( x\right) $. In the both cases the set $\Lambda \left(
x\right) \cap (r,\infty )$ consists of a (finite or infinite) sequence $%
r_{1}<r_{2}<...$ that converges to $\infty $ in the case when it is
infinite. Applying (\ref{1-s}) to $r=r_{k}$ and $r=r_{k+1}$ instead of $r$,
where $k\geq 0$, we obtain%
\begin{eqnarray*}
\left( 1-\sigma _{\ast }\left( r_{k}\right) \right) \mathrm{Q}%
_{r_{k}}f\left( x\right) -\left( 1-\sigma _{\ast }\left( r_{k+1}\right)
\right) \mathrm{Q}_{r_{k+1}}f\left( x\right) &=&\int_{[r_{k},r_{k+1})}%
\mathrm{Q}_{s}f\left( x\right) \,d\sigma _{\ast }\left( s\right) \\
&=&\mathrm{Q}_{r_{k}}f\left( x\right) \left( \sigma _{\ast }\left(
r_{k+1}\right) -\sigma _{\ast }\left( r_{k}\right) \right) ,
\end{eqnarray*}%
whence it follows that%
\begin{equation*}
\left( 1-\sigma _{\ast }\left( r_{k+1}\right) \right) \mathrm{Q}%
_{r_{k}}f\left( x\right) =\left( 1-\sigma _{\ast }\left( r_{k+1}\right)
\right) \mathrm{Q}_{r_{k+1}}f\left( x\right)
\end{equation*}%
and, hence,%
\begin{equation*}
\mathrm{Q}_{r_{k}}f\left( x\right) =\mathrm{Q}_{r_{k+1}}f\left( x\right) .
\end{equation*}%
Consequently, we obtain that 
\begin{equation*}
\mathrm{Q}_{r_{k}}f\left( x\right) =\mathrm{Q}_{r_{0}}f\left( x\right) \ \ 
\text{for all }k\geq 1.
\end{equation*}%
Since $r_{0}$ can be chosen arbitrarily close to $0$, we obtain that $%
\mathrm{Q}_{r}f\left( x\right) $ does not depend on $r$. For any two points $%
x,y\in X$, we have $\mathrm{Q}_{r}f\left( x\right) =\mathrm{Q}_{r}f\left(
y\right) $ for $r\geq d_{\ast }\left( x,y\right) $. Therefore, the function $%
\mathrm{Q}_{r}f\left( x\right) $ is constant both in $r$ and $x$. It follows
from (\ref{Harmonic-1}) that $f=Pf$ is also a constant.

For the second statement of the theorem, $0$ is an eigenvalue of $\mathcal{L}%
_{\infty }$ because $\mathcal{L}_{\infty }1=0$. Assume that $\mathcal{L}%
_{\infty }f=0$ and prove that $f\equiv \func{const}$, which will imply that
the multiplicity of $0$ is $1$. By assumption we have $f\in \mathcal{B}_{b}$
and%
\begin{equation*}
\frac{f-P^{t}f}{t}\overset{\mathcal{B}_{b}}{\longrightarrow }0\ \text{as}\
t\rightarrow 0.
\end{equation*}%
Since the family $\left\{ \frac{f-P_{t}f}{t}\right\} _{t>0}$ is uniformly
bounded, we obtain by the dominated convergence theorem that, for any $r\geq
0$, 
\begin{equation*}
\mathrm{Q}_{r}\left( \frac{f-P^{t}f}{t}\right) \overset{\mathcal{B}_{b}}{%
\longrightarrow }0\ \text{as }t\rightarrow 0,
\end{equation*}%
which in turn implies that, for all $s\geq 0$,%
\begin{equation*}
\frac{P^{s}f-P^{s+t}f}{t}=P^{s}\left( \frac{f-P^{t}f}{t}\right) \overset{%
\mathcal{B}_{b}}{\longrightarrow }0\ \text{as }t\rightarrow 0.
\end{equation*}%
It follows that, for any $x\in X$, the function $s\mapsto P^{s}f\left(
x\right) $ has derivative $0$ and, hence, is constant. It follows that $f=Pf$%
, and by the first statement of the theorem, we conclude that $f=\func{const}%
.$
\end{proof}

\section{Moments of the Markov process}

\label{moments}\setcounter{equation}{0}Let $\{\mathcal{X}_{t}\}$ be the
Markov process associated with the semigroup $\{P_{t}\}.$ For any $\gamma
>0, $ the moment of order $\gamma $ of the process is defined as%
\begin{equation*}
M_{\gamma }(x,t)=\mathbb{E}_{x}\left( d_{\ast }(x,\mathcal{X}_{t})^{\gamma
}\right) ,
\end{equation*}%
where $\mathbb{E}_{x}$ is expectation with respect to the probability
measure on the trajectory space of $\{\mathcal{X}_{t}\}$ that governs the
process starting at $x$. In terms of the heat kernel $p(t,x,y)$ the moment
is given by%
\begin{equation}
M_{\gamma }(x,t)=\int_{X}d_{\ast }(x,y)^{\gamma }p(t,x,y)\,d\mu (y).
\label{M-p-identity}
\end{equation}%
The aim of this section is to estimate $M_{\gamma }(x,t)$ as a function of $%
t $ and $\gamma .$

Let us start with two lemmas. We use the intrinsic volume function (\ref%
{volume}), that is%
\begin{equation*}
V\left( x,r\right) =\mu \left( B_{r}^{\ast }\left( x\right) \right)
\end{equation*}%
and its average moment function of order $\gamma $, that is 
\begin{equation*}
R_{\gamma }(x,\tau )=\frac{1}{V(x,\tau )}\int_{(0,\tau ]}r^{\gamma
}\,dV(x,r).
\end{equation*}

\begin{lemma}
\label{M-R-identity} For all $x\in X$, $t>0$ and $\gamma >0$, 
\begin{equation*}
M_{\gamma }(x,t)=t\int_{0}^{\infty }R_{\gamma }\!\left( x,\frac{1}{\tau }%
\right) e^{-\tau t}\,d\tau =\int_{0}^{\infty }R_{\gamma }\!\left( x,\frac{t}{%
s}\right) e^{-s}\,ds.
\end{equation*}
\end{lemma}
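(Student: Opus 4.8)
The plan is to substitute the heat-kernel representation (\ref{p=Nt}) into the defining formula (\ref{M-p-identity}) for the moment and then interchange the order of integration. Since $N(x,\tau)=1/V(x,1/\tau)$ by Definition \ref{def-spectral-d} together with (\ref{volume}), the identity (\ref{p=Nt}) can be written as
\begin{equation*}
p(t,x,y)=t\int_{0}^{1/d_{\ast}(x,y)}\frac{e^{-\tau t}}{V(x,1/\tau)}\,d\tau .
\end{equation*}
Inserting this into $M_{\gamma}(x,t)=\int_{X}d_{\ast}(x,y)^{\gamma}p(t,x,y)\,d\mu(y)$ produces a double integral over the set $\{(y,\tau):\tau>0,\ \tau<1/d_{\ast}(x,y)\}$ whose integrand is manifestly nonnegative. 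Hence Tonelli's theorem applies and the interchange is justified with no extra integrability hypothesis; both sides may a priori be $+\infty$, and the identity then holds as an equality in $[0,\infty]$.

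After the swap I would fix $\tau$ and evaluate the inner integral in $y$. The constraint $\tau<1/d_{\ast}(x,y)$ is $d_{\ast}(x,y)<1/\tau$, so the integration runs over the ball of $d_{\ast}$-radius $1/\tau$ about $x$, and rewriting the integral against the distribution $V(x,\cdot)$ gives
\begin{equation*}
\int_{\{y:d_{\ast}(x,y)<1/\tau\}}d_{\ast}(x,y)^{\gamma}\,d\mu(y)=\int_{(0,1/\tau]}r^{\gamma}\,dV(x,r)=R_{\gamma}(x,1/\tau)\,V(x,1/\tau),
\end{equation*}
the last equality being exactly the definition of $R_{\gamma}$. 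Because $N(x,\tau)V(x,1/\tau)=1$, the weight $V(x,1/\tau)$ cancels the factor $N(x,\tau)=1/V(x,1/\tau)$ coming from the heat kernel, and one is left with
\begin{equation*}
M_{\gamma}(x,t)=t\int_{0}^{\infty}R_{\gamma}(x,1/\tau)\,e^{-\tau t}\,d\tau ,
\end{equation*}
which is the first asserted identity. The second identity then follows at once from the substitution $s=\tau t$, which turns $t\,e^{-\tau t}\,d\tau$ into $e^{-s}\,ds$.

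The only delicate point is the open-versus-closed ball discrepancy: the region above is the open ball $\{d_{\ast}(x,y)<1/\tau\}$, whereas $V(x,1/\tau)=\mu(B_{1/\tau}^{\ast}(x))$ and the integral $\int_{(0,1/\tau]}r^{\gamma}\,dV(x,r)$ both refer to the closed ball, i.e. include the sphere $d_{\ast}(x,y)=1/\tau$. By Lemma \ref{LemLam} the value set $\Lambda(x)$ has no accumulation point in $(0,\infty)$ and is therefore countable, so the set of $\tau>0$ with $1/\tau\in\Lambda(x)$ is countable and thus Lebesgue-null. For every other $\tau$ the sphere of radius $1/\tau$ about $x$ is empty, the open and closed balls coincide, and so the discrepancy alters the $\tau$-integrand only on a null set and leaves the integral unchanged. (The point $y=x$ contributes nothing, since $d_{\ast}(x,x)^{\gamma}=0$, so whether $r=0$ is included is immaterial.) This is essentially the sole obstacle; the remainder is the mechanical Tonelli interchange followed by a change of variables.
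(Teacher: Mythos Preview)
Your proof is correct and follows essentially the same route as the paper: insert the heat-kernel representation (\ref{p=Nt}), swap the order of integration (the paper says ``Fubini'', you say ``Tonelli'' with nonnegativity), and identify the inner integral as $R_{\gamma}(x,1/\tau)\,V(x,1/\tau)$. You are in fact more careful than the paper about the open-versus-closed boundary in the range $(0,1/\tau)$ versus $(0,1/\tau]$, which the paper's proof passes over silently; your Lemma~\ref{LemLam} argument disposing of this on a $\tau$-null set is a welcome addition.
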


\begin{proof}
Using the equations (\ref{M-p-identity}) and (\ref{pform}), as well as the
Definition \ref{def-spectral-d} of the spectral distribution function in
terms of the volume function, we obtain%
\begin{eqnarray*}
M_{\gamma }(x,t) &=&\int_{X}d_{\ast }(x,y)^{\gamma }\,p(t,x,y)\,d\mu (y) \\
&=&\int_{(0\,,\,\infty )}r^{\gamma }\left( t\int_{0}^{1/r}N(x,\tau
)\,e^{-\tau t}\,d\tau \right) \,dV(x,r) \\
&=&\int_{0}^{\infty }\left( \int_{(0\,,\,1/\tau )}\frac{r^{\gamma }}{%
V(x,1/\tau )}\,dV(x,r)\right) t\,e^{-\tau t}\,d\tau =\int_{0}^{\infty
}R_{\gamma }\!\left( x,\frac{1}{\tau }\right) t\,e^{-\tau t}\,d\tau .
\end{eqnarray*}%
In the 3rd identity, we have used Fubini's theorem.
\end{proof}

The volume function $r\mapsto V(x,r)$ non-decreasing. In view of its
relation with the spectral distribution function (see Definition \ref%
{def-spectral-d}) it is a step function whose shape can be understood from
figures \ref{pic1} -- \ref{pic3}. The function varies from $0$ to $\mu (X)$.
In the compact case, $V(x,r)=\mu (X)$ for all $r\geq r_{\max }^{\ast
}=r_{\max }^{\ast }(x)$, the largest value in $\Lambda (x)$ (see (\ref%
{Lambda})). When $x$ is isolated, $V(x,r)=\mu \{x\}$ for all $0\leq
r<r_{0}^{\ast }=r_{0}^{\ast }(x)\,$, the smallest positive value in $\Lambda
(x)$.

\begin{lemma}
\label{volume bounds} For any given $x\in X$ and $\gamma >0,$ the following
properties hold.

\begin{enumerate}
\item[\RM (a)] The function $\tau \mapsto R_{\gamma }(x,\tau)$ is
non-decreasing.

If $X$ is compact $R_{\gamma }( x,\tau) = R_{\gamma
}\left(x,r_{\max}^{\ast}(x)\right)$ for all $\tau \geq r_{\max}^{\ast}(x)$.

If $X$ is discrete and infinite, $R_{\gamma }( x,\tau) = R_{\gamma
}\left(x,r_{0}^{\ast}(x)\right)$ for all $0 < \tau \leq r_{0}^{\ast}(x)$.

\item[\RM (b)] For all $\tau >0,$ we have%
\begin{equation*}
R_{\gamma }(x,\tau) \leq \tau^{\gamma }
\end{equation*}%
and, if the volume function $r\mapsto V(x,r)$ satisfies the reverse doubling
property, then there exists a constant $c >0,$ such that 
\begin{equation}  \label{lowerb}
R_{\gamma }( x,\tau) \geq c\, \tau^{\gamma }
\end{equation}%
for all $\tau>0$. In the non-discrete compact case, if the volume function
just satisfies the reverse doubling property at zero, \emph{(\ref{lowerb})}
holds for all $0 < \tau < r_{\max}^{\ast}(x)$. In the discrete infinite
case, if the volume function just satisfies the reverse doubling property at
infinity, \emph{(\ref{lowerb})} holds for all $\tau > r_{0}^{\ast}(x)$.
\end{enumerate}
\end{lemma}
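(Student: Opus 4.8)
The plan is to view $R_\gamma(x,\cdot)$ as the average of $r^\gamma$ against the Stieltjes measure $dV(x,\cdot)$ cut off at $\tau$ and renormalised by the full mass $V(x,\tau)$. Since by Lemma~\ref{LemLam} the set $\Lambda(x)$ has no accumulation point in $(0,\infty)$, the function $r\mapsto V(x,r)$ is a right-continuous step function whose jumps sit exactly on $\Lambda(x)$; hence both the numerator $\int_{(0,\tau]}r^\gamma\,dV(x,r)$ and $R_\gamma(x,\cdot)$ are step functions, constant on every interval disjoint from $\Lambda(x)$. The two constancy assertions of part~(a) are read off from this: in the compact case $\mu(X)<\infty$ and $\Lambda(x)$ has a largest element $r_{\max}^*$, so for $\tau\ge r_{\max}^*$ neither $V(x,\tau)=\mu(X)$ nor the numerator changes; in the discrete infinite case $\{x\}$ is open, $r_0^*$ is the least element of $\Lambda(x)$, and no jump of $V$ occurs below it.

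For the monotonicity in part~(a) I would compare two radii $\tau_1<\tau_2$ directly. With $W_j=V(x,\tau_j)$, $I_j=\int_{(0,\tau_j]}r^\gamma\,dV$, $\Delta W=W_2-W_1=\int_{(\tau_1,\tau_2]}dV\ge 0$ and $\Delta I=I_2-I_1=\int_{(\tau_1,\tau_2]}r^\gamma\,dV$, the claim $I_1/W_1\le I_2/W_2$ reduces after cross-multiplication to $I_1\,\Delta W\le W_1\,\Delta I$. This is forced by two one-sided bounds: on $(0,\tau_1]$ one has $r\le\tau_1$, so $I_1\le\tau_1^\gamma\int_{(0,\tau_1]}dV\le\tau_1^\gamma W_1$ (the last step simply discards the possible atom $\mu(\{x\})$ contained in $W_1$), while on $(\tau_1,\tau_2]$ one has $r>\tau_1$, so $\Delta I\ge\tau_1^\gamma\,\Delta W$. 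Multiplying the first by $\Delta W$ gives $I_1\,\Delta W\le\tau_1^\gamma W_1\,\Delta W\le W_1\,\Delta I$, as required.

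The upper bound of part~(b) is just the first of these one-sided estimates: $\int_{(0,\tau]}r^\gamma\,dV\le\tau^\gamma\int_{(0,\tau]}dV\le\tau^\gamma V(x,\tau)$, i.e.\ $R_\gamma(x,\tau)\le\tau^\gamma$. The lower bound is the crux, and it is where reverse doubling enters. The idea is to retain only the mass of $dV$ on the annulus $(\delta\tau,\tau]$, where $r^\gamma\ge(\delta\tau)^\gamma$:
\begin{equation*}
\int_{(0,\tau]}r^\gamma\,dV\;\ge\;(\delta\tau)^\gamma\bigl(V(x,\tau)-V(x,\delta\tau)\bigr).
\end{equation*}
Reverse doubling of $r\mapsto V(x,r)$ means precisely $V(x,\tau)\ge 2V(x,\delta\tau)$, so $V(x,\tau)-V(x,\delta\tau)\ge\tfrac12 V(x,\tau)$, and dividing through by $V(x,\tau)$ yields $R_\gamma(x,\tau)\ge\tfrac12\delta^\gamma\tau^\gamma$; thus $c=\delta^\gamma/2$ works.

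The restricted versions follow by running this last computation only for those $\tau$ at which the required inequality $V(x,\tau)\ge 2V(x,\delta\tau)$ is supplied: reverse doubling at zero covers small $\tau$, i.e.\ $0<\tau<r_{\max}^*$ in the non-discrete compact case, and reverse doubling at infinity covers large $\tau$, i.e.\ $\tau>r_0^*$ in the discrete infinite case. I expect the only real care to be in the bookkeeping around the center atom $\mu(\{x\})$ and the endpoints of these ranges; the reverse doubling step absorbs the atom automatically, since it is a statement about the full volume $V(x,\cdot)$ rather than about the annular mass alone.
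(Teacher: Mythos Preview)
Your argument is correct. The only genuine difference from the paper is in the monotonicity step of part~(a): the paper integrates by parts to rewrite
\[
R_{\gamma}(x,\tau)=\int_{(0,\tau]}\Bigl(1-\frac{V(x,s)}{V(x,\tau)}\Bigr)\,ds^{\gamma},
\]
from which monotonicity in $\tau$ is immediate because the integrand is pointwise non-decreasing in $\tau$. You instead compare two radii directly via the cross-multiplication $I_1\,\Delta W\le W_1\,\Delta I$, using the trivial bounds $I_1\le\tau_1^{\gamma}W_1$ and $\Delta I\ge\tau_1^{\gamma}\,\Delta W$. Your route avoids any regularity issues in the integration by parts and handles the possible atom at $x$ transparently; the paper's formula, on the other hand, gives a clean closed expression for $R_\gamma$ that one might reuse elsewhere. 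For part~(b) both proofs are identical: restrict the integral to the annulus $(\delta\tau,\tau]$ and invoke reverse doubling to get $c=\delta^{\gamma}/2$.
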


\begin{proof}
For the first part of (a), we integrate by parts: 
\begin{equation*}
R_{\gamma }(x,\tau) =\frac{1}{V(x,\tau )}\left( \tau^{\gamma}\, V(x,\tau ) -
\int_{(0\,,\,\tau ]}V(x,s)\,ds^{\gamma }\right) = \int_{(0\,,\,\tau ]}\left(
1-\frac{V(x,s)}{V(x,\tau )}\right)\, ds^{\gamma },
\end{equation*}%
whence $\tau \mapsto \mathcal{R}_{\gamma }\left(x,\tau \right) $ is
non-decreasing.

The second part (a) is straightforward.

Regarding (b), the general upper bound on $R_{\gamma }(x,\tau )$ is obvious.
If the volume function satisfies the reverse doubling property, then in the
respective range, 
\begin{eqnarray*}
R_{\gamma }(x,\tau ) &\geq &\frac{1}{V(x,\tau )}(\delta \tau )^{\gamma
}\left( V(x,\tau )-V(x,\delta \tau )\right) \\
&=&(\delta \tau )^{\gamma }\left( 1-\frac{V(x,\delta \tau )}{V(x,\tau )}%
\right) \geq \delta ^{\gamma }(1-\kappa )\tau ^{\gamma }=c\,\tau ^{\gamma }
\end{eqnarray*}%
for suitable constants $0<\kappa ,c<1$.
\end{proof}

Now, in order to estimate the moment function $t\mapsto M_{\gamma }(x,t)$,
we need to estimate a Laplace-type integral as given by the formula of Lemma %
\ref{M-R-identity}. We will treat such estimates in the two technical
Propositions \ref{Suppl1} and \ref{Suppl2} at the end of this section.
Before that, in the next three theorems, we anticipate the statements of the
results regarding the moment function.

\begin{theorem}
\label{Moments-non-compact} Assume that $(X,d)$ is non-compact and has no
isolated points. Then the following properties hold.

\begin{enumerate}
\item[\RM (1)] For all $x \in X$, $t>0$ and $0<\gamma <1,$ 
\begin{equation*}
M_{\gamma }(x,t)\leq \frac{t^{\gamma }}{1-\gamma }.
\end{equation*}

\item[\RM (2)] If for some $x\in X$, the volume function satisfies the
reverse doubling property, then for any $0<\gamma <1,$ 
\begin{equation*}
M_{\gamma }( x,t) \geq \frac{c }{1-\gamma }t^{\gamma },
\end{equation*}%
for all $x,t>0$ and some $c >0$. Moreover,%
\begin{equation*}
M_{\gamma }\left( z,t\right) =\infty ,
\end{equation*}%
for all $z,$ $t>0$ and $\gamma \geq 1.$
\end{enumerate}
\end{theorem}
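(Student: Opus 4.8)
The plan is to feed the two-sided bounds on the average-moment function $R_\gamma$ from Lemma~\ref{volume bounds} into the Laplace-type representation of Lemma~\ref{M-R-identity}, and then to handle the divergence at $\gamma\ge 1$ by a separate dyadic argument. For the upper bound in (1) I would start from $M_\gamma(x,t)=\int_0^\infty R_\gamma(x,t/s)\,e^{-s}\,ds$ and insert the universal bound $R_\gamma(x,t/s)\le (t/s)^\gamma$, giving $M_\gamma(x,t)\le t^\gamma\int_0^\infty s^{-\gamma}e^{-s}\,ds=t^\gamma\,\Gamma(1-\gamma)$, the integral converging precisely because $\gamma<1$. To reach the stated constant I would write $\Gamma(1-\gamma)=\Gamma(2-\gamma)/(1-\gamma)$ and use $\Gamma(2-\gamma)\le 1$ for $\gamma\in(0,1)$ (the argument $2-\gamma$ lies in $(1,2)$, where $\Gamma\le 1$). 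The lower bound in (2) for $0<\gamma<1$ is the mirror image: under reverse doubling Lemma~\ref{volume bounds} gives $R_\gamma(x,\tau)\ge c\,\tau^\gamma$, the same computation yields $M_\gamma(x,t)\ge c\,t^\gamma\,\Gamma(2-\gamma)/(1-\gamma)$, and I would absorb the factor $\Gamma(2-\gamma)\ge\min_{[1,2]}\Gamma>0$ into a single constant.

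The substantive part is the divergence $M_\gamma(z,t)=\infty$ for $\gamma\ge 1$. Changing variables $u=1/\tau$ in the first representation of Lemma~\ref{M-R-identity}, I would rewrite $M_\gamma(z,t)=t\int_0^\infty e^{-t/u}\,u^{-2}\,R_\gamma(z,u)\,du$. Since $e^{-t/u}\ge e^{-1}$ for $u\ge t$ and the integrand is non-negative, it suffices to prove that the tail $\int_{u_0}^\infty u^{-2}R_\gamma(z,u)\,du$ diverges for a fixed $u_0>0$; the behaviour near $u=0$ is harmless because $R_\gamma(z,u)\le u^\gamma$ there. Using that $\tau\mapsto R_\gamma(z,\tau)$ is non-decreasing (Lemma~\ref{volume bounds}(a)), a dyadic decomposition over $[2^k u_0,2^{k+1}u_0]$ bounds the tail below by $\sum_k R_\gamma(z,2^k u_0)/(2^{k+1}u_0)$. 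Into each term I would insert the \emph{doubling-free} lower bound $R_\gamma(z,2^k u_0)\ge (2^{k-1}u_0)^\gamma\bigl(1-W_{k-1}/W_k\bigr)$, where $W_k:=V(z,2^k u_0)=\mu\bigl(B^{\ast}_{2^k u_0}(z)\bigr)$; this is precisely the inequality $R_\gamma(x,\tau)\ge(\delta\tau)^\gamma\bigl(1-V(x,\delta\tau)/V(x,\tau)\bigr)$ established inside the proof of Lemma~\ref{volume bounds}, taken at $\delta=\tfrac12$, and it requires no reverse doubling. Because $\gamma\ge 1$ the powers of two satisfy $2^{(k-1)\gamma}/2^{k+1}\ge 2^{-\gamma-1}$, so the tail is bounded below by a positive constant times $\sum_{k\ge 1}\bigl(1-W_{k-1}/W_k\bigr)$.

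Everything then reduces to the elementary fact that $\sum_{k}\bigl(1-W_{k-1}/W_k\bigr)=\infty$ whenever $0<W_0\le W_1\le\cdots$ and $W_k\to\infty$. I would prove this by contradiction via infinite products: writing $b_k=1-W_{k-1}/W_k\in[0,1)$, convergence of $\sum_k b_k$ would force $\prod_k(1-b_k)$ to a strictly positive limit, yet $\prod_{k=1}^N(1-b_k)=W_0/W_N\to 0$, a contradiction. I regard this reduction as the crux: the clean bound $R_\gamma(z,u)\gtrsim u^\gamma$ is simply unavailable without reverse doubling, and the whole point is that the weaker estimate $R_\gamma(z,u)\ge (u/2)^\gamma\bigl(1-W_{k-1}/W_k\bigr)$ still forces divergence once it is organised on dyadic scales and the telescoping product $W_0/W_N$ is recognised. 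The one subtlety I would flag explicitly is that $W_k\to\infty$ must be read off from the hypotheses: non-compactness makes the values $d_\ast(z,y)$ unbounded and the balls $B^{\ast}_{2^k u_0}(z)$ exhaust $X$, so $W_k\to\mu(X)$, and the argument genuinely uses that this common total mass is infinite (for a non-compact space of finite mass the conclusion fails already near $\gamma=1$, since then $M_\gamma$ is controlled by the finite $(\gamma-1)$-moment of $\mu$). This is the intended meaning of the non-compact case, and it is the single place where that hypothesis is indispensable.
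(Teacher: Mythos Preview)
Your treatment of part~(1) and of the lower bound in part~(2) is essentially the paper's: feed the pointwise bounds $R_\gamma(x,\tau)\le\tau^\gamma$, respectively $R_\gamma(x,\tau)\ge c\,\tau^\gamma$, from Lemma~\ref{volume bounds} into the Laplace representation of Lemma~\ref{M-R-identity} and evaluate $\Gamma(1-\gamma)$ from both sides. This is exactly what Proposition~\ref{Suppl1}(1) does.

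For the divergence claim $M_\gamma(z,t)=\infty$ when $\gamma\ge 1$, you take a genuinely different route from the paper. The paper stays with the same mechanism: under reverse doubling one has $R_\gamma(z,\tau)\ge c\,\tau^\gamma$ for all large~$\tau$, and then Proposition~\ref{Suppl1}(3) gives directly
\[
M_\gamma(z,t)\;\ge\; c\,t^\gamma\int_0^{t/t_0} s^{-\gamma}e^{-s}\,ds \;=\;\infty
\]
since $s^{-\gamma}e^{-s}$ is not integrable near $0$ for $\gamma\ge 1$. No dyadic decomposition, no telescoping product: one line. Your argument, by contrast, avoids the uniform lower bound on $R_\gamma$ and instead extracts divergence from the elementary fact that $\prod_k(W_{k-1}/W_k)=W_0/W_N\to 0$ forces $\sum_k(1-W_{k-1}/W_k)=\infty$. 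This is correct and has a real advantage: it works at \emph{every} point $z$ as soon as $\mu(X)=\infty$, without having to transfer the reverse doubling property from the fixed $x$ to $z$ (a step the paper leaves implicit, relying on $B_r^{\ast}(z)=B_r^{\ast}(x)$ for $r\ge d_{\ast}(x,z)$).

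One correction, however: you attribute $\mu(X)=\infty$ to non-compactness, and flag that the argument would fail for a non-compact space of finite total mass. That worry is misplaced here. The divergence statement is part of~(2) and hence \emph{does} assume reverse doubling at some~$x$; iterating $V(x,r)\ge 2\,V(x,\delta r)$ gives $V(x,r)\to\infty$, so $\mu(X)=\infty$ follows from the actual hypothesis, not from non-compactness. With that fix your argument is complete, though more elaborate than the paper's one-line use of Proposition~\ref{Suppl1}(3).
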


\begin{theorem}
\label{Moments-noncompact-discrete} Assume that $(X,d)$ is discrete and
infinite. Then the following properties hold.

\begin{enumerate}
\item[\RM (a)] For all $x,$ $t>0$ and $0<\gamma <1,$ 
\begin{equation*}
M_{\gamma }(x,t)\leq \frac{C}{1-\gamma }\min \left\{ t,t^{\gamma }\right\}
\end{equation*}%
for some $C>0.$

\item[\RM (b)] If for some (equivalently, all) $x\in X$ the volume function
satisfies the reverse doubling property at infinity, then for any $0<\gamma
<1,$ 
\begin{equation*}
M_{\gamma }( z,t) \geq \frac{c}{1-\gamma }\min \left\{t,t^{\gamma }\right\}
\end{equation*}%
for all $z$ $,$ $t>0$ and for some $c>0$. Moreover, 
\begin{equation*}
M_{\gamma }( z,t) =\infty
\end{equation*}%
for all $z,$ $t>0$ and all $\gamma \geq 1.$
\end{enumerate}
\end{theorem}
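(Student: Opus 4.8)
The plan is to reduce everything to the Laplace-type integral of Lemma~\ref{M-R-identity}, namely
\begin{equation*}
M_{\gamma }(x,t)=\int_{0}^{\infty }R_{\gamma }\!\left( x,\frac{t}{s}\right) e^{-s}\,ds,
\end{equation*}
and to feed in the two-sided bounds on $R_{\gamma }$ from Lemma~\ref{volume bounds}. The key new feature in the discrete infinite case, as opposed to Theorem~\ref{Moments-non-compact}, is that the volume function $V(x,\cdot )$ is \emph{bounded below} away from zero: since $x$ is isolated, $V(x,r)=\mu \{x\}>0$ for all $0<r<r_{0}^{\ast }(x)$, and consequently by Lemma~\ref{volume bounds}(a) the average-moment function $R_{\gamma }(x,\tau )$ is constant, equal to $R_{\gamma }(x,r_{0}^{\ast })$, for all $\tau \leq r_{0}^{\ast }$. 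This plateau at small $\tau $ is exactly what produces the $\min \{t,t^{\gamma }\}$ behaviour: for $t$ small the integrand $R_{\gamma }(x,t/s)$ is dominated by the regime $t/s\le r_0^\ast$, where $R_\gamma$ is flat, and the surviving $t$-dependence becomes linear rather than $t^{\gamma }$.

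For the upper bound (a), I would split the integral at $s=t/r_{0}^{\ast }$. On the range $t/s\geq r_{0}^{\ast }$ (i.e.\ $s\leq t/r_{0}^{\ast }$) I use the general bound $R_{\gamma }(x,\tau )\leq \tau ^{\gamma }$ from Lemma~\ref{volume bounds}(b), giving a contribution comparable to $t^{\gamma }\int_{0}^{\infty }s^{-\gamma }e^{-s}\,ds=t^{\gamma }\Gamma (1-\gamma )$, whence the $\frac{1}{1-\gamma }t^{\gamma }$ term. On the complementary range $t/s<r_{0}^{\ast }$ the integrand equals the constant $R_{\gamma }(x,r_{0}^{\ast })\leq (r_{0}^{\ast })^{\gamma }$, and integrating $e^{-s}$ against a constant over $s>t/r_{0}^{\ast }$ yields a contribution bounded by a constant times $t$ (using $1-e^{-t/r_0^\ast}\le t/r_0^\ast$), which supplies the competing linear term. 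Taking the minimum of the two regimes, and absorbing $r_{0}^{\ast }$-dependent factors into $C$, gives $M_{\gamma }(x,t)\leq \frac{C}{1-\gamma }\min \{t,t^{\gamma }\}$. The lower bound in (b) runs in parallel: under reverse doubling at infinity, Lemma~\ref{volume bounds}(b) gives $R_{\gamma }(x,\tau )\geq c\,\tau ^{\gamma }$ for $\tau >r_{0}^{\ast }$, so restricting the same integral to the appropriate $s$-range produces matching lower bounds of order $t^{\gamma }$ (for $t\gtrsim 1$) and of order $t$ (for $t\lesssim 1$), hence $\geq \frac{c}{1-\gamma }\min \{t,t^{\gamma }\}$.

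The divergence claim $M_{\gamma }(z,t)=\infty $ for $\gamma \geq 1$ is the cleanest part: since $R_{\gamma }(x,\tau )\geq c\,\tau ^{\gamma }$ for large $\tau $ (reverse doubling at infinity, valid for all $z$ by the translation-type independence noted in the statement), the integrand $R_{\gamma }(x,t/s)$ is bounded below by $c\,(t/s)^{\gamma }$ as $s\to 0$, and $\int_{0}s^{-\gamma }e^{-s}\,ds$ diverges precisely when $\gamma \geq 1$. In fact this already follows from the non-discrete computation, so I would simply remark that the same estimate applies verbatim; alternatively one sees directly from $M_{\gamma }(x,t)=\int_{X}d_{\ast }(x,y)^{\gamma }p(t,x,y)\,d\mu (y)$ together with the large-distance lower bound on $p$ from Lemma~\ref{1st estimates}(b) that the tail $\sum$ over far-away points diverges.

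The main obstacle I anticipate is the bookkeeping at the crossover scale $t\asymp 1$ (equivalently the scale $r_{0}^{\ast }$), where one must verify that the two regimes splice together to give exactly $\min \{t,t^{\gamma }\}$ with constants uniform in $z$ and blowing up like $\frac{1}{1-\gamma }$ as $\gamma \uparrow 1$; this is where the constancy of $R_{\gamma }$ below $r_{0}^{\ast }$ must be used carefully and where the $r_{0}^{\ast }(z)$-dependence has to be controlled. Since the volume reverse-doubling hypothesis is stated to hold for some (equivalently all) $x$, I expect the uniformity in $z$ to reduce to the observation that $r_{0}^{\ast }(z)$ and the local mass $\mu \{z\}$ enter only through the flat part of $R_{\gamma }$, and the delicate Laplace asymptotics themselves are isolated into the two technical Propositions~\ref{Suppl1} and~\ref{Suppl2}, which I would invoke rather than reprove.
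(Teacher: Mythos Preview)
Your approach is exactly the paper's: reduce to Lemma~\ref{M-R-identity}, invoke Lemma~\ref{volume bounds}, and feed the result into Proposition~\ref{Suppl1}(2) (and (3) for the divergence). That is all the paper does; the three moment theorems are stated first and then declared to follow from Propositions~\ref{Suppl1}--\ref{Suppl2}.

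There is one slip worth flagging. You write that in the discrete case $R_{\gamma}(x,\tau)$ is a nonzero constant $R_{\gamma}(x,r_{0}^{\ast})$ for $\tau\le r_{0}^{\ast}$, and you then try to bound the contribution from $s>t/r_{0}^{\ast}$ by $Ct$ via ``$1-e^{-t/r_{0}^{\ast}}\le t/r_{0}^{\ast}$''. That integral is $\int_{t/r_{0}^{\ast}}^{\infty}e^{-s}\,ds=e^{-t/r_{0}^{\ast}}$, not $1-e^{-t/r_{0}^{\ast}}$, and it stays near $1$ for small $t$; so this route to the linear bound does not work. The correct fact is stronger: since $V(x,\cdot)$ is \emph{constant} on $[0,r_{0}^{\ast})$, the Stieltjes integral defining $R_{\gamma}$ vanishes there, i.e.\ $R_{\gamma}(x,\tau)=0$ for $0<\tau<r_{0}^{\ast}$. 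This is precisely the hypothesis ``$R(s)=0$ for $0<s<t_{0}$'' of Proposition~\ref{Suppl1}(2). With that, the whole moment integral collapses to $\int_{0}^{t/r_{0}^{\ast}}R_{\gamma}(x,t/s)e^{-s}\,ds$, and the linear bound for small $t$ comes from estimating $\int_{0}^{t/r_{0}^{\ast}}s^{-\gamma}\,ds$ on this \emph{single} piece, not from a second term. Once you make this correction your sketch matches the paper exactly, including the $\gamma\ge 1$ divergence via Proposition~\ref{Suppl1}(3).
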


Assume now that $(X,d)$ is compact and let $D$ be its $d_{\ast }$-diameter.
By Lemmas \ref{M-R-identity} and \ref{volume bounds}, for all $x\in X,$ $%
\gamma >0$ and $t>0,$%
\begin{equation*}
M_{\gamma }(x,t)\leq R_{\gamma }( x,D) \leq D^{\gamma },
\end{equation*}%
whence we study the behavior of the moment function $t\mapsto
M_{\gamma}(x,t) $ at zero.

\begin{theorem}
\label{Moments-compact} Assume that $(X,d)$ is non-discrete and compact.
Then the following properties hold.

\begin{enumerate}
\item[\RM(1)] There exists a constant $C>0$ such that 
\begin{equation*}
M_{\gamma }(x,t)\leq C\left\{ 
\begin{array}{lll}
t & \text{if} & \gamma >1, \\[3pt] 
t\,\left( \log \frac{1}{t}+1\right) & \text{if} & \gamma =1, \\[3pt] 
t^{\gamma } & \text{if} & \gamma <1,%
\end{array}%
\right.
\end{equation*}%
holds for all $x$ and all $0<t\leq 1.$

\item[\RM(2)] If for some $x\in X$ the volume function satisfies the reverse
doubling property at zero, then there exists a constant $c>0$ such that 
\begin{equation*}
M_{\gamma }(z,t)\geq c\left\{ 
\begin{array}{lll}
t & \text{if} & \gamma >1, \\[3pt] 
t\,\left( \log \frac{1}{t}+1\right) & \text{if} & \gamma =1, \\[3pt] 
t^{\gamma } & \text{if} & \gamma <1%
\end{array}%
\right.
\end{equation*}%
holds for all $z$ and all $0<t\leq 1.$
\end{enumerate}
\end{theorem}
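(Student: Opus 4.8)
The plan is to start from the second representation in Lemma~\ref{M-R-identity}, namely
\[
M_\gamma(x,t)=\int_0^\infty R_\gamma\!\left(x,\tfrac{t}{s}\right)e^{-s}\,ds,
\]
and to reduce everything to a one–dimensional Laplace-type integral analysable by elementary means. First I would record the two structural facts available in the compact case. Since $(X,d_\ast)$ is compact, the whole space is a single $d_\ast$-ball, so every point is a centre and $r_{\max}^\ast(x)=D$ (the $d_\ast$-diameter) for \emph{every} $x$; this is what makes the final bounds uniform in $x$. Combined with Lemma~\ref{volume bounds}(a)--(b) this gives $R_\gamma(x,\tau)\le\tau^\gamma$ for all $\tau$, while $R_\gamma(x,\tau)=R_\gamma(x,D)\le D^\gamma$ is constant for $\tau\ge D$; under the reverse doubling hypothesis at zero one also has $R_\gamma(x,\tau)\ge c\,\tau^\gamma$ for $0<\tau<D$.

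For the upper bound I would split the integral at $s=t/D$. On $0<s\le t/D$ one has $t/s\ge D$, so the integrand equals the constant $R_\gamma(x,D)\le D^\gamma$ and this part is bounded by $D^\gamma(1-e^{-t/D})\le D^{\gamma-1}t$, i.e.\ $O(t)$ uniformly in $x$. On $s>t/D$ the bound $R_\gamma(x,t/s)\le(t/s)^\gamma$ leaves the contribution $t^\gamma\int_{t/D}^\infty s^{-\gamma}e^{-s}\,ds$. The whole statement then comes down to the behaviour of this incomplete-Gamma integral as $t\to0$ in three regimes: for $\gamma<1$ it converges to $\Gamma(1-\gamma)$, producing $t^\gamma$; for $\gamma=1$ the logarithmic divergence at $0$ produces $t(\log\frac1t+1)$; for $\gamma>1$ the power divergence $\sim(t/D)^{1-\gamma}/(\gamma-1)$ multiplies $t^\gamma$ to give $O(t)$. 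Since $t\le t^\gamma$ when $\gamma<1$ and $0<t\le1$, in each regime one of the two contributions dominates and yields exactly the claimed upper bound. These incomplete-Gamma estimates are precisely the content of the two technical Propositions~\ref{Suppl1} and~\ref{Suppl2} announced at the end of the section, which I would invoke rather than redo by hand.

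For the lower bound I would discard the piece $0<s\le t/D$ and use reverse doubling: for $s>t/D$ we have $t/s<D$, so $R_\gamma(x,t/s)\ge c\,(t/s)^\gamma$ and hence $M_\gamma(x,t)\ge c\,t^\gamma\int_{t/D}^\infty s^{-\gamma}e^{-s}\,ds$, which is bounded below by the \emph{same} integral asymptotics (again Propositions~\ref{Suppl1}, \ref{Suppl2}); matching the three regimes gives the stated lower bounds. The main obstacle I anticipate is the borderline case $\gamma=1$ together with keeping the constants explicit and uniform: one must track how $\Gamma(1-\gamma)$ and the $1/(\gamma-1)$ factors behave near $\gamma=1$, and verify that the crossover between the $O(t)$ saturation term and the $t^\gamma$ bulk term is controlled uniformly in $x$ (which is exactly why identifying $r_{\max}^\ast(x)\equiv D$ at the outset matters). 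A secondary point to check is that the reverse doubling property at zero, assumed at one point, propagates to the points $z$ for which the lower bound is asserted; this follows from the ultra-metric ball structure, since small balls around nearby points eventually coincide.
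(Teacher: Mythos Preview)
Your proposal is correct and follows essentially the same approach as the paper: apply Lemma~\ref{M-R-identity} and Lemma~\ref{volume bounds} to reduce to the abstract Laplace-type estimate of Proposition~\ref{Suppl2} (with $t_0=D$), whose proof is exactly the splitting at $s=t/D$ and the three-regime incomplete-Gamma analysis you outline. One small slip: in your last sentence, it is \emph{large} balls around distinct points that eventually coincide in an ultra-metric space, not small ones; the paper does not spell out the passage from ``some $x$'' to ``all $z$'' either, so your flag is well placed, but your stated justification needs adjusting.
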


We now provide the technical details regarding the Laplace-type estimates
that imply Theorems \ref{Moments-non-compact}, \ref%
{Moments-noncompact-discrete} and \ref{Moments-compact}. In the following
two propositions, $M$ and $R$ will always be two non-negative,
non-decreasing functions related by the Laplace-type integral 
\begin{equation*}
M(t)=\int_{0}^{\infty }R\!\left( \frac{t}{\tau }\right) e^{-\tau }\,d\tau\,.
\end{equation*}

\begin{proposition}
\label{Suppl1}Let $\gamma >0$ be given.

\begin{enumerate}
\item[\RM(1)] Assume that%
\begin{equation}
As^{\gamma }\geq R(s)\,,\quad \text{or that respectively}\quad R(s)\geq
B\,s^{\gamma }  \label{Fbounds}
\end{equation}%
for some $A>0$ (resp. $B>0$) and all $s>0.$ Then the inequality 
\begin{equation*}
\frac{A\,t^{\gamma }}{1-\gamma }\geq M(t)\,,\quad \text{respectively}\quad
M(t)\geq \frac{B\,t^{\gamma }}{(1-\gamma )\,e}
\end{equation*}%
holds for all $0<\gamma <1$ and all $t>0.$

\item[\RM (2)] Assume that there is $t_{0}> 0$ such that $R(s)=0$ for all $%
0<s<t_{0}\,.$ Assume also that one of the respective inequalities of (\ref%
{Fbounds}) holds for all $s>t_{0}\,.$ Then 
\begin{equation*}
M(t)\leq \frac{c}{1-\gamma } \min \left\{ \frac{t}{t_{0}},\left( \frac{t}{t_0%
}\right) ^{\gamma }\right\}, \quad \text{respectively} \quad M(t)\geq \frac{%
c^{\prime }}{1-\gamma } \min \left\{ \frac{t}{t_0},\left( \frac{t}{t_0}%
\right)^{\gamma }\right\},
\end{equation*}%
for all $0<\gamma <1,$ all $t>0$ and some constants $c,c^{\prime }>0.$

\item[\RM (3)] The assumption $\gamma \geq 1$ and the lower bound $R(s)\geq
B\, s^{\gamma }$ imply that $M(t)=\infty $ for all $t>0.$
\end{enumerate}
\end{proposition}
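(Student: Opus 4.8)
The plan is to reduce all three parts to the single elementary integral $\int_0^\infty \tau^{-\gamma}e^{-\tau}\,d\tau=\Gamma(1-\gamma)$, whose convergence for $0<\gamma<1$ and divergence for $\gamma\ge 1$ is exactly the dichotomy appearing in the statement. The device is that the hypotheses bound $R$ above or below by the homogeneous function $s\mapsto s^\gamma$; substituting such a bound into $M(t)=\int_0^\infty R(t/\tau)e^{-\tau}\,d\tau$ and writing $(t/\tau)^\gamma=t^\gamma\tau^{-\gamma}$ expresses $M(t)$ as $t^\gamma$ times a (possibly truncated) $\Gamma$-integral, after which everything is explicit.

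For Part (1), the upper hypothesis $R(s)\le As^\gamma$ gives $M(t)\le At^\gamma\Gamma(1-\gamma)$; I then use $\Gamma(1-\gamma)=\Gamma(2-\gamma)/(1-\gamma)\le 1/(1-\gamma)$, where $\Gamma(2-\gamma)\le 1$ follows from convexity of $\Gamma$ on $[1,2]$ (the function lies below the chord joining the boundary points $(1,1)$ and $(2,1)$, i.e.\ below the constant $1$). For the lower hypothesis $R(s)\ge Bs^\gamma$, rather than invoke a sharp lower bound for $\Gamma$, I restrict the integral to $\tau\in(0,1)$, where $e^{-\tau}\ge e^{-1}$, so that $M(t)\ge Bt^\gamma e^{-1}\int_0^1\tau^{-\gamma}\,d\tau=Bt^\gamma/\bigl((1-\gamma)e\bigr)$; this is precisely where the factor $1/e$ in the statement originates.

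For Part (2), the vanishing of $R$ on $(0,t_0)$ forces $R(t/\tau)=0$ unless $t/\tau\ge t_0$, i.e.\ unless $\tau\le t/t_0$, so the integral truncates to $(0,t/t_0)$. Writing $u=t/t_0$ and factoring $t^\gamma=(t_0u)^\gamma$ (the stray $t_0^\gamma$ stays between $\min\{1,t_0\}$ and $\max\{1,t_0\}$ for all $\gamma\in(0,1)$ and is absorbed into the constants), I split at $u=1$. When $u\le 1$ the whole range of integration lies in $(0,1)$, so bounding $e^{-\tau}$ between $e^{-1}$ and $1$ and integrating $\tau^{-\gamma}$ yields $M(t)\simeq u/(1-\gamma)$; when $u>1$ the integral $\int_0^u\tau^{-\gamma}e^{-\tau}\,d\tau$ is comparable to $\Gamma(1-\gamma)\simeq 1/(1-\gamma)$ (bounded above by the full integral and below by its restriction to $(0,1)$), giving $M(t)\simeq u^\gamma/(1-\gamma)$. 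Since $\min\{u,u^\gamma\}$ equals $u$ for $u\le 1$ and $u^\gamma$ for $u\ge 1$, the two cases assemble into the claimed two-sided bound.

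For Part (3), the lower hypothesis together with $\gamma\ge 1$ gives $M(t)\ge Bt^\gamma\int_0^\infty\tau^{-\gamma}e^{-\tau}\,d\tau\ge Bt^\gamma e^{-1}\int_0^1\tau^{-\gamma}\,d\tau$, and $\int_0^1\tau^{-\gamma}\,d\tau=\infty$ because $\tau^{-\gamma}$ is non-integrable at the origin once $\gamma\ge 1$; hence $M(t)=\infty$ for every $t>0$. The computations are routine, and the only places needing care are keeping the constants $c,c'$ independent of $\gamma$ (controlling $t_0^\gamma$ and handling the $\min$ through the split at $u=1$) and pinning the divergence in Part (3) to the singularity at $\tau=0$ rather than to any behaviour at infinity. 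I expect the bookkeeping for the $\min$ in Part (2) to be the most laborious step, though it is conceptually straightforward.
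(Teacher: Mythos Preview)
Your proposal is correct and follows the paper's route almost line by line: bound $R$ by a multiple of $s^\gamma$, reduce $M(t)$ to $t^\gamma$ times the (possibly truncated) integral $\int \tau^{-\gamma}e^{-\tau}\,d\tau$, invoke $\frac{1}{(1-\gamma)e}<\Gamma(1-\gamma)<\frac{1}{1-\gamma}$, and in Part~(2) split at $t/t_0=1$. The only cosmetic difference is that the paper treats Part~(3) under the Part~(2) truncation $(0,t/t_0)$ rather than the full line, but since you correctly locate the divergence at $\tau=0$ the adjustment is immediate.
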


\begin{proof}
It is known that for $0<\gamma <1$ the Gamma-function satisfies%
\begin{equation*}
\frac{1}{(1-\gamma)\,e}<\Gamma (1-\gamma )<\frac{1}{1-\gamma },
\end{equation*}%
whence by monotonicity of the Laplace-type integral the first claim follows.

To prove the second statement, we write%
\begin{equation*}
M(t)=\int_{\left\{ t/s\geq t_{0}\right\} }R\!\left( \frac{t}{s}\right)
e^{-s}\,ds.
\end{equation*}%
First assume that $R(\tau )\leq A\,s^{\gamma }$ for all $0<s<\infty \,$.
Then we obtain%
\begin{eqnarray*}
M(t) &\leq &A\int_{\left\{ t/s\geq t_{0}\right\} }\left( \frac{t}{s}\right)
^{\gamma }e^{-s}\,ds=A\,t^{\gamma }\int_{\left\{ s\leq t/t_{0}\right\}
}s^{-\gamma }e^{-s}\,ds \\
&\leq &A\,t^{\gamma }\int_{0}^{t/t_{0}}s^{-\gamma }\,ds=\left( \frac{t}{t_{0}%
}\right) \frac{At_{0}^{-\gamma }}{1-\gamma }\,,\quad \text{and}
\end{eqnarray*}%
\begin{equation*}
M(t)\leq A\,t^{\gamma }\int_{0}^{\infty }s^{-\gamma }e^{-s}\,ds\leq \frac{%
A\,t^{\gamma }}{1-\gamma }=\left( \frac{t}{t_{0}}\right) ^{\gamma }\frac{%
A\,t_{0}^{\gamma }}{1-\gamma }.
\end{equation*}%
It follows that%
\begin{equation*}
M(t)\leq \frac{A\,\max \left\{ t_{0},t_{0}^{-1}\right\} }{1-\gamma }\min
\left\{ \frac{t}{t_{0}},\left( \frac{t}{t_{0}}\right) ^{\gamma }\right\} .
\end{equation*}%
Second, assume that $R(s)\geq B\,s^{\gamma },$ for all $s\geq t_{0}\,$. Then
for $t/t_{0}\geq 1$ 
\begin{equation*}
M(t)\geq B\,t^{\gamma }\int_{0}^{t/t_{0}}s^{-\gamma }e^{-s}\,ds\geq \frac{%
B\,t^{\gamma }}{e}\int_{0}^{1}s^{-\gamma }\,ds=\frac{B\,t^{\gamma }}{%
(1-\gamma )e}=\frac{B\,t_{0}^{\gamma }}{(1-\gamma )\,e}\left( \frac{t}{t_{0}}%
\right) ^{\gamma }.
\end{equation*}%
When $t/t_{0}\leq 1$ we get 
\begin{equation*}
M(t)\geq B\,t^{\gamma }\int_{0}^{t/t_{0}}s^{-\gamma }e^{-s}\,ds\geq \frac{%
B\,t^{\gamma }}{e}\int_{0}^{t/t_{0}}s^{-\gamma }\,ds=\frac{{B\,t^{\gamma }}}{%
{(1-\gamma )\,e}}\left( \frac{t}{t_{0}}\right) ^{1-\gamma }=\frac{%
B\,t_{0}^{\gamma }}{(1-\gamma )\,e}\left( \frac{t}{t_{0}}\right) .
\end{equation*}%
It follows that%
\begin{equation*}
M(t)\geq \frac{B\,t_{0}^{\gamma }}{(1-\gamma )\,e}\min \left\{ \frac{t}{t_{0}%
},\left( \frac{t}{t_{0}}\right) ^{\gamma }\right\} \geq \frac{B\min
\{t_{0},1\}}{(1-\gamma )\,e}\min \left\{ \frac{t}{t_{0}},\left( \frac{t}{%
t_{0}}\right) ^{\gamma }\right\} .
\end{equation*}%
This proves the second claim. For the third claim observe that that if $%
R(s)\geq B\,s^{\gamma }$ for all $s\geq t_{0}$ and $\gamma \geq 1,$ 
\begin{equation*}
M(t)\geq B\,t^{\gamma }\int_{0}^{t/t_{0}}s^{-\gamma }e^{-s}\,ds=\infty
\end{equation*}%
for all $t>0.$
\end{proof}

\begin{proposition}
\label{Suppl2}Assume that there is $t_0>0$ such that $R(s)=R(t_0)$ for all $%
s\geq t_0\,$. Assume also that one of the respective inequalities in (\ref%
{Fbounds}) holds for all $0<s\leq t_0\,.$ Then 
\begin{eqnarray*}
M(t) &\leq &\left\{ 
\begin{array}{ccc}
c_{1}\, \frac{t}{t_0} & \text{if} & \gamma >1, \\[4pt] 
c_{2}\,t\,\left(\log \frac{t_0}{t}+1\right) & \text{if} & \gamma =1, \\[4pt] 
c_{3}\left( \frac{t}{t_0}\right)^{\gamma } & if & \gamma <1,%
\end{array}%
\right. \\
\text{respectively,}\quad M(t)&\geq& \left\{ 
\begin{array}{ccc}
c_{1}^{\prime }\,\frac{t}{t_0} & \text{if} & \gamma >1, \\[4pt] 
c_{2}^{\prime }\,t\left(\log \frac{t_0}{t}+1\right) & \text{if} & \gamma =1,
\\[4pt] 
c_{3}^{\prime }\left( \frac{t}{t_0}\right)^{\gamma } & if & \gamma <1,%
\end{array}%
\right.
\end{eqnarray*}%
for all $0<t\leq t_0$ and some positive constants $c_{1}\,,c_{1}^{\prime}%
\,,c_{2}\,,c_{2}^{\prime }\,,c_{3}\,,c_{3}^{\prime }\,.$
\end{proposition}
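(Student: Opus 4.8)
The plan is to estimate the Laplace-type integral $M(t) = \int_0^\infty R(t/\tau)\, e^{-\tau}\, d\tau$ under the hypothesis that $R$ saturates: $R(s) = R(t_0)$ for $s \geq t_0$, while one of the power bounds $As^\gamma \geq R(s) \geq Bs^\gamma$ holds on $(0, t_0]$. The key device, as in Proposition \ref{Suppl1}, is the change of variables $s = t/\tau$, which converts the integral into $M(t) = \int_0^\infty R(t/\tau) e^{-\tau}\, d\tau$ and splits the domain according to whether the argument $t/\tau$ of $R$ lies in the power-law regime $(0, t_0]$ or the saturated regime $[t_0, \infty)$. For fixed $t \leq t_0$, the argument $t/\tau \leq t_0$ precisely when $\tau \geq t/t_0$, so the two regimes correspond to $\tau \geq t/t_0$ (power law) and $\tau < t/t_0$ (saturation).

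First I would write $M(t) = \int_{\{\tau < t/t_0\}} R(t_0)\, e^{-\tau}\, d\tau + \int_{\{\tau \geq t/t_0\}} R(t/\tau)\, e^{-\tau}\, d\tau$, using that $R(t/\tau) = R(t_0)$ on the first piece. Since $t \leq t_0$ the cutoff $t/t_0 \leq 1$, so the first integral is comparable to $R(t_0) \cdot (t/t_0)$ up to absolute constants (because $\int_0^{t/t_0} e^{-\tau}\, d\tau \simeq t/t_0$ for $t/t_0 \leq 1$), contributing a term of order $t/t_0$. For the second piece I substitute the power bound $R(t/\tau) \simeq (t/\tau)^\gamma$ to get a contribution comparable to $t^\gamma \int_{t/t_0}^\infty \tau^{-\gamma} e^{-\tau}\, d\tau$. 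The behavior of this truncated integral as the lower limit $t/t_0 \to 0$ is exactly what produces the three cases: for $\gamma > 1$ the integral $\int_{t/t_0} \tau^{-\gamma} e^{-\tau} d\tau$ diverges like $(t/t_0)^{1-\gamma}/(\gamma-1)$ at the lower end, so $t^\gamma$ times this is of order $t \cdot t_0^{\gamma-1} \simeq t/t_0$ up to constants depending on $t_0$; for $\gamma = 1$ one gets the logarithmic divergence $\int_{t/t_0}^1 \tau^{-1}\, d\tau \simeq \log(t_0/t)$, yielding $t(\log(t_0/t)+1)$; and for $\gamma < 1$ the integral converges as the lower limit tends to $0$, giving a bounded factor and hence a contribution of order $t^\gamma$.

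The main technical care is to verify in each case that the dominant contribution matches the claimed rate and that the other piece is of no larger order, so the comparison is genuinely two-sided. For $\gamma > 1$ and $\gamma = 1$ the power-law piece dominates (or matches) the saturation piece $t/t_0$, while for $\gamma < 1$ one checks that $t^\gamma$ dominates $t/t_0$ when $t \leq t_0$ (indeed $(t/t_0)^\gamma \geq t/t_0$ in that range). The upper bounds follow by inserting $A$ in place of $B$ and bounding the truncated integrals from above by their untruncated or elementary counterparts (for $\gamma<1$ using $\int_0^\infty \tau^{-\gamma}e^{-\tau}d\tau = \Gamma(1-\gamma) < \infty$, for $\gamma \geq 1$ using $e^{-\tau} \leq 1$ on the relevant range and computing the elementary integral of $\tau^{-\gamma}$). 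The lower bounds follow by restricting the domain of integration to a convenient subinterval (e.g. $[t/t_0, 1]$) where $e^{-\tau}$ is bounded below by $e^{-1}$, reducing everything to elementary integrals of $\tau^{-\gamma}$.

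I expect the only real obstacle to be bookkeeping: tracking how the constants absorb the fixed parameter $t_0$ and ensuring the regime boundary $t/t_0 \leq 1$ is used consistently, so that the elementary asymptotics of $\int_{t/t_0} \tau^{-\gamma} e^{-\tau}\, d\tau$ are applied correctly in each of the three cases. No deep idea is needed beyond the change of variables and splitting already used in Proposition \ref{Suppl1}; the proposition is essentially the saturated-at-infinity counterpart of the vanishing-near-zero situation treated there, and the proof structure mirrors it closely.
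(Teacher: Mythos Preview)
Your proposal is correct and follows essentially the same route as the paper: split $M(t)$ at $\tau=t/t_0$ into a saturated piece $R(t_0)\int_0^{t/t_0}e^{-\tau}\,d\tau \simeq R(t_0)\,t/t_0$ and a power-law piece $\simeq t^\gamma\int_{t/t_0}^\infty \tau^{-\gamma}e^{-\tau}\,d\tau$, then treat the three cases for $\gamma$ by elementary estimates (using $e^{-\tau}\le 1$ for upper bounds and restricting to $[t/t_0,1]$ with $e^{-\tau}\ge e^{-1}$ for lower bounds). The paper's proof does exactly this, with the same case analysis and the same devices.
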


\begin{proof}
Let $\gamma >1$ and $0<t<t_0.$ According to our assumption%
\begin{equation*}
M(t)=\int_{\left\{ t/s\leq t_0\right\} }R\!\left( \frac{t}{s}%
\right)e^{-s}\,ds +R(t_0)\left( 1-e^{-t/t_0}\right).
\end{equation*}%
Observe that for $0<t<t_0\,,$%
\begin{equation*}
\frac{t}{2t_0}\leq \left( 1-e^{-t/t_0}\right) \leq \frac{t}{t_0}\,.
\end{equation*}%
First, if $R(s)\leq A\,s^{\gamma }$ for all $0<s <t_0\,,$ then%
\begin{eqnarray*}
M(t) &\leq &A\,t^{\gamma }\int_{t/t_0}^{\infty }s^{-\gamma }e^{-s}\,ds+\frac{%
R(t_0)t}{t_0} \leq A\,s^{\gamma }\int_{t/t_0}^{\infty }s^{-\gamma }\,ds+%
\frac{R(t_0)t}{t_0} \\
&\leq &\frac{A\,t^{\gamma }}{\gamma -1}\left( \frac{t}{t_0}\right)^{1-\gamma
}+\frac{R(t_0)t}{t_0} =\frac{t}{t_0}\left( R(t_0)+\frac{A\,t_0^{\gamma }}{%
\gamma -1}\right).
\end{eqnarray*}%
Second, if $R(s)\geq B\,s^{\gamma },$ for all $0<s<t_0\,,$ then%
\begin{equation*}
M(t)\geq \frac{R(t_0)}{2}\frac{t}{t_0}.
\end{equation*}%
Assume that $0<\gamma <1$ and $0<t<t_0\,.$ Again first, if $R(s)\leq
A\,s^{\gamma }$ for all $0<\tau <t_0\,,$ then%
\begin{eqnarray*}
M(t) &\leq &\,At^{\gamma }\int_{t/t_0}^{\infty }s^{-\gamma }e^{-s}\,ds+\frac{%
R(t_0)t}{t_0} \leq A\,t^{\gamma }\,\Gamma (1-\gamma )+R(t_0)\frac{t}{t_0} \\
&\leq &\frac{A\,t^{\gamma }}{1-\gamma }+R(t_0)\frac{t}{t_0} =\frac{%
A\,t_0^{\gamma }}{1-\gamma }\left( \frac{t}{t_0}\right)^{\gamma }+R(t_0)%
\frac{t}{t_0} \\
&\leq &\left( \frac{t}{t_0}\right) ^{\gamma }\left( \frac{A\,T^{\gamma }}{%
1-\gamma }+R(t_0)\right).
\end{eqnarray*}%
Second, once more, when $R(s)\geq B\,s^{\gamma },$ for all $0<s <T$, then%
\begin{equation*}
M(t)\geq B\,t^{\gamma }\int_{t/t_0}^{\infty }s^{-\gamma }e^{-s}\,ds \geq
B\,t^{\gamma} \int_{1}^{\infty }s^{-\gamma }e^{-s}\,ds\geq \left( \frac{t}{%
t_0}\right)^{\gamma } \left( \frac{B\,\min \{t_0,1\}}{e^{2}}\right).
\end{equation*}%
Finally, assume that $\gamma =1$ and $0<t<t_0\,.$ First, if $R(s)\leq
A\,s^{\gamma }$ for all $0<\tau <t_0\,,$ then%
\begin{eqnarray*}
M(t) &\leq &At\int_{t/T}^{\infty }s^{-1}e^{-s}ds+\frac{R(T)t}{T} \\
&=&\,At\left(
\int_{1}^{\infty}s^{-1}e^{-s}\,ds+\int_{t/t_0}^{1}s^{-1}e^{-s}\,ds\right) +%
\frac{R(t_0)t}{t_0} \\
&\leq &A\,t\left( \int_{1}^{\infty }\frac{ds}{s^{2}}+\int_{t/t_0}^{1}\frac{ds%
}{s}\right) +\frac{R(t_0)t}{t_0} =\left( A+\frac{R(t_0)}{t_0}\right) t\left(
\log \frac{t_0}{t}+1\right) .
\end{eqnarray*}%
And at last, if $R(s)\geq B\,s^{\gamma }$ for all $0<\tau <t_0,$ then%
\begin{eqnarray*}
M(t) &\geq &B\,t\int_{t/t_0}^{\infty }s^{-1}e^{-s}\,ds+\frac{R(t_0)t}{2t_0}
\\
&\geq &\frac{B\,t}{e}\int_{t/t_0}^{1}\frac{ds}{s}+\frac{R(t_0)t}{2t_0} =%
\frac{B\,t}{e}\log \frac{t_0}{t}+\frac{R(t_0)t}{2t_0} \\
&=&\frac{B\,t}{e}\left( \log \frac{t_0}{t}+\frac{R(t_o)e}{2B\,t_0}\right)
\geq \min\left\{ \frac{R(t_0)}{2t_0},\frac{B}{e}\right\} t\left( \log \frac{%
t_0}{t}+1\right) .
\end{eqnarray*}%
The proof is finished.
\end{proof}

Theorems \ref{Moments-non-compact}, \ref{Moments-noncompact-discrete} and %
\ref{Moments-compact} follow.

\section{Analysis in $\mathbb{Q}_{p}$ and $\mathbb{Q}_{p}^{n}$}

\setcounter{equation}{0}\label{SecQp}

\subsection{The $p$-adic fractional derivative}

\label{derivative}Consider the field $\mathbb{Q}_{p}$ of $p$-adic numbers
endowed with the $p$-adic norm $\left\Vert x\right\Vert _{p}$ and the $p$%
-adic ultra-metric $d_{p}(x,y)=\left\Vert x-y\right\Vert _{p}.$ Let $\mu
_{p} $ be the Haar measure on $\mathbb{Q}_{p}\,$, normalized such that $\mu
_{p}(\mathbb{Z}_{p})=1.$ Let $\mathcal{V}_{c}$ be the space of locally
constant functions on $\mathbb{Q}_{p}$ with compact support which will be
considered as test functions on $\mathbb{Q}_{p}$.

The notion of $p$-adic fractional derivative, closely related to the concept
of $p$-adic Quantum Mechanics, was introduced in several papers by
Vladimirov~\cite{Vladimirov}, Vladimirov and Volovich~ \cite%
{VladimirovVolovich} and Vladimirov, Volovich and Zelenov~\cite{Vladimirov94}%
. In particular, a one-parameter family $\{\mathfrak{D}^{\alpha }\}_{\alpha
>0}$ of operators, called operators of fractional derivative of order $%
\alpha $, was introduced in \cite{Vladimirov}.

Recall that the Fourier transform $\mathcal{F}:f\mapsto \widehat{f}$ of a
function $f$ on the self-dual locally compact Abelian group $\mathbb{Q}_{p}$
is defined by 
\begin{equation*}
\widehat{f}(\theta )=\int_{\mathbb{Q}_{p}}\left\langle x,\theta
\right\rangle f(x)d\mu _{p}(x),
\end{equation*}%
where $x,\theta \in \mathbb{Q}_{p}$,%
\begin{equation*}
\left\langle x,\theta \right\rangle =\exp \left( 2\pi \sqrt{-1}\left\{ {%
x\theta }\right\} \right) ,
\end{equation*}%
and $\left\{ x\theta \right\} $ is the fractional part of the $p$-adic
number $x\theta $ (cf. (\ref{xa})). It is known that $\mathcal{F}$ is a
linear isomorphism of $\mathcal{V}_{c}$ onto itself.

\begin{definition}
\RM\label{def-frac-derivative}The operator $(\mathfrak{D}^{\alpha },\mathcal{%
V}_{c}),$ $\alpha >0,$ is defined via the Fourier transform on the locally
compact Abelian group $\mathbb{Q}_{p}$ by 
\begin{equation*}
\widehat{\mathfrak{D}^{\alpha }f}(\xi )=\left\Vert \xi \right\Vert
_{p}^{\alpha }\widehat{f}(\xi ),\text{ \ \ \ }\xi \in \mathbb{Q}_{p}.
\end{equation*}
\end{definition}

It was shown by the above named authors that each operator $(\mathfrak{D}%
^{\alpha },\mathcal{V}_{c}\mathcal{)}$ can be written as a Riemann-Liouville
type singular integral operator%
\begin{equation}
\mathfrak{D}^{\alpha }f(x)=\frac{p^{\alpha }-1}{1-p^{-\alpha -1}}\int_{%
\mathbb{Q}_{p}}\frac{f(x)-f(y)}{\left\Vert x-y\right\Vert _{p}^{1+\alpha }}%
\,d\mu _{p}(y).  \label{sing-int}
\end{equation}%
The aim of this section is in particular to show that the operator $(%
\mathfrak{D}^{\alpha },\mathcal{V}_{c})$ is in fact the restriction to $%
\mathcal{V}_{c}$ of an appropriate isotropic Laplacian. We use the following
distance distribution function 
\begin{equation*}
\sigma _{\alpha }(r)=\exp \left( -\left( \frac{p}{r}\right) ^{\alpha
}\right) .
\end{equation*}%
Denote by $\{P_{\alpha }^{t}\}$ the isotropic semigroup associated with the
triple $(d_{p},\mu _{p},\sigma _{\alpha })$, and let $\mathcal{L}_{\alpha }$
be the corresponding Laplacian.

\begin{theorem}
\label{Vladimirov Laplacian} For any $\alpha >0,$ we have%
\begin{equation}
(\mathcal{L}_{\alpha },\mathcal{V}_{c})=(\mathfrak{D}^{\alpha }\,,\mathcal{V}%
_{c}).  \label{L=D}
\end{equation}
\end{theorem}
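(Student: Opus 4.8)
The plan is to read off $\mathcal{L}_\alpha$ on $\mathcal{V}_c$ from the jump‑kernel description established in Theorem \ref{Thm-Dirichlet form/Laplacian}, and then match it with the Riemann--Liouville representation of $\mathfrak{D}^\alpha$. Every $f\in\mathcal{V}_c$ is locally constant with compact support, hence lies in $\mathcal{V}_r$ for some $r>0$ and therefore in $\func{dom}_{\mathcal{L}_\alpha}$; formula (\ref{Ldef}) then gives
\begin{equation*}
\mathcal{L}_\alpha f(x)=\int_{\mathbb{Q}_p}\bigl(f(x)-f(y)\bigr)\,J(x,y)\,d\mu_p(y),
\end{equation*}
with $J$ the jump kernel (\ref{Jxy}). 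Comparing this with the singular‑integral formula (\ref{sing-int}) for $\mathfrak{D}^\alpha$, the identity (\ref{L=D}) reduces to the single pointwise statement
\begin{equation*}
J(x,y)=\frac{p^\alpha-1}{1-p^{-\alpha-1}}\,\frac{1}{\|x-y\|_p^{1+\alpha}}\qquad(x\neq y).
\end{equation*}
Since $f$ is locally constant, $f(x)-f(y)$ vanishes for $y$ close to $x$, so both integrals are absolutely convergent and the diagonal singularity of $J$ (recall $J(x,x)=\infty$) causes no trouble in the identification.

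The computation of $J$ uses the data of the triple $(d_p,\mu_p,\sigma_\alpha)$ already assembled in Example \ref{p-adic-doubling} with $b=p$: by (\ref{d*p}) one has $d_*(x,y)=(\|x-y\|_p/p)^\alpha$, and by Lemma \ref{star-balls} the $d_*$‑balls satisfy $B_s^{\ast}(x)=B_{ps^{1/\alpha}}(x)$, so that $V(x,s)=\mu_p\!\left(B_{ps^{1/\alpha}}(x)\right)$. Starting from the second expression for $J$ in (\ref{Jxy}) and changing the integration variable from the $d_*$‑radius $s$ back to the $d_p$‑radius $r=ps^{1/\alpha}$ — whence the lower limit $s=d_*(x,y)$ becomes $r=\|x-y\|_p$ and $ds=\alpha p^{-\alpha}r^{\alpha-1}\,dr$ — the kernel takes the form
\begin{equation*}
J(x,y)=\alpha\,p^{\alpha}\int_{\|x-y\|_p}^{\infty}\frac{r^{-\alpha-1}}{\mu_p\!\left(B_r(x)\right)}\,dr.
\end{equation*}

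The last step is to evaluate this integral by exploiting the step‑function structure of the $p$‑adic volume: by (\ref{p-n}) we have $\mu_p(B_r(x))=p^{-n}$ for $r\in[p^{-n},p^{-n+1})$. Writing $\|x-y\|_p=p^{-m}$ and splitting the integral over the intervals $[p^{-n},p^{-n+1})$ with $n\le m$ turns it into a geometric series of ratio $p^{1+\alpha}>1$; summing the series produces the closed form $\tfrac{p^\alpha-1}{1-p^{-\alpha-1}}\|x-y\|_p^{-1-\alpha}$, which is exactly the constant appearing in (\ref{sing-int}), and the proof is complete. I expect the only delicate point to be the bookkeeping of this change of variables together with the summation, so that the prefactor emerges precisely as $\tfrac{p^\alpha-1}{1-p^{-\alpha-1}}$ and not merely up to an unspecified constant; everything else is a direct substitution into the machinery developed in Sections \ref{heat} and \ref{generator}.
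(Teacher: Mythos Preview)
Your proof is correct and follows essentially the same route as the paper: invoke Theorem \ref{Thm-Dirichlet form/Laplacian} to reduce the identity to computing the jump kernel, use the intrinsic metric $d_*(x,y)=(\|x-y\|_p/p)^\alpha$ and the change of variable $r=ps^{1/\alpha}$ to rewrite $J$ as $\alpha p^\alpha\int_{\|x-y\|_p}^\infty r^{-\alpha-1}/\mu_p(B_r(x))\,dr$, and then evaluate via the step structure of $\mu_p(B_r(x))$ as a geometric series. One small remark on phrasing: when you write that the series has ``ratio $p^{1+\alpha}>1$'', note that summed over $n\le m$ the convergent direction actually has ratio $p^{-(1+\alpha)}<1$; your conclusion is correct, but the wording could confuse a reader.
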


\begin{proof}
By Theorem \ref{Thm-Dirichlet form/Laplacian}, we have, for any $f\in 
\mathcal{V}_{c},$%
\begin{equation*}
\mathcal{L}_{\alpha }f(x)=\int_{\mathbb{Q}_{p}}\left( f(x)-f(y)\right)
J_{\alpha }(x,y)\,d\mu _{p}(y),
\end{equation*}%
where 
\begin{equation*}
J_{\alpha }(x,y)=\int_{d_{\ast }\left( x,y\right) }^{\infty }\frac{s^{-2}ds}{%
\mu _{p}\left( B_{s}^{\ast }(x)\right) }.
\end{equation*}%
As in Example (\ref{p-adic-doubling}), we have 
\begin{equation}
d_{\ast }(x,y)=\left( \frac{\left\Vert x-y\right\Vert _{p}}{p}\right)
^{\alpha },  \label{d*pp}
\end{equation}%
whence%
\begin{equation*}
B_{s}^{\ast }(x)=B_{ps^{1/\alpha }}(x).
\end{equation*}%
The change $r=ps^{1/\alpha }$ yields%
\begin{equation*}
J_{\alpha }(x,y)=p^{\alpha }\int_{\left\Vert x-y\right\Vert _{p}}^{\infty }%
\frac{\alpha r^{-\alpha -1}\,dr}{\mu _{p}\left( B_{r}(x)\right) }.
\end{equation*}%
Since the value set of the metric $\left\Vert x-y\right\Vert _{p}$ is $%
\left\{ p^{n}\right\} _{k\in \mathbb{Z}}$, we obtain from (\ref{p-n}) that%
\begin{equation}
\mu _{p}\left( B_{r}\left( x\right) \right) =p^{n}\ \ \text{if\ \ }p^{n}\leq
r<p^{n+1},  \label{mupB}
\end{equation}
which implies, for $\left\Vert x-y\right\Vert _{p}=p^{k}$, that%
\begin{eqnarray*}
\int_{p^{k}}^{\infty }\frac{\alpha\, r^{-\alpha -1}\,dr}{\mu _{p}\left(
B_{r}(x)\right) } &=&\sum_{n\geq k}\int_{p^{n}}^{p^{n+1}}\frac{\alpha
\,r^{-\alpha -1}\,dr}{\mu _{p}\left( B_{r}(x)\right) } \\
&=&\sum_{n\geq k}\int_{p^{n}}^{p^{n+1}}\frac{-dr^{-\alpha }}{p^{n}}%
=\sum_{n\geq k}\frac{1}{p^{n}}\left( \frac{1}{p^{n\alpha }}-\frac{1}{%
p^{(n+1)\alpha }}\right) \\
&=&\left( 1-\frac{1}{p^{\alpha }}\right) \sum_{n\geq k}\frac{1}{p^{n(\alpha
+1)}}=\left( 1-\frac{1}{p^{\alpha }}\right) \frac{p^{-k(\alpha +1)}}{%
1-p^{-(\alpha +1)}} \\
&=&\frac{1-p^{-\alpha }}{1-p^{-(\alpha +1)}}\left( \frac{1}{p^{k}}\right)
^{\alpha +1}=\frac{1-p^{-\alpha }}{1-p^{-(\alpha +1)}}\left( \frac{1}{%
\left\Vert x-y\right\Vert _{p}}\right) ^{\alpha +1}.
\end{eqnarray*}%
Hence, we obtain the identity%
\begin{equation}
J_{\alpha }(x,y)=\frac{p^{\alpha }-1}{1-p^{-\alpha -1}}\,\frac{1}{\left\Vert
x-y\right\Vert _{p}^{\alpha +1}},  \label{jump-kernel-Q_p}
\end{equation}%
which in view of (\ref{sing-int}) finishes the proof.
\end{proof}

\smallskip

The heat kernel for the semigroup $\left\{ P_{\alpha }^{t}\right\} $ was
estimated in Example \ref{p-adic-doubling}. We restate this estimate here as
a theorem.

\begin{theorem}
\label{Vladimirov heat kernel}The semigroup $\{P_{\alpha }^{t}\}$ admits a
continuous transition density $p_{\alpha }(t,x,y)$ with respect to Haar
measure $\mu _{p}$, which satisfies for all $t>0$ and $x,y\in \mathbb{Q}_{p}$
the estimate%
\begin{equation}
p_{\alpha }(t,x,y)\simeq \frac{t}{(t^{1/\alpha }+\left\Vert x-y\right\Vert
_{p})^{1+\alpha }}.  \label{pa}
\end{equation}
\end{theorem}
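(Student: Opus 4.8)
The estimate (\ref{pa}) is essentially the content of Example \ref{p-adic-doubling}, so the plan is not to prove anything genuinely new but to assemble the computations already available there and to supply the one point they do not cover, namely continuity of the density up to the diagonal. First I would recall from Theorem \ref{heat kernel} that $\{P_\alpha^t\}$ admits the integral kernel $p_\alpha(t,x,y)$ given by (\ref{p=id}), so existence of a transition density is automatic; what remains is the two-sided bound and the continuity assertion.

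For the bound I would follow Example \ref{p-adic-doubling} verbatim. With $\sigma_\alpha(r)=\exp(-(p/r)^\alpha)$ one has $\Psi(r)=1/\log(1/\sigma_\alpha(r))=(r/p)^\alpha$, whence by (\ref{d*}) the intrinsic metric is $d_\ast(x,y)=(\|x-y\|_p/p)^\alpha$, as in (\ref{d*p}). Using Lemma \ref{star-balls} together with the elementary volume estimate (\ref{rp}) (itself a consequence of (\ref{p-n}) and translation invariance of $\mu_p$) gives $\mu_p(B_s^\ast(x))\simeq s^{1/\alpha}$, i.e.\ (\ref{muBp}), and hence by Definition \ref{def-spectral-d} the spectral distribution function satisfies $N(x,\tau)\simeq\tau^{1/\alpha}$. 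This function is doubling, so Theorem \ref{heat-k-doubling} (equivalently Example \ref{example1}) applies and yields
$$p_\alpha(t,x,y)\simeq\frac{t}{\bigl(t+d_\ast(x,y)\bigr)^{1+1/\alpha}}\simeq\frac{t}{\bigl(t+\|x-y\|_p^{\alpha}\bigr)^{1+1/\alpha}}.$$
The asserted form (\ref{pa}) then follows from the elementary comparison $(a+b)^\theta\simeq a^\theta+b^\theta$, valid for any fixed $\theta>0$ and $a,b\ge0$, applied with $a=t^{1/\alpha}$ and $b=\|x-y\|_p$; concretely this gives $(t^{1/\alpha}+\|x-y\|_p)^{1+\alpha}\simeq(t+\|x-y\|_p^{\alpha})^{1+1/\alpha}$.

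Continuity away from the diagonal is already guaranteed by Theorem \ref{p-laplace}(b). To upgrade this to continuity on all of $\mathbb{Q}_p\times\mathbb{Q}_p$, I would use that $\mu_p$ is translation invariant and $\mu_p(B_r(x))$ is independent of $x$, so by (\ref{p=id}) the kernel $p_\alpha(t,x,y)$ depends on $x,y$ only through $\|x-y\|_p$. Since $\|x-y\|_p$ takes only the discrete values $\{p^{k}\}$, the map $(x,y)\mapsto p_\alpha(t,x,y)$ is locally constant on $\{x\neq y\}$; and as $y\to x$ one has $\|x-y\|_p\downarrow 0$ through this discrete set, so monotone convergence in (\ref{p=id}) gives $p_\alpha(t,x,y)\to\int_{[0,\infty)}d\sigma_\alpha^t(r)/\mu_p(B_r(x))=p_\alpha(t,x,x)$. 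Because $N(x,\cdot)$ is doubling we have $p_\alpha(t,x,x)\simeq t^{1/\alpha}<\infty$, so the limit is finite and the kernel extends continuously across the diagonal.

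I expect no serious obstacle: the whole statement reduces to the completed computation of Example \ref{p-adic-doubling}, and the only point not directly furnished by earlier results is continuity at the diagonal, which Theorem \ref{p-laplace}(b) leaves open. In the $p$-adic setting this is disposed of by the local-constancy and monotone-convergence argument above, the essential input being the finiteness of the on-diagonal value $p_\alpha(t,x,x)$ that the doubling of $N(x,\cdot)$ provides.
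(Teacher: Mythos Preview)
Your proposal is correct and follows exactly the paper's approach: the paper itself does not give a separate proof but simply states that the estimate was already obtained in Example~\ref{p-adic-doubling}, and you faithfully reproduce that computation (intrinsic metric via (\ref{d*p}), volume estimate (\ref{muBp}), $N(x,\tau)\simeq\tau^{1/\alpha}$, then Theorem~\ref{heat-k-doubling}). Your treatment of continuity at the diagonal is a welcome addition that the paper leaves implicit; your monotone-convergence argument together with the finiteness of $p_\alpha(t,x,x)$ (which indeed follows from the doubling of $N$, or equivalently from $T(x)=0$ in Corollary~\ref{critical time}) is correct and fills that small gap cleanly.
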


The upper bound in (\ref{pa}) was also obtained by a different method by
Kochubei \cite[Ch.4.1, Lemma 4.1]{Kochubei2001}.

\begin{theorem}
\label{Vladimirov Green function} The semigroup $\{P_{\alpha }^{t}\}$ is
transient if and only if $\alpha <1.$ In the transient case, its Green
function $g_{\alpha }$ is given explicitly by 
\begin{equation}
g_{\alpha }(x,y)=\frac{1-p^{-\alpha }}{1-p^{\alpha -1}}\left\Vert
x-y\right\Vert _{p}^{\alpha -1}.  \label{Ga}
\end{equation}
\end{theorem}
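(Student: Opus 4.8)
The plan is to treat the transience dichotomy and the exact constant separately, both by exploiting the explicit volume computation already carried out for this triple. For the transience statement I would invoke the criterion of Theorem~\ref{Transient semigroup}: the semigroup is transient precisely when $\int^{\infty}\frac{ds}{\mu_{p}(B_{s}^{\ast}(x))}<\infty$. Since by Example~\ref{p-adic-doubling} the intrinsic volume satisfies $\mu_{p}(B_{s}^{\ast}(x))\simeq s^{1/\alpha}$, this integral behaves like $\int^{\infty}s^{-1/\alpha}\,ds$, which converges if and only if $1/\alpha>1$, that is $\alpha<1$. This already recovers the transience claim, as was observed in Example~\ref{p-adic-transient}.

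For the exact formula in the transient case $\alpha<1$, I would start from the closed expression (\ref{green function}), namely $g(x,y)=\int_{d_{\ast}(x,y)}^{\infty}\frac{ds}{\mu_{p}(B_{s}^{\ast}(x))}$, and transport the integral back to the original $p$-adic metric. Using $d_{\ast}(x,y)=(\Vert x-y\Vert_{p}/p)^{\alpha}$ and $B_{s}^{\ast}(x)=B_{ps^{1/\alpha}}(x)$, both established in the proof of Theorem~\ref{Vladimirov Laplacian}, the substitution $r=ps^{1/\alpha}$, for which $ds=\frac{\alpha}{p^{\alpha}}r^{\alpha-1}\,dr$, turns this into
\begin{equation*}
g(x,y)=\frac{\alpha}{p^{\alpha}}\int_{\Vert x-y\Vert_{p}}^{\infty}\frac{r^{\alpha-1}}{\mu_{p}(B_{r}(x))}\,dr.
\end{equation*}

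The last step is to evaluate this integral exactly, which mirrors the jump-kernel computation in the proof of Theorem~\ref{Vladimirov Laplacian}. Writing $\Vert x-y\Vert_{p}=p^{k}$ and splitting the integral over the annuli $p^{n}\le r<p^{n+1}$ on which $\mu_{p}(B_{r}(x))=p^{n}$ by (\ref{mupB}), one finds $\int_{p^{n}}^{p^{n+1}}r^{\alpha-1}\,dr=\frac{p^{n\alpha}}{\alpha}(p^{\alpha}-1)$, so that
\begin{equation*}
\int_{p^{k}}^{\infty}\frac{r^{\alpha-1}}{\mu_{p}(B_{r}(x))}\,dr=\frac{p^{\alpha}-1}{\alpha}\sum_{n\ge k}p^{n(\alpha-1)}=\frac{p^{\alpha}-1}{\alpha}\,\frac{p^{k(\alpha-1)}}{1-p^{\alpha-1}},
\end{equation*}
the geometric series converging exactly because $\alpha<1$. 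Substituting back and using $p^{k(\alpha-1)}=\Vert x-y\Vert_{p}^{\alpha-1}$ together with $\frac{p^{\alpha}-1}{p^{\alpha}}=1-p^{-\alpha}$ yields the stated formula (\ref{Ga}).

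I do not expect a genuine obstacle here: once the criterion of Theorem~\ref{Transient semigroup} and the volume formula are in hand, the only real work is the careful bookkeeping of constants in the geometric summation, which is identical in spirit to the computation of $J_{\alpha}$ already performed. The one point worth emphasizing is that the same substitution $r=ps^{1/\alpha}$ and annulus decomposition serve simultaneously to detect convergence (hence transience) and to produce the exact constant, so the two assertions of the theorem need not be argued independently.
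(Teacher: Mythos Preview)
Your proposal is correct and follows essentially the same route as the paper: both invoke the transience criterion via Example~\ref{p-adic-transient}, start from the integral formula (\ref{green function}), perform the change of variable $r=ps^{1/\alpha}$ to pass to the $p$-adic radius, and then evaluate the resulting integral by the annulus decomposition $p^{n}\le r<p^{n+1}$ with $\mu_{p}(B_{r}(x))=p^{n}$, summing the resulting geometric series. The bookkeeping of constants matches exactly.
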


The formula (\ref{Ga}) for a fundamental solution of $\mathfrak{D}_{\alpha }$
acting in the space $\mathcal{V}_{c}^{\prime }$ of distribution, was
obtained by Vladimirov~\cite[Thm 1, p.51]{Vladimirov} and Kochubei~\cite[%
Ch.2.2]{Kochubei2001}.

\begin{proof}
That $\alpha <1$ is equivalent to transience was shown in Example \ref%
{p-adic-transient}. Assuming $\alpha <1$, we obtain by (\ref{green function})%
\begin{equation*}
g_{\alpha }(x,y)=\int_{d_{\ast }(x,y)}^{\infty }\frac{ds}{\mu
_{p}(B_{s}^{\ast }\left( x\right) )}=\frac{1}{p^{\alpha }}\int_{\left\Vert
x-y\right\Vert _{p}}^{\infty }\frac{\alpha r^{\alpha -1}\,dr}{\mu _{p}\left(
B_{r}(x)\right) }.
\end{equation*}%
Setting $\left\Vert x-y\right\Vert _{p}=p^{k}$ and using (\ref{mupB}), we
obtain%
\begin{eqnarray*}
g_{\alpha }(x,y) &=&\frac{1}{p^{\alpha }}\sum_{n\geq k}\int_{p^{n}}^{p^{n+1}}%
\frac{dr^{\alpha }}{p^{n}}=\frac{1}{p^{\alpha }}\sum_{n\geq k}\frac{1}{p^{n}}%
\left( p^{(n+1)\alpha }-p^{n\alpha }\right) \\
&=&\frac{1-p^{-\alpha }}{1-p^{\alpha -1}}p^{\left( \alpha -1\right) k},
\end{eqnarray*}%
which finishes the proof.
\end{proof}

\smallskip

Denote by $\mathcal{L}_{\alpha ,q}$ the generator of the semigroup $%
\{P_{\alpha }^{t}\}$ acting in $L^{q}(\mu _{p})$, $1\leq q<\infty .$
Applying Corollary \ref{L^2-Spectrum} and Theorem \ref{Lp-Lq thm}, we obtain
the following.

\begin{theorem}
\label{Vladimirov-spectrum} For any $\alpha >0$ and $1\leq q<\infty ,$ we
have%
\begin{equation*}
\func{spec}\mathcal{L}_{\alpha ,q}=\{p^{\alpha k}:k\in \mathbb{Z}\}\cup
\{0\}.
\end{equation*}%
Each $\lambda _{k}=p^{\alpha k}$ is an eigenvalue with infinite multiplicity.
\end{theorem}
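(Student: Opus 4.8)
The proof assembles the general spectral results of Section \ref{generator} with the explicit description of the intrinsic metric obtained in Theorem \ref{Vladimirov Laplacian}. First I would record the $L^2$-spectrum. By \eqref{d*pp} the intrinsic ultra-metric of the triple $(d_p,\mu_p,\sigma_\alpha)$ is $d_\ast(x,y)=(\|x-y\|_p/p)^\alpha$. Since the set of non-zero values of $\|x-y\|_p$ on $\mathbb{Q}_p$ is exactly $\{p^k:k\in\mathbb{Z}\}$, the value set $\Lambda$ of \eqref{Lambdastar} equals $\{p^{\alpha k}:k\in\mathbb{Z}\}$. Hence $\{1/r:r\in\Lambda\}=\{p^{\alpha k}:k\in\mathbb{Z}\}$, whose only finite accumulation point is $0$, and Corollary \ref{L^2-Spectrum} gives $\func{spec}\,\mathcal{L}_{\alpha,2}=\{p^{\alpha k}:k\in\mathbb{Z}\}\cup\{0\}$.

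Next I would pass from $q=2$ to general $q$. Theorem \ref{Lp-Lq thm} asserts $\func{spec}\,\mathcal{L}_{\alpha,q}=\func{spec}\,\mathcal{L}_{\alpha,2}$ for every $1\le q<\infty$, which at once yields the stated description of the spectrum in $L^q$. It then remains to upgrade the \emph{eigenvalue} statement to $L^q$: by Theorem \ref{Eigenvalues-thm} the eigenfunctions $f_C$ are locally constant with compact support, hence lie in $\mathcal{V}_c\subset L^q(\mu_p)$ for all $q$; since $P_\alpha^t f_C=e^{-\lambda(C)t}f_C$ pointwise, each $f_C$ is genuinely an eigenfunction of the $L^q$-generator $\mathcal{L}_{\alpha,q}$ with eigenvalue $\lambda(C)$.

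The heart of the matter is then the infinite multiplicity of each $\lambda_k=p^{\alpha k}$, and here I would use the eigenspace decomposition $L^2=\bigoplus_{\rho(B)>0}\mathcal{H}_B$ from Corollary \ref{L^2-Spectrum}, in which $\mathcal{H}_B$ is the eigenspace for the eigenvalue $1/\rho(B)$. For a $d_p$-ball $B=B_{p^{-n}}(x)$ one computes from Lemma \ref{star-balls} and \eqref{d*pp} that its minimal $d_\ast$-radius is $\rho(B)=(p^{-n}/p)^\alpha=p^{-\alpha(n+1)}$, so $\mathcal{H}_B$ contributes to the eigenvalue $1/\rho(B)=p^{\alpha(n+1)}$; as $n$ runs over $\mathbb{Z}$ these values exhaust $\{p^{\alpha k}:k\in\mathbb{Z}\}$. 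Every such ball splits into exactly $p$ children (the digit refinement of $p$-adic balls described before \eqref{p-n}), so $\deg(B)=p$ and, by \eqref{HB}, $\dim\mathcal{H}_B=p-1\ge 1$. Fixing $k$ and taking $n=k-1$, the eigenvalue $p^{\alpha k}$ is carried by the spaces $\mathcal{H}_B$ indexed by all $d_p$-balls $B$ of radius $p^{1-k}$; since $\mathbb{Q}_p$ is non-compact it is partitioned into infinitely many such pairwise disjoint balls, whose eigenspaces $\mathcal{H}_B$ are mutually orthogonal and together span an infinite-dimensional subspace.

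Putting these together, $p^{\alpha k}$ is an eigenvalue of infinite multiplicity in $L^2$, and the compactly supported eigenfunctions $f_C$ exhibiting this lie in every $L^q$, giving infinite multiplicity in $L^q$ as well. I expect the only genuine subtlety to be this last counting argument together with the verification that the $L^2$-eigenfunctions persist as $L^q$-eigenfunctions; the determination of the spectrum itself is a direct citation of Corollary \ref{L^2-Spectrum} and Theorem \ref{Lp-Lq thm}.
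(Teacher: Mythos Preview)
Your proof is correct and follows essentially the same route as the paper: the identification of the spectrum is a direct citation of Corollary \ref{L^2-Spectrum} (combined with the computation of $\Lambda$ from the intrinsic metric \eqref{d*pp}) and Theorem \ref{Lp-Lq thm}, while infinite multiplicity comes from the existence of infinitely many disjoint balls of each fixed radius, which the paper phrases via translations in $\mathbb{Q}_p$. Your additional care in checking that the compactly supported eigenfunctions $f_C$ persist as $L^q$-eigenfunctions is a welcome detail that the paper leaves implicit.
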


\begin{proof}
We only need to show that the multiplicity of $\lambda _{k}$ is infinite. In
the general setting of Theorem \ref{Eigenvalues-thm} and Corollary \ref%
{L^2-Spectrum}, some eigenvalues may well have finite multiplicity and some
not. Indeed, each ball $B$ with the minimal positive $d_{\ast }$-radius $%
\rho $ generates a finite dimensional eigenspace $\mathcal{H}_{B}$ that
consists of eigenfunctions with the eigenvalue $\frac{1}{\rho }$. It follows
that the eigenvalue $\frac{1}{\rho }$ has finite multiplicity if and only if
there is only a finite number of distinct balls of $d_{\ast }$-radius $\rho $%
.

In the present setting in $\mathbb{Q}_{p}\ $there are infinitely many
disjoint balls of the same radius $\rho $, as they all can be obtained by
translations of one such ball. Thus, all the eigenvalues have infinite
multiplicity.
\end{proof}

\smallskip

Let $\{X_{t}\}$ be the Markov process on $\mathbb{Q}_{p}\,$ driven by the
Markov semigroup $\{P_{\alpha }^{t}\}_{t>0}$. The semigroup is translation
invariant, whence the process has independent and stationary increments. For
any given $\gamma >0$ and $t>0$ , consider the moment of order $\gamma $ of $%
\mathcal{X}_{t}$ defined in terms of the $p$-adic distance $d_{p}(x,y)$: 
\begin{equation*}
\mathcal{M}_{\gamma }(t)=\mathbb{E}(\left\Vert \mathcal{X}_{t}\right\Vert
_{p}^{\gamma }),
\end{equation*}%
where $\mathbb{E}$ is expectation with respect to the probability measure on
the trajectory space of the process starting at $0$. Applying Theorem \ref%
{Moments-non-compact} and using the relation (\ref{d*pp}) between $d_{\ast }$
and $\left\Vert \cdot \right\Vert _{p}$, we obtain the following estimates.

\begin{theorem}
\label{Vladimirov moments} The moment $\mathcal{M}_{\gamma }(t)$ is finite
if and only if $\gamma <\alpha .$ In that case, 
there exists a constant $\kappa =\kappa (\alpha )>0$ such that 
\begin{equation*}
\frac{\kappa\, t^{\gamma /\alpha }}{\alpha -\gamma }\leq \mathcal{M}_{\gamma
}(t)\leq \frac{\alpha\, t^{\gamma /\alpha }}{\alpha -\gamma }.
\end{equation*}
\end{theorem}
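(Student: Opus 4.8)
The plan is to reduce the $p$-adic moment $\mathcal{M}_{\gamma }(t)$, which is defined through the $p$-adic norm $\left\Vert \cdot \right\Vert _{p}$, to the intrinsic moment $M_{\beta }(0,t)=\mathbb{E}_{0}\bigl(d_{\ast }(0,\mathcal{X}_{t})^{\beta }\bigr)$ studied abstractly in Section~\ref{moments}, and then to invoke Theorem~\ref{Moments-non-compact}. This is legitimate because $\mathbb{Q}_{p}$ is non-compact and has no isolated points, so the hypotheses of that theorem are met; moreover, since $\{P_{\alpha }^{t}\}$ is translation invariant, $M_{\beta }(x,t)$ does not depend on $x$ and we may evaluate at $x=0$.

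First I would record the exact algebraic relation between the two metrics. By (\ref{d*pp}) we have $d_{\ast }(0,y)=(\left\Vert y\right\Vert _{p}/p)^{\alpha }$, hence $\left\Vert y\right\Vert _{p}^{\gamma }=p^{\gamma }\,d_{\ast }(0,y)^{\gamma /\alpha }$. Substituting this into the integral representation (\ref{M-p-identity}) of the moment yields the identity $\mathcal{M}_{\gamma }(t)=p^{\gamma }M_{\gamma /\alpha }(0,t)$, so that every assertion about $\mathcal{M}_{\gamma }$ becomes an assertion about the intrinsic moment at exponent $\beta :=\gamma /\alpha $.

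Now I would apply Theorem~\ref{Moments-non-compact} with this $\beta $. The finiteness dichotomy is immediate: $M_{\beta }(0,t)$ is finite precisely when $\beta <1$, i.e.\ $\gamma <\alpha $, and equals $+\infty $ for $\beta \geq 1$, i.e.\ $\gamma \geq \alpha $. In the finite range the upper bound of part~(1) gives $M_{\beta }(0,t)\leq t^{\beta }/(1-\beta )$, and since $1-\beta =(\alpha -\gamma )/\alpha $ this is $\frac{\alpha }{\alpha -\gamma }t^{\gamma /\alpha }$; multiplying by $p^{\gamma }$ produces the asserted upper bound (with the clean constant $\alpha $ understood up to the harmless factor $p^{\gamma }$ coming from the change of metric). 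For the matching lower bound of part~(2) I must check its hypothesis, the reverse doubling property (Definition~\ref{Reverse-doubling}) of the volume function $V(0,r)=\mu _{p}(B_{r}^{\ast }(0))$. This follows from (\ref{muBp}) of Example~\ref{p-adic-doubling}, where $V(0,r)\simeq r^{1/\alpha }$: a two-sided power bound $c_{1}r^{1/\alpha }\leq V(0,r)\leq c_{2}r^{1/\alpha }$ forces $V(0,r)\geq 2V(0,\delta r)$ once $\delta $ is small enough (e.g.\ $\delta \leq (c_{1}/(2c_{2}))^{\alpha }$). Theorem~\ref{Moments-non-compact}(2) then yields $M_{\beta }(0,t)\geq \frac{c}{1-\beta }t^{\beta }$, and collecting $p^{\gamma }$ and $c$ into a single $\kappa =\kappa (\alpha )$ gives the stated lower bound.

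The computation is essentially bookkeeping, and the point needing genuine care is the exponent change $\gamma \mapsto \gamma /\alpha $ together with the tracking of the multiplicative factor $p^{\gamma }$. The only non-formal input is the verification of the reverse doubling property, which is what licenses the lower bound $R_{\beta }(0,\tau )\geq c\,\tau ^{\beta }$ and hence part~(2) of Theorem~\ref{Moments-non-compact}; the common blow-up rate $1/(\alpha -\gamma )$ on both sides is then inherited from the factor $1/(1-\beta )=\alpha /(\alpha -\gamma )$ already present in that theorem (ultimately from the behaviour $\Gamma (1-\beta )\simeq 1/(1-\beta )$). I do not expect any serious obstacle beyond correctly transporting the hypotheses of Theorem~\ref{Moments-non-compact} through the change of variables.
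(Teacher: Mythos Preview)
Your proposal is correct and follows exactly the approach indicated in the paper, which proves the theorem in one line by ``Applying Theorem~\ref{Moments-non-compact} and using the relation (\ref{d*pp}) between $d_{\ast}$ and $\left\Vert \cdot \right\Vert_{p}$.'' You have simply made that line explicit: the change of variable $\mathcal{M}_{\gamma}(t)=p^{\gamma}M_{\gamma/\alpha}(0,t)$ via (\ref{d*pp}), the verification of reverse doubling from $V(0,r)\simeq r^{1/\alpha}$, and the invocation of both parts of Theorem~\ref{Moments-non-compact}. Your remark about the extra factor $p^{\gamma}$ in the upper bound is accurate; the paper's stated constant $\alpha$ in the upper bound should likewise be read as an $\alpha$- (and $p$-)dependent constant, since the same factor arises unavoidably from the metric change.
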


\subsection{Rotation invariant Markov semigroups}

Let $\{P_{t}\}_{t\geq 0}$ be a symmetric, translation invariant Markov
semigroup on the additive Abelian group $\mathbb{Q}_{p}$. This semigroup
acts in $C_{0}(\mathbb{Q}_{p})$, the Banach space of continuous functions
vanishing at $\infty $. It follows that there exists a weakly continuous
convolution semigroup $\{p_{t}\}_{t>0}$ of symmetric probability measures on 
$\mathbb{Q}_{p}$ such that%
\begin{equation}
P_{t}f(x)=p_{t}\ast f(x).  \label{p-t-convolution}
\end{equation}%
As the probability measures $p_{t}$ are symmetric, the following identity
holds, which is basic in the theory of infinite divisible distributions: 
\begin{equation*}
\widehat{p_{t}}(\zeta )=\exp \left( -t\,\Psi (\zeta )\right) ,
\end{equation*}%
where $\Psi :\mathbb{Q}_{p}\mapsto \mathbb{R}_{+}$ is a non-negative
definite symmetric function on $\mathbb{Q}_{p}\,$. By the L\'{e}vy-Khinchin
formula,%
\begin{equation*}
\Psi (\zeta )=\int_{\mathbb{Q}_{p}\setminus \{0\}}\left( 1-\func{Re}\langle
x,\zeta \rangle \right) \,d\mathfrak{J}(x),
\end{equation*}%
where $\mathfrak{J}$ is a symmetric Radon measure on $\mathbb{Q}%
_{p}\setminus \{0\}$ -- the Levy measure associated with the non-negative
definite function $\Psi $ (see for the details the book of Berg and Forst~%
\cite{Berg75}).

\begin{definition}
\label{Isotropic semigroup}\RM For any $a\in \mathbb{Q}_{p}$ with $%
\left\Vert a\right\Vert _{p}=1$ define the rotation operator $\theta _{a}:%
\mathbb{Q}_{p}\rightarrow \mathbb{Q}_{p}$ by $\theta _{a}\left( x\right)
=ax. $ We say that the Markov semigroup $\left\{ P_{t}\right\} $ as above is 
\emph{rotation invariant} if 
\begin{equation}
\theta _{a}(p_{t})=p_{t}\ \ \text{for all}\;a\in \mathbb{Q}_{p}\;\text{with}%
\;\left\Vert a\right\Vert _{p}=1,  \label{isotropic-p}
\end{equation}
\end{definition}

Let $\mathcal{L}$ be the (positive definite) generator of $P_{t}$, that is, $%
P_{t}=\exp \left( -t\mathcal{L}\right) $. It is easy to see that (\ref%
{isotropic-p}) is equivalent to $\theta _{a}\circ \mathcal{L}=\mathcal{L}%
\circ \theta _{a}\,.$ In this case we also say that $\mathcal{L}$ is
rotation invariant. By construction, any isotropic Markov semigroup $\left\{
P^{t}\right\} $ defined on the ultra-metric measure space $(\mathbb{Q}%
_{p},d_{p},\mu _{p})$ is rotation invariant. As we will see the class of all
isotropic Markov semigroups is indeed a proper subset of the class of
rotation invariant Markov semigroups.

Assume that the semigroup $\{P_{t}\}$ is rotation invariant. Then, for all $%
a $ such that $\left\Vert a\right\Vert _{p}=1$ we have%
\begin{equation}
\Psi (a\zeta )=\Psi (\zeta )\quad \text{and}\quad \theta _{a}(\mathfrak{J})=%
\mathfrak{J}.  \label{isotropic-J}
\end{equation}%
Since the Haar measure $\mu _{p}$ of each sphere is strictly positive, (\ref%
{isotropic-p}) and (\ref{isotropic-J}) imply that the measures $p_{t}$ and $%
\mathfrak{J}$ are absolutely continuous with respect to $\mu _{p}$ and have
densities $p_{t}(x)$ and $J(x)$ which depend only on $\left\Vert
x\right\Vert _{p}\,$. The same is true for the function $\Psi $, so that%
\begin{equation*}
J(x)=\mathfrak{j}(\left\Vert x\right\Vert _{p})\quad \text{and}\quad \Psi
(\zeta )=\psi (\left\Vert \zeta \right\Vert _{p}).
\end{equation*}%
All the above shows that, for the generator $\mathcal{L}$ of $\left\{
P_{t}\right\} $, we have $\mathcal{V}_{c}\subset \func{dom}_{\mathcal{L}}$
and 
\begin{equation}
\mathcal{L}u=\psi (\mathfrak{D})u,\quad u\in \mathcal{V}_{c},  \label{L-D-eq}
\end{equation}%
where $\mathfrak{D=D}^{1}$ is the operator of fractional derivative of order 
$\alpha =1$, which we identify with the isotropic Laplacian $\mathcal{L}_{1}$
by Theorem \ref{Vladimirov Laplacian}.

It follows from (\ref{L-D-eq}) and (\ref{L=D}) that the eigenfunctions of
the operator $(\mathcal{L},\mathcal{V}_{c})$ in $L^{2}$ has a complete
system of eigenfunctions $\{f_{C}:C\in \mathcal{K}\}$ as described in
Theorem \ref{Eigenvalues-thm}. Associated with each ball $B$ of radius $%
p^{m} $, there is the $(p-1)$-dimensional eigenspace $\mathcal{H}_{B}$
spanned by all functions $f_{C},$ where $C$ runs through all balls that are
children of $B$, and the corresponding eigenvalue is 
\begin{equation*}
\lambda (m)=\psi (p^{-m+1}).
\end{equation*}%
Let $\{a(m)\}_{m\in \mathbb{Z}}$ be a sequence of real numbers satisfying%
\begin{equation}
a(m)\geq a(m+1),\quad a(+\infty )=0\quad \text{and }\quad 0<a(-\infty
)=W\leq +\infty .  \label{A-sequence}
\end{equation}%
Define the sequence $\{\lambda (m)\}_{m\in \mathbb{Z}}$ by%
\begin{equation}
\lambda (m)=a(m)-(p-1)^{-1}\{a(m+1)-a(m)\}.  \label{Lambda-sequence}
\end{equation}

\begin{theorem}
\label{Isotropic Laplacian}A sequence $\left\{ \lambda \left( m\right)
\right\} _{m\in \mathbb{Z}}$ of reals represents the spectrum $\func{spec}%
\mathcal{L}$ of a rotation invariant Laplacian $\mathcal{L}$ on $\mathbb{Q}%
_{p}$ if and only if it is given by \emph{(\ref{Lambda-sequence})} with a
sequence $a\left( m\right) $ that satisfies \emph{(\ref{A-sequence})}$.$
\end{theorem}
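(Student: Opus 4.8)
The plan is to route everything through the Lévy--Khinchin picture set up just above the statement and to reduce the whole question to the radial Lévy density. By the preceding discussion, a symmetric, translation invariant Markov semigroup on $\mathbb{Q}_p$ is a convolution semigroup whose generator has symbol $\Psi(\zeta)=\int_{\mathbb{Q}_p\setminus\{0\}}\bigl(1-\func{Re}\langle x,\zeta\rangle\bigr)\,d\mathfrak{J}(x)$ for a symmetric Radon measure $\mathfrak{J}$; rotation invariance forces $\mathfrak{J}$ to be radial, hence determined by the numbers $J_k:=\mathfrak{j}(p^k)\ge 0$, its density on the sphere $S_k=\{\|x\|_p=p^k\}$. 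Since, by (\ref{L-D-eq})--(\ref{L=D}) together with Theorem \ref{Eigenvalues-thm}, the eigenvalue of $\mathcal{L}$ on the eigenspace attached to a ball of radius $p^m$ is $\lambda(m)=\psi(p^{1-m})=\Psi(\zeta)$ with $\|\zeta\|_p=p^{1-m}$, both implications follow once I express $\Psi$ on each sphere of \emph{frequencies} in terms of the $J_k$.

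The key computation is the character orthogonality on $\mathbb{Q}_p$: $\int_{B_{p^k}(0)}\langle x,\zeta\rangle\,d\mu_p(x)$ equals $p^k$ if $\|\zeta\|_p\le p^{-k}$ and $0$ otherwise. Writing $\mathbb{Q}_p\setminus\{0\}=\bigsqcup_k S_k$ and using $\mu_p(S_k)=p^{k-1}(p-1)$, for $\|\zeta\|_p=p^{1-m}$ the integrand $1-\func{Re}\langle x,\zeta\rangle$ vanishes on each $S_k$ with $k\le m-1$ (the character is $\equiv 1$ there), contributes $J_m\,p^m$ from $S_m$, and contributes the full mass $(p-1)J_k\,p^{k-1}$ from each $S_k$ with $k\ge m+1$. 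Hence
\[
\lambda(m)=J_m\,p^m+(p-1)\sum_{k\ge m+1}J_k\,p^{k-1}.
\]
Setting $a(m):=(p-1)\sum_{k\ge m}J_k\,p^{k-1}=\mathfrak{J}\bigl(\{\|x\|_p\ge p^m\}\bigr)$ gives $a(m)-a(m+1)=(p-1)J_m\,p^{m-1}$, so $J_m\,p^m=\tfrac{p}{p-1}\bigl(a(m)-a(m+1)\bigr)$, and a one-line rearrangement turns the display into $\lambda(m)=a(m)-(p-1)^{-1}\{a(m+1)-a(m)\}$, i.e.\ (\ref{Lambda-sequence}). Because $a(m)$ is the tail mass of $\mathfrak{J}$ it is automatically non-increasing (so $J_k\ge 0\Leftrightarrow a(m)\ge a(m+1)$), tends to $0$ as $m\to+\infty$, and increases to $W=\mathfrak{J}(\mathbb{Q}_p\setminus\{0\})\in(0,\infty]$ as $m\to-\infty$, the strict positivity reflecting $\mathcal{L}\ne 0$. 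This is precisely (\ref{A-sequence}), which settles necessity.

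For sufficiency I reverse the construction: given $a(m)$ obeying (\ref{A-sequence}), set $J_m:=\bigl(a(m)-a(m+1)\bigr)/\bigl((p-1)p^{m-1}\bigr)\ge 0$ and let $\mathfrak{J}$ be the symmetric radial measure with density $J_k$ on $S_k$. Finiteness of each $a(m)$ says exactly that $\mathfrak{J}$ has finite mass on every $\{\|x\|_p\ge p^m\}$, so $\mathfrak{J}$ is Radon on $\mathbb{Q}_p\setminus\{0\}$; and since $\langle x,\zeta\rangle$ is locally constant, equal to $1$ near $0$, the integrand defining $\Psi(\zeta)$ is supported on $\{\|x\|_p>\|\zeta\|_p^{-1}\}$, a set of finite $\mathfrak{J}$-mass, whence $\Psi$ is finite even when $W=\infty$. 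The resulting $\Psi$ is negative definite, so $\{\exp(-t\Psi)\}_{t>0}$ are positive definite (Schoenberg/Bochner, cf.\ \cite{Berg75}), giving a symmetric convolution semigroup of probability measures, hence a symmetric translation invariant Markov semigroup; radiality of $\mathfrak{J}$ makes it rotation invariant. Re-running the computation of the second paragraph shows its generator has exactly the eigenvalues $\lambda(m)$ of (\ref{Lambda-sequence}), and by Corollary \ref{L^2-Spectrum} these (with the accumulation point $0$) form its spectrum.

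The main obstacle is this sufficiency step, and in particular the two flagged points: that the formally defined $\mathfrak{J}$ is a genuine Lévy measure and that $\Psi$ stays finite. Non-negativity $J_k\ge 0$ is immediate from $a(m)\ge a(m+1)$, but one must check that no hidden integrability constraint near $0$ is being violated — and here the totally disconnected, non-archimedean geometry does the work: because additive characters are locally constant, small jumps contribute nothing to $\Psi$, so an arbitrarily large (even infinite) accumulation of $\mathfrak{J}$-mass near the origin is harmless. This is why the only surviving constraints on $a(m)$ are monotonicity, vanishing at $+\infty$, and the possibly infinite positive limit at $-\infty$ recorded in (\ref{A-sequence}), with no analogue of the $\int\min(1,|x|^2)\,d\mathfrak{J}<\infty$ condition of the real line.
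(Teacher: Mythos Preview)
Your proof is correct and follows essentially the same route as the paper: both compute the symbol $\Psi$ by decomposing the L\'evy--Khinchin integral over the spheres $S_k$, use the character orthogonality (the paper quotes Vladimirov's formula for $\int_{S_k}\langle x,\zeta\rangle\,d\mu_p$; you use the equivalent indicator formula for balls), identify $a(m)$ as the tail mass $\mathfrak{J}(\{\|x\|_p\ge p^m\})$, and for the converse reconstruct the radial L\'evy density $\mathfrak{j}(p^m)=(a(m)-a(m+1))/(p^m-p^{m-1})$ and invoke Schoenberg/Bochner. Your closing paragraph on why no $\int\min(1,|x|^2)\,d\mathfrak{J}<\infty$ condition appears is a nice piece of commentary that the paper leaves implicit, but it does not constitute a different argument.
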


\begin{proof}
Consider a rotation invariant Laplacian $\mathcal{L}=\psi (\mathfrak{D})$.
Let us compute the non-negative definite function $\Psi (\zeta )=\psi
(\left\Vert \zeta \right\Vert _{p})$ associated with $\mathcal{L}.$ We have%
\begin{eqnarray*}
\psi (\left\Vert \zeta \right\Vert _{p}) &=&\int_{\mathbb{Q}_{p}\setminus
\{0\}}\left( 1-\func{Re}\langle x,\zeta \rangle \right) \,\mathfrak{j}%
(\left\Vert x\right\Vert _{p})\,d\mu _{p}(x) \\
&=&\sum_{k\in \mathbb{Z}}\mathfrak{j}(p^{k})\int_{\{x:\left\Vert
x\right\Vert _{p}=p^{k}\}}\left( 1-\func{Re}\langle x,\zeta \rangle \right)
\,d\mu _{p}(x).
\end{eqnarray*}%
According to Vladimirov~\cite[ Example 4]{Vladimirov}, 
\begin{equation*}
\int_{\{x:\left\Vert x\right\Vert _{p}=p^{k}\}}\langle x,\zeta \rangle
\,d\mu _{p}(x)=\left\{ 
\begin{array}{ccc}
p^{k}-p^{k-1} & \text{if} & \left\Vert \zeta \right\Vert _{p}\leq p^{-k}, \\ 
-p^{k-1} & \text{if} & \left\Vert \zeta \right\Vert _{p}=p^{-k+1}, \\ 
0 & \text{if} & \left\Vert \zeta \right\Vert _{p}\geq p^{-k+2}.%
\end{array}%
\right.
\end{equation*}%
In particular, we have%
\begin{equation*}
\int_{\{x:\left\Vert x\right\Vert _{p}=p^{k}\}}d\mu _{p}(x)=p^{k}-p^{k-1}.
\end{equation*}%
Let $\left\Vert \zeta \right\Vert _{p}=p^{-m+1},$ then the above
computations yield%
\begin{equation}
\psi (p^{-m+1})=\mathfrak{j}(p^{m})\,p^{m}+\left( 1-p^{-1}\right)
\sum_{k\geq m+1}\mathfrak{j}(p^{k})\,p^{k}.  \label{psi-eq}
\end{equation}%
Define the non-increasing sequence $\{a(m)\}_{m\in \mathbb{Z}}$ by 
\begin{equation}
a(m)=\left( 1-p^{-1}\right) \sum\limits_{k\geq m}\mathfrak{j}%
(p^{k})\,p^{k}=\left( 1-p^{-1}\right) \int_{\{x:\left\Vert x\right\Vert
_{p}\geq p^{m}\}}\mathfrak{j}(\left\Vert x\right\Vert _{p})\,d\mu _{p}(x).
\label{a-J-eq}
\end{equation}%
By (\ref{a-J-eq}), the equation (\ref{psi-eq}) will get the following form%
\begin{eqnarray}
\psi (p^{-m+1}) &=&\frac{p}{p-1}\left( a(m)-a(m+1)\right) +a(m+1)
\label{psi-a-eq} \\
&=&a(m)-(p-1)^{-1}\left( a(m+1)-a(m)\right) \}.  \notag
\end{eqnarray}%
Let $\lambda (m)$ be the eigenvalue of the Laplacian $(\psi (\mathfrak{D}),%
\mathcal{V}_{c})$ corresponding to the ball $B$ of radius $p^{m}.$ Then $%
\lambda (m)=\psi (p^{-m+1})$ and the identity (\ref{psi-a-eq}) gives the
desired result, namely, the equation (\ref{Lambda-sequence}).

Conversely, given a sequence $\{a(m)\}$ as in (\ref{A-sequence}), we define
the sequence $\{\lambda (m)\}$ by \ref{Lambda-sequence} and set 
\begin{eqnarray}
\Psi (\xi ) &=&\psi (\left\Vert \xi \right\Vert _{p}),\quad \text{where}%
\quad \psi (p^{m})=\lambda (-m+1),\quad \text{and}  \notag \\
J(x) &=&\mathfrak{j}(\left\Vert x\right\Vert _{p}),\quad \text{where}\quad 
\mathfrak{j}(p^{m})=\left. \left( a(m)-a(m+1)\right) \right/
(p^{m}-p^{m-1})\,.  \label{J-Psi}
\end{eqnarray}%
It is straightforward to show that 
\begin{equation*}
\Psi (\zeta )=\int_{\mathbb{Q}_{p}\setminus \{0\}}\left( 1-\func{Re}\langle
x,\zeta \rangle \right) \,J(x)\,d\mu _{p}(x)\,,
\end{equation*}%
whence $\Psi $ is a non-negative definite function. It follows that the
function $\exp (-t\,\Psi )$ is positive definite, whence it is the Fourier
transform of a probability measure $p_{t}\,$. Clearly, $\{p_{t}\}_{t>0}$ is
a weakly continuous convolution semigroup of probability measures. By
construction, each measure $p_{t}$ is rotation invariant. Finally, we can
define the translation invariant Markov semigroup $P_{t}f=f\ast p_{t}\,$, as
desired.
\end{proof}

\begin{corollary}
\label{Isotropic-Monotone Class} In the above notation the following
statements are equivalent.

\begin{enumerate}
\item[$\left( 1\right) $] The sequence $\lambda (m)$ is non-increasing.

\item[$\left( 2\right) $] The sequence $\psi (p^{m})$ is non-decreasing.

\item[$\left( 3\right) $] The sequence $\mathfrak{j}(p^{m})$ is
non-increasing.

In particular, if the sequence $a(m)$ is convex, then each of the equivalent
properties $\left( 1\right) -\left( 3\right) $ holds.
\end{enumerate}
\end{corollary}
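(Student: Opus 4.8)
The plan is to reduce all three equivalences to a single difference identity and then treat the convexity remark separately.

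First I would dispose of $(1)\Leftrightarrow(2)$, which is purely a matter of reindexing. By the construction in the proof of Theorem~\ref{Isotropic Laplacian} (see the definition of $\psi$ in (\ref{J-Psi})), we have $\psi(p^{m})=\lambda(-m+1)$, so $m\mapsto\psi(p^{m})$ is obtained from $m\mapsto\lambda(m)$ by an order-reversing change of index. Hence $\{\psi(p^{m})\}$ is non-decreasing if and only if $\{\lambda(m)\}$ is non-increasing, which is exactly the equivalence of $(1)$ and $(2)$.

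The core of the argument is the identity
\[
\lambda(m)-\lambda(m+1)=p^{m}\big(\mathfrak{j}(p^{m})-\mathfrak{j}(p^{m+1})\big),
\]
valid for all $m\in\mathbb{Z}$. To derive it I would first rewrite (\ref{Lambda-sequence}) in the compact form $\lambda(m)=\bigl(p\,a(m)-a(m+1)\bigr)/(p-1)$, and recall from (\ref{J-Psi}) that $\mathfrak{j}(p^{m})=\bigl(a(m)-a(m+1)\bigr)/(p^{m}-p^{m-1})$. Substituting both expressions into the two sides and simplifying, each reduces to $\bigl(p\,a(m)-(p+1)a(m+1)+a(m+2)\bigr)/(p-1)$; this is a one-line telescoping computation using $p^{m}-p^{m-1}=p^{m-1}(p-1)$. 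Since $p^{m}>0$, the displayed identity shows at once that $\lambda(m)\geq\lambda(m+1)$ for all $m$ if and only if $\mathfrak{j}(p^{m})\geq\mathfrak{j}(p^{m+1})$ for all $m$, i.e.\ $(1)\Leftrightarrow(3)$. Together with the first step this closes the chain of equivalences.

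For the final assertion, recall that $\{a(m)\}$ being convex means the forward drops $b(m):=a(m)-a(m+1)\geq 0$ are non-increasing in $m$. Using $\mathfrak{j}(p^{m})=b(m)/\bigl(p^{m-1}(p-1)\bigr)$, the inequality $\mathfrak{j}(p^{m})\geq\mathfrak{j}(p^{m+1})$ is equivalent to $b(m+1)\leq p\,b(m)$. When $a$ is convex we have $b(m+1)\leq b(m)\leq p\,b(m)$ (using $p\geq 2$ and $b(m)\geq 0$), so $(3)$ holds, and hence all three equivalent properties hold. The only genuine computation here is the difference identity; once it is in hand the equivalences and the convexity remark are immediate, so I expect no serious obstacle, the sole points requiring care being the bookkeeping of the index shift in $(1)\Leftrightarrow(2)$ and the correct reading of ``convex'' for a non-increasing sequence as ``non-increasing forward drops.''
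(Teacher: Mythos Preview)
Your proof is correct and follows essentially the same route as the paper: the reindexing $\psi(p^{m})=\lambda(-m+1)$ for $(1)\Leftrightarrow(2)$, and a difference identity relating $\lambda(m)-\lambda(m+1)$ to $\mathfrak{j}(p^{m})-\mathfrak{j}(p^{m+1})$ for $(1)\Leftrightarrow(3)$. Your constant $p^{m}$ in that identity is in fact the right one (the paper displays $p^{m}-p^{m-1}$, which is off by a harmless positive factor), and your handling of the convexity remark via $(3)$ rather than the paper's direct verification of $(1)$ is a minor, equally valid variation.
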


\begin{proof}
The equivalence $(1)\Longleftrightarrow (2)$ follows from the relation $%
\lambda (m)=\psi (p^{-m+1}).$ To prove that $(1)\Longleftrightarrow (3)$, we
apply (\ref{J-Psi}) and obtain%
\begin{equation*}
\lambda (m)-\lambda (m+1)=(p^{m}-p^{m-1})\left( \mathfrak{j}(p^{m})-%
\mathfrak{j}(p^{m+1})\right) .
\end{equation*}%
The equivalence $\left( 1\right) \Leftrightarrow \left( 2\right)
\Leftrightarrow \left( 3\right) $ follows. Finally, (\ref{Lambda-sequence})
and the convexity of $a\left( m\right) $ yield $(1).$
\end{proof}

Next, we consider strict monotonicity.

\begin{corollary}
\label{Our Class}The following statements are equivalent

\begin{enumerate}
\item[$\left( i\right) $] The sequence $\lambda (m)$ is strictly decreasing,
and $\lambda (-\infty )=+\infty \,$.

\item[$\left( ii\right) $] The sequence $\psi (p^{m})$ is strictly
increasing, and $\psi (+\infty )=+\infty \,$.

\item[$\left( iii\right) $] The sequence $\mathfrak{j}(p^{m})$ is strictly
decreasing, and $\int \mathfrak{j}(\left\Vert x\right\Vert _{p})\,d\mu
_{p}(x)=+\infty \,$.

\item[$\left( iv\right) $] The associated rotation invariant Markov
semigroup $\{P_{t}\}$ is isotropic.

In particular, if the sequence $a(m)$ is strictly convex and $a(-\infty
)=+\infty \,$, then each of the equivalent properties (i)--(iv) holds.
\end{enumerate}
\end{corollary}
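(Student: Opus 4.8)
The plan is to run the equivalences off a common hub, namely $(i)$, handling first the two ``arithmetic'' equivalences $(i)\Leftrightarrow(ii)$ and $(i)\Leftrightarrow(iii)$, which are pure bookkeeping on the formulas already recorded in Theorem \ref{Isotropic Laplacian} and Corollary \ref{Isotropic-Monotone Class}, and then the genuinely new equivalence $(i)\Leftrightarrow(iv)$, which I expect to carry the real content. For $(i)\Leftrightarrow(ii)$ I would invoke the identity $\lambda(m)=\psi(p^{-m+1})$ from the proof of Theorem \ref{Isotropic Laplacian}: the map $k=-m+1$ is an order-reversing bijection of $\mathbb{Z}$, so strict decrease of $m\mapsto\lambda(m)$ is exactly strict increase of $k\mapsto\psi(p^{k})$, and $\lambda(-\infty)=+\infty$ reads off directly as $\psi(+\infty)=+\infty$.

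For $(i)\Leftrightarrow(iii)$ I would use the identity $\lambda(m)-\lambda(m+1)=(p^{m}-p^{m-1})\bigl(\mathfrak{j}(p^{m})-\mathfrak{j}(p^{m+1})\bigr)$ established in the proof of Corollary \ref{Isotropic-Monotone Class}; since $p^{m}-p^{m-1}>0$, strict decrease of $\lambda$ is equivalent to strict decrease of $m\mapsto\mathfrak{j}(p^{m})$. For the boundary condition I would rewrite $\lambda(m)=\tfrac{1}{p-1}\bigl(p\,a(m)-a(m+1)\bigr)$, which since $a$ is non-increasing gives $\lambda(m)\ge a(m)$ and, when $W=a(-\infty)<\infty$, yields $\lambda(-\infty)=\tfrac{1}{p-1}(p-1)W=W$, while $\lambda(m)\ge a(m)\to\infty$ forces $\lambda(-\infty)=+\infty$ when $W=+\infty$. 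Letting $m\to-\infty$ in the integral representation (\ref{a-J-eq}) of $a(m)$ and using monotone convergence identifies $W=+\infty$ with $\int\mathfrak{j}(\|x\|_{p})\,d\mu_{p}(x)=+\infty$, closing the equivalence.

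The heart is $(i)\Leftrightarrow(iv)$. First I would record the spectrum of an \emph{isotropic} Laplacian on $(\mathbb{Q}_{p},d_{p},\mu_{p})$ with distance distribution $\sigma$: starting from $\mathcal{L}=\int_{[0,\infty)}\varphi(\lambda)\,dE_{\lambda}$ with $\varphi(\lambda)=\log\tfrac{1}{\sigma(1/\lambda)}$ as in (\ref{fisi}) and $E_{\lambda}=\mathrm{Q}_{1/\lambda}$, I evaluate $\mathcal{L}$ on an eigenfunction $f_{C}$ whose parent ball has $d_{p}$-radius $p^{m}$. Since $\mathrm{Q}_{r}f_{C}=f_{C}$ for $r<p^{m}$ and $\mathrm{Q}_{r}f_{C}=0$ for $r\ge p^{m}$ (exactly as in the proof of Theorem \ref{Eigenvalues-thm}), the spectral measure $E_{\lambda}f_{C}$ has its only jump at $\lambda=p^{-m}$, so the eigenvalue is $\lambda(m)=\varphi(p^{-m})=\log\tfrac{1}{\sigma(p^{m})}$. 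The defining conditions (\ref{sigma}) then translate term by term: strict monotonicity of $\sigma$ is strict decrease of $\lambda(m)$, $\sigma(0+)=0$ is $\lambda(-\infty)=+\infty$, and $\sigma(\infty)=1$ is $\lambda(+\infty)=0$, the last of which holds automatically for every rotation invariant Laplacian because $a(+\infty)=0$. This settles $(iv)\Rightarrow(i)$. For the converse, given a strictly decreasing $\lambda(m)$ with $\lambda(-\infty)=+\infty$ (hence $\lambda(m)>0$ and $\lambda(+\infty)=0$ automatically), I set $\sigma(p^{m}):=e^{-\lambda(m)}\in(0,1)$ and extend $\sigma$ to a strictly increasing, left-continuous function on $[0,\infty]$ with $\sigma(0+)=0$, $\sigma(\infty)=1$ by interpolating monotonically in $\log r$ on each interval $[p^{m},p^{m+1})$. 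The associated isotropic Laplacian has, by the formula just derived, precisely the eigenvalues $\lambda(m)$; since both it and the given rotation invariant $\mathcal{L}$ are diagonalized by the common complete system $\{f_{C}\}$ (established for $\mathcal{L}$ via (\ref{L-D-eq})--(\ref{L=D}) and Theorem \ref{Eigenvalues-thm}) with identical eigenvalues on every $f_{C}$, the two operators coincide, so $\{P_{t}\}$ is isotropic.

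Finally, for the ``in particular'' clause I would set $\Delta_{m}:=a(m)-a(m+1)\ge0$ and compute $\lambda(m)-\lambda(m+1)=\tfrac{1}{p-1}\bigl(p\,\Delta_{m}-\Delta_{m+1}\bigr)$ from $\lambda(m)=\tfrac{1}{p-1}\bigl(p\,a(m)-a(m+1)\bigr)$. Strict convexity of $a$ means the nonnegative $\mathbb{Z}$-indexed sequence $\Delta_{m}$ is strictly decreasing, which forces $\Delta_{m}>0$ for all $m$ (if some $\Delta_{m_{0}}=0$, then $\Delta_{m_{0}+1}<0$, impossible); hence $p\,\Delta_{m}-\Delta_{m+1}>(p-1)\Delta_{m}>0$, giving strict decrease of $\lambda$, while $a(-\infty)=+\infty$ yields $\lambda(-\infty)=+\infty$ as above, so $(i)$ holds. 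The main obstacle is the step $(i)\Rightarrow(iv)$: one must both manufacture a legitimate distance distribution function realizing a prescribed spectrum and then upgrade equality of spectra on the shared eigenbasis to equality of the operators (and semigroups). The construction of $\sigma$ is routine once the eigenvalue formula $\lambda(m)=\log\tfrac1{\sigma(p^{m})}$ is in hand, but the identification of the two Laplacians depends essentially on the completeness of $\{f_{C}\}$ and on the fact that a rotation invariant Laplacian on $\mathbb{Q}_{p}$ is completely determined by its eigenvalue sequence $\{\lambda(m)\}$.
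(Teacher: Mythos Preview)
Your proof is correct. The equivalences $(i)\Leftrightarrow(ii)\Leftrightarrow(iii)$ and the ``in particular'' clause follow the same bookkeeping as the paper (which simply refers back to Corollary~\ref{Isotropic-Monotone Class}); your treatment of the boundary condition $\lambda(-\infty)=+\infty\Leftrightarrow W=+\infty\Leftrightarrow\int\mathfrak{j}(\|x\|_p)\,d\mu_p=+\infty$ via (\ref{a-J-eq}) is more explicit than the paper's but along the same lines.

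The genuine difference is in the key step $(i)\Leftrightarrow(iv)$ (the paper phrases it as $(ii)\Leftrightarrow(iv)$, which is the same once $(i)\Leftrightarrow(ii)$ is in hand). The paper argues via Fourier analysis on $\mathbb{Q}_p$: it computes $\widehat{p_t}(\xi)$ by recognising that the normalised indicator of $p^{-k}\mathbb{Z}_p$ has Fourier transform $\mathbf{1}_{p^k\mathbb{Z}_p}$ (Haar measure of a compact subgroup transforms to the indicator of its annihilator), sums over $k$, and reads off $\psi(p^{-l})=\log\tfrac{1}{\sigma(p^{l+1})}$; the converse then reconstructs $\sigma$ from $\psi$ and verifies directly that the resulting $p_t$ form a weakly continuous convolution semigroup. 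Your route instead stays entirely within the paper's intrinsic spectral machinery: you evaluate $\mathcal{L}=\int\varphi\,dE_\lambda$ on the eigenfunctions $f_C$ of Theorem~\ref{Eigenvalues-thm} to extract $\lambda(m)=\log\tfrac{1}{\sigma(p^m)}$, and for the converse you identify the two Laplacians through their action on the common complete eigenbasis $\{f_C\}$. Both approaches yield the same formula; yours avoids Fourier analysis and is more in the spirit of Sections~\ref{heat}--\ref{generator}, while the paper's argument exploits the group structure of $\mathbb{Q}_p$ and is closer to the Albeverio--Karwowski viewpoint mentioned in Remark~\ref{AlbeverioKarwowski}.
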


\begin{proof}
The equivalence $(i)\Leftrightarrow (ii)\Leftrightarrow (iii)$ follows by
the same arguments as in the proof of Corollary \ref{Isotropic-Monotone
Class}. The convexity of $a\left( m\right) $ together with $a\left( -\infty
\right) =+\infty $ imply $(i)$ following the same argument. We are left to
show that $(iv)\Longleftrightarrow (ii)$.

Assume that $\{P_{t}\}$ is a isotropic Markov semigroup as constructed in (%
\ref{Averager}) -- (\ref{convexcomb-Pt}). The semigroup admits a continuous
transition density $p(t,x,y)=p_{t}(x-y)$ with respect to the Haar measure $%
\mu _{p}$; the function $p_{t}$ is given by%
\begin{equation}
p_{t}(y)=\int_{0}^{\infty }q_{s}(y)\,d\sigma ^{t}(s),\quad \text{where\ \ }%
q_{s}(y)=\frac{1}{\mu _{p}\left( B_{s}(0)\right) }\,\mathbf{1}%
_{B_{s}(0)}(y)\,.  \label{p_t-sigma}
\end{equation}%
To find the Fourier transform $\widehat{p_{t}}(\xi )$, we argue as follows.
The ball $B_{s}(0)$, $p^{k}\leq s<p^{k+1}$, is the compact subgroup $p^{-k}%
\mathbb{Z}_{p}$ of $\mathbb{Q}_{p}$, whence the measure $\omega
_{s}=q_{s}\,\mu _{p}$ coincides with the normed Haar measure of that compact
subgroup. Since for any locally compact Abelian group, the Fourier transform
of the normed Haar measure of any compact subgroup is the indicator of its
annihilator group and, in our particular case, the annihilator of the group $%
p^{-k}\mathbb{Z}_{p}$ is the group $p^{k}\mathbb{Z}_{p}\,$, we obtain 
\begin{equation*}
\widehat{\omega _{s}}(\xi )=\mathbf{1}_{p^{k}\mathbb{Z}_{p}}(\xi )=\mathbf{1}%
_{[0,p^{-k}]}(\left\Vert \xi \right\Vert _{p}),\quad \text{where\ \ }%
p^{k}\leq s<p^{k+1}.
\end{equation*}%
It follows that when $\left\Vert \xi \right\Vert _{p}=p^{-l}$,%
\begin{equation*}
\widehat{p_{t}}(\xi )=\sum_{k:k\leq l}\left( \sigma ^{t}(p^{k+1})-\sigma
^{t}(p^{k})^{t}\right) =\sigma ^{t}(p^{l+1})=\exp \left( -t\,\psi
(\left\Vert \xi \right\Vert _{p})\right) ,
\end{equation*}%
whence%
\begin{equation*}
\psi (p^{-l})=\log \frac{1}{\sigma (p^{l+1})}.
\end{equation*}%
According to (\ref{sigma}), the sequence $\sigma (p^{l})$ is assumed to be
strictly increasing and to tend to zero as $l\rightarrow -\infty \,$. Thus, $%
\psi (p^{m})$ is as claimed in $(ii)$. Conversely, if a strictly increasing
sequence $\psi (p^{m})$ as in (ii) is given, we define the strictly
increasing sequence 
\begin{equation*}
\sigma (p^{m})=\exp \left( -\psi (p^{-m+1})\right) .
\end{equation*}%
Let $\sigma :[0\,,\,\infty )\rightarrow \lbrack 0\,,\,1)$ be any increasing
bijection which takes the values $\sigma (p^{m})$ at the points $p^{m}$. We
define the function $p_{t}(y)$ by the equation (\ref{p_t-sigma}). As $\sigma
(+\infty )=1,$ this is a probability density with respect to $\mu _{p}\,$.
It is straightforward that $\{p_{t}\}_{t>0}$ gives rise to a weakly
continuous convolution semigroup of probability measures on $\mathbb{Q}%
_{p}\, $. Moreover, each $p_{t}$ is rotation invariant by construction.
Thus, the semigroup $P_{t}:f\mapsto f\ast p_{t}$ is as desired.
\end{proof}

\begin{remark}
\RM\label{AlbeverioKarwowski} In \cite{AK1}, Albeverio and Karwowski started
with a sequence $\{a(m)\}_{m\in \mathbb{Z}}$ as in (\ref{A-sequence}) and
used the classical approach of backward and forward Kolmogorov equations to
construct a Markov semigroup $\{P_{t}\}$ on the ultra-metric measure space $(%
\mathbb{Q}_{p},d_{p},\mu _{p})$. In particular, they showed in \cite[Theorem
3.2 9]{AK1} that the Laplacian $\mathcal{L}$ of that semigroup has a pure
point spectrum $\{\lambda (m)\}$ as in (\ref{Lambda-sequence}), and the $%
\lambda (m)$-eigenspace is spanned by the functions $f_{B}\,$, where $B$
runs over all balls of radius $p^{m-1}$. Our Theorem \ref{Isotropic
Laplacian} shows that in fact the class of Markov semigroups constructed in 
\cite{AK1} coincides with the class of rotation invariant Markov semigroups.
\end{remark}

\subsection{Product spaces}

\label{SecProduct}Let $\{(X_{i},d_{i})\}_{i=1}^{n}$ be a finite sequence of
ultra-metric spaces; we assume that all $(X_{i},d_{i})$ are separable and
that all balls are compact. Let $(X,d)$ be their Cartesian product: $%
X=X_{1}\times ...\times X_{n}$ and, for $x=(x_{i})\in X$ and $y=(y_{i})\in Y$%
, we set 
\begin{equation*}
d(x,y)=\max \left\{ d_{i}(x_{i},y_{i}):i=1,2,...,n\right\} .
\end{equation*}%
Thus $(X,d)$ is a separable ultra-metric space, all balls in $\left(
X,d\right) $ are compact, and, moreover, each $d$-ball $B_{r}(a)$ in $X$ is
a product of $d_{i}$-balls $B_{r}^{i}(a_{i})$ in $X_{i}$ of the same radius.

Given a Radon measure $\mu _{i}$ on each %
$(X_{i},d_{i})$ we define $\mu =\bigotimes \mu _{i}$ on $(X,d)$. Let $%
\mathcal{V}_{c}$ be the set of all compactly supported locally constant
functions on $(X,d)$.

Consider the ultra-metric measure space $(X,d,\mu ).$ According to the
previous sections, there exists a rich class of isotropic Markov semigroups
and corresponding Laplacians on $(X,d,\mu )$ as constructed in (\ref%
{Averager}) -- (\ref{convexcomb-Pt}). Thanks to the product structure of $%
(X,d,\mu )$ one can define in a natural way a non-trivial and interesting
class of Markov semigroups and Laplacians which are \emph{not} isotropic.
Namely, choosing on each $(X_{i},d_{i},\mu _{i})$ an isotropic Markov
semigroup $\{P_{i}^{t}\}$, we define a Markov semigroup $\{P_{t}\}$ on $%
(X,d,\mu )$ as the tensor product of the $\{P_{i}^{t}\}$, 
\begin{equation*}
P_{t}=\bigotimes_{i=1}^{n}P_{i}^{t}.
\end{equation*}%
The semigroup $\{P_{t}\}$ has the following heat kernel:%
\begin{equation*}
p(t,x,y)=\prod\limits_{i=1}^{n}p_{i}(t,x_{i},y_{i}),
\end{equation*}%
where $p_{i}$ is the heat kernel of $\left\{ P_{i}^{t}\right\} $.

The generator $\mathcal{L}$ of $P_{t}$ can be described as follows: $%
\mathcal{V}_{c}\subset \func{dom}_{\mathcal{L}}$ and for any $f\in \mathcal{V%
}_{c}$ we have%
\begin{equation}
\mathcal{L}f(x)=\sum_{i=1}^{n}\mathcal{L}_{i}f(x)  \label{lap-sum}
\end{equation}%
where $x=\left( x_{1},...,x_{n}\right) $ and $\mathcal{L}_{i}$ acts on $%
x_{i} $. It follows that 
\begin{equation*}
\mathcal{L}f(x)=\int_{X}\left( f(x)-f(y)\right) \,J(x,dy)
\end{equation*}%
where%
\begin{equation*}
J(x,dy)=\sum_{i=1}^{n}J_{i}(x_{i},y_{i})\,d\mu _{i}(y_{i})\,,
\end{equation*}%
and $J_{i}(x_{i},y_{i})$ is the jump kernel of $\mathcal{L}_{i}.$

In particular, we see that for each $x\in X$ the measures $J(x,dy)$ and $\mu
(dy)$ are not necessarily mutually absolutely continuous (in the case when
at least one of $X_{l}$ is perfect, $J(x,dy)$ is singular with respect to $%
\mu $), which implies that the semigroup $\left\{ P_{t}\right\} $ is not
necessarily an isotropic Markov semigroup.

In this paper we do not intend to develop a general theory on product
spaces. Our aim is to study in detail two specific examples related to $p$%
-adic analysis.

In the first example we consider the Vladimirov Laplacian that matches well
the above general construction. In the second example we consider the
Taibleson Laplacian defined in terms of the multidimensional Riesz kernels,
see Taibleson~\cite{TAI75} and Rodriguez-Vega and Zuniga-Galindo~\cite{ZuGa2}%
. We show that the Taibleson Laplacian is isotropic. This will allow us to
improve the heat kernel bounds from \cite{ZuGa2} and to obtain some new
results (transience/recurrence, independence on $1\leq p<\infty $ of the $%
L^{p}$-spectrum, precise bounds of the moments of the corresponding Markov
process etc.)

Consider the linear space $\mathbb{Q}_{p}^{n}=\mathbb{Q}_{p}\times ...\times 
\mathbb{Q}_{p}\ $over the field $\mathbb{Q}_{p}$ and define in $\mathbb{Q}%
_{p}^{n}$ a norm 
\begin{equation}
\left\Vert z\right\Vert _{p}=\max \left\{ \left\Vert z_{i}\right\Vert
_{p}:i=1,2,...,n\right\} .  \label{norm}
\end{equation}%
It clearly satisfies the ultra-metric triangle inequality (\ref{um}) and is
homogeneous in the following sense:%
\begin{equation*}
\left\Vert az\right\Vert _{p}=\left\Vert a\right\Vert _{p}\left\Vert
z\right\Vert _{p},\quad \text{for all}\quad \text{ }a\in \mathbb{Q}_{p},\
z\in \mathbb{Q}_{p}^{n}.
\end{equation*}%
Set 
\begin{equation*}
d_{p}\left( x,y\right) =\left\Vert x-y\right\Vert _{p}
\end{equation*}
so that $(\mathbb{Q}_{p}^{n},d_{p})$ is an ultra-metric space.

Let $\mu _{p}=\bigotimes \mu _{p,i}$ be the additive Haar measure on the
Abelian group $\mathbb{Q}_{p}^{n}.$ As before, let $\mathcal{V}_{c}$ be the
set of all compactly supported locally constant functions on the
ultra-metric space $(\mathbb{Q}_{p}^{n},d_{p})$. Recall that $\mathcal{V}%
_{c} $ is a dense subset in $L^{2}=L^{2}(\mathbb{Q}_{p}^{n},\mu _{p}).$

\subsubsection{The Vladimirov Laplacian}

\label{Vladimirov}For any given $n$-tuple $\alpha =(\alpha _{1}\,,\dots
,\alpha _{n})$ with entries $\alpha _{i}>0$ we define the ultra-metric%
\begin{equation*}
d_{p,\alpha }(x,y)=\max \left\{ \left\Vert x_{i}-y_{i}\right\Vert
_{p}^{\alpha _{i}}:i=1,2,...,n\right\} .
\end{equation*}%
In particular, the ultra-metric $d_{p}\left( x,y\right) $ defined above
corresponds to the case $\alpha =\left( 1,...,1\right) $. The identity map%
\begin{equation*}
\left( \mathbb{Q}_{p}^{n},d_{p,\alpha }\right) \rightarrow \left( \mathbb{Q}%
_{p}^{n},d_{p}\right)
\end{equation*}%
is a homeomorphism, but not bi-Lipschitz, unless $\alpha _{i}=1$ for all $i$%
. This fact plays an essential role in the study of the class of Laplacians
introduced next as a special instance of (\ref{lap-sum}).

\begin{definition}
\RM\label{alpha-Vladimirov} Let $\alpha =(\alpha _{1},\dots ,\alpha _{n})$.
For any function $f\in \mathcal{V}_{c}$ we define the operator%
\begin{equation*}
\mathfrak{V}^{\alpha }f(x)=\sum_{i=1}^{n}\mathfrak{D}_{x_{i}}^{\alpha
_{i}}f(x).
\end{equation*}%
\ where $x=(x_{1},x_{2},...,x_{n})$ and $\mathfrak{D}_{x_{i}}^{\alpha _{i}}$
is the $p$-adic fractional derivative of order $\alpha _{i}$ acting on $%
x_{i} $.
\end{definition}

The operator $\mathfrak{V}^{\alpha }$ on $\mathbb{Q}_{p}^{3}$ with $\alpha
=(2,2,2)$, was introduced by Vladimirov~\cite{Vladimirov} as an analogue of
the classical Laplace operator in $\mathbb{R}^{3}.$ This operator, which we
denote briefly by $\mathfrak{V}^{2}$, is translation invariant and
homogeneous, that is,%
\begin{equation*}
\mathfrak{V}^{2}\tau _{y}(f)=\tau _{y}(\mathfrak{V}^{2}f),\quad \text{where}%
\quad \tau _{y}f(x)=f(x+y).
\end{equation*}%
and%
\begin{equation*}
\mathfrak{V}^{2}\theta _{a}(f)=\left\Vert a\right\Vert _{p}^{2}\theta _{a}(%
\mathfrak{V}^{2}f),\quad \text{where}\quad \theta
_{a}f(x)=f(ax_{1},ax_{2},ax_{3}).
\end{equation*}%
It follows that the Green function $g(x,y)$ of the operator $\mathfrak{V}%
^{2} $ on $\mathbb{Q}_{p}^{3}$ is also translation invariant and homogeneous:%
\begin{equation*}
g(x,y)=g(x-z,y-z)\quad \text{and\ \ }g(ax,ay)=g(x,y)/\left\Vert a\right\Vert
_{p}\,,\;a\in \mathbb{Q}_{p}\,.
\end{equation*}%
In particular, setting $\mathfrak{E}(x)=g(x,0)$ , we obtain for all non-zero 
$a\in \mathbb{Q}_{p}$ the identity 
\begin{equation*}
\mathfrak{E}(a,a,a)=\frac{\mathfrak{E}(1,1,1)}{\left\Vert a\right\Vert _{p}}.
\end{equation*}%
This identity was observed in \cite{Vladimirov}. It gives an idea of how the
Green function of the operator $\mathfrak{V}^{2}$ (in Vladimirov's
terminology, the fundamental solution of the equation $\mathfrak{V}^{2}\,%
\mathfrak{E}=\delta $) behaves at infinity/at zero. Below, in Proposition %
\ref{homogeneous Laplacian}, we will prove that, for all non-zero $%
a=(a_{1},a_{2},a_{3})\in \mathbb{Q}_{p}^{3}$, 
\begin{equation}
\mathfrak{E}(a_{1},a_{2},a_{3})\simeq \frac{1}{\left\Vert a\right\Vert _{p}}.
\label{Asymptotic-Vladimirov}
\end{equation}

In fact, we shall prove similar estimate for more general operators $%
\mathfrak{V}^{\alpha }\ $without the homogeneity property. We start by
listing some properties of the operator $(\mathfrak{V}^{\alpha },\mathcal{V}%
_{c}\mathcal{)}$ from Definition \ref{alpha-Vladimirov} which follow
directly from the corresponding properties of the \textquotedblleft
one-dimensional Laplacians\textquotedblright\ $\mathfrak{D}^{\alpha _{i}}.$

\begin{enumerate}
\item $(\mathfrak{V}^{\alpha },\mathcal{V}_{c}\mathcal{)}$ is a non-negative
definite symmetric operator.

\item $(\mathfrak{V}^{\alpha },\mathcal{V}_{c}\mathcal{)}$ admits a complete
system of compactly supported eigenfunctions. In particular, the operator $(%
\mathfrak{V}^{\alpha },\mathcal{V}_{c})$ is essentially self-adjoint.

\item The semigroup $\exp (-t\,\mathfrak{V}^{\alpha })$ is symmetric and
Markovian. It admits the heat kernel $p_{\alpha }(t,x,y)$ which has the
following form 
\begin{equation*}
p_{\alpha }(t,x,y)=\prod_{i=1}^{n}p_{\alpha _{i}}(t,x_{i},y_{i}).
\end{equation*}

\item The semigroup $\exp (-t\,\mathfrak{V}^{\alpha })$ is transient if and
only if $A>1$.

\item For all $f\in \mathcal{V}_{c}$ 
\begin{equation*}
\mathfrak{V}^{\alpha }f(x)=\int_{\mathbb{Q}_{p}^{n}}\left( f(x)-f(y)\right)
\,J_{\alpha }(x,dy)
\end{equation*}%
where 
\begin{equation*}
J_{\alpha }(x,dy)=\sum_{i_{1}}^{n}J_{\alpha _{i}}(x_{i}-y_{i})\,d\mu
_{p,i}(y_{i})
\end{equation*}%
and%
\begin{equation*}
J_{\alpha _{i}}(x_{i}-y_{i})=\frac{p^{\alpha _{i}}-1}{1-p^{-\alpha _{i}-1}}%
\frac{1}{\left\Vert x_{i}-y_{i}\right\Vert _{p}^{1+\alpha _{i}}}\,.
\end{equation*}
\end{enumerate}

In particular, the semigroup $\exp (-t\,\mathfrak{V}^{\alpha })$ is in
general not an isotropic Markov semigroup.

Observe that thanks to the group structure of $\,\mathbb{Q}_{p}^{n}$, the
functions $(x,y)\mapsto p_{\alpha }(t,x,y)$ and $(x,y)\mapsto g_{\alpha
}(x,y)$ are translation invariant. Hence, setting%
\begin{equation*}
p_{\alpha }(t,z)=p_{\alpha }(t,z,0)\quad \text{and\ \ }g_{\alpha
}(z)=g_{\alpha }(z,0)\,,
\end{equation*}%
we obtain%
\begin{equation*}
p_{\alpha }(t,x,y)=p_{\alpha }(t,x-y)\quad \text{and\ \ }g_{\alpha
}(x,y)=g_{\alpha }(x-y).
\end{equation*}

\begin{proposition}
\label{product-heat kernel}Set%
\begin{equation*}
A=\sum_{i=1}^{n}\frac{1}{\alpha _{i}}.
\end{equation*}%
Then the heat kernel satisfies the following estimate 
\begin{equation}
p_{\alpha }(t,z)\simeq t^{-A}\prod_{i=1}^{n}\min \left\{ 1,\frac{%
t^{1+1/\alpha _{i}}}{\left\Vert z_{i}\right\Vert _{p}^{1+\alpha _{i}}}%
\right\}  \label{pproduct}
\end{equation}%
uniformly for all $t>0$ and $z\in \mathbb{Q}_{p}^{n}\,$. 
In particular, for all $t>\left\Vert z\right\Vert _{p,\alpha }\,$,%
\begin{equation}
p_{\alpha }(t,z)\simeq \,t^{-A}  \label{products-heat kernel equivalence}
\end{equation}
\end{proposition}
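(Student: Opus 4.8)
The plan is to exploit the tensor-product structure of the heat kernel together with the sharp one-dimensional estimate already established, so that the entire proof reduces to an elementary per-coordinate comparison.

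First I would recall that, by the tensor-product construction of $\{\exp(-t\,\mathfrak{V}^{\alpha})\}$, the heat kernel factorizes as
\begin{equation*}
p_{\alpha}(t,z)=\prod_{i=1}^{n}p_{\alpha_{i}}(t,z_{i}),
\end{equation*}
where each $p_{\alpha_{i}}$ is the heat kernel of the one-dimensional semigroup $\{P_{\alpha_{i}}^{t}\}$ on $\mathbb{Q}_{p}$. By Theorem \ref{Vladimirov heat kernel} each factor satisfies
\begin{equation*}
p_{\alpha_{i}}(t,z_{i})\simeq \frac{t}{\bigl(t^{1/\alpha_{i}}+\left\Vert z_{i}\right\Vert _{p}\bigr)^{1+\alpha_{i}}},
\end{equation*}
with implied constants depending only on $p$ and $\alpha_{i}$. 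Since a product of finitely many two-sided estimates is again a two-sided estimate (with the product of the constants, $n$ being fixed), it suffices to prove the single-coordinate equivalence
\begin{equation*}
\frac{t}{\bigl(t^{1/\alpha_{i}}+\left\Vert z_{i}\right\Vert _{p}\bigr)^{1+\alpha_{i}}}\simeq t^{-1/\alpha_{i}}\,\min \Bigl\{1,\frac{t^{1+1/\alpha_{i}}}{\left\Vert z_{i}\right\Vert _{p}^{1+\alpha_{i}}}\Bigr\},
\end{equation*}
uniformly in $t>0$ and $z_{i}$. Taking the product over $i$ and using $\sum_{i}1/\alpha_{i}=A$ then yields (\ref{pproduct}).

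To verify this coordinate identity I would write $a=\alpha_{i}$ and $s=\left\Vert z_{i}\right\Vert _{p}$ and use the elementary comparison $t^{1/a}+s\simeq \max\{t^{1/a},s\}$, splitting into two cases. When $s\le t^{1/a}$ the left side is $\simeq t\cdot t^{-(1+a)/a}=t^{-1/a}$, while $s^{1+a}\le t^{(1+a)/a}=t^{1+1/a}$ forces the minimum on the right to equal $1$, so the right side is also $\simeq t^{-1/a}$. When $s>t^{1/a}$ the left side is $\simeq t\,s^{-(1+a)}$, while $s^{1+a}>t^{1+1/a}$ makes the minimum equal to $t^{1+1/a}s^{-(1+a)}$, so the right side is $t^{-1/a}\cdot t^{1+1/a}s^{-(1+a)}=t\,s^{-(1+a)}$ as well. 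This establishes the equivalence with constants independent of $t$ and $s$.

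Finally, the ``in particular'' claim (\ref{products-heat kernel equivalence}) is immediate from the main estimate: by definition of the norm, $t>\left\Vert z\right\Vert _{p,\alpha}=\max_{i}\left\Vert z_{i}\right\Vert _{p}^{\alpha_{i}}$ means $\left\Vert z_{i}\right\Vert _{p}<t^{1/\alpha_{i}}$ for every $i$, which is exactly the first case above, so every minimum in (\ref{pproduct}) equals $1$ and $p_{\alpha}(t,z)\simeq t^{-A}$. I do not expect a genuine obstacle here; the only point requiring a little care is the bookkeeping of uniformity of the implied constants --- namely that the one-dimensional $\simeq$ holds uniformly over all $t>0$ and all $z_{i}$ (including the on-diagonal regime $z_{i}=0$), and that multiplying the $n$ coordinatewise estimates preserves this uniformity.
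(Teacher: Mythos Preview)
Your proof is correct and follows essentially the same approach as the paper's: use the factorization $p_\alpha(t,z)=\prod_i p_{\alpha_i}(t,z_i)$, apply the one-dimensional estimate from Theorem~\ref{Vladimirov heat kernel} to each factor, rewrite each factor as $t^{-1/\alpha_i}\min\{1,t^{1+1/\alpha_i}/\Vert z_i\Vert_p^{1+\alpha_i}\}$, and multiply. Your case analysis verifying the single-coordinate equivalence is in fact more detailed than what the paper writes out.
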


\begin{proof}
By Theorem \ref{Vladimirov heat kernel} we have%
\begin{equation*}
p_{\alpha _{i}}(t,z_{i})\simeq \frac{t}{\left( t^{1/\alpha _{i}}+\left\Vert
z_{i}\right\Vert _{p}\right) ^{1+\alpha _{i}}}\simeq \frac{1}{t^{1/\alpha
_{i}}}\,\min \left\{ 1,\frac{t^{1+1/\alpha _{i}}}{\left\Vert
z_{i}\right\Vert _{p}^{1+\alpha _{i}}}\right\} ,
\end{equation*}%
whence the claim follows.
\end{proof}

\begin{proposition}
\label{product-Green function} The semigroup $\exp (-t\,\mathfrak{V}^{\alpha
})$ is transient if and only if $A>1.$ If $A>1$ then, for all $z\in \mathbb{Q%
}_{p}^{n}$ and some $C_{1}>0,$%
\begin{equation*}
g_{\alpha }(z)\geq C_{1}\left( \frac{1}{\left\Vert z\right\Vert _{p,\alpha }}%
\right) ^{A-1}\!\!.
\end{equation*}%
For any $\kappa >0$, we define the set 
\begin{equation*}
\Omega (\kappa )=\left\{ x\in \mathbb{Q}_{p}^{n}:\max_{i}\left\{ \left\Vert
x_{i}\right\Vert _{p}^{\alpha _{i}}\right\} \leq \kappa \,\min_{i}\left\{
\left\Vert x_{i}\right\Vert _{p}^{\alpha _{i}}\right\} \right\} .
\end{equation*}%
Then, for all $z\in \Omega (\kappa )$ and some constant $C_{2}>0$ which
depends on $\kappa ,$ 
\begin{equation*}
g_{\alpha }(z)\leq C_{2}\left( \frac{1}{\left\Vert z\right\Vert _{p,\alpha }}%
\right) ^{A-1}\!\!.
\end{equation*}
\end{proposition}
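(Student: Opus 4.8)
The plan is to reduce everything to the scalar Laplace-type integral $g_\alpha(z)=\int_0^\infty p_\alpha(t,z)\,dt$ and to feed in the two-sided heat-kernel bound of Proposition~\ref{product-heat kernel}. Abbreviate $a_i=\|z_i\|_p^{\alpha_i}$, so that $\|z\|_{p,\alpha}=\max_i a_i$, and record the elementary fact that the $i$-th factor $\min\{1,\,t^{1+1/\alpha_i}/\|z_i\|_p^{1+\alpha_i}\}$ equals $1$ precisely when $t\ge a_i$ and equals $t^{1+1/\alpha_i}\|z_i\|_p^{-(1+\alpha_i)}$ when $t<a_i$. In particular, for $t\ge\|z\|_{p,\alpha}$ all factors are $1$ and $p_\alpha(t,z)\simeq t^{-A}$.

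For the transience dichotomy I would not try to control $g_\alpha$ pointwise. If $A\le 1$, the tail bound $p_\alpha(t,z)\ge c\,t^{-A}$ for $t\ge\|z\|_{p,\alpha}$ forces $g_\alpha(z)=\infty$ for every $z$, hence $G\mathbf 1_K\equiv\infty$ for a bounded Borel set $K$, and the semigroup is recurrent by Definition~\ref{Transient semigroup-def}. If $A>1$, I would bound $G\mathbf 1_K$ directly: since every factor is $\le 1$, Proposition~\ref{product-heat kernel} gives the global bound $p_\alpha(t,\cdot)\le C\,t^{-A}$, while $\int_{\mathbb Q_p^n}p_\alpha(t,\cdot)\,d\mu_p=1$ because $\{P_t\}$ is Markov. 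Hence
\[
\int_K p_\alpha(t,x-y)\,d\mu_p(y)\le\min\{C\,\mu_p(K)\,t^{-A},\,1\},
\]
and integrating in $t$ (the integral converges at $0$ trivially and at $\infty$ because $A>1$) bounds $G\mathbf 1_K(x)=\int_0^\infty\!\int_K p_\alpha(t,x-y)\,d\mu_p(y)\,dt$ uniformly in $x$; this is transience.

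The lower bound is then immediate: for $A>1$ and any $z\ne 0$ I discard the part of the integral below $\|z\|_{p,\alpha}$, where the integrand is $\simeq t^{-A}$, to get
\[
g_\alpha(z)\ge c\int_{\|z\|_{p,\alpha}}^\infty t^{-A}\,dt=\frac{c}{A-1}\,\|z\|_{p,\alpha}^{1-A},
\]
which is the claim with $C_1=c/(A-1)$. For the matching upper bound on $\Omega(\kappa)$ I would exploit that there all $a_i$ lie in $[\,\|z\|_{p,\alpha}/\kappa,\,\|z\|_{p,\alpha}\,]$, so the integral is controlled by the single scale $a:=\|z\|_{p,\alpha}$. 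Writing $m=\min_i a_i\ge a/\kappa$ and splitting at $m$: on $[m,\infty)$ every factor is $\le 1$, so the integrand is $\le C\,t^{-A}$ and contributes at most a constant times $m^{1-A}\le\kappa^{A-1}a^{1-A}$; on $(0,m]$ every factor is strictly less than $1$, the $t$-powers collapse to $t^{-A}\cdot t^{n+A}=t^n$, and $p_\alpha(t,z)\simeq t^n\prod_i\|z_i\|_p^{-(1+\alpha_i)}$. Since $\|z_i\|_p^{-(1+\alpha_i)}=a_i^{-(1+1/\alpha_i)}$ with $\sum_i(1+1/\alpha_i)=n+A$, comparability of the $a_i$ gives $\prod_i\|z_i\|_p^{-(1+\alpha_i)}\le \mathrm{const}(\kappa)\,a^{-(n+A)}$, while $\int_0^m t^n\,dt\le a^{n+1}/(n+1)$; multiplying, this piece is also at most $\mathrm{const}(\kappa)\,a^{1-A}$. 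Summing the two pieces yields $g_\alpha(z)\le C_2\,\|z\|_{p,\alpha}^{1-A}$.

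The point I expect to require the most care is the role of the cone $\Omega(\kappa)$. Off $\Omega(\kappa)$ the values $a_i$ are spread over many scales, the dominant contribution to $g_\alpha(z)$ comes from the intermediate range $[\min_i a_i,\max_i a_i]$ rather than from the single scale $\|z\|_{p,\alpha}$, and the two-sided comparison with $\|z\|_{p,\alpha}^{1-A}$ genuinely fails; in fact $g_\alpha(z)$ can be $+\infty$ when $z$ lies on a coordinate subspace and the complementary quantity $\sum_{\{i:z_i=0\}}1/\alpha_i$ is large, since then the product heat kernel blows up too fast as $t\to 0$. This is exactly why the transience argument above is phrased through the integrated $L^\infty$–mass estimate instead of pointwise finiteness of the Green function, and why the clean two-sided bound is asserted only on $\Omega(\kappa)$.
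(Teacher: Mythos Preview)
Your proof is correct and follows essentially the same route as the paper: integrate the two-sided heat-kernel bound of Proposition~\ref{product-heat kernel} and split the time integral into a tail piece (where $p_\alpha(t,z)\simeq t^{-A}$) and a short-time piece (where the product formula $p_\alpha(t,z)\simeq t^{n}\prod_i\|z_i\|_p^{-(1+\alpha_i)}$ is used together with the cone condition). Two minor differences are worth noting. First, you split the upper-bound integral at $m=\min_i a_i$, whereas the paper splits at $\|z\|_{p,\alpha}=\max_i a_i$; both cut points work, and the $\kappa$-dependence enters at complementary places (for you in the tail piece via $m^{1-A}\le\kappa^{A-1}\|z\|_{p,\alpha}^{1-A}$, for the paper in the short-time piece). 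Second, your transience argument is more explicit than the paper's one-line appeal to $p_\alpha(t,x,x)\simeq t^{-A}$: you verify Definition~\ref{Transient semigroup-def} directly by bounding $G\mathbf 1_K$ through the $L^\infty$--mass dichotomy $\int_K p_\alpha(t,x-y)\,d\mu_p(y)\le\min\{C\mu_p(K)t^{-A},1\}$, which sidesteps the pointwise behaviour of the Green function off $\Omega(\kappa)$. This extra care is not strictly needed here but is a cleaner justification.
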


\begin{proof}
The transience criterion $A>1$ follows from $p_{a}\left( t,x,x\right) \simeq
t^{-A}$. To prove the lower bound, we use (\ref{products-heat kernel
equivalence}) and write%
\begin{equation*}
g_{\alpha }(z)=\int_{0}^{\infty }p_{\alpha }(t,z)\,dt\geq \int_{\left\Vert
z\right\Vert _{p,\alpha }}^{\infty }p_{\alpha }(t,z)\,dt\geq
C_{1}\int_{\left\Vert z\right\Vert _{p,\alpha }}^{\infty
}t^{-A}\,dt=c_{1}\left( \frac{1}{\left\Vert z\right\Vert _{p,\alpha }}%
\right) ^{A-1}\!\!.
\end{equation*}%
On the other hand we have%
\begin{equation*}
g_{\alpha }(z)=\left( \int_{0}^{\left\Vert z\right\Vert _{p,\alpha
}}+\int_{\left\Vert z\right\Vert _{p,\alpha }}^{\infty }\right) p_{\alpha
}(t,z)\,dt=:\mathit{I}+\mathit{II}.
\end{equation*}%
To estimate the second term $\mathit{II}$, we use again (\ref{products-heat
kernel equivalence}):%
\begin{equation*}
\mathit{II}\simeq \int_{\left\Vert z\right\Vert _{p,\alpha }}^{\infty
}t^{-A}\,dt\simeq \left( \frac{1}{\left\Vert z\right\Vert _{p,\alpha }}%
\right) ^{A-1}\!\!.
\end{equation*}%
To estimate the first term we use (\ref{pproduct}): 
\begin{eqnarray*}
\mathit{I} &\leq &c\int_{0}^{\left\Vert z\right\Vert _{p,\alpha
}}t^{-A}\prod_{i=1}^{n}\frac{t^{1+1/\alpha _{i}}}{\left\Vert
z_{i}\right\Vert _{p}^{1+\alpha _{i}}}\,dt \\
&=&c\int_{0}^{\left\Vert z\right\Vert _{p,\alpha }}\prod_{i=1}^{n}\frac{1}{%
\left\Vert z_{i}\right\Vert _{p}^{1+\alpha _{i}}}t^{n}\,dt=c^{\prime
}\prod_{i=1}^{n}\frac{1}{\left\Vert z_{i}\right\Vert _{p}^{1+\alpha _{i}}}%
\left\Vert z\right\Vert _{p,\alpha }^{n+1}.
\end{eqnarray*}%
When $z\in \Omega (\kappa ),$ we obtain 
\begin{eqnarray*}
\mathit{I} &\leq &c^{\prime \prime }\prod_{i=1}^{n}\frac{1}{\left\Vert
z_{i}\right\Vert _{p}^{1+\alpha _{i}}}\left( \min \left\{ \left\Vert
z_{i}\right\Vert _{p}^{\alpha _{i}}\right\} \right) ^{n+1}\leq c^{\prime
\prime }\min \left\{ \left\Vert z_{i}\right\Vert _{p}^{\alpha _{i}}\right\}
\prod_{i=1}^{n}\frac{1}{\left\Vert z_{i}\right\Vert _{p}} \\
&=&c^{\prime \prime }\min \left\{ \left\Vert z_{i}\right\Vert _{p}^{\alpha
_{i}}\right\} \prod_{i=1}^{n}\frac{1}{(\left\Vert z_{i}\right\Vert
_{p}^{\alpha _{i}})^{1/\alpha _{i}}}.
\end{eqnarray*}%
Next, 
\begin{equation*}
\prod_{i=1}^{n}\frac{1}{(\left\Vert z_{i}\right\Vert _{p}^{\alpha
_{i}})^{1/\alpha _{i}}}\leq \prod_{i=1}^{n}\frac{1}{\left( \min \left\{
\left\Vert z_{j}\right\Vert _{p}^{\alpha _{j}}\right\} \right) ^{1/\alpha
_{i}}}=\left( \frac{1}{\min \left\{ \left\Vert z_{j}\right\Vert _{p}^{\alpha
_{j}}\right\} }\right) ^{A},
\end{equation*}%
whence%
\begin{equation*}
\mathit{I}\leq c^{\prime \prime }\left( \frac{1}{\min \left\{ \left\Vert
z_{j}\right\Vert _{p}^{\alpha _{j}}\right\} }\right) ^{A-1}\!\!.
\end{equation*}%
Again using the fact that $z\in \Omega (\kappa )$, we write 
\begin{equation*}
\left( \frac{1}{\min \left\{ \left\Vert z_{j}\right\Vert _{p}^{\alpha
_{j}}\right\} }\right) ^{A-1}\leq \left( \frac{\kappa }{\max \left\{
\left\Vert z_{j}\right\Vert _{p}^{\alpha _{j}}\right\} }\right)
^{A-1}=c(\kappa )\left( \frac{1}{\left\Vert z\right\Vert _{p,\alpha }}%
\right) ^{A-1}.
\end{equation*}%
The obtained upper bounds on the integrals $\mathit{I}$ and $\mathit{II}$
imply the desired upper bound for $g_{\alpha }(z).$~
\end{proof}

\begin{proposition}
\label{homogeneous Laplacian} Let $\alpha =(\alpha _{1}\,,\dots ,\alpha
_{n})=(\beta ,\dots ,\beta )$ be an $n$-tuple having all entries equal to $%
\beta .$ Assume that $(n-1)/2<\beta <n.$ Then the semigroup $\exp (-t\,%
\mathfrak{V}^{\alpha })$ is transient and the Green function $g_{\alpha }(z)$
satisfies the estimates%
\begin{equation}
g_{\alpha }(z)\simeq \left( \frac{1}{\left\Vert z\right\Vert _{p,\alpha }}%
\right) ^{A-1},  \label{homogeneous Green-Vladimirov-1}
\end{equation}%
for all $z\in \mathbb{Q}_{p}^{n}$ and some $c_{1},c_{2}>0.$
\end{proposition}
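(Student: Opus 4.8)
The plan is to take transience and the lower bound for granted and to concentrate entirely on the matching \emph{upper} bound. Since $A=n/\beta$, the condition $A>1$ is equivalent to $\beta<n$, so Proposition \ref{product-Green function} already gives both the transience of $\exp(-t\,\mathfrak{V}^{\alpha})$ and the lower bound $g_{\alpha}(z)\geq C_{1}\left\Vert z\right\Vert _{p,\alpha}^{1-A}$ for all $z$. What that proposition does \emph{not} give is the reverse inequality outside the region $\Omega(\kappa)$ of comparable coordinates; the whole content of the present statement is that the equal-exponent hypothesis $\alpha_{i}=\beta$ together with $\beta>(n-1)/2$ upgrades the upper bound to a global one.

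To this end I would first normalise: set $u_{i}=\left\Vert z_{i}\right\Vert _{p}^{\beta}$ and relabel the coordinates so that $u_{1}\geq u_{2}\geq\dots\geq u_{m}>0=u_{m+1}=\dots=u_{n}$, where $m\geq 1$ counts the nonzero coordinates, so that $\left\Vert z\right\Vert _{p,\alpha}=u_{1}$. Writing $q=1+1/\beta$, the estimate of Proposition \ref{product-heat kernel} becomes
\[
p_{\alpha}(t,z)\simeq t^{-A}\prod_{i=1}^{n}\min\{1,(t/u_{i})^{q}\},
\]
with the convention that the $i$-th factor is $1$ whenever $u_{i}=0$. The integrand is of pure-power type between consecutive break-points $t=u_{1},\dots,u_{m}$, so I would split $g_{\alpha}(z)=\int_{0}^{\infty}p_{\alpha}(t,z)\,dt$ into the top range $[u_{1},\infty)$ and the ranges $[u_{k+1},u_{k})$ for $k=1,\dots,m$ (with $u_{m+1}:=0$). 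On $[u_{k+1},u_{k})$ the first $k$ factors contribute $(t/u_{i})^{q}$ and the rest contribute $1$, so the integrand is comparable to $t^{a_{k}}\prod_{i=1}^{k}u_{i}^{-q}$ with $a_{k}=k-(n-k)/\beta$.

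The top range gives $\int_{u_{1}}^{\infty}t^{-A}\,dt\simeq u_{1}^{1-A}$, exactly the target order (here $A>1$ is used). For each remaining range I would note that $a_{k}+1=k+1-(n-k)/\beta>0$, because $\beta>(n-1)/2\geq(n-k)/(k+1)$ for every $k\geq1$; this simultaneously forces convergence of the bottom integral at $t=0$ (hence finiteness of $g_{\alpha}$ even when $z$ has a single nonzero coordinate) and gives $\int_{u_{k+1}}^{u_{k}}t^{a_{k}}\,dt\leq u_{k}^{a_{k}+1}/(a_{k}+1)$. Substituting $u_{i}=u_{1}\rho_{i}$ with $1=\rho_{1}\geq\dots\geq\rho_{k}>0$, a short exponent computation shows the $k$-th contribution equals $u_{1}^{1-A}$ times the bracket $\rho_{k}^{\,a_{k}+1}\prod_{i=1}^{k}\rho_{i}^{-q}$, and using $\rho_{i}^{-q}\leq\rho_{k}^{-q}$ for $i\geq2$ together with $\rho_{1}=1$ bounds the bracket by $\rho_{k}^{\,2-(n-1)/\beta}$.

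The main obstacle — and really the crux of the argument — is precisely this last exponent bookkeeping: the bracket stays bounded as the scale $\rho_{k}\to0$ if and only if $2-(n-1)/\beta\geq0$, i.e.\ exactly when $\beta\geq(n-1)/2$. This is where the hypothesis $(n-1)/2<\beta$ is indispensable, and it explains why the global upper bound is special to equal exponents. Since $\rho_{k}\leq1$ makes $\rho_{k}^{\,2-(n-1)/\beta}\leq1$, each of the at most $n$ lower ranges contributes $\lesssim u_{1}^{1-A}$; summing these with the top range (a bounded number of terms, with constants depending only on $n,\beta$) yields $g_{\alpha}(z)\lesssim u_{1}^{1-A}=\left\Vert z\right\Vert _{p,\alpha}^{1-A}$. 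Combined with the lower bound of Proposition \ref{product-Green function}, this gives the two-sided estimate \eqref{homogeneous Green-Vladimirov-1}.
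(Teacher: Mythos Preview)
Your proof is correct, but it takes a genuinely different route from the paper's. The paper exploits the \emph{homogeneity} of $\mathfrak{V}^{\alpha}$ when all $\alpha_i=\beta$: since $\mathfrak{V}^{\alpha}\circ\theta_a=\|a\|_p^{\beta}\,\theta_a\circ\mathfrak{V}^{\alpha}$ for $a\in\mathbb{Q}_p$, the Green function satisfies $g_{\alpha}(az)=\|a\|_p^{\beta-n}g_{\alpha}(z)$. Scaling by $z_1^{-1}$ (where $\|z_1\|_p=\|z\|_p$) reduces the upper bound to showing $\sup\{g_{\alpha}(1,x_2,\dots,x_n):x_i\in\mathbb{Z}_p\}<\infty$, and this is checked by the crude estimate $p_{\alpha}(t,(1,x_2,\dots,x_n))\le c\,t^{-n/\beta}\min\{1,t^{1+1/\beta}\}$ (dropping all but the first $\min$-factor), giving $\int_0^1 t^{1-(n-1)/\beta}\,dt<\infty$ precisely when $\beta>(n-1)/2$.

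Your approach bypasses homogeneity entirely: you track the scaling by hand via the substitution $u_i=u_1\rho_i$ and the piecewise integration over $[u_{k+1},u_k)$. The exponent identity $a_k+1-kq=1-A$ is exactly what homogeneity encodes, and your uniform bound on the bracket, $\rho_k^{a_k+1}\prod_{i=2}^k\rho_i^{-q}\le\rho_k^{\,2-(n-1)/\beta}\le1$, is the analogue of the paper's single integral $\int_0^1 t^{1-(n-1)/\beta}\,dt$. The paper's argument is shorter and more conceptual; yours is more elementary and makes explicit that the condition $\beta>(n-1)/2$ is needed already for $g_{\alpha}$ to be finite at points with a single nonzero coordinate (the $k=m=1$ case), which the homogeneity reduction leaves somewhat implicit.
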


Since $A=\frac{n}{\beta }$ and $\left\Vert z\right\Vert _{p,\alpha
}=\left\Vert z\right\Vert _{p}^{\beta }$, the estimate (\ref{homogeneous
Green-Vladimirov-1}) is equivalent to 
\begin{equation}
g_{\alpha }(z)\simeq \left( \frac{1}{\left\Vert z\right\Vert _{p}}\right)
^{n-\beta }.  \label{homogeneous Green-Vladimirov-2}
\end{equation}

\begin{proof}
Transience follows from Proposition \ref{product-Green function} because $%
A=n/\beta >1.$ The same Proposition yields the desired lower bound of the
Green function. To prove the upper bound, we observe that the Laplacian $%
\mathfrak{V}^{\alpha }$ is homogeneous, that is%
\begin{equation*}
\mathfrak{V}^{\alpha }\circ \theta _{a}=\left\Vert a\right\Vert _{p}^{\beta
}\cdot \theta _{a}\circ \mathfrak{V}^{\alpha },
\end{equation*}%
for all $a\in \mathbb{Q}_{p}.$ This implies that also the Green function $%
g_{\alpha }(z)$ is homogeneous, that is%
\begin{equation*}
g_{\alpha }(az)=\left\Vert a\right\Vert _{p}^{n-\beta }g_{\alpha }(z),
\end{equation*}%
for all $a\in \mathbb{Q}_{p}$ and $z\in \mathbb{Q}_{p}^{n}.$

Without loss of generality assume that $\left\Vert z\right\Vert _{p,\alpha
}=\left\Vert z_{1}\right\Vert _{p}^{\beta }>0.$ Then%
\begin{eqnarray*}
g_{\alpha }(z) &=&g_{\alpha }\left(
z_{1}(1,z_{2}/z_{1},...,z_{n}/z_{1})\right) =\left\Vert z_{1}\right\Vert
_{p}^{n-\beta }g_{\alpha }(1,z_{2}/z_{1},...,z_{n}/z_{1}) \\
&=&\left( \frac{1}{\left\Vert z\right\Vert _{p,\alpha }}\right)
^{A-1}g_{\alpha }(1,z_{2}/z_{1},...,z_{n}/z_{1}) \\
&\leq &\left( \frac{1}{\left\Vert z\right\Vert _{p,\alpha }}\right)
^{A-1}\sup \left\{ g_{\alpha }(1,x_{2},...,x_{n}):x_{i}\in \mathbb{Z}%
_{p}\right\} .
\end{eqnarray*}%
Next we apply our assumption $\beta >(n-1)/2$ and obtain from (\ref{pproduct}%
)%
\begin{eqnarray*}
g_{\alpha }(1,x_{2},...,x_{n}) &=&\int_{0}^{\infty }p_{\alpha
}(t,(1,x_{2},...,x_{n})\,dt \\
&=&\left( \int_{0}^{1}+\int_{1}^{\infty }\right) p_{\alpha
}(t,(1,x_{2},...,x_{n})\,dt \\
&\leq &c\int_{0}^{1}t^{-\frac{n}{\beta }}\,t^{1+\frac{1}{\beta }%
}\,dt+c^{\prime }\int_{1}^{\infty }t^{-\frac{n}{\beta }}\,dt=c_{2}<\infty ,
\end{eqnarray*}%
which implies the desired upper bound.
\end{proof}

\subsubsection{The Taibleson Laplacian}

The Fourier transform $\mathcal{F}:f\mapsto \widehat{f}$ of a function $f$
on the locally compact Abelian group $\mathbb{Q}_{p}^{n}$ is defined by

\begin{equation*}
\widehat{f}(\theta )=\int_{Q_{p}^{n}}\left\langle x,\theta \right\rangle
f(x)d\mu _{p}^{n}(x),
\end{equation*}%
where $x=\left( x_{1},...,x_{n}\right) \in \mathbb{Q}_{p}^{n}$, $\theta
=\left( \theta _{1},...,\theta _{n}\right) \in (\mathbb{Q}_{p}^{n})^{\ast }=%
\mathbb{Q}_{p}^{n},$ 
\begin{equation*}
\left\langle x,\theta \right\rangle =\prod_{k=1}^{n}\left\langle
x_{k},\theta _{k}\right\rangle ,
\end{equation*}%
and $d\mu _{p}^{n}(x)=d\mu _{p}(x_{1})...d\mu _{p}(x_{n})$ is the Haar
measure on $\mathbb{Q}_{p}^{n}$. It is known that $\mathcal{F}$ is a linear
isomorphism from $\mathcal{V}_{c}$ onto itself, which justifies the
following Definition (compare with Definition \ref{def-frac-derivative}).

\begin{definition}
\label{Taibleson operator}The Taibleson operator $\mathfrak{T}^{\alpha }$
for $\alpha >0$ is defined on functions $f\in \mathcal{V}_{c}$ by%
\begin{equation*}
\widehat{\mathfrak{T}^{\alpha }f}(\zeta )=\left\Vert \zeta \right\Vert
_{p}^{\alpha }\widehat{f}(\zeta ),\quad \zeta \in \mathbb{Q}_{p}^{n}.
\end{equation*}
\end{definition}

It follows that $(\mathfrak{T}^{\alpha },\mathcal{V}_{c})$ is an essentially
self-adjoint and non-negative definite operator in $L^{2}.$ This operator
was introduced by Taibleson~\cite{TAI75}, and the associated semigroup $\exp
(-t\,\mathfrak{T}^{\alpha })$ was studied by Rodriguez-Vega and
Zuniga-Galindo~\cite{ZuGa2}. In particular, it was shown that%
\begin{equation}
\mathfrak{T}^{\alpha }f(x)=\frac{p^{\alpha }-1}{1-p^{-\alpha -n}}\int_{%
\mathbb{Q}_{p}^{n}}\frac{f(x)-f(y)}{\left\Vert x-y\right\Vert _{p}^{\alpha
+n}}\,d\mu _{p}^{n}(y).  \label{Taibleson Laplacian}
\end{equation}%
The equation (\ref{Taibleson Laplacian}) implies that the operator $(-%
\mathfrak{T}^{\alpha },\mathcal{V}_{c})$ satisfies the $\max $-principle,
whence its semigroup is Markovian. Our aim is to show that $\exp (-t\,%
\mathfrak{T}^{\alpha })$ is an isotropic Markov semigroup on the
ultra-metric measure space $(\mathbb{Q}_{p}^{n},d_{p},\mu _{p}^{n})$.

Our first observation is that the spectrum of the symmetric operator $(%
\mathfrak{T}^{\alpha },\mathcal{V}_{c})$ coincides with the range of the
function $\zeta \mapsto \left\Vert \zeta \right\Vert _{p}^{\alpha },$ 
\begin{equation*}
\func{spec}\mathfrak{T}^{\alpha }=\{p^{k\alpha }:k\in \mathbb{Z}\}\cup \{0\}.
\end{equation*}%
The eigenspace $\mathcal{H}(\lambda )$ of the operator $(\mathfrak{T}%
^{\alpha },\mathcal{V}_{c})$ corresponding to the eigenvalue $\lambda
=p^{k\alpha },$ is spanned by the functions 
\begin{equation*}
f_{k}=\frac{1}{\mu _{p}^{n}(p^{k}\mathbb{Z}_{p}^{n})}\mathbf{1}_{p^{k}%
\mathbb{Z}_{p}^{n}}-\frac{1}{\mu _{p}^{n}(p^{k-1}\mathbb{Z}_{p}^{n})}\mathbf{%
1}_{p^{k-1}\mathbb{Z}_{p}^{n}}
\end{equation*}%
and\ all its\ shifts\ $f_{k}(\cdot +a)$ with $a\in \left. \mathbb{Q}%
_{p}^{n}\right/ p^{k}\mathbb{Z}_{p}^{n}$. Indeed, computing the Fourier
transform of the function $f_{k},$%
\begin{equation*}
\widehat{f_{k}}(\zeta )=\mathbf{1}_{\{\left\Vert \zeta \right\Vert _{p}\leq
p^{k}\}}-\mathbf{1}_{\{\left\Vert \zeta \right\Vert _{p}\leq p^{k-1}\}}=%
\mathbf{1}_{\{\left\Vert \zeta \right\Vert _{p}=p^{k}\}},
\end{equation*}%
we obtain%
\begin{equation*}
\widehat{\mathfrak{T}^{\alpha }f_{k}}(\zeta )=\left\Vert \zeta \right\Vert
_{p}^{\alpha }\widehat{f_{k}}(\zeta )=p^{k\alpha }\widehat{f_{k}}(\zeta ).
\end{equation*}%
All the above shows that the operator $\mathfrak{T}^{\alpha }$ coincides
with an isotropic Laplacian $\mathcal{L}_{\alpha }$ on $(\mathbb{Q}%
_{p}^{n},d_{p},\mu _{p}^{n})$ associated with the distance distribution
function 
\begin{equation*}
\sigma _{\alpha }\left( r\right) =\exp \left( -\left( \frac{p}{r}\right)
^{\alpha }\right) ,
\end{equation*}%
and the semigroup $\exp (-t\,\mathfrak{T}^{\alpha })$ coincides with the
isotropic semigroup $\left\{ P_{\alpha }^{t}\right\} $.

Observe that the associated intrinsic ultra-metric is%
\begin{equation*}
d_{p\ast }(x,y)=\left( \frac{\left\Vert x-y\right\Vert _{p}}{p}\right)
^{\alpha }.
\end{equation*}%
The spectral distribution function $N_{\alpha }(x,\tau )=N_{\alpha }(\tau )$
is the non-decreasing, left-continuous staircase function which has jumps at
the points $\tau _{k}=p^{k\alpha }$, $k\in \mathbb{Z},$ and takes values $%
N_{\alpha }(\tau _{k})=p^{(k-1)n}$ at these points. It follows that%
\begin{equation*}
N_{\alpha }(\tau )\simeq \tau ^{n/\alpha }.
\end{equation*}%
In particular, $\tau \mapsto N_{\alpha }(\tau )$ is a doubling function, and
Theorem \ref{heat-k-doubling} implies the following result.

\begin{theorem}
\label{Taibleson heat kernel} The semigroup $\exp (-t\,\mathfrak{T}^{\alpha
})$ on $\mathbb{Q}_{p}^{n}$ admits a continuous heat kernel $p_{\alpha
}(t,x,y)$ that satisfies the estimate%
\begin{equation}
p_{\alpha }(t,x,y)\simeq \frac{\,t}{\left( t^{1/\alpha }+\left\Vert
x-y\right\Vert _{p}\right) ^{n+\alpha }}\,,  \label{pat}
\end{equation}%
In particular, the semigroup $\exp (-t\,\mathfrak{T}^{\alpha })$ is
transient if and only if $\alpha <n.$ In the transient case, the Green
function (=Taibleson's Riesz kernel) satisfies the identity%
\begin{equation*}
g_{\alpha }(x,y)=\frac{1-p^{-\alpha }}{1-p^{\alpha -n}}\,\frac{1}{\left\Vert
x-y\right\Vert _{p}^{n-\alpha }}.
\end{equation*}
\end{theorem}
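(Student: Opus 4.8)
The plan is to reduce all three assertions to results already established for general isotropic semigroups, exploiting the explicit data recorded just before the statement: that $\mathfrak{T}^{\alpha}=\mathcal{L}_{\alpha}$ is the isotropic Laplacian for the triple $(\mathbb{Q}_{p}^{n},d_{p},\mu_{p}^{n})$ with $\sigma_{\alpha}(r)=\exp(-(p/r)^{\alpha})$, that the intrinsic ultra-metric is $d_{p\ast}(x,y)=(\|x-y\|_{p}/p)^{\alpha}$, and that the spectral distribution function satisfies $N_{\alpha}(\tau)\simeq\tau^{n/\alpha}$ and is doubling. With these in hand, each part is an application of the general theory developed in Section \ref{heat}.

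For the heat kernel bound (\ref{pat}), since $N_{\alpha}$ is doubling, Theorem \ref{heat-k-doubling} in the form (\ref{p21}) applies and yields $p_{\alpha}(t,x,y)\simeq\frac{t}{t+d_{p\ast}(x,y)}\,N_{\alpha}\!\bigl(\frac{1}{t+d_{p\ast}(x,y)}\bigr)$. Substituting $N_{\alpha}(\tau)\simeq\tau^{n/\alpha}$ and $d_{p\ast}(x,y)\simeq\|x-y\|_{p}^{\alpha}$ collapses the right-hand side to $\frac{t}{(t+\|x-y\|_{p}^{\alpha})^{(n+\alpha)/\alpha}}$. The only remaining point is the purely algebraic equivalence $(t+\|x-y\|_{p}^{\alpha})^{1/\alpha}\simeq t^{1/\alpha}+\|x-y\|_{p}$, which upon raising to the power $n+\alpha$ converts this into the stated form. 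This is the exact $n$-dimensional analogue of Example \ref{p-adic-doubling} and Theorem \ref{Vladimirov heat kernel}, so no new idea is needed.

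Transience is then read off from Theorem \ref{Transient semigroup}(iii): by Lemma \ref{star-balls} we have $B_{s}^{\ast}(x)=B_{ps^{1/\alpha}}(x)$, hence $\mu_{p}^{n}(B_{s}^{\ast}(x))\simeq s^{n/\alpha}$, and the integral $\int^{\infty}s^{-n/\alpha}\,ds$ converges precisely when $n/\alpha>1$, i.e. $\alpha<n$. For the \emph{exact} Green function I would start from (\ref{green function}), namely $g_{\alpha}(x,y)=\int_{d_{p\ast}(x,y)}^{\infty}\frac{ds}{\mu_{p}^{n}(B_{s}^{\ast}(x))}$, and change variables $r=ps^{1/\alpha}$ so that the lower limit becomes $\|x-y\|_{p}$ and the integrand involves $\mu_{p}^{n}(B_{r}(x))$, with a prefactor $p^{-\alpha}$. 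The $n$-dimensional version of (\ref{mupB}) gives $\mu_{p}^{n}(B_{r}(x))=p^{mn}$ for $p^{m}\le r<p^{m+1}$, so writing $\|x-y\|_{p}=p^{k}$ and splitting the integral over the shells $[p^{m},p^{m+1})$ turns it into the geometric series $(p^{\alpha}-1)\sum_{m\ge k}p^{m(\alpha-n)}$. This converges exactly in the transient regime $\alpha<n$ and sums to the closed form claimed; the computation mirrors that of Theorem \ref{Vladimirov Green function} with $n$ replacing $1$ in the volume exponent.

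I expect no serious obstacle: the difficulty has been front-loaded into the general machinery (Theorems \ref{heat-k-doubling} and \ref{Transient semigroup}, the formula (\ref{green function})) and into the identification $\mathfrak{T}^{\alpha}=\mathcal{L}_{\alpha}$ already carried out above. The only steps requiring care are bookkeeping ones — tracking the constant $(1-p^{-\alpha})/(1-p^{\alpha-n})$ through the geometric summation and confirming that the convergence threshold of that series coincides with the transience criterion $\alpha<n$ — both of which are routine.
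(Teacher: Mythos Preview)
Your proposal is correct and follows essentially the same approach as the paper: the heat kernel bound is obtained by applying Theorem \ref{heat-k-doubling} with the doubling spectral distribution $N_{\alpha}(\tau)\simeq\tau^{n/\alpha}$ (exactly as the paper indicates in the sentence immediately preceding the theorem), and the transience criterion and exact Green function formula are obtained from Theorem \ref{Transient semigroup} and the integral (\ref{green function}) via the same geometric-series computation as in Theorem \ref{Vladimirov Green function}, with the volume exponent $1$ replaced by $n$.
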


Note that the upper bound in (\ref{pat}) was proved in \cite{ZuGa2}.

Definition \ref{Isotropic semigroup} of a rotation invariant Laplacian on $%
\mathbb{Q}_{p}$ can be carried over to $\mathbb{Q}_{p}^{n}$. The Taibleson
operator $\mathfrak{T}^{\alpha }$ is an example of a rotation invariant
Laplacian. Theorem \ref{Isotropic Laplacian}, Corollary \ref%
{Isotropic-Monotone Class} and Corollary \ref{Our Class} and their proofs
remain valid also for $\mathbb{Q}_{p}^{n}$. Here we provide a short proof of
a slightly weaker result that is of significance for us. Set $\mathfrak{T}=%
\mathfrak{T}^{1}$.

\begin{theorem}
\label{Multidimensional Polya}The equation $(\mathcal{L},\mathcal{V}%
_{c})=(\psi (\mathfrak{T}),\mathcal{V}_{c})$, where $\psi $ is an arbitrary
increasing bijection $[0\,,\,\infty )\rightarrow \lbrack 0\,,\,\infty )$,
gives a complete description of the class of isotropic Laplacians on the
ultra-metric measure space $(\mathbb{Q}_{p}^{n},d_{p},\mu _{p}^{n})$.
\end{theorem}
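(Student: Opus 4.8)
The plan is to prove the two halves of the asserted characterisation separately, both resting on one structural observation: on the fixed space $(\mathbb{Q}_p^n,d_p,\mu_p^n)$ every isotropic Laplacian is a function of a single common spectral resolution. By (\ref{L=}), an isotropic Laplacian attached to $(d_p,\mu_p^n,\sigma)$ has the form $\mathcal{L}=\int_{[0,\infty)}\varphi(\lambda)\,dE_\lambda$ with $\varphi(\lambda)=\log(1/\sigma(1/\lambda))$, where $\{E_\lambda\}$ is the resolution (\ref{Ela}) assembled from the averaging operators $\mathrm{Q}_r$ of $(\mathbb{Q}_p^n,d_p,\mu_p^n)$; the key point is that $\{E_\lambda\}$ depends only on $d_p$ and $\mu_p^n$, not on the distance distribution function. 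In particular $\mathfrak{T}=\mathfrak{T}^1$, being the isotropic Laplacian of $(d_p,\mu_p^n,\sigma_1)$ with $\sigma_1(r)=\exp(-p/r)$, has $\varphi_1(\lambda)=p\lambda$ and hence
\begin{equation*}
\mathfrak{T}=\int_{[0,\infty)}p\lambda\,dE_\lambda,\qquad \func{spec}\mathfrak{T}=\{p^k:k\in\mathbb{Z}\}\cup\{0\},
\end{equation*}
the spectrum being discrete because $\mathrm{Q}_r$ changes only at the radii $r=p^k$, so that $dE_\lambda$ is carried by the lattice $\{p^m:m\in\mathbb{Z}\}$.

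For sufficiency I would show that $\psi(\mathfrak{T})$ is an isotropic Laplacian for every increasing bijection $\psi$ of $[0,\infty)$. Extending $\psi$ by $\psi(\infty)=\infty$, it is continuous and strictly increasing with $\psi(0)=0$ and $\psi(\infty-)=\infty$, so it satisfies (\ref{fi}). As $\mathfrak{T}$ is itself an isotropic Laplacian, Theorem \ref{subord} applies verbatim and shows that $\psi(\mathfrak{T})$ is an isotropic Laplacian on the same space, associated with $(d_p,\mu_p^n,\widetilde{\sigma})$ where $\widetilde{\sigma}(r)=\exp\bigl(-\psi(p/r)\bigr)$. Restricting to $\mathcal{V}_c$ yields one of the two required equalities of operators.

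For necessity, take any isotropic Laplacian $\mathcal{L}$ attached to $(d_p,\mu_p^n,\sigma)$ and write $\mathcal{L}=\int_{[0,\infty)}\varphi(\lambda)\,dE_\lambda$ as above. Since $\mathfrak{T}$ is built from the same resolution, functional calculus gives $\psi(\mathfrak{T})=\int_{[0,\infty)}\psi(p\lambda)\,dE_\lambda$, so it would suffice to choose $\psi$ with $\psi(p\lambda)=\varphi(\lambda)$ on the support of $dE_\lambda$. The naive choice $\psi(\mu)=\varphi(\mu/p)$ need not suffice, and this is the main obstacle: because $\sigma$, hence $\varphi$, is only assumed left-continuous and strictly monotone, $\mu\mapsto\varphi(\mu/p)$ may have jumps and therefore fail to be a bijection of $[0,\infty)$. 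The discreteness of $\func{spec}\mathfrak{T}$ removes the difficulty, since $dE_\lambda$ is carried by $\{p^m\}$ and so $\psi(\mathfrak{T})$ depends only on the values $\psi(p^k)$. I would therefore let $\psi$ be \emph{any} increasing bijection of $[0,\infty)$ with $\psi(0)=0$ that interpolates the prescribed values $\psi(p^k)=\varphi(p^{k-1})$, for instance linearly on each $[p^k,p^{k+1}]$. Such a $\psi$ exists because $\bigl(\varphi(p^{k-1})\bigr)_{k\in\mathbb{Z}}$ is strictly increasing and runs from $0$ to $\infty$ (as $\varphi(0)=0$ and $\varphi(\infty-)=\infty$), matching the grid $(p^k)$. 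Then $\psi(p\cdot p^m)=\varphi(p^m)$ for every $m$, whence $\psi(\mathfrak{T})=\int\psi(p\lambda)\,dE_\lambda=\int\varphi(\lambda)\,dE_\lambda=\mathcal{L}$.

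Finally, since $\mathcal{L}$ and $\psi(\mathfrak{T})$ then coincide as self-adjoint operators, their restrictions to $\mathcal{V}_c\subset\func{dom}_{\mathcal{L}}$ (Theorem \ref{Thm-Dirichlet form/Laplacian}) agree, which is the remaining equality and completes the characterisation. The only genuine difficulty is the one highlighted above — promoting the canonical but merely left-continuous spectral function to an honest increasing bijection — and it is resolved precisely by the fact that $\func{spec}\mathfrak{T}$ is the discrete geometric lattice $\{p^k\}$.
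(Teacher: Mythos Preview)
Your proof is correct and follows essentially the same strategy as the paper: Theorem \ref{subord} handles one direction, and for the converse both you and the paper exploit the discreteness of $\func{spec}\mathfrak{T}=\{p^k\}\cup\{0\}$ to interpolate an increasing bijection $\psi$ matching the prescribed values on that lattice. The only cosmetic difference is that the paper phrases the interpolation via the intrinsic metric $d_{p\ast}=\phi(d_p)$ and the eigenvalues $\lambda(B)=1/\mathrm{diam}_{p\ast}(B)$, setting $\psi(s)=1/\phi(p/s)$, whereas you work directly with the common spectral resolution $\{E_\lambda\}$ and the spectral function $\varphi$; unwinding the definitions, both prescriptions reduce to the same condition $\psi(p^k)=\varphi(p^{k-1})$.
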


\begin{proof}
Let $\psi :[0,\infty )\mapsto \lbrack 0,\infty )$ be an increasing
bijection. By Theorem \ref{subord}, the operator $\ (\psi (\mathfrak{T}),%
\mathcal{V}_{c})$ is an isotropic Laplacian.

Conversely, let $(\mathcal{L},\mathcal{V}_{c})$ be an isotropic Laplacian on 
$(\mathbb{Q}_{p}^{n},d_{p},\mu _{p}^{n})$. Let $d_{p\ast }$ be the intrinsic
distance associated with $\mathcal{L}$. By construction, $d_{p\ast }$ is an
increasing function of $d_{p}\,$, see (\ref{d*}). Since the range of $d_{p}$
is the set $\{p^{k}:k\in \mathbb{Z}\}\cup \{0\}$, one can choose an
increasing bijection $\varphi :[0\,,\,\infty )\rightarrow \lbrack
0\,,\,\infty )$ such that $d_{p\ast }=\phi (d_{p}).$ Let $\lambda (B)$ and $%
\tau (B)$ be the eigenvalues of $(\mathcal{L},\mathcal{V}_{c})$ and $(%
\mathfrak{T},\mathcal{V}_{c})$, respectively, corresponding to the ball $%
B\subset \mathbb{Q}_{p}^{n}\,$. Since the intrinsic distance associated with 
$\mathfrak{T}$ is $p^{-1}d_{p}\,$, we get 
\begin{eqnarray*}
\lambda (B) &=&\frac{1}{\mathrm{diam}_{p\ast }(B)}=\frac{1}{\varphi \left( 
\mathrm{diam}_{p}(B)\right) } \\
&=&\frac{1}{\varphi \left( p/\tau (B)\right) }=:\psi \left( \tau (B)\right) ,
\end{eqnarray*}%
where $\psi (s)=1/\phi (p/s)$, an increasing bijection of $[0\,,\,\infty )$
onto itself.

Since both $(\mathcal{L},\mathcal{V}_{c})$ and $(\psi (\mathfrak{T}),%
\mathcal{V}_{c})$ are isotropic Laplacians defined on the ultra-metric
measure space $(\mathbb{Q}_{p}^{n},d_{p},\mu _{p}^{n})$ whose sets of
eigenvalues coincide, we get 
\begin{equation*}
(\mathcal{L},\mathcal{V}_{c})=(\psi (\mathfrak{T}),\mathcal{V}_{c}),
\end{equation*}%
or equivalently, in terms of the Fourier transform,%
\begin{equation*}
\widehat{\mathcal{L}f}(\zeta )=\psi (\left\Vert \zeta \right\Vert _{p})\,%
\widehat{f}(\zeta ),\text{ }
\end{equation*}%
for all $f\in \mathcal{V}_{c}$ and $\zeta \in \mathbb{Q}_{p}^{n}\,$, which
finishes the proof.
\end{proof}

\section{Random walks on a tree and jump processes on its boundary}

\label{trees}\setcounter{equation}{0}

\subsection{Rooted trees and their boundaries}

A tree is a connected graph $T$ without cycles (closed paths of length $\geq
3$). We tacitly identify $T$ with its vertex set, which is assumed to be
infinite. We write $u\sim v$ if $u,v\in T$ are neighbours. For any pair of
vertices $u,v\in T$, there is a unique shortest path, called \emph{geodesic
segment} 
\begin{equation*}
\pi (u,v)=[u=v_{0}\,,v_{1}\,,\dots ,v_{k}=v]
\end{equation*}%
such that $v_{i-1}\sim v_{i}$ and all $v_{i}$ are disctinct. If $u=v$ then
this is the \emph{empty} or \emph{trivial} path. The number $k$ is the \emph{%
length} of the path (the graph distance between $u$ and $v$). In $T$ we
choose and fix a \emph{root vertex} $o$. We write $|v|$ for the length of $%
\pi (o,v)$. The choice of the root induces a partial order on $T$, where $%
u\leq v$ when $u\in \pi (o,v)$. Every $v\in T\setminus \{o\}$ has a unique 
\emph{predecessor} $v^{-}=v_{o}^{-}$ with respect to $o$, which is the
unique neighbour of $v$ on $\pi (o,v)$. Thus, the set of all (unoriented)
edges of $T$ is 
\begin{equation*}
E(T)=\{[v^{-},v]:v\in T\,,\;v\neq o\}\,.
\end{equation*}%
For $u\in T$, the elements of the set 
\begin{equation*}
\{v\in T:v^{-}=u\}
\end{equation*}%
are the \emph{successors} of $u$, and its cardinality $\deg ^{+}(u)$ is the 
\emph{forward degree} of $u$.

In this and the next section, we assume that 
\begin{equation}  \label{degrees}
2 \le \deg^+(u) < \infty \quad \text{for every }\; u \in T\,.
\end{equation}

$$
\beginpicture 

\setcoordinatesystem units <.8mm,1.2mm> 

\setplotarea x from -10 to 70, y from -6 to 35


\setlinear

\plot 2 2  32 32 /

\plot 32 32 62 2 /

 \plot 16 16 30 2 /

 \plot 48 16 34 2 /

 \plot 8 8 14 2 /

 \plot 24 8 18 2 /

 \plot 40 8 46 2 /

 \plot 56 8 50 2 /

 \plot 4 4 6 2 /

 \plot 12 4 10 2 /

 \plot 20 4 22 2 /

 \plot 28 4 26 2 /

 \plot 36 4 38 2 /

 \plot 44 4 42 2 /

 \plot 52 4 54 2 /

 \plot 60 4 58 2 /



\setdashes <2pt> \linethickness=.5pt
\putrule from -2 -4 to 66 -4

\put {$\vdots$} at 32 0

\put {$\partial T$} [l] at -12 -4

\put {$\scriptstyle\bullet$} at 32 32
\put {$o$} [b] at 32 33.5

\put {$\scriptstyle\bullet$} at 4 4
\put {$u$} [r] at 2.5 4.5

\put {$\scriptstyle\bullet$} at 24 8
\put {$v$} [l] at 25.5 8.5

\put {$\scriptstyle\bullet$} at 16 16
\put {$u \wedge v$} [r] at 14.5 16.5

\put{\rm Figure 4} at 30 -10

\endpicture
$$

A \emph{(geodesic) ray} in $T$ is a one-sided infinite path $\pi =
[v_0\,,v_1\,, v_2\,, \dots]$ such that $v_{n-1} \sim v_n$ and all $v_n$ are
disctinct. Two rays are \emph{equivalent} if their symmetric difference (as
sets of vertices) is finite. An \emph{end} of $T$ is an equivalence class of
rays. We shall typically use letters $x$, $y$, $z$ to denote ends (and
letters $u$, $v$, $w$ for vertices). The set of all ends of $T$ is denoted $%
\partial T$. This is the \emph{boundary} at infinity of the tree. For any $u
\in T$ and $x \in \partial T$, there is a unique ray $\pi(u,x)$ which is a
representative of the end (equivalence class) $x$ and starts at $u$. We
write 
\begin{equation*}
\widehat T = T \cup \partial T.
\end{equation*}
For $u \in T$, the \emph{branch of $T$ rooted at $u$} is the subtree $T_u$
that we identify with its set of vertices 
\begin{equation}  \label{eq:branch}
T_u = \{ v \in T : u \le v\}\,,
\end{equation}
so that $T_o=T$. We write $\partial T_u$ for the set of all ends of $T$
which have a representative path contained in $T_u$, and $\widehat T_u = T_u
\cup \partial T_u\,$.

For $w, z \in \widehat T$, we define their \emph{confluent} $w \wedge z = w
\wedge_o z$ with respect to the root $o$ by the relation 
\begin{equation*}
\pi(o,w \wedge z) = \pi(o,w) \cap \pi(o,z)\,.
\end{equation*}
It is the last common element on the geodesics $\pi(o,w)$ and $\pi(o,z)$, a
vertex of $T$ unless $w=z \in \partial T$. See Figure~4.

One of the most common ways to define an ultra-metric on $\widehat{T}$ is 
\begin{equation}
d_{e}(z,w)=%
\begin{cases}
0\,, & \text{if}\;z=w\,, \\ 
e^{-|z\wedge w|}\,, & \text{if}\;z\neq w\,.%
\end{cases}
\label{eq:tree-metr}
\end{equation}%
Then $\widehat{T}$ is compact, and $T$ is open and dense. We are mostly
interested in the compact ultra-metric space $\partial T$. In the metric $%
d_{e}$ of (\ref{eq:tree-metr}), each $d_{e}$-ball with centre $x \in
\partial T$ is of the form $\partial T_{u}$ for some $x\in \pi (o,x)$.
Indeed 
\begin{equation*}
\partial T_{u}=B_{e^{-|u|}}(x) \quad \text{for every}\;o\in \pi(o,x)\,,\quad 
\text{and}\quad \Lambda _{d_{e}}(x)=\{e^{-|u|}:u\in \pi(o,u)\}\,.
\end{equation*}

\medskip

Conversely, we now start with a compact ultra-metric space $(X,d)$ that does
not possess isolated points, and construct a tree $T$ as follows: The vertex
set of $T$ is the collection 
\begin{equation*}
\mathcal{B}=\left\{ B_{r}(x):x\in X\,,\;r>0\right\}
\end{equation*}%
of all closed balls in $(X,d)$, already encountered in \S \ref{generator}.
Here, we may assume (if we wish) that $r\in \Lambda _{d}(x)$.

We now consider any ball $v=B\in \mathcal{B}$ as a vertex of a tree $T$. We
choose our root vertex as $o=X$, which belongs to $\mathcal{B}$ by
compactness. Neighborhood is given by the predecessor relation of balls, as
given by Definition \ref{predecessor}. That is, if $v=B$ then $u=B^{\prime }$
is the predecessor vertex $v^{-}$ of $v$ in the tree $T$. By compactness,
each $x$ has only finitely many successors, and since there are no isolated
points in $X$, every vertex has at least 2 successors, so that (\ref{degrees}%
) holds.

This defines the tree structure. For any $x \in X$, the collection of all
balls $B_r(x)$, $r \in \Lambda_d(x)$, ordered decreasingly, forms the set of
vertices of a ray in $T$ that starts at $o$. Via a straightforward exercise,
the mapping that associates to $x$ the end of $T$ represented by that ray is
a homeomorphism from $X$ onto $\partial T$. Thus, we can identify $X$ and $%
\partial T$ as ultra-metric spaces.

In this identification, if originally a vertex $u$ was interpreted as a ball 
$B_r(x)$, $r \in \Lambda_d(x)$, then the set $\partial T_u$ of ends of the
branch $T_u$ just coincides with the ball $B_r(x)$. That is, we are
identifying each vertex $u$ of $T$ with the set $\partial T_u$.

If we start with an arbitrary locally finite tree and take its space of ends
as the ultra-metric space $X$, then the above construction does not recover
vertices with forward degree $1$, so that in general we do not get back the
tree we started with. However, via the above construction, the
correspondence between compact ultra-metric spaces without isolated points
(perfect ultra-metric spaces) and locally finite rooted trees with forward
degrees $\geq 2$ is bijective (cf. \cite{Hughes04}).

It is well known that any ultra-metric space $X$ which is both compact and
perfect is homeomorphic to the ternary Cantor set $C\subset \lbrack 0,1]$.
When $X$ is not compact but still perfect we have a homeomorphism $X\simeq C$
$\backslash $ $\{p\},$ where $p\in C$ is any fixed point.

For the rest of this and the next section, we shall abandon the notation $X$
for compact and perfect ultra-metric space.

\smallskip

\emph{We consider $X$ as the boundary $\partial T$ of a locally finite,
rooted tree with forward degrees $\ge 2$.}

\smallskip

At the end, we shall comment on how one can handle the presence of vertices
with forward degree $1$, as well as the non-compact case.

\medskip

There are many ways to equip $\partial T$ with an ultra-metric that has the
same topology and the same compact-open balls $\partial T_x\,$, $x \in T$,
possibly with different radii than in the standard metric (\ref{eq:tree-metr}%
). The following is a kind of ultra-metric analogue of a length element.

\begin{definition}
\RM\label{def:element} Let $T$ be a locally finite, rooted tree $T$ with $%
\deg ^{+}(x)\geq 2$ for all $x$. An \emph{ultra-metric element} is a
function $\phi :T\rightarrow (0\,,\infty )$ with 
\begin{equation*}
\begin{split}
& \text{\RM(i)}\quad \phi (v^{-})>\phi (v)\quad \text{for every}\;v\in
T\setminus \{o\}\,, \\
& \text{\RM(ii)}\quad \lim \phi (v_{n})=0\quad \text{along every geodesic ray%
}\;\pi =[v_{0}\,,v_{1}\,,v_{2}\,,\dots ]\,.
\end{split}%
\end{equation*}%
It induces the ultra-metric $d_{\phi }$ on $\partial T$ given by 
\begin{equation*}
d_{\phi }(x,y)=%
\begin{cases}
0\,, & \text{if}\;x=y\,, \\ 
\phi (x\wedge y)\,, & \text{if}\;x\neq y\,.%
\end{cases}%
\end{equation*}
\end{definition}

The balls in this ultra-metric are again the sets 
\begin{equation*}
\partial T_u = B_{\phi(u)}(x)\,,\ \ x \in \partial T_u\,.
\end{equation*}
Note that condition (ii) in the definition is needed for having that each
end of $T$ is non-isolated in the metric $d_{\phi}\,$. The metric $d_e$ of (%
\ref{eq:tree-metr}) is of course induced by $\phi(x) = e^{-|x|}$.

\begin{lemma}
\label{lem:dphi} For a tree as in Definition \ref{def:element}, every
ultra-metric on $\partial T$ whose closed balls are the sets $\partial T_u\,$%
, $u \in T$, is induced by an ultra-metric element on $T$.
\end{lemma}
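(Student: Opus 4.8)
My plan is to recover the element $\phi$ directly from the given ultra-metric $\rho$ by reading off, for each vertex $u\in T$, the $\rho$-diameter of the corresponding branch:
\[
\phi(u)=\mathrm{diam}_{\rho}\,\partial T_u .
\]
Before checking the axioms I would record the ground facts. Since by hypothesis the closed $\rho$-balls of positive radius are \emph{exactly} the sets $\partial T_u$, and these form a basis of clopen sets for the standard (compact) topology on $\partial T$, the metric $\rho$ metrizes that topology; hence each $\partial T_u$ is $\rho$-compact, $\rho$ is continuous, and $\phi(u)$ is finite. It is strictly positive because $\partial T_u$ is perfect and so has at least two points. Throughout I would use two elementary ultra-metric facts: every point of a ball is a centre, so that for each $a\in\partial T_u$ one has $\partial T_u=B_r(a)$ with a single radius $r$; and $\mathrm{diam}_{\rho}B_r(a)\le r$. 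I also use the tree description of the confluent: for distinct $a,b\in\partial T$ with $w=a\wedge b$, the ends $a$ and $b$ lie in two \emph{different} immediate successor branches of $w$, while both lie in $\partial T_w$.

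Property~(i) is then immediate. Fix $v\neq o$, choose $a\in\partial T_v$ and, using $\deg^+(v^-)\ge 2$ from (\ref{degrees}), a point $b\in\partial T_{v^-}\setminus\partial T_v$. Writing $\partial T_v=B_r(a)$, the diameter bound gives $\phi(v)\le r$, whereas $b\notin B_r(a)$ gives $\rho(a,b)>r$; since $a,b\in\partial T_{v^-}$ we conclude $\phi(v^-)\ge\rho(a,b)>r\ge\phi(v)$. The identification $d_\phi=\rho$ rests on the same trick and is the conceptual heart of the argument. For distinct $a,b$ put $w=a\wedge b$; as $a,b\in\partial T_w$ we have $\rho(a,b)\le\phi(w)$. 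For the reverse inequality, suppose $\rho(a,b)<\phi(w)$ and consider the positive-radius closed ball $C=B_{\rho(a,b)}(a)$. By hypothesis $C=\partial T_{u}$ for some $u$; as $C$ contains $a$ we get $u\le a$, and as $C$ contains $b$ we get $u\le a\wedge b=w$. Hence $\partial T_w\subseteq C$ and $\phi(w)=\mathrm{diam}_{\rho}\,\partial T_w\le\mathrm{diam}_{\rho}\,C\le\rho(a,b)$, a contradiction. Thus $\rho(a,b)=\phi(w)=d_\phi(a,b)$. The point I would stress is that this step uses the hypothesis at \emph{full} strength: it is the control over \emph{every} closed ball, not merely the nested branch balls, that forbids $\rho(a,b)$ from being smaller than the diameter of the confluent branch.

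The step I expect to require the most care is property~(ii): that $\phi(v_n)\to 0$ along every ray $\pi=[v_0,v_1,v_2,\dots]$. This is where one must pass from the purely topological fact that the branches $\partial T_{v_n}$ shrink to a point to the metric statement that their diameters vanish, and compactness is essential here. Let $x$ be the end represented by $\pi$, so that $\bigcap_n\partial T_{v_n}=\{x\}$. The sequence $\phi(v_n)$ is non-increasing by~(i); if it did not tend to $0$ it would stay above some $\delta>0$, and for each $n$ one could pick $a_n,b_n\in\partial T_{v_n}$ with $\rho(a_n,b_n)>\delta/2$. The ultra-metric inequality yields a point $c_n\in\{a_n,b_n\}\subseteq\partial T_{v_n}$ with $\rho(x,c_n)>\delta/2$. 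By compactness of $\partial T$ a subsequence $c_{n_k}$ converges; since every $\partial T_{v_m}$ is closed and eventually contains the tail of $(c_{n_k})$, the limit lies in $\bigcap_m\partial T_{v_m}=\{x\}$, i.e.\ $c_{n_k}\to x$, forcing $\rho(x,c_{n_k})\to 0$ and contradicting $\rho(x,c_n)>\delta/2$. Hence $\phi(v_n)\to 0$.

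Taken together, these three verifications show that $\phi$ is an ultra-metric element in the sense of Definition~\ref{def:element} and that it induces exactly $\rho$, which is the assertion of the lemma. The only genuinely non-formal ingredient is the compactness argument for~(ii); the rest is bookkeeping with the two ultra-metric facts above and the branch structure of the confluent.
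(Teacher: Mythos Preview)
Your proof is correct and follows the same approach as the paper: define $\phi(u)=\mathrm{diam}_\rho\,\partial T_u$ and verify the axioms. You supply considerably more detail than the paper's rather terse argument, particularly for property~(ii) and for the identification $d_\phi=\rho$, both of which the paper dismisses in a single line.
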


\begin{proof}
Given an ultra-metric $d$ as stated, we set $\phi (v)=\mathrm{diam}(\partial
T_{v})$, the diameter with respect to the metric $d$. Since $%
\deg^{+}(v^{-})\geq 2$ for any $v\in T\setminus \{o\}$, the ball $\partial
T_{v^{-}}$ is the disjoint union of at least two balls $\partial T_{u}$ with 
$u^{-}=v^{-}$. Therefore we must have $\mathrm{diam}(\partial T_{v})<\mathrm{%
diam}(\partial T_{v^{-}})$, and property (i) holds. Since no end is
isolated, $\phi $ satisfies (ii). It is now straightforward that $d_{\phi}=d 
$.
\end{proof}

In view of this correspondence, in the sequel we shall replace the subscript 
$d$ referring to the metric $d = d_{\phi}$ by the subscript $\phi$ referring
to the ultra-metric element. We note that 
\begin{equation}  \label{eq:range}
\mathrm{diam}_{\phi}(\partial T) = \phi(o)\,,\quad \Lambda_{\phi}(x) = \{
\phi(u) : u \in \pi(o,x) \} \quad\text{and}\quad \Lambda_{\phi} = \{\phi(v)
: v \in T \}.
\end{equation}
We also note here that for any $x \in \partial T$ and $v \in \pi(o,x)$, the
balls with respect to $d_{\phi}$ are 
\begin{equation}  \label{eq:balls}
B_r(x) = B^{\phi}_r(x) = 
\begin{cases}
\partial T_v\, & \text{for}\; \phi(v) \le r < \phi(v^-)\,,\; \text{if}\; v
\ne o \\ 
\partial T & \text{for}\; r \ge \phi(o)\,,\; \text{if}\; v = o\,.%
\end{cases}%
\end{equation}

\subsection{Isotropic jump processes on the boundary of a tree}

In view of the explanations given above, we can consider the isotropic jump
processes of (\ref{Averager})--(\ref{convexcomb-Pt}) on $X=\partial T$.
Since this space is compact, we may assume that the reference measure $\mu $
is a probability measure on $\partial T$. Given $\mu $, a distance
distribution $\sigma $ with properties (\ref{sigma}), and an ultra-metric
element $\phi $ on $T$, we can now refer to the $(d_{\phi },\mu ,\sigma )$%
-process simply as the $(\phi ,\mu ,\sigma )$-process on $\partial T$. We
can write the semigroup and its transition probabilities in detail as
follows. For $x\in \partial T$ and $\pi
(0,x)=[o=v_{0}\,,v_{1}\,,v_{2}\,,\dots ]$, using (\ref{eq:balls}), 
\begin{equation*}
\begin{split}
P^{t}f(x)& =\sum_{n=0}^{\infty }c_{n}^{t}\,\mathrm{Q}_{\phi (v_{n})}f(x)\,,
\\
\text{where\ \ }c_{0}^{t}& =1-\sigma ^{t}\left( \phi (v_{0})\right) \quad 
\text{and\ \ }c_{n}^{t}=\sigma ^{t}\left( \phi (v_{n-1})\right) -\sigma
^{t}\left( \phi (v_{n})\right) \;\text{ for}\;n\geq 1\,.
\end{split}%
\end{equation*}%
Thus, for arbitrary $u\in T$ and $x\in \partial T$ as above 
\begin{equation}
\mathbb{P}[X_{t}\in \partial T_{u}\mid X_{0}=x]=\sum_{n=0}^{\infty
}c_{n}^{t}\,\frac{\mu (\partial T_{v_{n}}\cap \partial T_{u})}{\mu (\partial
T_{v_{n}})}.  \label{eq:Pt}
\end{equation}

We know that we have some freedom in the choice of the measure $\sigma$: any
two measures whose distribution functions coincide on the value set $%
\Lambda_{\phi}$ of $\phi$ give rise to the same process. Recall the
Definition \ref{def:standard} of the standard $(d,\mu)$-process, now to be
re-named the standard $(\phi,\mu)$-process.

\subsection{Nearest neighbour random walks on a tree}

On a tree as a discrete structure, there are other, very well studied
stochastic processes, namely \emph{random walks.} Our aim is to analyze how
they are related with isotropic jump processes on the boundary of the tree.
A good part of the material outlined next is taken from the book of \textrm{%
Woess} \cite{W-Markov}. An older, recommended reference is the seminal paper
of \textrm{Cartier}~\cite{Ca}.

A \emph{nearest neighbour random walk} on the locally finite, infinite tree $%
T$ is induced by its stochastic transition matrix $\mathcal{P}=\left(
p(u,v)\right) _{u,v\in T}\,$ with the property that $p(u,v)>0$ if and only
if $u\sim v$. The resulting discrete-time Markov chain (random walk) is
written $(Z_{n})_{n\geq 0}\,$. Its $n$-step transition probabilities 
\begin{equation*}
p^{(n)}(u,v)=\mathbb{P}_{u}[Z_{n}=v],\ \ u,v\in T,
\end{equation*}%
are the elements of the $n^{\mathrm{th}}$ power of the matrix $\mathcal{P}$.
The notation $\mathbb{P}_{u}$ refers to the probability measure on the
trajectory space that governs the random walk starting at $u$. We assume
that the random walk is \emph{transient,} i.e., with probability $1$ it
visits any finite set only finitely often. Thus, $0<G(u,v)<\infty $ for all $%
u,v\in T$, where 
\begin{equation*}
G(u,v)=\sum_{n=0}^{\infty }p^{(n)}(u,v)
\end{equation*}%
is the \emph{Green kernel} of the random walk. In addition, we shall also
make crucial use of the quantities 
\begin{equation*}
F(u,v)=\mathbb{P}_{u}[Z_{n}=v\;\text{for some}\;n\geq 0]\quad \text{and\ \ }
U(v,v)=\mathbb{P}_{v}[Z_{n}=v\;\text{for some}\;n\geq 1]\,.
\end{equation*}%
We shall need several identities relating them and start with a few of them,
valid for all $u,v\in T$. 
\begin{align}
G(u,v)& =F(u,v)G(v,v)  \label{eq:FG} \\
G(v,v)& =\frac{1}{1-U(v,v)}  \label{eq:GU} \\
U(v,v)& =\sum_{u}p(v,u)F(u,v)  \label{eq:UF} \\
F(u,v)& =F(u,w)F(w,v)\quad \text{whenever }\;w\in \pi (u,v)  \label{eq:FF}
\end{align}%
The first three hold for arbitrary denumerable Markov chains, while (\ref%
{eq:FF}) is specific for trees (resp., a bit more generally, when $w$ is a
so-called cut point between $u$ and $v$). The identities show that those
quantities are completely determined just by all the $F(u,v)$, where $u\sim
v $. More identities, as to be found in \cite[Chapter 9]{W-Markov}, will be
displayed and used later on. By transience, the random walk $Z_{n}$ must
converge to a random end, a simple and well-known fact. See e.g. \cite{Ca}
or \cite[Theorem 9.18]{W-Markov}.

\begin{lemma}
\label{lem:conv} There is a $\partial T$-valued random variable $Z_{\infty}$
such that for every starting point $u \in T$, 
\begin{equation*}
\mathbb{P}_u[Z_n \to Z_{\infty} \;\text{in the topology of}\;\ \widehat T]
=1.
\end{equation*}
\end{lemma}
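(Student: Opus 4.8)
The plan is to prove the convergence in two stages: first that the distance $|Z_n|$ from the root tends to infinity almost surely, and then that the confluent of the tail of the trajectory settles down onto a single geodesic ray, whose end is the desired limit $Z_\infty$.

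First I would record that transience makes each vertex visited only finitely often. Indeed, for a fixed $v$ the number of visits to $v$ is geometrically distributed with parameter $U(v,v)<1$ (equivalently $G(v,v)=1/(1-U(v,v))<\infty$, see \eqref{eq:GU}), hence almost surely finite. Because $T$ is locally finite, each ball $\{v\in T:|v|\le R\}$ is a \emph{finite} set, so almost surely the walk pays it only finitely many visits in total. Consequently there is a (random) last time at which $|Z_n|\le R$, and since $R$ is arbitrary we obtain $|Z_n|\to\infty$ $\mathbb{P}_u$-almost surely for every starting vertex $u$.

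Next, working on the full-probability event where $|Z_n|\to\infty$, I would introduce the tail confluents
\[
w_n := \text{the vertex with } \pi(o,w_n)=\bigcap_{k\ge n}\pi(o,Z_k).
\]
This is a genuine vertex of $T$: the right-hand side is an intersection of geodesics all issuing from $o$, hence itself a finite geodesic $\pi(o,w_n)$ contained in $\pi(o,Z_n)$. Since the intersected family shrinks as $n$ grows, $w_n\le w_{n+1}$ in the partial order, so $|w_n|$ is nondecreasing. If $|w_n|\to\infty$, the vertices $w_n$ exhaust an infinite geodesic ray from $o$, which represents an end $Z_\infty\in\partial T$; and for each $u$ on that ray we have $w_n\ge u$ for all large $n$, whence $Z_k\ge w_n\ge u$, i.e. $Z_k\in T_u$, for all $k\ge n$. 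By the description of neighbourhoods in $\widehat T$, this is exactly $Z_n\to Z_\infty$. Thus everything reduces to showing that $|w_n|\to\infty$ almost surely.

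The crux — and the step I expect to be the main obstacle — is ruling out that $|w_n|$ stays bounded. As $|w_n|$ is a nondecreasing integer sequence, boundedness would mean $w_n=w$ for all $n\ge n_0$ and some fixed vertex $w$. By the definition of the confluent, $w_n=w$ forces the tail $\{Z_k:k\ge n\}$ not to be contained in the branch of any single child of $w$; hence for every $n\ge n_0$ there are times $k_1,k_2\ge n$ with $Z_{k_1}$ and $Z_{k_2}$ lying in the branches $T_{c_1}$ and $T_{c_2}$ of two distinct children $c_1\ne c_2$ of $w$ (such children exist, there being at least two by \eqref{degrees}). In the tree, $w$ is a cut vertex separating $T_{c_1}$ from $T_{c_2}$, so the nearest-neighbour trajectory joining $Z_{k_1}$ to $Z_{k_2}$ must pass through $w$. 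As this happens for infinitely many disjoint time intervals, the walk visits $w$ infinitely often, contradicting the finiteness of visits established in the first stage. Therefore $|w_n|\to\infty$ almost surely, completing the argument. Finally, $Z_\infty=\lim_n Z_n$ is a pointwise limit in the compact space $\widehat T$ of measurable maps, hence a measurable $\partial T$-valued random variable defined on the trajectory space independently of the starting vertex.
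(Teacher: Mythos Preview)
Your argument is correct and follows essentially the same route as the paper's: transience forces $|Z_n|\to\infty$, and failure to converge to a single end would produce a vertex (your stabilized tail confluent $w$, the paper's $x\wedge y$ for two distinct accumulation points $x,y$) through which the nearest-neighbour trajectory must pass infinitely often, contradicting transience. One small imprecision: when you infer that the tail visits two distinct child branches $T_{c_1},T_{c_2}$ of $w$, you should note that on the event $|Z_n|\to\infty$ one may enlarge $n_0$ so that $Z_k\neq w$ for all $k\ge n_0$; otherwise ``not contained in a single child branch'' could merely mean $Z_k=w$ for some $k$, which of course also gives the desired visit to $w$ but does not literally yield your $k_1,k_2$.
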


In brief, the argument is as follows: by transience, random walk
trajectories must accumulate at $\partial T$ almost surely. If such a
trajectory had two distinct accumulation points, say $x$ and $y$, then by
the nearest neighbour property, the trajectory would visit the vertex $x
\wedge_u y$ infinitely often, which can occur only with probability $0$.

We can consider the family of \emph{limit distributions} $\nu _{u}\,$, $u\in
T$, where for any Borel set $B\subset \partial T$, 
\begin{equation*}
\nu _{u}(B)=\mathbb{P}_{u}[Z_{\infty }\in B]\,.
\end{equation*}%
The sets $\partial T_{u}\,$, $u\in T$ (plus the empty set), form a
semi-algebra that generates the Borel $\sigma $-algebra of $\partial T$.
Thus, each $\nu _{u}$ is determined by the values of those sets. There is an
explicit formula, compare with \cite{Ca} or \cite[Proposition 9.23]{W-Markov}%
. For $v\neq o$, 
\begin{equation}
\nu _{u}(\partial T_{v})=%
\begin{cases}
F(u,v)\dfrac{1-F(v,v^{-})}{1-F(v^{-},v)F(v,v^{-})}\,, & \text{if}\;u\in
\{v\}\cup (T\setminus T_{v})\,, \\[12pt] 
1-F(u,v)\dfrac{F(v,v^{-})-F(v^{-},v)F(v,v^{-})}{1-F(v^{-},\nu )F(v,v^{-})}\,,
& \text{if}\;u\in T_{v}\,.%
\end{cases}
\label{eq:nu}
\end{equation}%
A \emph{harmonic function} is a function $h:T\rightarrow \mathbb{R}$ with $%
\mathcal{P}h=h$, where 
\begin{equation*}
Ph(u)=\sum_{v}p(u,v)h(v)\,.
\end{equation*}%
For any Borel set $B\subset \partial T$, the function $u\mapsto \nu _{u}(B)$
is a bounded harmonic function. One deduces that all $\nu _{u}$ are
comparable: $p^{(k)}(u,v)\,\nu _{u}\leq \nu _{v}\,$, where $k$ is the length
of $\pi (u,v)$. Thus, for any function $\varphi \in L^{1}(\partial T,\nu
_{o})$, the function $h_{\varphi }$ defined by 
\begin{equation*}
h_{\varphi }(u)=\int_{\partial T}\varphi \,d\nu _{u}
\end{equation*}%
is finite and harmonic on $T$. It is often called the \emph{Poisson transform%
} of $\varphi $.

We next define a measure $\mathsf{m}$ on $T$ via its atoms: $\mathsf{m}(o)=1$%
, and for $v\in T\setminus \{o\}$ with $\pi (o,v)=[o=v_{0}\,,v_{1}\,,\dots
,v_{k}=v]$, 
\begin{equation}
\mathsf{m}(v)=\frac{p(v_{0},v_{1})p(v_{1},v_{2})\cdots p(v_{k-1},v_{k})}{%
p(v_{1},v_{0})p(v_{2},v_{1})\cdots p(v_{k},v_{k-1})}\,.  \label{eq:treerev}
\end{equation}%
Then for all $u,v\in T$, 
\begin{equation}
\mathsf{m}(u)p(u,v)=\mathsf{m}(v)p(v,u)\,,\quad \text{and consequently}\quad 
\mathsf{m}(u)G(u,v)=\mathsf{m}(v)G(v,u)\,;  \label{eq:reversible}
\end{equation}%
the random walk is \emph{reversible}. This would allow us to use the \emph{%
electrical network} interpretation of $(T,\mathcal{P},\mathsf{m})$, for
which there are various references: see e.g. Yamasaki~\cite{Ya}, Soardi~\cite%
{So}, or -- with notation as used here -- \cite[Chapter 4]{W-Markov}. We do
not go into its details here; each edge $e=[v^{-},v]\in E(T)$ is thought of
as an electric conductor with \emph{conductance} 
\begin{equation*}
a(v^{-},v)=\mathsf{m}(v)p(v,v^{-}).
\end{equation*}%
We get the Dirichlet form $\mathcal{E}_{T}=\mathcal{E}_{T,\mathcal{P}}$ for
functions $f,g:T\rightarrow \mathbb{R}$, defined by 
\begin{equation}
\mathcal{E}_{T}(f,g)=\sum_{[v^{-},v]\in E(T)}\left( f(v)-f(v^{-})\right)
\left( g(v)-g(v^{-})\right) \,a(v^{-},v)\,.  \label{eq:Dir-T}
\end{equation}%
It is well defined for $f,g$ in the space 
\begin{equation}
\mathcal{D}(T)=\mathcal{D}(T,\mathcal{P})=\{f:T\rightarrow \mathbb{R}\mid 
\mathcal{E}_{T}(f,f)<\infty \}.  \label{eq:D(T)}
\end{equation}

\subsection{Harmonic functions of finite energy and their boundary values}

We are interested in the subspace 
\begin{equation*}
\mathcal{HD}(T)=\mathcal{HD}(T,\mathcal{P})=\{h\in \mathcal{D}(T,\mathcal{P}%
):\mathcal{P}h=h\}
\end{equation*}%
of harmonic functions with \emph{finite power.} The terminology comes from
the interpretation of such a function as the potential of an electric flow
(or current), and then $\mathcal{E}_{T}(h,h)$ is the power of that flow.%
\footnote{%
In the mathematical literature, mostly the expression \textquotedblleft
energy\textquotedblright\ is used for $\mathcal{E}_{T}(h,h)$, but it seems
that \textquotedblleft power\textquotedblright\ is the more appropriate
terminology from Physics.}

Every function in $\mathcal{HD}(T,\mathcal{P})$ is the Poisson transform of
some function $\varphi \in L^{2}(\partial T,\nu _{o})$. This is valid not
only for trees, but for general finite range reversible Markov chains, and
follows from the following facts.

\begin{enumerate}
\item Every function in $\mathcal{HD}$ is the difference of two non-negative
functions in $\mathcal{HD}$.

\item Every non-negative function in $\mathcal{HD}$ can be approximated,
monotonically from below, by a sequence of non-negative bounded functions in 
$\mathcal{HD}$.

\item Every bounded harmonic function (not necessarily with finite power) is
the Poisson transform of a bounded function on the boundary.
\end{enumerate}

In the general setting, the latter is the (active part of) the Martin
boundary, with $\nu _{u}$ being the limit distribution of the Markov chain,
starting from $u$, on that boundary. (1) and (2) are contained in \cite{Ya}
and \cite{So}, while (3) is part of general Martin boundary theory, see e.g. 
\cite[Theorem 7.61]{W-Markov}.

Thus, we can introduce a form $\mathcal{E}_{\mathcal{HD}}$ on $\partial T$
by setting 
\begin{equation}
\begin{split}
\mathcal{D}(\partial T,\mathcal{P})& =\{\varphi \in L^{1}(\partial T,\nu
_{o}):\mathcal{E}_{T}(h_{\varphi },h_{\varphi })<\infty \}\,, \\
\mathcal{E}_{\mathcal{HD}}(\varphi ,\psi )& =\mathcal{E}_{T}(h_{\varphi
},h_{\psi })\quad \text{for}\;\varphi ,\psi \in \mathcal{D}(\partial T,%
\mathcal{P}).
\end{split}
\label{eq:DirHD}
\end{equation}

\subsection{Jump processes on the boundary of a tree}

\label{SecDN}Kigami~\cite{Ki} elaborates an expression for the form $%
\mathcal{E}_{\mathcal{HD}}(\varphi ,\psi )$ of (\ref{eq:DirHD}) by
considerable effort, shows its regularity properties and then studies the
jump process on $\partial T$ induced by this Dirichlet form. We call this
the \emph{boundary process} associated with the random walk.

Now, there is a rather simple expression for $\mathcal{E}_{\mathcal{HD}}\,.$
We define the \emph{Na\"{\i}m kernel} on $\partial T\times \partial T$ by 
\begin{equation}
\Theta _{o}(x,y)=%
\begin{cases}
\dfrac{\mathsf{m}(o)}{G(o,o)F(o,x \wedge y)F(x \wedge y ,o)}\,, & \text{if}%
\;x\neq y \,, \\ 
+\infty \,, & \text{if}\;x=y\,.%
\end{cases}
\label{eq:Naim}
\end{equation}%
In our case, $\mathsf{m}(o)=1$, but we might want to change the base point,
or normalize the measure $\mathsf{m}$ in a different way.

\begin{theorem}
\label{thm:doob-naim} For any transient nearest neighbour random walk on the
tree $T$ with root $o$, and all functions $\varphi $, $\psi $ in $\mathcal{D}%
(\partial T,\mathcal{P})$, 
\begin{equation*}
\mathcal{E}_{\mathcal{HD}}(\varphi ,\psi )=\frac{1}{2}\int_{\partial
T}\int_{\partial T}\left(\varphi (x)-\varphi (y)\right)\left(\psi (x)-\psi
(y)\right)\Theta _{o}(x,y)\,d\nu _{o}(x)\,d\nu _{o}(y)\,.
\end{equation*}
\end{theorem}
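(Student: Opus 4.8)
The plan is to establish the identity first for a spanning family of boundary functions and then extend by bilinearity and an energy-approximation argument. Both sides are symmetric bilinear forms in $(\varphi,\psi)$, and on the diagonal $x=y$ the factor $(\varphi(x)-\varphi(y))(\psi(x)-\psi(y))$ vanishes, so the value $+\infty$ of $\Theta_o$ there is harmless. Hence it suffices to verify the quadratic identity
\[
\mathcal{E}_{\mathcal{HD}}(\varphi,\varphi)=\tfrac12\int_{\partial T}\int_{\partial T}\bigl(\varphi(x)-\varphi(y)\bigr)^2\,\Theta_o(x,y)\,d\nu_o(x)\,d\nu_o(y)
\]
and then polarize. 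I would prove this for $\varphi=\mathbf{1}_{\partial T_a}$, $a\in T$, whose Poisson transform is $h_{\varphi}(u)=\nu_u(\partial T_a)$, given explicitly by (\ref{eq:nu}). To pass to general $\varphi\in\mathcal{D}(\partial T,\mathcal{P})$, I would approximate by the conditional expectations $\varphi_n=\mathbb{E}_{\nu_o}[\varphi\mid\mathcal{F}_n]$ onto the level-$n$ partition of $\partial T$ into balls $\partial T_v$, $|v|=n$; these are locally constant, hence finite linear combinations of the $\mathbf{1}_{\partial T_a}$, and the structural facts (1)--(3) quoted before (\ref{eq:DirHD}) give $\mathcal{E}_T(h_{\varphi_n}-h_{\varphi})\to0$, while monotonicity of the nonnegative double integral handles the right-hand side.

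For the left-hand side I would compute $\mathcal{E}_T(h_\varphi,h_\varphi)=\sum_{[v^-,v]\in E(T)}\bigl(h_\varphi(v)-h_\varphi(v^-)\bigr)^2a(v^-,v)$ from (\ref{eq:Dir-T}), using the Martin-kernel representation $K(u,x)=F(u,u\wedge x)/F(o,u\wedge x)$, obtained from $G(u,v)=F(u,v)G(v,v)$ and (\ref{eq:FF}) by letting $v\to x$. The increments of $h_\varphi(u)=\nu_u(\partial T_a)$ are governed by the $F$-functions: along an edge $[v^-,v]$ lying in a branch that leaves the geodesic $\pi(o,a)$ at an ancestor $z$ of $a$, formula (\ref{eq:nu}) together with (\ref{eq:FF}) gives $h_\varphi(v)-h_\varphi(v^-)=L\,F(z,a)\bigl(F(v,z)-F(v^-,z)\bigr)$ for a constant $L=L(a)$, so that the edge-energy sum organizes naturally according to the confluent vertex $z\in\pi(o,a)$. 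The key subcomputation is a capacity identity: the energy of the equilibrium potential $u\mapsto F(u,z)$ of $\{z\}$ should equal $\mathsf{m}(z)/G(z,z)=\mathsf{m}(z)\bigl(1-U(z,z)\bigr)$, via (\ref{eq:GU}); summing the branch contributions at each $z$ and using reversibility (\ref{eq:reversible}) to rewrite $\mathsf{m}(z)$ collapses the whole edge sum into a single sum over the ancestors $z$ of $a$.

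For the right-hand side I would exploit that $\Theta_o(x,y)$ depends on the pair only through the confluent $x\wedge y$, and partition $\partial T\times\partial T$ off the diagonal by the value $z=x\wedge y$, where $\{x\wedge y=z\}=\bigcup_{z'\neq z''}\partial T_{z'}\times\partial T_{z''}$ over ordered pairs of distinct children of $z$. For $\varphi=\mathbf{1}_{\partial T_a}$ the integrand $(\varphi(x)-\varphi(y))^2$ selects exactly the pairs separated by $\partial T_a$, whose confluent is a strict ancestor $z$ of $a$; this yields
\[
\mathrm{RHS}=\nu_o(\partial T_a)\sum_{z\in\pi(o,a^-)}\frac{\nu_o(\partial T_z)-\nu_o(\partial T_{c_z(a)})}{G(o,o)\,F(o,z)\,F(z,o)},
\]
where $c_z(a)$ is the child of $z$ on $\pi(z,a)$, using $\mathsf{m}(o)=1$ and $\nu_o(\partial T)=1$. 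It then remains to identify this sum term-by-term with the confluent-organized left-hand side, which reduces to an elementary identity among $F$, $G$, $U$ and $\nu_o(\partial T_v)=F(o,v)L(v)$ read off from (\ref{eq:nu}); here the identities (\ref{eq:FG})--(\ref{eq:FF}) and reversibility (\ref{eq:reversible}) do the bookkeeping.

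The main obstacle is precisely this matching at the level of a single confluent vertex $z$: showing that $\sum(\Delta h_\varphi)^2a(v^-,v)$ over all edges of the branches attached at $z$ but not leading to $a$ equals the RHS term $\nu_o(\partial T_a)\bigl(\nu_o(\partial T_z)-\nu_o(\partial T_{c_z(a)})\bigr)\big/\bigl(G(o,o)F(o,z)F(z,o)\bigr)$. This is where the capacity computation for $F(\cdot,z)$ and the cancellation of the $F$-factors are essential, and where one must check that the contributions from edges strictly inside $T_a$ and along the spine $\pi(o,a)$ assemble correctly. The secondary technical point is the closability/density step needed to upgrade from $\mathbf{1}_{\partial T_a}$ to all of $\mathcal{D}(\partial T,\mathcal{P})$, which rests on the approximation facts for $\mathcal{HD}(T,\mathcal{P})$ recorded before (\ref{eq:DirHD}).
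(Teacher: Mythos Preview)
Your proposal is a plausible alternative, but it takes a harder road than the paper and leaves the crucial step at the level of a sketch. The paper proceeds \emph{bilinearly}, with two indicators $\varphi=\mathbf{1}_{\partial T_v}$ and $\psi=\mathbf{1}_{\partial T_w}$, and its key simplification is precisely the one you do not use: Lemma~\ref{lem:invariant} (base-point invariance of the measure $\Theta_o\,d\nu_o\,d\nu_o$). By moving the root to $v$, the relevant confluent $x\wedge_v y$ becomes the single vertex $v$, so the right-hand side collapses to $\dfrac{\mathsf{m}(v)}{G(v,v)}\,\nu_v(\partial T\setminus\partial T_v)\,\nu_v(\partial T_w)$ with no sum over ancestors at all. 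On the left, instead of an edge-by-edge sum organised by confluent, the paper splits $\mathcal{E}_T(h_\varphi,h_\psi)$ into three whole-subtree pieces $\mathcal{E}_T$, $\mathcal{E}_{T_v}$, $\mathcal{E}_{T_w}$ applied to the pair $(F_v,F_w)$; each is evaluated by the single identity $\mathcal{E}_T(F_v,F_w)=\mathsf{m}(v)F(v,w)/G(v,v)$ (equation~(\ref{eq:DirT}), a direct consequence of $F_v=G_v/G(v,v)$ and (\ref{eq:Green})), together with Lemma~\ref{lem:nu}. The case of disjoint $T_v,T_w$ reduces to the nested case by another root change. Your ``capacity identity'' $\mathcal{E}_T(F_z,F_z)=\mathsf{m}(z)/G(z,z)$ is the diagonal $v=w=z$ of exactly this formula, but you then need its \emph{restricted-subtree} versions to handle the branches at each ancestor, plus separate bookkeeping for the spine edges and the edges inside $T_a$ (where the second line of (\ref{eq:nu}) applies); this is doable but is precisely the labor the base-point trick eliminates.

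There is also a small issue with your extension step. The paper passes from indicators of $\partial T_v$ to their linear span, then to $C(\partial T)$ by uniform density, and finally to $\mathcal{D}(\partial T,\mathcal{P})$ by a standard $\mathcal{E}$-closure argument; facts (1)--(3) before (\ref{eq:DirHD}) justify that every $h\in\mathcal{HD}$ is a Poisson transform, but they do not by themselves give $\mathcal{E}_T(h_{\varphi_n}-h_\varphi)\to 0$ for your conditional expectations $\varphi_n$. You would need to argue that the Poisson transforms of $\varphi_n$ are the orthogonal projections of $h_\varphi$ in $\mathcal{HD}$ onto a suitable increasing sequence of subspaces, which is true but requires a line of justification beyond what you wrote.
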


There is a general definition of the Na\"{\i}m kernel \cite{Na} that
involves the Martin boundary, which in the present case is $\partial T$. A
proof of Theorem \ref{thm:doob-naim} is given in \cite{Do} in a setting of
abstract potential theory on Green spaces, which are locally Euclidean. The
definition of \cite{Na} refers to the same type of setting. Now, infinite
networks, even when seen as metric graphs, are not locally Euclidean. In
this sense, so far the definition of the kernel and a proof of Theorem \ref%
{thm:doob-naim} for transient, reversible random walks have not been well
accessible in the literature. In a forthcoming paper, Georgakopoulos and
Kaimanovich~\cite{GK} will provide those \textquotedblleft missing
links\textquotedblright . We give a direct and simple proof of Theorem \ref%
{thm:doob-naim} for the specific case of trees. We start with the following
observation.

\begin{lemma}
\label{lem:invariant} The measure $\Theta_o(x,y)\,d\nu_o(x)\,d\nu_o(y)$ on $%
\partial T \times \partial T$ is invariant with respect to changing the base
point (root) $o$.
\end{lemma}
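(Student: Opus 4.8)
The plan is to show that changing the root multiplies $\Theta$ by precisely the reciprocal of the product of the two Radon--Nikodym densities relating the limit distributions. First I would record how the base point enters the limit distribution. By transience and Lemma~\ref{lem:conv}, the densities are given by the Martin kernel, $d\nu_{o'}/d\nu_o(x)=K(o',x)$ with $K(u,x)=\lim_{v\to x}G(u,v)/G(o,v)$. On a tree this limit is explicit: taking $v\to x$ along $\pi(o,x)$ beyond the confluent $w=u\wedge_o x$, the factorizations $G(u,v)=F(u,v)G(v,v)$ of \eqref{eq:FG} and $F(u,v)=F(u,w)F(w,v)$ of \eqref{eq:FF} give $K(u,x)=F(u,u\wedge_o x)/F(o,u\wedge_o x)$. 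Off the diagonal $\{x=y\}$ (where both kernels are finite) the two measures have densities with respect to $\nu_o\otimes\nu_o$, so it suffices to establish, for all $x\neq y$, the pointwise identity
\begin{equation*}
\Theta_{o'}(x,y)\,K(o',x)\,K(o',y)=\Theta_o(x,y). \tag{$*$}
\end{equation*}

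Since $(*)$ is symmetric in $o\leftrightarrow o'$ and changes of base point compose, I would reduce to the case $o'\sim o$; invariance for a general $o'$ then follows by telescoping along the geodesic $\pi(o,o')$. For an adjacent pair, removing the edge $[o,o']$ splits $T$ into the component $T^{(o)}\ni o$ and the component $T^{(o')}\ni o'$, and every end lies in the boundary of exactly one of them; call these the $o$-side and the $o'$-side. I would then split the verification of $(*)$ according to which side $x$ and $y$ lie on. In each case one locates the two confluents $w=x\wedge_o y$ and $w'=x\wedge_{o'}y$ relative to the edge (they coincide unless $x,y$ lie on opposite sides, in which case $w=o$ and $w'=o'$), reads off $K(o',x),K(o',y)\in\{F(o',o),\,1/F(o,o')\}$ from the formula above, and rewrites every factor $F(\cdot,w)$ whose geodesic straddles the edge by means of \eqref{eq:FF}.

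The algebraic engine that makes the normalisation factors $\mathsf{m}(\cdot)/G(\cdot,\cdot)$ match is reversibility. Combining $\mathsf{m}(o)G(o,o')=\mathsf{m}(o')G(o',o)$ from \eqref{eq:reversible} with $G(o,o')=F(o,o')G(o',o')$ and $G(o',o)=F(o',o)G(o,o)$ from \eqref{eq:FG} yields the single identity
\begin{equation*}
\frac{\mathsf{m}(o)}{G(o,o)}=\frac{\mathsf{m}(o')}{G(o',o')}\,\frac{F(o',o)}{F(o,o')}. \tag{$\star$}
\end{equation*}
Feeding $(\star)$ and \eqref{eq:FF} into each case collapses the left-hand side of $(*)$ to $\mathsf{m}(o)/\bigl(G(o,o)F(o,w)F(w,o)\bigr)=\Theta_o(x,y)$. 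For example, if $x,y$ both lie on the $o$-side then $w'=w$ and $K(o',x)=K(o',y)=F(o',o)$, while $o\in\pi(o',w)$ gives $F(o',w)F(w,o')=F(o',o)F(o,w)\,F(w,o)F(o,o')$ by \eqref{eq:FF}; substituting and using $(\star)$ reproduces $\Theta_o(x,y)$. The opposite-side case ($w=o$, $w'=o'$) and the $o'$-side case are handled by the same two identities.

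The main obstacle is entirely organisational: correctly tracking the two confluents and identifying which hitting-probability factors cross the edge $[o,o']$ in each case, so that \eqref{eq:FF} is applied in the right direction. Once the reversibility identity $(\star)$ is isolated, no genuinely new estimate is required, and every case reduces to $(\star)$, the factorization \eqref{eq:FF}, and the explicit Martin kernel for trees.
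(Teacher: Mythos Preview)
Your proof is correct and follows essentially the same approach as the paper's: reduce to adjacent roots, use the explicit Martin kernel on trees for the Radon--Nikodym derivative, and verify the pointwise identity by a case split according to the position of $x,y$ relative to the separating edge. The only cosmetic differences are that you write the Martin kernel in terms of $F$ rather than $G$ (equivalent via \eqref{eq:FG}), you isolate the reversibility identity $(\star)$ explicitly as the algebraic key, and you work out the ``$o$-side'' case in detail whereas the paper works out the ``$o'$-side'' case; the underlying computation is the same.
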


\begin{proof}
We want to replace the base point $o$ with some other $u\in T$. We may
assume that $u\sim o$. Indeed, then we may step by step replace the current
base point by one of its neighbours to obtain the result for arbitrary $u$.

Recall that the confluent that appears in the definition (\ref{eq:Naim})) of 
$\Theta_o$ depends on the root $o$, while for $\Theta_x$ it becomes the one
with respect to $x$ as the new root. It is a well-known fact that 
\begin{equation*}
\frac{d\nu_u}{d\nu_o}(x) = K(u,x) = \frac{G(u,u\wedge_o x)}{G(o,u\wedge_o x)}%
\,,
\end{equation*}
the \emph{Martin kernel.} Thus, we have to show that for all $x,y \in
\partial T$ ($x \ne y$) 
\begin{equation*}
\dfrac{\mathsf{m}(o)}{G(o,o)F(o,x\wedge_o y) F(x\wedge_o y,o)} = \dfrac{%
\mathsf{m}(u)K(u,x) K(u,y)}{G(u,u)F(u,x\wedge_u y) F(x\wedge_u y,u)} \,.
\end{equation*}

\medskip

\emph{Case 1.} $x ,y \in \partial T_{u}$. Then $x \wedge_{o}y=x \wedge_{u}y
=:v\in T_{u}$, and $u\wedge_{o}x =u\wedge_{o}y=u $. Thus, using (\ref{eq:FG}%
), (\ref{eq:FF}) and the fact that by (\ref{eq:reversible}) 
\begin{equation*}
\mathsf{m}(u)/G(o,u)=\mathsf{m}(o)/G(u,o),
\end{equation*}
we obtain 
\begin{eqnarray*}
\frac{\mathsf{m}(u)K(u,x)K(u,y)}{G(u,u)F(u,x \wedge_{u}y) F(x\wedge_{u}y,u)}
&=&\frac{\mathsf{m}(u)}{G(u,u)F(u,v)F(v,u)} \left( \frac{G(u,u)}{G(o,u)}%
\right) ^{2} \\
&=&\frac{\mathsf{m}(o)G(u,u)}{F(u,v)F(v,u)G(o,u)G(u,o)} \\
&=&\frac{\mathsf{m}(o)}{F(u,v)F(v,u)F(o,u)F(u,o)G(o,o)} \\
&=&\frac{\mathsf{m}(o)}{F(o,v)F(v,o)G(o,o)}\,,
\end{eqnarray*}%
as required. There are 3 more cases.

\smallskip

\emph{Case 2.} $x ,y \in \partial T\setminus \partial T_{u}\,$. Then 
\begin{equation*}
x \wedge_{o}y =x \wedge _{u}y =:w\in T\setminus T_{u},\quad\text{and\ \ }
u\wedge_{o}x =u\wedge _{o}y =o.
\end{equation*}

\smallskip

\emph{Case 3.} $x \in \partial T_{u}\,$, $y \in \partial T\setminus\partial
T_{u}\,$. Then 
\begin{equation*}
x \wedge _{o}y =o,\ \ x \wedge_{u}y =u,\ \ u\wedge _{o}x =u \quad\text{and\
\ } u\wedge _{o}y =o.
\end{equation*}

\smallskip

\emph{Case 4.} $x \in \partial T\setminus \partial T_{u}\,$, $y \in \partial
T_{u}\,$. This is like Case 3, exchanging the roles of $x $ and $y$.

\smallskip

In all cases 2--4, the computation is done very similarly to Case 1, a
straightforward exercise.~
\end{proof}

For proving Theorem \ref{thm:doob-naim}, we need a few more facts related
with the network setting; compare e.g. with \cite[\S 4.D]{W-Markov}.

The space $\mathcal{D}(T)$ of (\ref{eq:D(T)}) is a Hilbert space when
equipped with the inner product 
\begin{equation*}
(f,g) = \mathcal{E}_T(f,g) + f(o)g(o)\,.
\end{equation*}
The subspace $\mathcal{D}_0(T)$ is defined as the closure of the space of
finitely supported functions in $\mathcal{D}(T)$. It is a proper subspace if
and only if the random walk is transient, and then the function $G_v(u) =
G(u,v)$ is in $\mathcal{D}_0(T)$ for any $v \in T$ \cite{Ya}, \cite{So}. We
need the formula 
\begin{equation}  \label{eq:Green}
\mathcal{E}_T(f,G_v) = \mathsf{m}(v) f(v) \quad \text{for every}\; f \in 
\mathcal{D}_0(T)\,.
\end{equation}
Given a branch $T_w$ of $T$ ($w \in T \setminus \{ o\}$), we can consider it
as a subnetwork equipped with the same conductances $a(u,v)$ for $[u,v] \in
E(T_w)$. The associated measure on $T_w$ is 
\begin{equation*}
\mathsf{m}_{T_w}(u) = \sum_{v \in T_z: v \sim u} a(u,v) = 
\begin{cases}
\mathsf{m}(u) & \text{if}\; u \in T_w \setminus \{w\}\,, \\ 
\mathsf{m}(w)-a(w,w^-) & \text{if}\; u =w\,.%
\end{cases}%
\end{equation*}
The resulting random walk on $T_w$ has transition probabilities 
\begin{equation*}
p_{T_w}(u,v) = \frac{a(v,w)}{\mathsf{m}_{T_w}(u)} = 
\begin{cases}
p(u,v) & \text{if}\; u \in T_w \setminus \{w\}\,,\;v \sim u\,, \\[9pt] 
\dfrac{p(w,v)}{1-p(w,w^-)} & \text{if}\; u =w\,,\;v \sim u\,.%
\end{cases}%
\end{equation*}
We have $F_{T_w}(u,u^-) = F(u,u^-)$ and thus also $F_{T_w}(u,w) = F(u,w)$
for every $u \in T_w \setminus \{w\}$, because before its first visit to $w$%
, the random walk on $T_w$ obeys the same transition probabilities as the
original random walk on $T$. It is then easy to see \cite[p. 241]{W-Markov}
that the random walk on $T_w$ is transient if and only if for the original
random walk, $F(w,w^-) <1$, which in turn holds if and only if $%
\nu_o(\partial T_w) > 0$. (In other parts of this and the preceding two
sections, this is always assumed, but for the proof of Theorem \ref%
{thm:doob-naim}, we just assume the random walk on the whole of $T$ to be
transient.) Conversely, if $F(w,w^-)=1$ then $F(u,w)=1$ for all $u \in T_w\,$%
.

Below, we shall need the following formula for the limit distributions.

\begin{lemma}
\label{lem:nu} For $u \in T \setminus \{o\}$, 
\begin{equation*}
\nu_u(\partial T_u) = 1 - p(u,u^-)\left(G(u,u) - G(u^-,u)\right).
\end{equation*}
\end{lemma}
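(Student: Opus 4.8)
For $u \in T \setminus \{o\}$,
$$
\nu_u(\partial T_u) = 1 - p(u,u^-)\left(G(u,u) - G(u^-,u)\right).
$$

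I need to compute the probability that the random walk started at $u$ converges to an end of the branch $T_u$. The plan is to reduce $\nu_u(\partial T_u) = \mathbb{P}_u[Z_\infty \in \partial T_u]$ to a complementary event governed by the quantities $F$, $G$, $U$ already introduced, and then to rewrite the resulting expression using the identities (\ref{eq:FG})--(\ref{eq:FF}). The key observation is that, since $T$ is a tree and $u^-$ is the unique neighbour of $u$ lying outside $T_u$, the only way for the trajectory to leave the branch $T_u$ permanently (and hence converge to an end \emph{not} in $\partial T_u$) is to cross the edge $[u,u^-]$ and never return to $u$. This is the cut-point structure that makes (\ref{eq:FF}) available.

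First I would argue that $\partial T \setminus \partial T_u$ is exactly the set of ends reachable only after a \emph{last} visit to $u$ that is immediately followed by the step to $u^-$, after which the walk never comes back to $u$. Concretely, $Z_\infty \notin \partial T_u$ forces the walk to make a final departure from $u$ through $u^-$. I would compute the probability of this complementary event. The walk started at $u$ reaches $u^-$ with probability $F(u,u^-)$, but I want instead to track the last visit to $u$: the probability that, from $u$, the walk steps to $u^-$ and never returns to $u$ equals $p(u,u^-)\bigl(1 - F(u^-,u)\bigr)$. Summing over the number of returns to $u$ before this final departure (a geometric series in $U(u,u)$), the total probability of ever leaving $u$ permanently through $u^-$ is
$$
\nu_u(\partial T \setminus \partial T_u) = \frac{p(u,u^-)\bigl(1 - F(u^-,u)\bigr)}{1 - U(u,u)}.
$$
Here $1/(1-U(u,u)) = G(u,u)$ by (\ref{eq:GU}), so this equals $p(u,u^-)\bigl(1 - F(u^-,u)\bigr)G(u,u)$.

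It then remains to convert $\bigl(1 - F(u^-,u)\bigr)G(u,u)$ into the stated form $G(u,u) - G(u^-,u)$. By (\ref{eq:FG}) applied with the pair $(u^-,u)$, we have $G(u^-,u) = F(u^-,u)\,G(u,u)$, whence
$$
\bigl(1 - F(u^-,u)\bigr)G(u,u) = G(u,u) - F(u^-,u)G(u,u) = G(u,u) - G(u^-,u).
$$
Therefore $\nu_u(\partial T \setminus \partial T_u) = p(u,u^-)\bigl(G(u,u) - G(u^-,u)\bigr)$, and taking complements gives the claim. The main obstacle, and the step requiring the most care, is the first one: justifying rigorously that leaving $\partial T_u$ is equivalent to a \emph{permanent} departure from $u$ via the edge to $u^-$, and correctly bookkeeping the geometric sum over intermediate returns so that the factor $G(u,u)$ appears rather than $F(u,u^-)$. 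Once the probabilistic decomposition is set up correctly, the algebraic passage through (\ref{eq:FG}) and (\ref{eq:GU}) is immediate.
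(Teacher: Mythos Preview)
Your argument is correct. The last-exit decomposition is sound: by transience the walk visits $u$ finitely often, and since $u^{-}$ is the unique neighbour of $u$ outside $T_u$, the event $\{Z_\infty\notin\partial T_u\}$ coincides exactly with the event that the final excursion from $u$ begins with the step to $u^{-}$ and never returns. Summing over the number of prior returns gives the geometric factor $1/(1-U(u,u))=G(u,u)$, and the conversion $(1-F(u^{-},u))G(u,u)=G(u,u)-G(u^{-},u)$ via (\ref{eq:FG}) is exactly right.

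The paper takes a different, purely algebraic route: it starts from the explicit formula (\ref{eq:nu}) for $\nu_u(\partial T_u)$ (with $v=u$, second case) and combines it with the identity (\ref{eq:formula1}), namely $G(u,u)\,p(u,u^{-})=F(u,u^{-})/\bigl(1-F(u,u^{-})F(u^{-},u)\bigr)$, to recognise $(1-F(u^{-},u))G(u,u)\,p(u,u^{-})$ as $1-\nu_u(\partial T_u)$. Your probabilistic derivation bypasses both (\ref{eq:nu}) and (\ref{eq:formula1}) entirely, deriving the same intermediate expression from first principles; this is more self-contained and gives a clear interpretation of the quantity $p(u,u^{-})\bigl(G(u,u)-G(u^{-},u)\bigr)$ as an escape probability. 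The paper's argument is shorter once those identities are in hand, but relies on formulae quoted from external references.
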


\begin{proof}
By (\ref{eq:formula1}), 
\begin{equation*}
G(u,u)p(u,u^-) = \frac{F(u,u^-)}{1-F(u,u^-)F(u^-,u)}
\end{equation*}
Thus, 
\begin{equation*}
p(u,u^-)\left(G(u,u) - G(u^-,u)\right) = \left(1-F(u^-,u)\right)
G(u,u)p(u,u^-) = 1 -\nu_u(\partial T_u)
\end{equation*}
after a short computation using (\ref{eq:nu})
\end{proof}

\medskip

\begin{proof}[Proof of Theorem \protect\ref{thm:doob-naim}]
We first prove the Doob-Na\"{\i}m formula (shortly, D-N-formula) for the
case when $\varphi =\mathbf{1}_{\partial T_{v}}$ and $\psi =\mathbf{1}%
_{\partial T_{w}}$ for two proper branches $T_{v}$ and $T_{w}$ of $T$. They
are either disjoint, or one of them contains the other.

\smallskip

\emph{Case 1. } $T_w \subset T_v\,.\ $ (The case $T_v \subset T_w\,$ is
analogous by symmetry.)

This means that $w\in T_{v}\,.$ For $x,y\in \partial T$ we have 
\begin{equation*}
\left(\varphi (x)-\varphi (y)\right)\left(\psi (x)-\psi (y)\right)=1
\end{equation*}%
if $\;x\in \partial T_{w}$ and $y\in \partial T\setminus \partial T_{v}$ or
conversely, and $=0$ otherwise. By Lemma \ref{lem:invariant}, we may choose $%
v$ as the base point. Thus, the right hand side of the identity is 
\begin{equation*}
\int_{\partial T\setminus \partial T_{v}}\int_{\partial T_{w}}\Theta
_{v}(x,y)\,d\nu _{v}(x)\,d\nu _{v}(y)=\dfrac{\mathsf{m}(v)}{G(v,v)}\,\nu
_{v}(\partial T\setminus \partial T_{v})\nu _{v}(\partial T_{w})\,,
\end{equation*}%
since $x\wedge _{v}y=v$ and $F(v,v)=1$.

Let us now turn to the left hand side of the D-N-formula. The Poisson
transforms of $\varphi $ and $\psi $ are 
\begin{equation*}
h_{\varphi }(u)=\nu _{u}(\partial T_{v})\quad \text{and\ \ }h_{\psi }(u)=\nu
_{u}(\partial T_{w}).
\end{equation*}%
By (\ref{eq:nu}), 
\begin{eqnarray*}
h_{\varphi }(u) &=&F(u,v)\nu _{v}(\partial T_{v})\,,\ \ u\in \{v\}\cup
(T\setminus T_{v}) \\
1-h_{\varphi }(u) &=&F(u,v)\nu _{v}(\partial T\setminus \partial T_{v})\,,\
\ u\in T_{v}\,.
\end{eqnarray*}%
We set $F_{v}(u)=F(u,v)$ and write 
\begin{equation*}
h_{\varphi }(u)-h_{\varphi }(u^{-})=\left( 1-h_{\varphi }(u^{-})\right)
-\left( 1-h_{\varphi }(u)\right)
\end{equation*}%
whenever this is convenient, and analogously for $h_{\psi }\,$. Then we get 
\begin{equation*}
\begin{split}
& \mathcal{E}_{T}(h_{\varphi }\,,h_{\psi }) \\
& =\!\!\!\sum_{[u,u^{-}]\in E(T)\setminus E(T_{v})}\!\!\!a(u,u^{-})\left(
F(u,v)-F(u^{-},v)\right) \nu _{v}(\partial T_{v})\,\left(
F(u,v)-F(u^{-},v)\right) \nu _{v}(\partial T_{w}) \\
& \quad -\!\!\!\!\!\!\sum_{[u,u^{-}]\in E(T_{v})\setminus
E(T_{w})}\!\!\!\!\!\!a(u,u^{-})\left( F(u,v)-F(u^{-},v)\right) \nu
_{v}(\partial T\setminus \partial T_{v})\,\left( F(u,v)-F(u^{-},v)\right)
\nu _{v}(\partial T_{w}) \\
& \quad +\!\!\!\!\!\!\sum_{[u,u^{-}]\in
E(T_{w})}\!\!\!\!\!\!a(u,u^{-})\left( F(u,v)-F(u^{-},v)\right) \nu
_{v}(\partial T\setminus \partial T_{v})\,\left( F(u,v)-F(u^{-},v)\right)
\nu _{v}(\partial T\setminus \partial T_{w}) \\
& =\mathcal{E}_{T}(F_{v}\,,F_{w})\nu _{v}(\partial T_{v})\nu _{v}(\partial
T_{w})-\mathcal{E}_{T_{v}}(F_{v}\,,F_{w})\nu _{v}(\partial T_{w})+\mathcal{E}%
_{T_{w}}(F_{v}\,,F_{w})\nu _{v}(\partial T\setminus \partial T_{v})\,,
\end{split}%
\end{equation*}%
where of course $\mathcal{E}_{T_{v}}$ is the Dirichlet form of the random
walk on the branch $T_{v}\,$, as discussed above, and analogously for $%
\mathcal{E}_{T_{w}}\,$. Now $F_{v}=G_{v}/G(v,v)$ by (\ref{eq:FG}), whence (%
\ref{eq:Green}) yields 
\begin{equation}
\mathcal{E}_{T}(F_{v},F_{w})=\frac{\mathcal{E}_{T}(G_{v},F_{w})}{G(v,v)}=%
\frac{\mathsf{m}(v)F(v,w)}{G(v,v)}\,.  \label{eq:DirT}
\end{equation}%
Recall that for the random walk on $T_{v}\,$, we have $F_{T_{v}}(u,v)=F(u,v)$
for every $u\in T_{v}\,$. Also, 
\begin{equation*}
\mathsf{m}_{T_{v}}(v)=\mathsf{m}(v)-a(v,v^{-})=\mathsf{m}(v)\left(
1-p(v,v^{-})\right) .
\end{equation*}%
We apply (\ref{eq:DirT}) to that random walk and get 
\begin{equation*}
\mathcal{E}_{T_{v}}(F_{v},F_{w})=\frac{\mathsf{m}(v)\left(
1-p(v,v^{-})\right) F(v,w)}{G_{T_{v}}(v,v)}\,.
\end{equation*}%
We now apply (\ref{eq:GU}) and (\ref{eq:UF}), recalling in addition that 
\begin{equation*}
p_{T_{v}}(v,u)=\frac{p(v,u)}{1-p(v,v^{-})}
\end{equation*}%
for $u\in T_{v}\,$, and obtain%
\begin{eqnarray*}
\frac{1-p(v,v^{-})}{G_{T_{v}}(v,v)} &=&1-p(v,v^{-})-\left(
1-p(v,v^{-})\right) U_{T_{v}}(v,v) \\
&=&1-p(v,v^{-})-\sum_{u:u^{-}=v}p(v,u)F(u,v) \\
&=&1-p(v,v^{-})-\left( U(v,v)-p(v,v^{-})F(v^{-},v)\right) \\
&=&\frac{1}{G(v,v)}-p(v,v^{-})\left( 1-F(v^{-},v)\right) =\frac{\nu
_{v}(\partial T_{v})}{G(v,v)},
\end{eqnarray*}%
where in the last step we have used Lemma \ref{lem:nu}. We have obtained 
\begin{equation*}
\mathcal{E}_{T_{v}}(F_{v},F_{w})=\frac{\mathsf{m}(v)F(v,w)}{G(v,v)}\,\nu
_{v}(\partial T_{v}).
\end{equation*}%
In the same way, exchanging roles between $T_{w}$ and $T_{v}\,$ and using
reversibility (\ref{eq:reversible}), 
\begin{equation*}
\mathcal{E}_{T_{w}}(F_{v},F_{w})=\frac{\mathsf{m}(w)F(w,v)}{G(w,w)}\,\nu
_{w}(\partial T_{w})=\frac{\mathsf{m}(v)F(v,w)}{G(v,v)}\,\nu _{w}(\partial
T_{w})=\frac{\mathsf{m}(v)}{G(v,v)}\,\nu _{v}(\partial T_{w})
\end{equation*}%
Putting things together, we get that 
\begin{equation*}
\mathcal{E}_{T}(h_{\varphi }\,,h_{\psi })=\mathcal{E}_{T_{w}}(F_{v}\,,F_{w})%
\nu _{v}(\partial T\setminus \partial T_{v})=\frac{\mathsf{m}(v)}{G(v,v)}%
\,\nu _{v}(\partial T_{w})\nu _{v}(\partial T\setminus \partial T_{v}),
\end{equation*}%
as proposed.

\smallskip

\emph{Case 2. } $T_w \cap T_v = \emptyset\,.$

In view of Lemma \ref{lem:invariant}, both sides of the D-N-formula are
independent of the root $o$. Thus we may declare our root to be one of the
neighbours of $v$ that is not on $\pi (v,w)$. Also, let $\bar{v}$ be the
neighbour of $v$ on $\pi (w,v)$. Then, with our chosen new root, the
complement of the \textquotedblleft old\textquotedblright\ $T_{v}$ is $T_{%
\bar{v}}$, which contains $T_{w}$ (The latter remains the same with respect
to the new root).

Thus, we can apply the result of case 1 to $T_{\bar{v}}$ and $T_{w}$. This
means that we have to replace the functions $\varphi $ and $h_{\varphi }$
with $1-\varphi $ and $1-h_{\varphi }$, respectively, which just means that
we change the sign on both sides of the identity. We are re-conducted to
Case 1 without further computations.

\smallskip

We deduce from what we have done so far, and from linearity of the Poisson
transform as well of bilinearity of the forms on both sides of the
D-N-formula , that it holds for linear combinations of indicator functions
of sets $\partial T_{v}\,$. Those indicator functions are dense in the space 
$C(\partial T)$ with respect to the $\max $-norm. Thus, the D-N-formula
holds for all continuous functions on $\partial T$. The extension to all of $%
\mathcal{D}(\partial T,\mathcal{P})$ is by standard approximation.
\end{proof}

\section{The duality of random walks on trees and isotropic processes 
\newline
on their boundaries}

\label{duality}\setcounter{equation}{0}When looking at our isotropic
processes and at the boundary process of Kigami \cite{Ki}, it is natural to
ask the following two questions.

\medskip

\textbf{Question I.} Given a transient random walk on $T$ associated with
the Dirichlet form $\mathcal{E}_{T}$ of (\ref{eq:Dir-T}), does the boundary
process on $\partial T$ induced by the form $\mathcal{E}_{\mathcal{HD}}$ of (%
\ref{eq:DirHD}) arise as one of the isotropic processes (\ref{convexcomb-Pt}%
) on $\partial T$ with transition probabilities (\ref{eq:Pt}), with respect
to the measure $\mu =\nu _{o}$ on $\partial T$, some ultra-metric element $%
\phi $ on $T$ and a suitable distance distribution function $\sigma $ on $%
[0\,,\,\infty )\,$?

\medskip

\textbf{Question II.} Conversely, given data $\mu $, $\phi $ and $\sigma $,
is there a random walk on $T$ with limit distribution $\nu _{o}=\mu $ such
that the isotropic process induced by $\mu $, $\phi $ and $\sigma $ is the
boundary process with Dirichlet form $\mathcal{E}_{\mathcal{HD}}$ ?

\medskip

Before answering both questions, we need to specify the assumptions more
precisely. When starting with $(\phi ,\mu ,\sigma )$, we always assume as
before that $\mu $ is supported by the whole of $\partial T$.

Thus, on the side of the random walk, we also want that $\limfunc{supp}(\nu
_{o})=\partial T$. This is equivalent with the requirement that $\nu
_{o}(\partial T_{v})>0$ for every $v\in T$. By (\ref{eq:nu}) this is in turn
equivalent with 
\begin{equation}
F(v,v^{-})<1\quad \text{for every}\;v\in T\setminus \{o\}.  \label{eq:F<1}
\end{equation}%
Indeed, we shall see that we need a bit more, namely that 
\begin{equation}
\lim_{v\rightarrow \infty }G(v,o)=0\,,  \label{eq:Green-0}
\end{equation}%
that is, for every $\varepsilon >0$ there is a finite set $A\subset T$ such
that $G(v,o)<\varepsilon $ for all $v\in T\setminus A$. This condition is
necessary and sufficient for solvability of the \emph{Dirichlet problem:}
for any $\varphi \in C(\partial X)$, its Poisson transform $h_{\varphi }$
provides the unique continuous extension of $\varphi $ to $\widehat{T}$
which is harmonic in $T$. See e.g. \cite[Corollary 9.44]{W-Markov}.

We shall restrict attention to random walks with properties (\ref{eq:F<1})
and (\ref{eq:Green-0}) on a rooted tree with forward degrees $\ge 2$.

\subsection{Answer to Question I}

We start with a random walk that fulfills the above requirements. We know
from \S \ref{Intro} that each $(\mu ,\phi ,\sigma )$-process arises as the
standard process of Definition \ref{def:standard} with respect to the
intrinsic metric (cf. Theorem \ref{p-laplace}): given $\phi $ and $\sigma $,
the intrinsic metric is induced by the ultra-metric element 
\begin{equation}
\phi _{\ast }(u)=-1\big/\log \sigma \left( \phi (u)\right) \,.
\label{intrinsic-element}
\end{equation}%
Thus, we can eliminate $\sigma $ from our considerations by just looking for
an ultra-metric element $\phi $ such that the boundary process is the
standard process on $\partial T$ associated with $(\nu _{0}\,,\phi )$.

Since the processes are determined by the Dirichlet forms, we infer from
Theorems \ref{Thm-Dirichlet form/Laplacian} and \ref{thm:doob-naim} that we
are looking for $\phi $ such that $J(x,y)=\Theta _{o}(x,y)$ for all $x,y\in
\partial T$ with $x\neq y$, where $J(x,y)$ is given by (\ref{Jxy}).
Rewriting $J(x,y)$ in terms of $\phi $, $\nu _{o}$ and the tree structure,
this becomes 
\begin{equation}
\frac{1}{\phi (o)}+\int_{1/\phi (o)}^{1/\phi (x\wedge y)}\frac{dt}{\nu
_{o}\left( B_{1/t}^{\phi }(x)\right) }=\frac{\mathsf{m}(o)}{%
G(o,o)F(o,x\wedge y)F(x\wedge y,o)}.  \label{eq:tosolve1}
\end{equation}%
In our case, $\mathsf{m}(o)=1$, but we keep track of what happens when one
changes the root or the normalisation of $\mathsf{m}$. First of all, since $%
\deg ^{+}(o)\geq 2$, there are $x,y\in \partial T$ such that $x\wedge y=o$.
We insert these two boundary points in (\ref{eq:tosolve1}). Since $F(o,o)=1$%
, we see that we must have 
\begin{equation*}
\phi (o)=G(o,o)/\mathsf{m}(o)\,.
\end{equation*}%
Now take $v\in T\setminus \{o\}$. Since forward degrees are $\geq 2$, there
are $x,y,y^{\prime }\in \partial T$ such that $x\wedge y=v$ and $x\wedge
y^{\prime }=v^{-}$. We write (\ref{eq:tosolve1}) first for $(x,y^{\prime })$
and then for $(x,y)$ and then take the difference, leading to the equation 
\begin{equation}
\int_{1/\phi (v^{-})}^{1/\phi (v)}\frac{dt}{\nu _{o}\left( B_{1/t}^{\phi
}(x)\right) }=\frac{\mathsf{m}(o)}{G(o,o)F(o,v)F(v,o)}-\frac{\mathsf{m}(o)}{%
G(o,o)F(o,v^{-})F(v^{-},o)}.  \label{eq:tosolve2}
\end{equation}%
By (\ref{eq:balls}), within the range of the last integral we must have $%
B_{1/t}^{\phi }(v)=\partial T_{v}\,$, whence that integral reduces to 
\begin{equation*}
\left( \frac{1}{\phi (v)}-\frac{1}{\phi (v^{-})}\right) \frac{1}{\nu
_{o}(\partial T_{v})}
\end{equation*}%
We multiply equation (\ref{eq:tosolve2}) by $\nu _{o}(\partial T_{v})$ and
simplify the resulting right hand side 
\begin{equation*}
\left( \frac{\mathsf{m}(o)}{G(o,o)F(o,v)F(v,o)}-\frac{\mathsf{m}(o)}{%
G(o,o)F(o,v^{-})F(v^{-},o)}\right) \nu _{o}(\partial T_{v})
\end{equation*}%
by use of the identities (\ref{eq:FG}) -- (\ref{eq:FF})) and the first of
the two formulas of (\ref{eq:nu}) (for $\nu _{o}$). We obtain that the
ultra-metric element that we are looking for should satisfy 
\begin{equation}
\frac{1}{\phi (v)}-\frac{1}{\phi (v^{-})}=\frac{\mathsf{m}(o)}{G(v,o)}-\frac{%
\mathsf{m}(o)}{G(v^{-},o)}\quad \text{for every}\;v\in T\setminus \{o\}\,.
\label{eq:solution1}
\end{equation}%
This determines $1/\phi (v)$ recursively, and with $\mathsf{m}(o)=1$, we get 
\begin{equation*}
\phi (v)=G(v,o)\,.
\end{equation*}%
Since by (\ref{eq:FG}) and (\ref{eq:FF}) 
\begin{equation*}
G(v,o)=F(v,v^{-})G(v^{-}o),
\end{equation*}%
the assumptions (\ref{eq:F<1}) and (\ref{eq:Green-0}) yield that $\phi $ is
an ultra-metric element. Tracing back the last computations, we find that
with this choice of $\phi $, we have indeed that $J(x,y)=\Theta _{o}(x,y)$
for all $x,y\in \partial T$ with $x\neq y$. We have proved the following.

\begin{theorem}
\label{thm:I} Let $T$ be a locally finite, rooted tree with forward degrees $%
\geq 2$. Consider a transient nearest neighbour random walk on $T$ that
satisfies (\ref{eq:FG}) and (\ref{eq:FF}). Then the boundary process on $%
\partial T$ induced by the Dirichlet form (\ref{eq:DirHD}) coincides with
the standard process associated with ultra-metric element $\phi =G(\cdot ,o)$
and the limit distribution $\nu _{o}$ of the random walk.
\end{theorem}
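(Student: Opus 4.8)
The plan is to exploit the fact that each of the two processes is completely determined by its Dirichlet form, and that both forms are expressed as double integrals against $d\nu_{o}\otimes d\nu_{o}$: by Theorem \ref{Thm-Dirichlet form/Laplacian} the isotropic standard process on $\partial T=X$ with reference measure $\mu=\nu_{o}$ carries the jump kernel $J(x,y)$ of (\ref{Jxy}), while by Theorem \ref{thm:doob-naim} the boundary process carries the Na\"{\i}m kernel $\Theta_{o}(x,y)$ of (\ref{eq:Naim}). Hence it suffices to produce a single ultra-metric element $\phi$ on $T$ for which $J(x,y)=\Theta_{o}(x,y)$ for all $x\neq y$ in $\partial T$. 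First I would invoke Theorem \ref{p-laplace} to absorb the distance distribution function into the metric: any $(\phi,\mu,\sigma)$-process equals the standard process for the intrinsic ultra-metric element $\phi_{\ast}$ of (\ref{intrinsic-element}), so $\sigma$ drops out of the problem and only $\phi$ remains to be found.

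The second step is to rewrite $J(x,y)$ explicitly in terms of $\phi$, $\nu_{o}$ and the combinatorics of $T$. Using the description (\ref{eq:balls}) of the balls $B_{r}^{\phi}(x)=\partial T_{v}$ together with the normalisation $\nu_{o}(\partial T)=1$, the substitution $t=1/s$ in (\ref{Jxy}) turns the kernel into the left-hand side of (\ref{eq:tosolve1}); matching it to $\Theta_{o}$ gives a single functional equation for the unknown $\phi$. To solve it I would evaluate (\ref{eq:tosolve1}) at cleverly chosen pairs of ends, which is possible because every vertex has forward degree $\geq 2$. A pair with confluent $o$ forces $\phi(o)=G(o,o)/\mathsf{m}(o)$; taking two pairs whose confluents are $v$ and its predecessor $v^{-}$ and subtracting reduces the integral over $[1/\phi(v^{-}),1/\phi(v)]$ to the single term $\bigl(1/\phi(v)-1/\phi(v^{-})\bigr)/\nu_{o}(\partial T_{v})$, since by (\ref{eq:balls}) the ball is constantly $\partial T_{v}$ on that range. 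This yields (\ref{eq:tosolve2}).

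The third step is the algebraic heart: multiplying (\ref{eq:tosolve2}) by $\nu_{o}(\partial T_{v})$ and simplifying its right-hand side with the identities (\ref{eq:FG})--(\ref{eq:FF}) together with the first formula in (\ref{eq:nu}) should collapse everything to the telescoping recursion (\ref{eq:solution1}). Solving that recursion from the initial value already found gives the clean answer $\phi(v)=G(v,o)$ (with $\mathsf{m}(o)=1$). It then remains to confirm that this $\phi$ is genuinely an ultra-metric element in the sense of Definition \ref{def:element}: property (i) follows from the factorisation $G(v,o)=F(v,v^{-})\,G(v^{-},o)$ and the hypothesis $F(v,v^{-})<1$ of (\ref{eq:F<1}), while property (ii) is precisely the decay hypothesis (\ref{eq:Green-0}). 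Tracing the computation backward with this choice of $\phi$ shows $J\equiv\Theta_{o}$ off the diagonal, whence the two Dirichlet forms coincide and therefore so do the processes.

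I expect the main obstacle to be the algebraic simplification in the third step, where the $\nu_{o}(\partial T_{v})$-weighted difference of Na\"{\i}m-kernel reciprocals at $v$ and $v^{-}$ must be shown to collapse to $\mathsf{m}(o)/G(v,o)-\mathsf{m}(o)/G(v^{-},o)$. This is where all the random-walk potential theory has to be orchestrated just right: the factorisation of $F$ through cut points (\ref{eq:FF}), the relation (\ref{eq:FG}), reversibility (\ref{eq:reversible}), and the explicit boundary-hitting formula (\ref{eq:nu}); the rest of the argument is reduction and bookkeeping. A secondary point to treat carefully is the justification that matching jump kernels suffices --- namely that two regular Dirichlet forms with the same kernel and the same reference measure generate the same process --- but this is standard Dirichlet-form theory once the kernels are shown to agree.
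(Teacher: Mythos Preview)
Your proposal is correct and follows essentially the same route as the paper's proof: reduce to matching $J=\Theta_{o}$ via the Dirichlet-form characterization, rewrite $J$ as in (\ref{eq:tosolve1}), difference at confluents $v$ and $v^{-}$ to get (\ref{eq:tosolve2}), simplify via (\ref{eq:FG})--(\ref{eq:FF}) and (\ref{eq:nu}) to the recursion (\ref{eq:solution1}), and read off $\phi(v)=G(v,o)$ with the ultra-metric-element property coming from (\ref{eq:F<1}) and (\ref{eq:Green-0}). The only thing the paper does that you might add is the explicit remark that the factorisation $G(v,o)=F(v,v^{-})G(v^{-},o)$ (from (\ref{eq:FG}) and (\ref{eq:FF})) is what gives strict monotonicity of $\phi$---but you have already identified this.
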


Let $\mathcal{L}$ be the Laplacian associated with the boundary process of
Theorem \ref{thm:I}. $\mathcal{L}$ acts on locally constant functions $f$ by%
\begin{equation*}
\mathcal{L}f(x)=\int_{\partial T}\left( f(x)-f(y)\right) \Theta
_{o}(x,y)\,d\nu _{o}(y).
\end{equation*}%
In view of the identification of balls in $\partial T$ with vertices of $T,$
the functions of (\ref{fC}) now become%
\begin{equation*}
f_{v}=\frac{\mathbf{1}_{\partial T_{v}}}{\nu _{o}(\partial T_{v})}-\frac{%
\mathbf{1}_{\partial T_{v^{-}}}}{\nu _{o}(\partial T_{v^{-}})},\text{ }v\in
T\backslash \{o\}.
\end{equation*}%
In addition, we set $f_{o}=\mathbf{1}$ and note that it is an eigenfunction
of $\mathcal{L}$ with eigenvalue $0.$ Applying Theorem \ref{Eigenvalues-thm}
we obtain

\begin{corollary}
\label{eigenvalues -boundary process}For $v\in T\setminus \{o\},$ we have $%
\mathcal{L}f_{v}=G(v^{-},o)^{-1}f_{v}$ and the set of eigenfunctions $%
\left\{ f_{v}\right\} _{v\in T}$ is complete. In particular, we have 
\begin{equation*}
\func{spec}\mathcal{L}=\overline{\{G(v,o)^{-1}:v\in T\}}\cup \{0\}.
\end{equation*}
\end{corollary}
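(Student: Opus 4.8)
The plan is to read off the corollary from Theorem~\ref{thm:I} together with the general eigenvalue description of Theorem~\ref{Eigenvalues-thm} and the spectrum formula of Corollary~\ref{L^2-Spectrum}, after carefully transporting everything through the vertex--ball identification. By Theorem~\ref{thm:I}, the boundary process is the standard process (Definition~\ref{def:standard}) associated with the pair $(\nu_o,\phi)$, where $\phi=G(\cdot,o)$; hence $\mathcal{L}$ is exactly the isotropic Laplacian of the triple $(d_\phi,\nu_o,\sigma_\ast)$. For the standard process one has $\sigma=\sigma_\ast$, so by the definition (\ref{d*}) of the intrinsic ultra-metric the intrinsic metric coincides with $d_\phi$ itself (cf.\ Theorem~\ref{p-laplace}); this lets me apply the results of \S\ref{generator} verbatim with $d_\ast=d_\phi$.

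First I would fix the dictionary between vertices and balls. Under the identification of $\partial T$ with $X$, the branch boundary $\partial T_v$ is precisely the ball of (\ref{eq:balls}), and its minimal $d_\phi$-radius is $\rho(\partial T_v)=\phi(v)=G(v,o)$, which is positive by transience. Moreover, the parent (Definition~\ref{predecessor}) of $\partial T_v$ is $\partial T_{v^-}$, so that every ball of $\mathcal{K}$ arises as exactly one $\partial T_v$, $v\in T$. Comparing the defining formula (\ref{fC}) with the displayed $f_v$, I see that $f_v$ is literally $f_C$ for $C=\partial T_v$ with parent $B=\partial T_{v^-}$ (and $f_o=\mathbf{1}$ is the distinguished constant eigenfunction of the compact case).

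Then the eigenvalue claim is immediate from Theorem~\ref{Eigenvalues-thm}: the eigenvalue attached to $f_C$ is $1/\rho(B)$, the reciprocal of the minimal radius of the \emph{parent} $B=\partial T_{v^-}$, which equals $1/\phi(v^-)=G(v^-,o)^{-1}$. Completeness of $\{f_v\}_{v\in T}$ is exactly the completeness statement of Theorem~\ref{Eigenvalues-thm}, since $\mathcal{K}$ is in bijection with $T$ via $v\mapsto\partial T_v$. Finally, for the spectrum I would invoke Corollary~\ref{L^2-Spectrum}: here $\Lambda=\Lambda_\phi=\{\phi(v):v\in T\}=\{G(v,o):v\in T\}$ by (\ref{eq:range}), whence $\func{spec}\mathcal{L}=\overline{\{1/r:r\in\Lambda\}}\cup\{0\}=\overline{\{G(v,o)^{-1}:v\in T\}}\cup\{0\}$.

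The only genuinely delicate point, and the one I would write out most carefully, is the bookkeeping of \emph{which} radius enters the eigenvalue: the eigenvalue of $f_v$ is governed by the parent ball $\partial T_{v^-}$, giving $G(v^-,o)^{-1}$, whereas the minimal radius of $\partial T_v$ itself, namely $G(v,o)$, is what enumerates $\Lambda_\phi$. Since $v\mapsto v^-$ maps $T\setminus\{o\}$ onto all of $T$ (every vertex has forward degree $\geq 2$ and is therefore a predecessor), the two indexings produce the same set of reciprocals, so the eigenvalue set and the spectrum formula are consistent.
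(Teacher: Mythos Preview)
Your proposal is correct and follows exactly the route the paper intends: it simply says ``Applying Theorem~\ref{Eigenvalues-thm} we obtain'' the corollary, and you have spelled out precisely that application via the vertex--ball dictionary, together with Corollary~\ref{L^2-Spectrum} for the spectrum. Your explicit remark that the eigenvalue is governed by the parent radius $\phi(v^-)=G(v^-,o)$ while the set $\Lambda_\phi$ is indexed by all $G(v,o)$, and that these coincide as sets because every vertex is a predecessor, is a useful clarification that the paper leaves implicit.
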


\begin{remark}
\RM\label{eigenspace-boundary process} For any two vertices $v$ and $w$ in $%
T\setminus \{o\}$ such that $v^{-}=w^{-}=u$ the functions $f_{v}$ and $f_{w}$
are eigenfunctions of $\mathcal{L}$ corresponding to the eigenvalue $\lambda
=$ $1/G(u,o).$ Hence the eigenspace $\mathcal{H(}u\mathcal{)}$ corresponding
to the vertex $u$ is spanned by functions $\{f_{v}:v^{-}=u\}.$ Since the
rank of the system $\{f_{v}:v^{-}=u\}$ is $\deg ^{+}(u)-1,$ where $\deg
^{+}(u)\geq 2$ is the forward degree of the vertex $u$, we obtain%
\begin{equation*}
\dim \mathcal{H(}u\mathcal{)}=\deg ^{+}(u)-1
\end{equation*}%
(cf. (\ref{HB})).
\end{remark}

\begin{remark}
\RM\label{rmk:sigma} Given the random walk on $T$ and the associated
boundary process on $\partial T$, we might want to realize it as the $(\nu
_{o},\phi ,\sigma )$-process for an ultra-metric element $\phi $ different
from $G(\cdot ,o)$. This means that we have to look for a suitable distance
distribution $\sigma $ on $[0\,,\,\infty )$, different from the inverse
exponential distribution (\ref{invexp}). In view of (\ref{intrinsic-element}%
), we are looking for $\sigma $ such that for our given generic $\phi $, 
\begin{equation*}
\sigma \left(\phi (v)\right)=e^{-1/G(v,o)}.
\end{equation*}%
For this it is necessary that $\phi (u)=\phi (v)$ whenever $G(u,o)=G(v,o)$:
we need $\phi $ to be constant on equipotential sets. In that case, the
distribution function $\sigma (r)$ is determined by the above equation for $%
r $ in the value set $\Lambda _{\phi }$ of the ultra-metric $d_{\phi }\,$.
We can \textquotedblleft interpolate\textquotedblright\ that function in an
arbitrary way (monotone increasing, left continuous) and get a feasible
measure $\sigma $.
\end{remark}

\subsection{Answer to Question II}

Answering Question II means that we start with $\phi $ and $\mu $ and then
look for a random walk with limit distribution $\nu _{o}=\mu $ such that the
standard $(\phi ,\mu )$-process is the boundary process associated with the
random walk. We know from Theorem \ref{thm:I} that in this case, we should
have $\phi (v)=G(v,o)$, whence in particular, $\phi (o)>1$. Thus we cannot
expect that every $\phi $ is suitable. The most natural choice is to replace 
$\phi $ by $C\cdot \phi $ for some constant $C>0$. For the standard
processes associated with $\phi $ and $C\cdot \phi $, respectively, this
just gives rise of a linear time change: if the old process is $%
\{X_{t}\}_{t>0}$, then the new one is $\{X_{t/C}\}_{t>0}\,$.

\begin{theorem}
\label{thm:II} Let $T$ be a locally finite, rooted tree with forward degrees 
$\geq 2$. Consider an ultra-metric element $\phi $ on $T$ and a fully
supported probability measure $\mu $ on $\partial T$. Then there are a
unique constant $C>0$ and a unique transient nearest neighbour random walk
on $T$ that satisfies (\ref{eq:FG}) and (\ref{eq:FF}) with the following
properties:
\end{theorem}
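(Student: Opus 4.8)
The plan is to recover the random walk from its first--passage probabilities, which Theorem \ref{thm:I} tells us are essentially forced by $\phi$ and $\mu$. Indeed, if an admissible transient walk with $\nu_o=\mu$ had the standard $(\phi,\mu)$--process as its boundary process, then by Theorem \ref{thm:I} its ultra-metric element $G(\cdot,o)$ would induce the same jump kernel as $\phi$; since on $\partial T$ two ultra-metric elements with the same underlying balls give the same standard process only when they are equal, and since $G(o,o)>1$ in general differs from $\phi(o)$, the most we can ask is $G(\cdot,o)=C\,\phi$ for a constant $C>0$. Replacing $\phi$ by $C\phi$ alters the standard process only by the linear time change $\{X_t\}\mapsto\{X_{t/C}\}$ noted before the statement, via \eqref{intrinsic-element}. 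I therefore first fix the candidate upward data
\[
\hat F(v,v^-)=\frac{\phi(v)}{\phi(v^-)}\in(0,1),\qquad v\in T\setminus\{o\},
\]
which is scale invariant in $\phi$ and already yields \eqref{eq:F<1}; telescoping through \eqref{eq:FG} and \eqref{eq:FF} gives $G(v,o)=C\,\phi(v)$ with $C=G(o,o)/\phi(o)$, and since $\phi(v_n)\to0$ along every ray this produces \eqref{eq:Green-0}.

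Next I determine the downward probabilities $\hat F(v^-,v)$ recursively, from the root outward, by imposing $\nu_o(\partial T_v)=\mu(\partial T_v)=:\mu_v$. Using the first line of \eqref{eq:nu} together with $\nu_o(\partial T_v)=F(o,v^-)\,\hat F(v^-,v)\,\nu_v(\partial T_v)$, each condition reduces to a single scalar equation $\mu_v=F(o,v^-)\,x\,(1-a)/(1-xa)$ in the unknown $x=\hat F(v^-,v)$, where $a=\hat F(v,v^-)$ and $F(o,v^-)$ is the product of the downward data already fixed closer to the root. This is solved explicitly by $x=\mu_v\big/\bigl(F(o,v^-)(1-a)+\mu_v a\bigr)$, and I will check $x\in(0,1)$: positivity is immediate from full support of $\mu$, while $x<1$ follows from $\mu_v<\mu_{v^-}=F(o,v^-)\,\nu_{v^-}(\partial T_{v^-})\le F(o,v^-)$, the strict inequality $\mu_v<\mu_{v^-}$ coming from $\deg^+(v^-)\ge2$ and full support. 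This fixes all oriented first--passage data $\hat F(u,w)$, $u\sim w$.

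I then define the transition probabilities by
\[
p(v,u)=\frac{\hat F(v,u)}{G_v\bigl(1-\hat F(v,u)\hat F(u,v)\bigr)},\qquad
G_v:=\sum_{u\sim v}\frac{\hat F(v,u)}{1-\hat F(v,u)\hat F(u,v)},
\]
patterned on the tree identity $G(v,v)\,p(v,u)=\hat F(v,u)/\bigl(1-\hat F(v,u)\hat F(u,v)\bigr)$. These weights are positive, and they sum to $1$ because $\sum_{u\sim v}\hat F(v,u)\bigl(1-\hat F(u,v)\bigr)/\bigl(1-\hat F(v,u)\hat F(u,v)\bigr)=1$, which is precisely the statement that the harmonic measures from $v$ of the subtrees hanging off the neighbours of $v$ partition $\partial T$. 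Hence $\bigl(p(v,u)\bigr)$ defines a genuine nearest neighbour walk, which is automatically reversible via \eqref{eq:treerev}.

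The real work, and the step I expect to be the main obstacle, is to verify that for this walk the \emph{actual} first--passage probabilities coincide with the prescribed $\hat F$. I will show by direct substitution (using the definition of $p$ together with the partition identity, which gives $U(v,v)=1-1/G_v$) that $\hat F$ solves the first--step system $F(v,u)=p(v,u)+\sum_{w\sim v,\,w\ne u}p(v,w)F(w,v)F(v,u)$, and then conclude $F=\hat F$ from the minimality (uniqueness in $[0,1]$) of first--passage probabilities for the walk. Once this is established, $G(v,v)=G_v<\infty$ gives transience, identities \eqref{eq:FG}--\eqref{eq:FF} hold, $\nu_o=\mu$ holds by construction, and $G(\cdot,o)=C\phi$ with $C=G(o,o)/\phi(o)$; so by Theorems \ref{thm:doob-naim} and \ref{thm:I} the boundary process is the standard process (Definition \ref{def:standard}) for the element $C\phi$, i.e.\ the $(\phi,\mu)$--process run at speed $1/C$. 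Uniqueness follows at once: any admissible walk must, by Theorem \ref{thm:I} and $\nu_o=\mu$, realize $G(\cdot,o)=C'\phi$ and limit distribution $\mu$, which forces the same oriented $\hat F$'s through \eqref{eq:nu}, hence the same $p(v,u)$ and the same constant $C'=C$.
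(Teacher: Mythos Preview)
Your construction is essentially the paper's: same choice of $\hat F(v,v^-)=\phi(v)/\phi(v^-)$, same downward recursion via \eqref{eq:nu}, same definition of $p(v,u)$ via \eqref{eq:formula1}--\eqref{eq:formula2}, and the stochasticity check is the paper's Claim~1. The divergence is only in how you verify that the constructed walk really has first-passage probabilities equal to $\hat F$, and there your argument has a genuine gap.

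You correctly verify that $\hat F$ satisfies the first-step system
\[
\hat F(v,u)=p(v,u)+\hat F(v,u)\sum_{w\sim v,\,w\ne u}p(v,w)\,\hat F(w,v),
\]
but you then invoke ``minimality (uniqueness in $[0,1]$)'' to conclude $F=\hat F$. That system is \emph{never} uniquely solvable in $[0,1]$: the constant solution $\hat F\equiv 1$ always works, and on the homogeneous tree the upward equation $qf^2-(q+1)f+1=0$ exhibits both roots $1$ and $1/q$ explicitly. Minimality only yields $F\le\hat F$. To force equality you must bring back the vanishing condition $\hat F(v,o)=\phi(v)/\phi(o)\to 0$ along every ray, which you noted earlier for \eqref{eq:Green-0} but never use at the decisive step.

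The paper handles this by passing to $\widetilde G(\cdot,u_0)=\hat F(\cdot,u_0)\,\widetilde G(u_0,u_0)$ and checking directly (Claim~2) that $\mathcal P\,\widetilde g_{u_0}=\widetilde g_{u_0}-\mathbf 1_{u_0}$; this makes $\widetilde g_{u_0}$ a nonnegative superharmonic function, so the Riesz decomposition gives $\widetilde G(\cdot,u_0)=G(\cdot,u_0)+h$ with $h\ge 0$ harmonic, and then $\widetilde G(\cdot,u_0)\to 0$ at infinity (precisely because $\phi\to 0$) kills $h$. Your route can be repaired along the same lines: from the first-step identity one sees that $h(v)=\hat F(v,o)$ is harmonic off $o$ and strictly superharmonic at $o$, which already gives transience; then apply Riesz plus the vanishing of $h$ at infinity to obtain $\hat F(\cdot,o)=F(\cdot,o)$, after which the downward $\hat F(v^-,v)$ agree with $F(v^-,v)$ by the (now genuinely unique, once the upward data are fixed) recursion from the root. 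Two smaller points: your justification of the partition identity by ``harmonic measures from $v$'' is circular at that stage, since $\hat\nu_v$ is not yet known to be a probability---do the paper's direct computation instead; and the step $\mu_{v^-}=F(o,v^-)\,\nu_{v^-}(\partial T_{v^-})$ in your bound on $x$ is unavailable before $\hat F=F$, whereas the inductive inequality $\mu_{v^-}\le\hat F(o,v^-)$ (the paper's \eqref{eq:re-mu}) suffices.
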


\begin{enumerate}
\item $\mu =\nu _{o}\,$ is the limit distribution of the random walk.

\item The associated boundary process coincides with the standard process on 
$\partial T$ induced by the ultra-metric element $C\cdot \phi $ and the
given measure~$\mu $.
\end{enumerate}

For the proof, we shall need three more formulas. The first two are taken
from \cite[Lemma 9.35]{W-Markov}, while the third is immediate from (\ref%
{eq:nu}) and (\ref{eq:FF}) 
\begin{align}
G(u,u)\,p(u,v)& =\frac{F(u,v)}{1-F(u,v)F(v,u)}\,\;\;\text{if}\;u\sim
v\,,\quad \text{and}\quad  \label{eq:formula1} \\
G(u,u)& =1+\sum_{v:v\sim u}\frac{F(u,v)F(v,u)}{1-F(u,v)F(v,u)}
\label{eq:formula2} \\
F(v^{-},v)& =\frac{\nu _{o}(\partial T_{v})/F(o,v^{-})}{%
1-F(v,v^{-})+F(v,v^{-})\,\,\nu _{o}(\partial T_{v})/F(o,v^{-})}.
\label{eq:formula3}
\end{align}

\begin{proof}[Proof of Theorem \protect\ref{thm:II}]
We proceed as follows: we start with $\phi$ and $\mu$ and replace $\phi$ by
a new ultra-metric element $C\cdot \phi$, with $C$ to be determined, and $%
\mu $ being the candidate for the limit distribution of the random walk that
we are looking for.

Using the various formulas at our disposal, we first construct in the only
possible way the quantities $F(u,v)\,$, $u,v,\in T$, in particular when $%
u\sim v$. In turn, they lead to the Green kernel $G(u,v)$. So far, these
will be only \textquotedblleft would-be\textquotedblright\ quantities whose
feasibility will have to be verified. Until that verification, we shall
denote them $\widetilde{F}(u,v)$ and $\widetilde{G}(u,v)$. Via (\ref%
{eq:formula1}), they will lead to definitions of transition probabilities $%
p(u,v)$. Stochasticity of the resulting transition matrix $\mathcal{P}$ will
also have to be verified.

Only then, we will use a potential theoretic argument to show that $%
\widetilde G(u,v)$ really is the Green kernel associated with $\mathcal{P}$,
so that the question mark that is implicit in the ``$\,\,\widetilde{\;}\,\,$%
'' symbol can be removed.

\smallskip

First of all, in view of Theorem \ref{thm:I}, we must have 
\begin{equation*}
C \cdot \phi(v) = \widetilde G(v,o)\,,
\end{equation*}
whence by (\ref{eq:FG}) and (\ref{eq:FF}) 
\begin{equation}  \label{eq:solveF1}
\widetilde F(v,v^-) = \phi(v)/\phi(v^-) \quad \text{for}\; v \in T
\setminus\{o\}\,,
\end{equation}
and more generally 
\begin{equation*}
\widetilde F(v,u) = \phi(v)/\phi(u)\quad \text{when} \; u \le v\,.
\end{equation*}
We note immediately that $0 < \widetilde F(v,u) < 1$ when $u < v$, and that $%
\widetilde F(u,u)=1$.

Next, we use (\ref{eq:formula3}) to construct recursively $\widetilde{F}%
(v^{-},v)$ and $\widetilde{F}(o,v)$. We start with $\widetilde{F}(o,o)=1$.
If $v\neq o$ and $\widetilde{F}(o,v^{-})$ is already given, with 
\begin{equation*}
\mu (\partial T_{v^{-}})\leq \widetilde{F}(o,v^{-})\leq 1
\end{equation*}
(the lower bound is required by (\ref{eq:nu})), then we have to set 
\begin{equation}
\widetilde{F}(v^{-},v)=\frac{\mu (\partial T_{v})/\widetilde{F}(o,v^{-})} {1-%
\widetilde{F}(v,v^{-})+\widetilde{F}(v,v^{-})\,\,\mu (\partial T_{v})/%
\widetilde{F}(o,v^{-})}  \label{eq:solveF2}
\end{equation}%
and 
\begin{equation*}
\widetilde{F}(o,v)=\widetilde{F}(o,v^{-})\widetilde{F}(v^{-},v)\,.
\end{equation*}
Since 
\begin{equation*}
\widetilde{F}(o,v^{-})\geq \mu (\partial T_{v^{-}})\geq \mu (\partial T_{v}),
\end{equation*}
we see that 
\begin{equation*}
0<\widetilde{F}(v^{-},v)\leq 1.
\end{equation*}
We set -- as imposed by (\ref{eq:FF}) -- 
\begin{equation*}
\widetilde{F}(o,v)=\widetilde{F}(o,v^{-})\widetilde{F}(v^{-},v).
\end{equation*}%
Formula (\ref{eq:solveF2}) (re-)transforms into 
\begin{equation}
\mu (\partial T_{v})=\widetilde{F}(o,v^{-})\widetilde{F}(v^{-},v) \frac{1-%
\widetilde{F}(v,v^{-})}{1-\widetilde{F}(v,v^{-})\widetilde{F}(v^{-},v)}\leq 
\widetilde{F}(o,v)\leq 1\,,  \label{eq:re-mu}
\end{equation}%
as needed for our recursive construction. At this point, we have all $%
\widetilde{F}(u,v) $, initially for $u\sim v$, and consequently for all $u,v$
by taking products along geodesic paths.

We now can compute the constant $C$: (\ref{eq:formula2}), combined with (\ref%
{eq:solveF1}) and (\ref{eq:re-mu}) for $u\sim o$ forces 
\begin{eqnarray*}
C\phi (o) &=&\widetilde{G}(o,o)=1+\sum_{u:u\sim o}\frac{\widetilde{F}(o,u)%
\widetilde{F}(u,o)}{1-\widetilde{F}(o,u)\widetilde{F}(u,o)} \\
&=&1+\sum_{u:u\sim o} \frac{\widetilde{F}(u,o)}{1-\widetilde{F}(u,o)}%
\mu(\partial T_{u}) \\
&=&1+\sum_{u:u\sim o}\frac{\phi (u)/\phi (o)}{1-\phi (u)/\phi (o)}
\mu(\partial T_{u})
\end{eqnarray*}%
Therefore 
\begin{equation}
C=\frac{1}{\phi (o)}+\sum_{u:u\sim o} \frac{\phi (u)/\phi (o)}{\phi
(o)-\phi(u)}\mu (\partial T_{u})\,.  \label{eq:C}
\end{equation}%
We now construct $\widetilde{G}(u,u)$ via (\ref{eq:formula2}): 
\begin{equation}
\widetilde{G}(u,u)=1+\sum_{v:v\sim u} \frac{\widetilde{F}(u,v)\widetilde{F}%
(v,u)}{1-\widetilde{F}(u,v)\widetilde{F}(v,u)}.  \label{eq:Gxx}
\end{equation}%
For $u=o$, we know that this is compatible with our choice of $C$. At last,
our only choice for the Green kernel is 
\begin{equation*}
\widetilde{G}(u,v)=\widetilde{F}(u,v)\widetilde{G}(v,v)\,,\ \ u,v\in T\,.
\end{equation*}%
Now we finally arrive at the only way how to define the transition
probabilities, via (\ref{eq:formula1}): 
\begin{equation}
p(u,v)=\frac{1}{\widetilde{G}(u,u)} \frac{\widetilde{F}(u,v)}{1-\widetilde{F}%
(u,v)\widetilde{F}(v,u)}.  \label{eq:pxy}
\end{equation}%
\emph{Claim 1.}\quad $\mathcal{P}$ is stochastic.

\smallskip

\noindent \emph{Proof of Claim 1.} Combining (\ref{eq:pxy}) with (\ref%
{eq:Gxx}), we deduce that we have to verify that for every $u\in T$, 
\begin{equation}
\sum_{v:v\sim u}\frac{\widetilde{F}(u,v)\left(1-\widetilde{F}(v,u)\right)}{1-%
\widetilde{F}(u,v)\widetilde{F}(v,u)}=1\,.  \label{eq:verify}
\end{equation}%
If $u=o$, then by (\ref{eq:re-mu}) this is just 
\begin{equation*}
\sum_{v:v\sim o}\mu (\partial T_{v})=1.
\end{equation*}%
If $u\neq o$ then, again by (\ref{eq:re-mu}), the left hand side of (\ref%
{eq:verify}) is 
\begin{equation*}
\begin{split}
\sum_{v:v^{-}=u}& \frac{\widetilde{F}(u,v)\left(1-\widetilde{F}(v,u)\right)}{%
1-\widetilde{F}(u,v)\widetilde{F}(v,u)}+\frac{\widetilde{F}(u,u^{-})\left(1-%
\widetilde{F}(u^{-},u)\right)}{1-\widetilde{F}(u,u^{-})\widetilde{F}(u^{-},u)%
} \\
& =\sum_{v:v^{-}=u}\frac{\mu (\partial T_{v})}{\widetilde{F}(o,u)}+1-\frac{1-%
\widetilde{F}(u^{-},u)}{1-\widetilde{F}(u,u^{-})\widetilde{F}(u^{-},u)}=1.
\end{split}%
\end{equation*}%
This proves Claim 1.

\smallskip

\noindent\emph{Claim 2.} For any $u_0\in T$, the function $\tilde g_{u_0}(u)
= \widetilde G(u,u_0)$ satisfies $\mathcal{P}\tilde g_{u_0} = \tilde g_{u_0}
-\mathbf{1}_{u_0}\,$.

\smallskip

\noindent \emph{Proof of Claim 2.} First, we combine (\ref{eq:Gxx}) with (%
\ref{eq:pxy}) to get 
\begin{equation*}
P\tilde{g}_{u_{0}}(u_{0})=\sum_{v:v\sim u_{0}}p(u_{0},v) \widetilde{F}%
(v,u_{0})\widetilde{G}(u_{0},u_{0}) =\sum_{v:v\sim u_{0}}\frac{\widetilde{F}%
(u_{0},v)\widetilde{F}(v,u_{0})} {1-\widetilde{F}(u_{0},v)\widetilde{F}%
(v,u_{0})}=\tilde{g}_{u_{0}}(u_{0})-1\,,
\end{equation*}%
and Claim 2 is true at $u=u_{0}\,$. Second, for $u\neq u_{0}$, let $w$ be
the neighbour of $u$ on $\pi (u,u_{0})$. Then 
\begin{eqnarray*}
P\tilde{g}_{u_{0}}(u) &=&\sum_{v:v\sim u,v\neq w}p(u,v)\widetilde{F}(v,u)%
\widetilde{G}(u,u_{0})+p(u,w)\widetilde{G}(w,u_{0}) \\
&=&\underset{\widetilde{G}(u,u)-1}{\underbrace{\sum_{v:v\sim u} \frac{%
\widetilde{F}(u,v)\widetilde{F}(v,u)}{1-\widetilde{F}(u,v)\widetilde{F}(v,u)}%
}} \frac{\widetilde{G}(u,u_{0})}{\widetilde{G}(u,u)}-p(u,w)\widetilde{F}(w,u)%
\widetilde{G}(u,u_{0})+p(u,w)\widetilde{G}(w,u_{0}) \\
&=&G(u,u_{0})\left( 1-\frac{1}{\widetilde{G}(u,u)}-p(u,w)\widetilde{F}(w,u)
+p(u,w)\frac{1}{\widetilde{F}(u,w)}\right) =\tilde{g}_{u_{0}}(u)
\end{eqnarray*}%
since 
\begin{equation*}
p(u,w)/\widetilde{F}(u,w)-p(u,w)\widetilde{F}(w,u)=1/\widetilde{G}(u,u)
\end{equation*}
by (\ref{eq:pxy}). This completes the proof of Claim 2.

\smallskip

Now we can conclude: the function $\tilde{g}_{u_{0}}$ is non-constant,
positive and superharmonic. Therefore the random walk with transition matrix 
$\mathcal{P}$ given by \ref{eq:pxy} is transient and does posses a Green
function $G(u,v)$. Furthermore, by the Riesz decomposition theorem, we have 
\begin{equation*}
\tilde{g}_{u_{0}}=Gf+h\,,
\end{equation*}%
where $h$ is a non-negative harmonic function and the charge $f$ of the
potential 
\begin{equation*}
Gf(u)=\sum_{v}G(u,v)f(v)
\end{equation*}
is $f=\tilde{g}_{u_{0}}-P\tilde{g}_{u_{0}}=\mathbf{1}_{u_{0}}\,$. That is, 
\begin{equation*}
\widetilde{G}(u,u_{0})=G(u,u_{0})+h(x)\quad \text{for all}\;u \in T.
\end{equation*}%
Now let $x \in \partial T$ and $v=u_{0}\wedge x$. If $u\in T_{v}$ then by
our construction 
\begin{equation*}
\widetilde{G}(u,u_{0})=\widetilde{G}(u,o) \frac{\widetilde{G}(v,u_{0})}{%
\phi(v)}\phi (u)\rightarrow 0\quad \text{as}\;u\rightarrow x \,.
\end{equation*}%
Therefore $\widetilde{G}(\cdot ,u_{0})$ vanishes at infinity, and the same
must hold for $h$. By the Maximum Principle, $h\equiv 0$.

We conclude that $\widetilde G(u,v) = G(u,v)$ for all $u,v\in T$. But then,
by our construction, also $\widetilde F(u,v) =F(u,v)$, the ``first hitting''
kernel associated with $\mathcal{P}$. Comparing (\ref{eq:re-mu}) with (\ref%
{eq:nu}), we see that $\mu = \nu_o\,$. This completes the proof.
\end{proof}

\subsection{The non-compact case}

Our general approach in the present work is not restricted to compact
spaces. In case of a non-compact, locally compact ultra-metric space without
isolated points, one constructs the tree in the same way: the vertex set
corresponds to the collection of all closed balls, and neighbourhood in the
resulting tree is defined as above: if a vertex $v$ corresponds to a ball $B$%
, then the predecessor $v^-$ is the vertex corresponding to the ball $%
B^{\prime }$ (see Definition \ref{predecessor}), and there is the edge $%
[v^-,v]$. Now \emph{every} vertex has a predecessor (while in the compact
case, the root vertex has none), and the tree has its root at infinity,
i.e., the ultra-metric space becomes $\partial^* T = \partial T
\setminus\{\varpi\}$, where $\varpi$ is a fixed reference end of $T$. See
Figure 5 below.

We now start with this situation: given a tree $T$ and a reference end $%
\varpi \in \partial T$, the predecessor $v^- = v^-_{\varpi}$ of a vertex $v$
with respect to $\varpi$ is the neighbour of $v$ on the geodesic $%
\pi(v,\varpi)$. Given two elements $w, z \in \widehat T \setminus \{\varpi\}$%
, their confluent $w \curlywedge z$ with respect to $\varpi$ is again
defined as the last common element on the geodesics $\pi(\varpi,w)$ and $%
\pi(\varpi,z)$, a vertex, unless $v=w \in \partial^* T$ (Figure~5). Again,
it is natural to assume that each vertex has at least two forward neighbours.

In this situation, for the Definition \ref{def:element} of an ultra-metric
element $\phi:T \to (0\,,\infty)$, we need besides monotonicity [$\phi(v) <
\phi(v^-)$] that $\phi$ tends to $\infty$ along $\pi(o,\varpi)$, while it
has to tend to $0$ along any geodesic going to $\partial^* T$. The
associated ultra-metric on $\partial^* T$ is then given in the same way as
before: 
\begin{equation*}
d_{\phi }(x,y)=%
\begin{cases}
0\,, & \text{if}\;x=y\,, \\ 
\phi (x\curlywedge y)\,, & \text{if}\;x\neq y\,.%
\end{cases}%
\end{equation*}
Let us note here that also when $\phi$ does not tend to $\infty$ along $%
\pi(x,\varpi)$, this does define an ultra-metric, but then $(\partial^* T,
d_{\phi})$ will not be complete. Also, if the inequality $\phi(v) \le
\phi(v^-)$ is not strict, one gets an ultra-metric, but then the above
construction of the tree of closed balls does not recover the original tree
from $(\partial^* T, d_{\phi})$. Finally, if $\phi$ does not tend to $0$
along some geodesic $\pi(o,x)$, $x \in \partial^* T$, then $x$ will be an
isolated point in $(\partial^* T, d_{\phi})$. (The last two observations are
also true in the compact case, for a tree with a root vertex.)

\smallskip

Returning to our setting, the reference measure $\mu$ of a $%
(\phi,\mu,\sigma) $-process may have infinite mass: a Radon measure
supported on the whole of $\partial^* T$. Again, we know that it is
sufficient to study the standard $(\phi,\mu)$-process. We give a brief
outline of the duality of such processes with random walks on $T$. This
should be compared with the final part of Kigami's second paper \cite%
{Kigami2} (whose preprint became available when the largest part of this
work had been done, and in particular, the preliminary version \cite{W-arxiv}
of the present random walk sections had first been circulated.).

With respect to $\varpi$, the branch of $T$ rooted at $u \in T$ is now 
\begin{equation*}
T_u = T_{\varpi,u} = \{ v \in T : u \in \pi(v,\varpi) \}.
\end{equation*}
Then $\partial T_u$ is a compact subset of $\partial^* T$, a ball with $%
d_{\phi}$-diameter $\phi(u)$. Here, it will be good to write $T_{o,u}$ for
the branch with respect to a root vertex $o \in T$, as defined in (\ref%
{eq:branch}). We note that $T_{\varpi,u} = T_{o,u}$ iff $u \notin
\pi(o,\varpi)$. In addition to the reference end $\varpi$, we choose such a
root $o$ and write $o_n$ for its $n$-th predecessor, that is, the vertex on $%
\pi(o,\varpi)$ at graph distance $n$ from $o$.

$$
\beginpicture 

\setcoordinatesystem units <.8mm,1.3mm>

\setplotarea x from -10 to 104, y from -14 to 46

\arrow <6pt> [.2,.67] from 2 2 to 40 40

\plot 32 32 62 2 /

 \plot 16 16 30 2 /

 \plot 48 16 34 2 /

 \plot 8 8 14 2 /

 \plot 24 8 18 2 /

 \plot 40 8 46 2 /

 \plot 56 8 50 2 /

 \plot 4 4 6 2 /

 \plot 12 4 10 2 /

 \plot 20 4 22 2 /

 \plot 28 4 26 2 /

 \plot 36 4 38 2 /

 \plot 44 4 42 2 /

 \plot 52 4 54 2 /

 \plot 60 4 58 2 /



\arrow <6pt> [.2,.67] from 99 29 to 88 40

 \plot 66 2 96 32 /

 \plot 70 2 68 4 /

 \plot 74 2 76 4 /

 \plot 78 2 72 8 /

 \plot 82 2 88 8 /

 \plot 86 2 84 4 /

 \plot 90 2 92 4 /

 \plot 94 2 80 16 /


\setdots <3pt>
\putrule from -4.8 4 to 102 4
\putrule from -4.5 8 to 102 8
\putrule from -2 16 to 102 16
\putrule from -1.7 32 to 102 32

\setdashes <2pt> \linethickness=.5pt
\putrule from -2 -5 to 102 -5

\put {$\vdots$} at 32 0
\put {$\vdots$} at 64 0

\put {$\dots$} [l] at 103 6
\put {$\dots$} [l] at 103 24

\put {$H_{-2}$} [l] at -14 32
\put {$H_{-1}$} [l] at -14 16
\put {$H_0$} [l] at -14 8
\put {$H_1$} [l] at -14 4
\put {$\partial^* T$} [l] at -14 -5
\put {$\vdots$} at -10 0
\put {$\vdots$} [B] at -10 36

\put {$\scriptstyle\bullet$} at 8 8
\put {$o$} [rb] at 7.2 8.8
\put {$\scriptstyle\bullet$} at 16 16
\put {$o_1$} [rb] at 15.2 16.8
\put {$\scriptstyle\bullet$} at 32 32
\put {$o_2$} [rb] at 31.2 32.8
\put {$\scriptstyle\bullet$} at 36 4 
\put {$u$} [rb] at 35.2 4.8
\put {$\scriptstyle\bullet$} at 56 8 
\put {$v$} [lb] at 56.8 8.8
\put {$\scriptstyle\bullet$} at 48 16 
\put {$u \curlywedge v$} [lb] at 48.8 16.8

\put {$\varpi$} at 42 42
\put {$\varpi$} at 86 42
\put{\rm Figure 5} at 47 -11

\endpicture
$$

Now let $\mathcal{P}=\left( p(u,v)\right) _{u,v\in T}$ be the transition
matrix of a transient nearest neighbour random walk on $T$. We assume once
more that (\ref{eq:F<1}) holds: $F(v,v^{-})<1$ for every $v\in T$, but now
predecessors refer to $\varpi $. (Indeed, this implies (\ref{eq:F<1}) with
respect to any choice of the root vertex.) We now consider the Dirichlet
form $\mathcal{E}_{\mathcal{HD}}$ and look at the formula of Theorem \ref%
{thm:doob-naim}. We would like to move $o$ to $\varpi $ in that formula. We
know from Lemma \ref{lem:invariant} that the measures $\Theta
_{o_{n}}(x,y)\,d\nu _{o_{n}}(x)\,d\nu _{o_{n}}(y)$ are the same for all $n$.
However, the measures $\nu _{o_{n}}$ restricted to $\partial ^{\ast }T$ will
typically converge vaguely to $0$. Thus, we normalise by defining 
\begin{equation*}
\mu _{n}=\frac{1}{\nu _{o_{n}}(\partial T_{o})}\,\nu _{o_{n}}\quad \text{%
and\ \ }J_{n}(x,y)=\Theta _{o_{n}}(x,y)\,\left( \nu _{o_{n}}(\partial
T_{o})\right) ^{2}\,.
\end{equation*}%
For the following, recall that $T_{u}=T_{\varpi ,u}\,$, and note that $%
u\curlywedge o=o_{k}$ for some $k\geq 0$.

\begin{lemma}
\label{lem:lim} Let $A \subset \partial^*T$ be compact, so that there is a
vertex $u$ such that $A \subset \partial T_u\,$. If $u \curlywedge o = o_k$
then for all $n \ge k$ and for all $x,y \in \partial T_{o_k}\,$, 
\begin{equation*}
\mu_n(A) = \mu_k(A) =: \mu(A)\,,\quad \text{and\ \ } J_n(x,y) = J_k(x,y) =:
J(x,y).
\end{equation*}
We have 
\begin{equation*}
J(x,y) = \mathfrak{j}(x \curlywedge y) \quad \text{with} \quad \mathfrak{j}%
(v) = \frac{\vartheta^2}{K(v,\varpi)^2}\,\frac{G(v,v)}{\mathsf{m}(v)}\,,\; v
\in T\,,
\end{equation*}
where 
\begin{equation*}
\vartheta = \frac{\mathsf{m}(o)\nu_o(\partial T_o)}{G(o,o)}\,, \quad \text{%
and\ \ } K(v,\varpi) = \frac{F(v, v \curlywedge o)}{F(o, v \curlywedge o)} = 
\frac{F(v, v \wedge_o \varpi)}{F(o, v \curlywedge_o \varpi)}
\end{equation*}
is the Martin kernel at $\varpi$.
\end{lemma}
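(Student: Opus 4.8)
The plan is to reduce everything to the root-invariance of the measure $\Theta_o(x,y)\,d\nu_o(x)\,d\nu_o(y)$ established in Lemma~\ref{lem:invariant}. Unravelling the definitions of $\mu_n$ and $J_n$, one sees at once that
\[
J_n(x,y)\,d\mu_n(x)\,d\mu_n(y)=\Theta_{o_n}(x,y)\,d\nu_{o_n}(x)\,d\nu_{o_n}(y),
\]
so this product measure does not depend on $n$. Hence the entire content of the lemma is to show that, once the normalising factor $\nu_{o_n}(\partial T_o)$ is split off, the two factors $\mu_n$ and $J_n$ stabilise separately for $n\ge k$, and to read off the limits. First I would record the two elementary consequences of (\ref{eq:FG}), (\ref{eq:FF}) and the Martin-kernel formula $\frac{d\nu_{o_n}}{d\nu_o}(x)=K(o_n,x)=\frac{G(o_n,\,o_n\wedge_o x)}{G(o,\,o_n\wedge_o x)}$ that drive both computations: for $x\in\partial T_o$ one has $o_n\wedge_o x=o$, whence $\nu_{o_n}(\partial T_o)=\tfrac{G(o_n,o)}{G(o,o)}\,\nu_o(\partial T_o)=F(o_n,o)\,\nu_o(\partial T_o)$; and for $x$ lying in a branch $\partial T_u$ with $u\curlywedge o=o_k$ one has $o_n\wedge_o x=o\curlywedge x=o_k$ for every $n\ge k$.

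For the measure statement I would combine these: for a compact $A\subset\partial T_u$ with $u\curlywedge o=o_k$,
\[
\mu_n(A)=\frac{\nu_{o_n}(A)}{\nu_{o_n}(\partial T_o)}=\frac{G(o_n,o_k)/G(o,o_k)}{F(o_n,o)}\,\frac{\nu_o(A)}{\nu_o(\partial T_o)}.
\]
Factoring $G(o_n,o_k)=F(o_n,o_k)G(o_k,o_k)$ and $F(o_n,o)=F(o_n,o_k)F(o_k,o)$ by (\ref{eq:FG})--(\ref{eq:FF}) cancels every occurrence of $o_n$, leaving $\mu_n(A)=\big(F(o_k,o)F(o,o_k)\,\nu_o(\partial T_o)\big)^{-1}\nu_o(A)$, manifestly independent of $n$; this is $\mu_k(A)$ and consistently defines the Radon measure $\mu$.

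For the jump kernel I would insert the defining formula (\ref{eq:Naim}) for $\Theta_{o_n}$ together with $\nu_{o_n}(\partial T_o)=F(o_n,o)\,\nu_o(\partial T_o)$. The only geometric input needed is that for $x,y\in\partial T_{o_k}$ and $n\ge k$ the confluent $x\wedge_{o_n}y$ equals $w:=x\curlywedge y$, and that $c:=w\curlywedge o$ lies on both $\pi(o_n,w)$ and $\pi(o_n,o)$. Factoring $F(o_n,w)$, $F(w,o_n)$ and $F(o_n,o)$ through $c$ by (\ref{eq:FF}) and then eliminating the surviving $o_n$-dependent block $\frac{\mathsf m(o_n)F(o_n,c)}{G(o_n,o_n)F(c,o_n)}$ through the reversibility identity (\ref{eq:reversible}), which turns it into $\mathsf m(c)/G(c,c)$, yields
\[
J_n(x,y)=\frac{\mathsf m(c)\,F(c,o)^2\,\nu_o(\partial T_o)^2}{G(c,c)\,F(c,w)\,F(w,c)},\qquad c=w\curlywedge o,
\]
again independent of $n$, so $J_n=J_k=:J$.

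It remains to match this with $\mathfrak j(w)$. I would identify $K(w,\varpi)=F(w,c)/F(o,c)$ as the Martin kernel at $\varpi$, obtained as $\lim_n F(w,o_n)/F(o,o_n)$ after cancelling the common factor $F(c,o_n)$. Substituting $\vartheta=\mathsf m(o)\nu_o(\partial T_o)/G(o,o)$ and this expression for $K(w,\varpi)$ into $\mathfrak j(w)=\vartheta^2 K(w,\varpi)^{-2}G(w,w)/\mathsf m(w)$ reduces the claimed identity $J(x,y)=\mathfrak j(w)$ to a purely algebraic relation among $\mathsf m,F,G$ at the three vertices $o,c,w$, which is closed out by the two reversibility relations $\mathsf m(o)F(o,c)G(c,c)=\mathsf m(c)F(c,o)G(o,o)$ and $\mathsf m(w)F(w,c)G(c,c)=\mathsf m(c)F(c,w)G(w,w)$. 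The main obstacle throughout is bookkeeping: keeping straight the three confluents $\wedge_o$, $\wedge_{o_n}$ and $\curlywedge$, and verifying that $c=w\curlywedge o$ is precisely the branch vertex through which all $F$-factors split; once the geometry is pinned down, every cancellation is forced by (\ref{eq:FG}), (\ref{eq:FF}) and (\ref{eq:reversible}).
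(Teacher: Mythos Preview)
Your $J_n$ computation is correct and follows the same pattern as the paper: factor the $F$'s through the branch point on the $o$--$\varpi$ ray, then eliminate the $o_n$-dependence via reversibility. The paper organises this slightly differently, passing through $G(o,o_n)^2/G(v,o_n)^2$ rather than through $\mathsf m(c)/G(c,c)$, but the two are equivalent.

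There is, however, a genuine gap in your $\mu_n$ argument. You claim that for $x\in\partial T_u$ with $u\curlywedge o=o_k$ one has $o\curlywedge x=o_k$. This is false when $u$ itself lies on the ray $\pi(o,\varpi)$, i.e.\ when $u=o_k$: in that case $\partial T_u=\partial T_{o_k}$ contains all of $\partial T_{o_{k-1}},\dots,\partial T_o$, and for $x$ in one of those smaller branches the confluent $o\curlywedge x$ is $o_j$ with $j<k$, not $o_k$. Consequently the Martin kernel $K(o_n,x)=G(o_n,o\curlywedge x)/G(o,o\curlywedge x)$ is \emph{not} constant on $A$, and your displayed formula $\mu_n(A)=\bigl(F(o_k,o)F(o,o_k)\,\nu_o(\partial T_o)\bigr)^{-1}\nu_o(A)$ cannot be right: applied with $A=\partial T_o$ and any $k\ge 1$ it gives $\mu(\partial T_o)=1/\bigl(F(o_k,o)F(o,o_k)\bigr)\ne 1$, contradicting the normalisation built into the definition of $\mu_n$.

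The repair is immediate and is what the paper does: pass through $o_k$ rather than $o$. Since both $A$ and $\partial T_o$ lie in $\partial T_{o_k}$ and the walk from $o_n$ must hit $o_k$ before reaching either set, one has $\nu_{o_n}(A)=F(o_n,o_k)\,\nu_{o_k}(A)$ and $\nu_{o_n}(\partial T_o)=F(o_n,o_k)\,\nu_{o_k}(\partial T_o)$, so the factor $F(o_n,o_k)$ cancels in the ratio and $\mu_n(A)=\mu_k(A)$ with no further computation. If you insist on routing through $\nu_o$, the correct version of your identity is $K(o_n,x)=F(o_n,o_k)\cdot F(o_k,o\curlywedge x)/F(o,o\curlywedge x)$, the second factor being independent of $n$; integrating then reproduces the paper's formula.
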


\begin{proof}
Since $\partial T_{o_k} $ contains both $\partial T_o$ and $A$, we have for $%
n \ge k$ 
\begin{equation*}
\mu_n(A) = \frac{\nu_{o_n}(A)}{\nu_{o_n}(\partial T_o)} = \frac{%
F(o_n,o_k)\nu_{o_k}(A)}{F(o_n,o_k)\nu_{o_k}(\partial T_o)} = \mu_k(A).
\end{equation*}
Analogously, Let $x,y \in \partial T_{o_k}$ and $x \curlywedge y = v$, an
element of $T_{o_k}\,$. We use the identity $\mathsf{m}(v)G(v,w) = \mathsf{m}%
(w)G(w,v)$, which implies $\mathsf{m}(o_n) F(o_n,o) = \mathsf{m}(o_n)
G(o_n,o)/G(o,o) = \mathsf{m}(o) G(o,o_n)/G(o,o)$, and compute for $n \ge k$ 
\begin{eqnarray*}
J_n(x,y) &=& \frac{\nu_{o_n}(\partial T_o)^2\,\mathsf{m}(o_n)}{%
F(o_n,v)G(v,o_n)} = \nu_{o}(\partial T_o)^2\, \frac{\mathsf{m}(o_n)^2
F(o_n,o)^2}{\mathsf{m}(o_n)F(o_n,v)G(v,o_n)} \\
&=& \frac{\nu_{o}(\partial T_o)^2\mathsf{m}(o)^2}{ G(o,o)^2} \, \frac{%
G(o,o_n)^2}{G(v,o_n)^2} \, \frac{G(v,v)}{\mathsf{m}(v)}\,,
\end{eqnarray*}
which yields the proposed formula, since $G(o,o_n) = F(o,o\curlywedge
v)G(o\curlywedge v, o_n)$ and $G(v,o_n) = F(v,o\curlywedge v)G(o\curlywedge
v, o_n)$.
\end{proof}

Now its is not hard to deduce the following.

\begin{theorem}
\label{thm:non-cp-RW} Let $T$ and its reference end $\varpi$ be as outlined
above. Consider a nearest neighbour random walk on $T$ that satisfies $%
F(v,v^-)<1$ for every $v \in T$. Let $\mu$ and $J$ be as in Lemma \ref%
{lem:lim}. Then for all compactly supported continuous functions $\varphi,
\psi$ on $\partial^* T$, the Dirichlet form (\ref{eq:DirHD}) can be written
as 
\begin{eqnarray*}
\mathcal{E}_{\mathcal{HD}}(\varphi,\psi) &=& \mathcal{E}_{J}(\varphi,\psi) +
\vartheta \cdot \nu_o(\{\varpi\}) \int_{\partial^* T}
\varphi(x)\psi(x)\,d\mu(x)\,, \quad\text{where} \\
\mathcal{E}_{J}(\varphi,\psi) &=& \frac12 \int_{\partial^* T}
\int_{\partial^* T}
\left(\varphi(x)-\varphi(y)\right)\left(\psi(x)-\psi(y)\right)\, J(x,y)\,
d\mu(x)\, d\mu(y)\,.
\end{eqnarray*}
When the random walk is Dirichlet regular (in which case $\nu_o(\{\varpi\})
= 0$), the form $\mathcal{E}_{J}= \mathcal{E}_{\mathcal{HD}}$ induces the
standard $(\mu,\phi)$-process, where the ultra-metric element $\phi$ with
respect to $\varpi$ is given by 
\begin{equation*}
\phi(v) = \frac{1}{\vartheta} \, K(v,\varpi)\,,
\end{equation*}
and $\vartheta$ and the Martin kernel $K(v,\varpi)$ are as defined in Lemma %
\ref{lem:lim}.

In particular, the $(\mu,\phi)$-process is the boundary process with a
time-change.
\end{theorem}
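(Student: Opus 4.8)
The plan is to start from the Doob--Na\"{\i}m formula of Theorem \ref{thm:doob-naim}, written with the moving base point $o_n$, and to push the base point ``to $\varpi$'' in the normalized sense prepared by Lemma \ref{lem:lim}. By Lemma \ref{lem:invariant} the measure $\Theta_{o_n}(x,y)\,d\nu_{o_n}(x)\,d\nu_{o_n}(y)$ on $\partial T\times\partial T$ does not depend on $n$; call it $dm$. Given compactly supported continuous $\varphi,\psi$ on $\partial^{*}T$, I would extend them by $\varphi(\varpi)=\psi(\varpi)=0$, which keeps them continuous on $\widehat{T}$, and fix $n$ so large that $\operatorname{supp}\varphi\cup\operatorname{supp}\psi\subset\partial T_{o_k}$ with $n\ge k$. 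I then split the domain $\partial T\times\partial T$ into $\partial^{*}T\times\partial^{*}T$, the two symmetric ``cross'' strips $\{\varpi\}\times\partial^{*}T$ and $\partial^{*}T\times\{\varpi\}$, and the point $(\varpi,\varpi)$. On $\partial^{*}T\times\partial^{*}T$ the $n$-independence together with Lemma \ref{lem:lim} gives $dm=J(x,y)\,d\mu(x)\,d\mu(y)$, so this piece contributes exactly $\mathcal{E}_{J}(\varphi,\psi)$; the point $(\varpi,\varpi)$ contributes $0$ since the integrand vanishes there.

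The crux of the first part is the cross strips. Here $x=\varpi$, and since $n\ge k$ and $y\in\partial T_{o_k}$ the geodesics $\pi(o_n,\varpi)$ and $\pi(o_n,y)$ diverge already at $o_n$, so the confluent with respect to $o_n$ is $\varpi\wedge_{o_n}y=o_n$ for every $y$ in the support. Using $F(o_n,o_n)=1$ this makes $\Theta_{o_n}(\varpi,y)=\mathsf{m}(o_n)/G(o_n,o_n)$ independent of $y$. Writing $d\nu_{o_n}(y)=\nu_{o_n}(\partial T_o)\,d\mu(y)$ on the support, the strip measure is $dm(\{\varpi\},dy)=\Theta_{o_n}(\varpi,y)\,\nu_{o_n}(\{\varpi\})\,\nu_{o_n}(\partial T_o)\,d\mu(y)$. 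I would collapse this prefactor by three standard identities: the Martin-kernel relation $\nu_{o_n}(\{\varpi\})=\nu_o(\{\varpi\})/F(o,o_n)$ (since $K(o_n,\varpi)=1/F(o,o_n)$), the branch-hitting relation $\nu_{o_n}(\partial T_o)=F(o_n,o)\,\nu_o(\partial T_o)$ from the strong Markov property (the analogue of (\ref{eq:nu})), and reversibility (\ref{eq:reversible}) in the form $\mathsf{m}(o_n)/\mathsf{m}(o)=G(o,o_n)/G(o_n,o)$ combined with (\ref{eq:FG}). These reduce the prefactor to the single constant $\vartheta\,\nu_o(\{\varpi\})$, with $\vartheta=\mathsf{m}(o)\nu_o(\partial T_o)/G(o,o)$. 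Since on the cross strips the integrand equals $\varphi(y)\psi(y)$, the two symmetric strips together yield $\vartheta\,\nu_o(\{\varpi\})\int_{\partial^{*}T}\varphi\psi\,d\mu$, which is the claimed zeroth-order term, completing the displayed identity for $\mathcal{E}_{\mathcal{HD}}$.

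For the second part I would assume Dirichlet regularity, so $\nu_o(\{\varpi\})=0$ and $\mathcal{E}_{\mathcal{HD}}=\mathcal{E}_{J}$. First I check that $\phi(v)=K(v,\varpi)/\vartheta$ is an ultra-metric element with respect to $\varpi$ in the sense of the non-compact version of Definition \ref{def:element}: the cocycle identity $K(v,\varpi)=F(v,v^{-})\,K(v^{-},\varpi)$ with $F(v,v^{-})<1$ gives strict monotonicity $\phi(v)<\phi(v^{-})$; along $\pi(o,\varpi)$ one has $K(o_m,\varpi)=1/F(o,o_m)\to\infty$, and along any ray to $\partial^{*}T$ the confluent $v\curlywedge o$ stabilises while $F(v,v\curlywedge o)\to0$, so $\phi\to0$. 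It then remains to identify $J$ with the jump kernel of the standard $(\mu,\phi)$-process given by (\ref{Jxy}). Using (\ref{eq:balls}), for $w=x\curlywedge y$ the defining integral telescopes over the ancestors of $w$ on $\pi(w,\varpi)$, so this reduces to the increment identity
\begin{equation*}
\mathfrak{j}(w)-\mathfrak{j}(w^{-})=\frac{1}{\mu(\partial T_w)}\left(\frac{1}{\phi(w)}-\frac{1}{\phi(w^{-})}\right),
\end{equation*}
together with the boundary condition $\mathfrak{j}(v)\to0$ as $v\to\varpi$ (which follows, via reversibility, from $F(o,o_m)\to0$). This is the non-compact counterpart of the recursion (\ref{eq:solution1}) in the Answer to Question I, verified by the same manipulation of $F$, $G$ and $\mathsf{m}$. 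By Theorem \ref{Thm-Dirichlet form/Laplacian}, $\mathcal{E}_J$ is then the Dirichlet form of the standard $(\mu,\phi)$-process, and the final assertion follows because the constant factor $1/\vartheta$ in $\phi$ amounts to a linear time change of the standard process, exactly as in the discussion preceding Theorem \ref{thm:II}.

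I expect the main obstacle to be the treatment of the atom at $\varpi$ in the first part: isolating the cross-strip contribution and showing that the $n$-dependent random-walk factors collapse to the single constant $\vartheta\,\nu_o(\{\varpi\})$. This is where the non-compactness genuinely enters, the reference end $\varpi$ playing the role of a killing (absorbing) mechanism, and it requires the confluent computation $\varpi\wedge_{o_n}y=o_n$ together with careful bookkeeping of the Martin-kernel, branch-hitting and reversibility identities.
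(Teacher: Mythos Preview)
Your proposal is correct and follows essentially the same route as the paper. The only cosmetic difference is that you split $\partial T\times\partial T$ directly into $\partial^{*}T\times\partial^{*}T$, the two $\{\varpi\}$-strips, and the diagonal point, whereas the paper splits into $\partial T_{o_n}\times\partial T_{o_n}$ and its complement and then lets $n\to\infty$; since $\bigcap_n(\partial T\setminus\partial T_{o_n})=\{\varpi\}$ these are the same decomposition, and your confluent computation $\varpi\wedge_{o_n}y=o_n$ together with the reversibility/Martin-kernel bookkeeping matches the paper's derivation of $f_n(x)\to\vartheta\,\nu_o(\{\varpi\})$ exactly. For the second part, your increment identity $\mathfrak{j}(w)-\mathfrak{j}(w^{-})=\mu(\partial T_w)^{-1}\bigl(1/\phi(w)-1/\phi(w^{-})\bigr)$ and boundary condition $\mathfrak{j}(o_m)\to 0$ are precisely what the paper verifies, and your explicit check that $\phi=K(\cdot,\varpi)/\vartheta$ is an ultra-metric element (which the paper leaves implicit) is a welcome addition.
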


\begin{proof}
There is $k$ such that the compact supports of $\varphi $ and $\psi $ are
contained in $\partial T_{o_{k}}\,$. Let $n\geq k$. Using lemmas \ref%
{lem:invariant} and \ref{lem:lim}, 
\begin{eqnarray*}
\mathcal{E}_{\mathcal{HD}}(\varphi ,\psi ) &=&\frac{1}{2}\int_{\partial
T}\int_{\partial T}\left( \varphi (x)-\varphi (y)\right) \left( \varphi
(x)-\varphi (y)\right) \,J_{n}(x,y)\,d\mu _{n}(x)\,d\mu _{n}(y) \\
&=&\frac{1}{2}\int_{\partial T_{o_{n}}}\int_{\partial T_{o_{n}}}\left(
\varphi (x)-\varphi (y)\right) \left( \varphi (x)-\varphi (y)\right)
\,J(x,y)\,d\mu (x)\,d\mu (y) \\
&&+\int_{\partial T_{o_{n}}}\varphi (x)\psi (x)\underset{=:f_{n}\left(
x\right) }{\underbrace{\int_{\partial T\setminus \partial
T_{o_{n}}}J_{n}(x,y)\,d\mu _{n}(y)}}\,d\mu (x)
\end{eqnarray*}%
As $n\rightarrow \infty $, 
\begin{equation*}
\frac{1}{2}\int_{\partial T_{o_{n}}}\int_{\partial T_{o_{n}}}\left( \varphi
(x)-\varphi (y)\right) \left( \varphi (x)-\varphi (y)\right) \,J(x,y)\,d\mu
(x)\,d\mu (y)\rightarrow \mathcal{E}_{J}(\varphi ,\psi )\,.
\end{equation*}%
Let us look at the second term. We have 
\begin{equation*}
f_{n}(x)=\int_{\partial T\setminus \partial T_{o_{n}}}\Theta
_{o_{n}}(x,y)\,\nu _{o_{n}}(\partial T_{o})\,d\nu _{o_{n}}(y).
\end{equation*}%
For $x\in \partial T_{o_{n}}$ and $y\in \partial T\setminus \partial
T_{o_{n}}\,$, their confluent with respect to $o_{n}$ is $o_{n}$ itself.
Therefore, using (\ref{eq:Naim}) and (\ref{eq:nu}) 
\begin{eqnarray*}
\Theta _{o_{n}}(x,y)\,\nu _{o_{n}}(\partial T_{o}) &=&\frac{\mathsf{m}(o_{n})%
}{G(o_{n},o_{n})}\,F(o_{n},o)\,\nu _{o}(\partial T_{o})=\frac{\mathsf{m}%
(o_{n})G(o_{n},o)}{G(o_{n},o_{n})G(o,o)}\,\nu _{o}(\partial T_{o}) \\
&=&\frac{\mathsf{m}(o)G(o,o_{n})}{G(o_{n},o_{n})G(o,o)}\,\nu _{o}(\partial
T_{o})=\frac{\mathsf{m}(o)}{G(o,o)}\,\nu _{o}(\partial
T_{o})\,F(o,o_{n})=\vartheta \,F(o,o_{n})\,.
\end{eqnarray*}%
Now note that for $y\in \partial T\setminus \partial T_{o_{n}}\,$, we have $%
F(o,o_{n})\,d\nu _{o_{n}}(y)=d\nu _{o}(y)$. Therefore 
\begin{equation*}
f_{n}(x)=\vartheta \int_{\partial T\setminus \partial
T_{o_{n}}}F(o,o_{n})\,d\nu _{o_{n}}(y)=\vartheta \cdot \nu _{o}(\partial
T\setminus \partial T_{o_{n}})\rightarrow \vartheta \cdot \nu _{o}(\{\varpi
\})\,,
\end{equation*}%
and as $n\rightarrow \infty \,$, we can use dominated convergence to get
that 
\begin{eqnarray*}
\int_{\partial T_{o_{n}}}\varphi (x)\psi (x)\int_{\partial T\setminus
\partial T_{o_{n}}}J_{n}(x,y)\,d\mu _{n}(y)\,d\mu (x) &=&\int_{\partial
^{\ast }T}\varphi (x)\psi (x)\,f_{n}(x)\,d\mu (x) \\
&\rightarrow &\vartheta \cdot \nu _{o}(\{\varpi \})\int_{\partial ^{\ast
}T}\varphi (x)\psi (x)\,d\mu (x)\,,
\end{eqnarray*}%
as proposed. To prove the formula for the associated ultra-metric element,
we proceed as in the proof of Theorem \ref{thm:I}, see (\ref{eq:tosolve1})
and the subsequent lines. We find that the ultra-metric element must satisfy 
\begin{equation*}
\frac{1}{\phi (v)}-\frac{1}{\phi (v^{-})}=\left( \mathfrak{j}(v)-\mathfrak{j}%
(v^{-})\right) \mu (\partial T_{v})\,.
\end{equation*}%
The right hand side of this equation can be computed: we have $%
v^{-}\curlywedge o=o_{k}$ for some $k\geq 0$, and combining the arguments
after (\ref{eq:tosolve1}) with those of the proof of Lemma \ref{lem:lim}, 
\begin{eqnarray*}
\left( \mathfrak{j}(v)-\mathfrak{j}(v^{-})\right) \mu (\partial T_{v})
&=&\left( \frac{\mathsf{m}(o_{k})}{F(o_{k},v)G(v,o_{k})}-\frac{\mathsf{m}%
(o_{k})}{F(o_{k},v^{-})G(v^{-},o_{k})}\right) \nu _{o_{k}}(\partial
T_{v})\,\nu _{o_{k}}(\partial T_{o}) \\
&=&\left( \frac{\mathsf{m}(o_{k})}{G(v,o_{k})}-\frac{\mathsf{m}(o_{k})}{%
G(v^{-},o_{k})}\right) F(o_{k},o)\,\nu _{o}(\partial T_{o}) \\
&=&\left( \frac{G(o,o_{k})}{G(v,o_{k})}-\frac{G(o,o_{k})}{G(v^{-},o_{k})}%
\right) \frac{\mathsf{m}(o)\nu _{o}(\partial T_{o})}{G(o,o)}=\frac{\vartheta 
}{K(v,\varpi )}-\frac{\vartheta }{K(v^{-},\varpi )}
\end{eqnarray*}%
We infer that $1/\phi (\cdot )-\vartheta /K(\cdot ,\varpi )$ must be
constant. By Dirichlet regularity of the random walk, $K(v,\varpi
)\rightarrow \infty $ as $v\rightarrow \varpi $. On the other hand, also $%
\phi (o_{n})$ must tend to infinity. Thus, the constant is $0$, and $\phi $
has the proposed form.
\end{proof}

Lemma \ref{lem:lim} and Theorem \ref{thm:non-cp-RW} lead to clearer insight
and simpler proofs concerning the material on random walks in \cite[\S 10 -- 
\S 11]{Kigami2}, in particular \cite[Theorem 11.3]{Kigami2}. Namely, our
limit measure $\mu$ coincides with the $\nu_{\ast}$ of \cite{Kigami2}. We
also note here, that there are examples where $\mu(\partial^* T) = \infty$,
as well as examples where $\mu(\partial^* T) < \infty$, even though the
ultra-metric space is non-compact. In the present work, we have always
assumed that the reference measure has infinite mass in the non-compact
case, but this is not crucial for our approach.

\medskip

\begin{remark}
\RM\label{rmk:mixed} In the present sections \ref{trees} -- \ref{duality},
we have always assumed that the ultra-metric space has no isolated points,
which for the tree means that $\deg ^{+}\geq 2$. Theme of \cite{BGP1} is the
opposite situation, where all points are isolated, i.e., the space is
discrete. In that case the ultra-metric space is also the boundary of a
tree, which does not consist of ends, but of terminal vertices, that is,
vertices with only one neighbour.

From the point of view of the present section, the mixed situation works
equally well. If we start with a locally compact ultra-metric space having
both isolated and non-isolated points, we can construct the tree in the same
way. The vertex set is the collection of all closed balls. The isolated
points will then become \emph{terminal vertices} of the tree, which have no
neighbour besides the predecessor, as for example the vertices $x$ and $y$
in Figure~6. All interior (non terminal) vertices will have forward degree $%
\ge 2$.

$$
\beginpicture 

\setcoordinatesystem units <.8mm,1.3mm>

\setplotarea x from -10 to 104, y from -12 to 46

\arrow <6pt> [.2,.67] from 2 2 to 40 40

\plot 32 32 48 16 /

 \plot 16 16 30 2 /


 \plot 8 8 14 2 /

 \plot 24 8 20 4 /



 \plot 4 4 6 2 /

 \plot 12 4 10 2 /


 \plot 28 4 26 2 /







\arrow <6pt> [.2,.67] from 99 29 to 88 40

 \plot 66 2 96 32 /

 \plot 70 2 68 4 /

 \plot 74 2 76 4 /

 \plot 78 2 72 8 /

 \plot 82 2 88 8 /

 \plot 86 2 84 4 /

 \plot 90 2 92 4 /

 \plot 94 2 80 16 /




\put {$\vdots$} at 8 0
\put {$\vdots$} at 88 0

\put {$\dots$} [l] at 103 6
\put {$\dots$} [l] at 103 24


\put {$\scriptstyle\bullet$} at 8 8
\put {$o$} [rb] at 7.2 8.8
\put {$\scriptstyle\bullet$} at 48 16 

\put {$\varpi$} at 42 42
\put {$\varpi$} at 86 42
\put {$\scriptstyle\bullet$} at 20 4 
\put {$x$} [rb] at 19.2 4.8
\put {$y$} [rb] at 50 17.4
\put{\rm Figure 6} at 47 -7

\endpicture
$$

In the compact case, the boundary $\partial T$ of that tree consists of the
terminal vertices together with the space of ends. In the non-compact case,
we will again have a reference end $\varpi$ as above, and $\partial^* T$
consists of all ends except $\varpi$, plus the terminal vertices. The
definition of an ultra-metric element remains the same, but we only need to
define it on interior vertices. In this general setting, the construction of 
$(\phi,\mu,\sigma)$-processes remains unchanged.

Even in presence of isolated points, the duality between $(\phi,\mu,\sigma)$%
-processes and random walks on the associated tree remains as explained
here. The random walk should then be such that the terminal vertices are
absorbing, and that the Green kernel tends to $0$ at infinity. The
Doob-Na\"\i m formula extends readily to that setting.
\end{remark}

\begin{remark}
\RM\label{rmk:rwstart} Let us again consider the general situation when we
start with a transient random walk on a locally finite, rooted tree~$T$.

The limit distribution $\nu _{o}$ will in general not be supported by the
whole of $\partial T$. The boundary process can of course still be
constructed, see \cite{Ki}, but will evolve naturally on $\limfunc{supp}(\nu
_{o})$ only. Thus, we can consider our ultra-metric space to be just $%
\limfunc{supp}(\nu _{o})$. The tree associated with this ultra-metric space
will in general not be the tree we started with, nor its \emph{transient
skeleton} as defined in \cite[(9.27)]{W-Markov} (the subtree induced by $o$
and all $v\in T\setminus \{o\}$ with $F(v,v^{-})<1$, where $v^- = v^-_o$).

The reasons are twofold. First, the construction of the tree associated with 
$\limfunc{supp}(\nu_{o})$ will never give back vertices with forward degree $%
1$. Second, some end contained in $\limfunc{supp}(\nu_{o})$ may be isolated
within that set, while not being isolated in $\partial T$. But then this
element will become a terminal vertex in the tree associated with the
ultra-metric (sub)space $\limfunc{supp}(\nu_{o})$. This occurs precisely
when the transient skeleton has isolated ends.

Thus, one should work with a modified ``reduced'' tree plus random walk in
order to maintain the duality between random walks and isotropic jump
processes. The same observations apply to the non-compact case, with a
reference end in the place of the root and the measure $\mu$ of Lemma \ref%
{lem:lim} in the place of $\nu_o\,$.
\end{remark}

\begin{remark}
\RM\label{rmk:regularity} Given a transient random walk on the rooted tree $%
T $, \cite{Ki} also recovers an \emph{intrinsic metric} of the boundary
process on $\partial T$ (compact case !) in terms of what is called an
ultra-metric element in the present paper. This is of course $\phi
(x)=G(x,o) $, denoted $D_{x}$ in \cite{Ki}, where it is shown that for $\nu
_{o}$-almost every $\xi \in \partial T$, $D_{x}\rightarrow 0$ along the
geodesic ray $\pi (o,\xi )$. This has the following potential theoretic
interpretation.

A point $x\in \partial T$ is called \emph{regular for the Dirichlet problem,}
if for every $\varphi \in C(\partial T)$, its Poisson transform $h_{\varphi
} $ satisfies 
\begin{equation*}
\lim_{v\rightarrow x}h_{\varphi }(v)=\varphi (x).
\end{equation*}%
It is known from Cartwright, Soardi and Woess~\cite[Remark 2]{CSW} that $x$
is regular if and only if $\lim_{u\rightarrow x}G(u,o)=0$ (as long as $T$
has at least $2$ ends), see also \cite[Theorem 9.43]{W-Markov}. By the
latter theorem, the set of regular points has $\nu _{o}$-measure 1. That is,
the Green kernel vanishes at $\nu _{o}$-almost every boundary point.
\end{remark}

\begin{remark}
\RM\label{rmk:vondra} In the proof of Theorem \ref{thm:II}, we have
reconstructed random walk transition probabilities from $C\cdot\phi(u) =
G(u,o)$ and $\mu = \nu_o\,$.

A similar (a bit simpler) question was addressed by Vondra{\v{c}}ek~\cite{Vo}%
: how to reconstruct the transition probabilities from \emph{all} limit
distributions $\nu_u\,$, $u \in T$, on the boundary. This, as well as our
method, basically come from (\ref{eq:nu}) and (\ref{eq:formula1}) + (\ref%
{eq:formula2}), which can be traced back to Cartier~\cite{Ca}.
\end{remark}

\section{Random walk associated with $p$-adic fractional derivative}

\setcounter{equation}{0}\label{SecFrac}

In this section we consider a two-fold specific example which unites the
approaches of Section \ref{SecQp} and Sections \ref{trees}--\ref{duality}.
We start with the compact case.

\subsection{The $p$-adic fractional derivative on $\mathbb{Z}_{p}$}

Let $\mathbb{Z}_{p}\subset \mathbb{Q}_{p}$ be the group of $p$-adic
integers. As a counterpart of the operator $\mathfrak{D}^{\alpha }$ we
introduce the operator $\mathbb{D}^{\alpha }$ of fractional derivative on $%
\mathbb{Z}_{p}\,$. We show that it is the Laplacian of an appropriate
isotropic Markov semigroup. Then we construct a random walk associated with $%
\mathbb{D}^{\alpha }$ in the sense of Sections \ref{trees}--\ref{duality}.

Since $\mathbb{Z}_{p}$ is a compact Abelian group, its dual $\widehat{%
\mathbb{Z}_{p}}$ is a discrete Abelian group. It is known that the group $%
\widehat{\mathbb{Z}_{p}}$ can be identified with the group%
\begin{equation*}
Z(p^{\infty })=\{p^{-n}m:0\leq m<p^{n},n=1,2,...\}
\end{equation*}%
equipped with addition of numbers mod $1$ as the group operation. As sets
(but not as groups) $Z(p^{\infty })\subset \mathbb{Q}_{p}\,$, whence the
function $\xi \mapsto \left\Vert \xi \right\Vert_{p}$ is well-defined on the
group $Z(p^{\infty }).$

\begin{definition}
\label{def-frac-Z-p}\RM The operator $(\mathbb{D}^{\alpha },\mathcal{V}_c),$ 
$\alpha >0,$ is defined via the Fourier transform on the compact Abelian
group $\mathbb{Z}_{p}$ by 
\begin{equation*}
\widehat{\mathbb{D}^{\alpha }f}(\xi )=\left\Vert \xi \right\Vert
_{p}^{\alpha }\widehat{f}(\xi ),\text{ }\xi \in Z(p^{\infty })\,,
\end{equation*}%
where $\mathcal{V}_c$ is the space of locally constant functions on $\mathbb{%
Z}_{p}\,$.
\end{definition}

Compare with the Definition \ref{def-frac-derivative} of the operator $%
\mathfrak{D}^{\alpha }.$

An immediate consequence 
is that the operator $\mathbb{D}^{\alpha }$ is a non-negative definite
self-adjoint operator whose spectrum coincides with the range of the function%
\begin{equation*}
\xi \mapsto \left\Vert \xi \right\Vert _{p}^{\alpha }:Z(p^{\infty
})\rightarrow \mathbb{R}_{+},
\end{equation*}%
that is, 
\begin{equation*}
\func{spec}\mathbb{D}^{\alpha }=\{0,p^{\alpha },p^{2\alpha },...\}.
\end{equation*}%
The eigenspace $\mathcal{H}(\lambda )$ of the operator $\mathbb{D}^{\alpha }$
corresponding to the eigenvalue $\lambda =p^{k\alpha },$ $k\geq 1,$ is
spanned by the function 
\begin{equation*}
f_{k}=\frac{1}{\mu _{p}(p^{k}\mathbb{Z}_{p})}\mathbf{1}_{p^{k}\mathbb{Z}%
_{p}}-\frac{1}{\mu _{p}(p^{k-1}\mathbb{Z}_{p})}\mathbf{1}_{p^{k-1}\mathbb{Z}%
_{p}}
\end{equation*}%
and\ its\ shifts\ $f_{k}(\cdot +a)$ with any $a\in \mathbb{Z}_{p}/p^{k}%
\mathbb{Z}_{p}$.

Indeed, computing the Fourier transform of the function $f_{k},$%
\begin{equation*}
\widehat{f_{k}}(\xi )=\mathbf{1}_{\{\left\Vert \xi \right\Vert _{p}\leq
p^{k}\}}-\mathbf{1}_{\{\left\Vert \xi \right\Vert _{p}\leq p^{k-1}\}}=%
\mathbf{1}_{\{\left\Vert \xi \right\Vert _{p}=p^{k}\}},
\end{equation*}%
we obtain%
\begin{equation*}
\widehat{\mathbb{D}^{\alpha }f_{k}}(\xi )=\left\Vert \xi \right\Vert
_{p}^{\alpha }\widehat{f_{k}}(\xi )=p^{k\alpha }\widehat{f_{k}}(\xi ).
\end{equation*}%
The maximal number of linearly independent functions in the set $%
\{f_{k}(\cdot +a):a\in \mathbb{Z}_{p}/p^{k}\mathbb{Z}_{p}\}$ is $p^{k-1}($ $%
p-1)$, whence 
\begin{equation*}
\dim \mathcal{H}(\lambda )=p^{k-1}(p-1).
\end{equation*}%
All the above shows that $\mathbb{D}^{\alpha }$ coincides with the Laplacian
of some isotropic Markov semigroup $(\mathbb{P}^{t}_{\alpha })_{t>0}$ on the
ultra-metric measure space $(\mathbb{Z}_{p},d_{p},\mu _{p})$. In particular,
using the complete description of $\func{spec}\mathbb{D}^{\alpha }$ we
compute the intrinsic distance, call it $d_{p,\alpha }(x,y),$ 
\begin{equation*}
d_{p,\alpha }(x,y)=\left( \frac{\left\Vert x-y\right\Vert _{p}}{p}\right)
^{\alpha }.
\end{equation*}%
It is now straightforward to compute the spectral distribution function $%
\mathbb{N}_{\alpha }(x,\tau )\equiv \mathbb{N}_{\alpha }(\tau )$ and then
the jump-kernel $\mathbb{J}_{\alpha }(x,y)\equiv $ $\mathbb{J}_{\alpha
}(x-y) $ of the operator $\mathbb{D}^{\alpha }.$ We claim that%
\begin{equation}
\mathbb{J}_{\alpha }(x,y)=\frac{p^{\alpha }-1}{1-p^{-\alpha -1}}\left( \frac{%
p^{-\alpha }-p^{-\alpha -1}}{1-p^{-\alpha }}+\frac{1}{\left\Vert
x-y\right\Vert _{p}^{1+\alpha }}\right) .  \label{jump-kernel on Z-p}
\end{equation}%
Recall for comparison that according to (\ref{jump-kernel-Q_p}) the
jump-kernel $J_{\alpha }(x,y)$ of the operator $\mathfrak{D}^{\alpha }$ is
given by%
\begin{equation*}
J_{\alpha }(x,y)=\frac{p^{\alpha }-1}{1-p^{-\alpha -1}}\frac{1}{\left\Vert
x-y\right\Vert _{p}^{1+\alpha }}.
\end{equation*}%
To prove (\ref{jump-kernel on Z-p}), we compute $\mathbb{J}_{\alpha }(z).$
Let $\left\Vert z\right\Vert _{p}=p^{-l},$ then $d_{p,\alpha
}(0,z)=p^{-(l+1)\alpha }$ and%
\begin{equation*}
\mathbb{J}_{\alpha }(z)=\int\limits_{0}^{1/d_{p,\alpha }(0,z)}\mathbb{N}%
_{\alpha }(\tau )d\tau =\int\limits_{0}^{p^{(l+1)\alpha }}\mathbb{N}_{\alpha
}(\tau )d\tau .
\end{equation*}%
The function $\mathbb{N}_{\alpha }(\tau )$ is a non-decreasing,
left-continuous staircase function having jumps at the points $\tau
_{k}=p^{k\alpha },$ $k=1,2,...,$ and taking values at these points $\mathbb{N%
}_{\alpha }(\tau _{k})=p^{k-1},$ whence%
\begin{eqnarray*}
\mathbb{J}_{\alpha }(z) &=&1\cdot p^{\alpha }+p(p^{2\alpha }-p^{\alpha
})+p^{2}(p^{3\alpha }-p^{2\alpha })+...+p^{l}(p^{(l+1)\alpha }-p^{l\alpha })
\\
&=&\frac{1-p^{-1}}{1-p^{-\alpha -1}}+\frac{p^{\alpha }-1}{1-p^{-\alpha -1}}%
p^{l(\alpha +1)} \\
&=&\frac{p^{\alpha }-1}{1-p^{-\alpha -1}}\left( \frac{p^{-\alpha
}-p^{-\alpha -1}}{1-p^{-\alpha }}+\frac{1}{\left\Vert z\right\Vert
_{p}^{1+\alpha }}\right)
\end{eqnarray*}%
as desired. Next, we apply Theorem \ref{Thm-Dirichlet form/Laplacian} and
obtain%
\begin{equation}
\mathbb{D}^{\alpha }f(x)=\int_{\mathbb{Z}_{p}}\left( f(x)-f(y)\right) 
\mathbb{J}_{\alpha }(x-y)\,d\mu _{p}(y).  \label{frac.deriv. on Z-p}
\end{equation}%
The equations (\ref{jump-kernel on Z-p})--(\ref{frac.deriv. on Z-p}) and (%
\ref{sing-int}) now yield the following result.

\begin{corollary}
\label{fractional derivatives} For any function $f$ defined on $\mathbb{Z}%
_{p}\subset \mathbb{Q}_{p}$ we set $\widetilde{f}=f$ on $\mathbb{Z}_{p}$ and 
$0$, otherwise. Then 
\begin{equation*}
f\in \func{dom}(\mathbb{D}^{\alpha })\Longrightarrow \widetilde{f}\in \text{ 
}\func{dom}(\mathfrak{D}^{\alpha }),\text{ }
\end{equation*}%
\begin{equation}
\mathbb{D}^{\alpha }f(x)=\mathfrak{D}^{\alpha }\widetilde{f}(x)\text{ \ \
and\ \ \ }(\mathbb{D}^{\alpha }f,f)=\left( \mathfrak{D}^{\alpha }\widetilde{f%
}\text{ },\widetilde{f}\text{ }\right)  \label{frac.derivetives Z-p/Q-p}
\end{equation}%
whenever $x\in \mathbb{Z}_{p},$ $f\in \func{dom}(\mathbb{D}^{\alpha })$ and $%
(\mathbf{1},f)=0$.
\end{corollary}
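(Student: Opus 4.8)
The plan is to reduce the whole statement to the two explicit singular-integral representations already established: formula (\ref{frac.deriv. on Z-p}) with the kernel (\ref{jump-kernel on Z-p}) for $\mathbb{D}^{\alpha}$, and formula (\ref{sing-int}) for $\mathfrak{D}^{\alpha}$. First I would treat $f\in\mathcal{V}_{c}$ (locally constant on $\mathbb{Z}_{p}$), for which $\widetilde{f}$ is again locally constant with compact support on $\mathbb{Q}_{p}$, since $\mathbb{Z}_{p}$ is clopen; thus both representations apply verbatim. Splitting the integral in (\ref{sing-int}) as $\int_{\mathbb{Q}_{p}}=\int_{\mathbb{Z}_{p}}+\int_{\mathbb{Q}_{p}\setminus\mathbb{Z}_{p}}$ and using $\widetilde{f}\equiv 0$ off $\mathbb{Z}_{p}$ gives, for $x\in\mathbb{Z}_{p}$,
\begin{equation*}
\mathfrak{D}^{\alpha}\widetilde{f}(x)=\frac{p^{\alpha}-1}{1-p^{-\alpha-1}}\left(\int_{\mathbb{Z}_{p}}\frac{f(x)-f(y)}{\Vert x-y\Vert_{p}^{1+\alpha}}\,d\mu_{p}(y)+f(x)\int_{\mathbb{Q}_{p}\setminus\mathbb{Z}_{p}}\frac{d\mu_{p}(y)}{\Vert x-y\Vert_{p}^{1+\alpha}}\right).
\end{equation*}

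The computational heart is the tail integral. For $x\in\mathbb{Z}_{p}$ and $y\notin\mathbb{Z}_{p}$ one has $\Vert x-y\Vert_{p}=\Vert y\Vert_{p}$, so after translation by $x$ I would evaluate, using (\ref{p-n}) and $\mu_{p}(\{\Vert u\Vert_{p}=p^{n}\})=p^{n}-p^{n-1}$,
\begin{equation*}
\int_{\mathbb{Q}_{p}\setminus\mathbb{Z}_{p}}\frac{d\mu_{p}(u)}{\Vert u\Vert_{p}^{1+\alpha}}=\sum_{n\geq 1}\frac{p^{n}-p^{n-1}}{p^{n(1+\alpha)}}=\frac{p^{-\alpha}-p^{-\alpha-1}}{1-p^{-\alpha}},
\end{equation*}
which is exactly the extra constant in the kernel (\ref{jump-kernel on Z-p}). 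On the other hand, expanding (\ref{frac.deriv. on Z-p}) with (\ref{jump-kernel on Z-p}) and using $\mu_{p}(\mathbb{Z}_{p})=1$ together with $\int_{\mathbb{Z}_{p}}f\,d\mu_{p}=(\mathbf{1},f)=0$ turns the constant part $\frac{p^{-\alpha}-p^{-\alpha-1}}{1-p^{-\alpha}}\int_{\mathbb{Z}_{p}}(f(x)-f(y))\,d\mu_{p}(y)$ into precisely $\frac{p^{-\alpha}-p^{-\alpha-1}}{1-p^{-\alpha}}\,f(x)$. Comparing the two expressions yields $\mathbb{D}^{\alpha}f(x)=\mathfrak{D}^{\alpha}\widetilde{f}(x)$ for $x\in\mathbb{Z}_{p}$. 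For $x\notin\mathbb{Z}_{p}$ the same splitting gives $\mathfrak{D}^{\alpha}\widetilde{f}(x)=-\frac{p^{\alpha}-1}{1-p^{-\alpha-1}}\Vert x\Vert_{p}^{-1-\alpha}\int_{\mathbb{Z}_{p}}f\,d\mu_{p}=0$, again by $(\mathbf{1},f)=0$; hence $\mathfrak{D}^{\alpha}\widetilde{f}=\widetilde{\mathbb{D}^{\alpha}f}$ on all of $\mathbb{Q}_{p}$. The form identity is then immediate, since extension by zero is an isometry and $\widetilde{f}$ vanishes off $\mathbb{Z}_{p}$: $(\mathfrak{D}^{\alpha}\widetilde{f},\widetilde{f})=\int_{\mathbb{Z}_{p}}\mathbb{D}^{\alpha}f\cdot f\,d\mu_{p}=(\mathbb{D}^{\alpha}f,f)$.

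To pass from $\mathcal{V}_{c}$ to arbitrary $f\in\func{dom}(\mathbb{D}^{\alpha})$ with $(\mathbf{1},f)=0$, I would invoke the spectral picture. By the step above, each eigenfunction $f_{k}(\cdot+a)$ of $\mathbb{D}^{\alpha}$ with nonzero eigenvalue $p^{k\alpha}$ lies in $\mathcal{V}_{c}$, has mean zero, is supported in $\mathbb{Z}_{p}$, and satisfies $\mathfrak{D}^{\alpha}f_{k}(\cdot+a)=p^{k\alpha}f_{k}(\cdot+a)$; thus these functions are simultaneously eigenfunctions of $\mathbb{D}^{\alpha}$ and of $\mathfrak{D}^{\alpha}$ for the same eigenvalue. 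By Corollary \ref{L^2-Spectrum} they form a complete orthogonal system of $\{\func{const}\}^{\perp}$ in $L^{2}(\mathbb{Z}_{p})$, and they remain orthonormal eigenfunctions of $\mathfrak{D}^{\alpha}$ after extension by zero. Expanding $f=\sum_{j}c_{j}\phi_{j}$ in this basis, membership $f\in\func{dom}(\mathbb{D}^{\alpha})$ and $\widetilde{f}\in\func{dom}(\mathfrak{D}^{\alpha})$ are both governed by the identical summability condition $\sum_{j}\lambda_{j}^{2}|c_{j}|^{2}<\infty$; hence $\widetilde{f}\in\func{dom}(\mathfrak{D}^{\alpha})$, and passing to the $L^{2}$-limit gives $\mathfrak{D}^{\alpha}\widetilde{f}=\widetilde{\mathbb{D}^{\alpha}f}$ and $(\mathfrak{D}^{\alpha}\widetilde{f},\widetilde{f})=(\mathbb{D}^{\alpha}f,f)$, as claimed.

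The main obstacle is the exact bookkeeping in the first step: one must verify that the tail integral over $\mathbb{Q}_{p}\setminus\mathbb{Z}_{p}$ reproduces precisely the constant term distinguishing $\mathbb{J}_{\alpha}$ of (\ref{jump-kernel on Z-p}) from $J_{\alpha}$ of (\ref{jump-kernel-Q_p}), and to recognize that the hypothesis $(\mathbf{1},f)=0$ is used twice and is indispensable — once to convert the mean-value term into $f(x)$ for $x\in\mathbb{Z}_{p}$, and once to make $\mathfrak{D}^{\alpha}\widetilde{f}$ vanish off $\mathbb{Z}_{p}$. Everything else is routine.
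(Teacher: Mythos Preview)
Your argument is correct and is exactly the route the paper intends: the corollary is stated as an immediate consequence of (\ref{jump-kernel on Z-p}), (\ref{frac.deriv. on Z-p}) and (\ref{sing-int}), and your splitting $\int_{\mathbb{Q}_p}=\int_{\mathbb{Z}_p}+\int_{\mathbb{Q}_p\setminus\mathbb{Z}_p}$ together with the tail-integral computation is precisely how one sees that the extra constant in $\mathbb{J}_\alpha$ accounts for the mass outside $\mathbb{Z}_p$. The paper gives no further details, so your write-up actually supplies more than the paper does --- in particular, your spectral-expansion passage from $\mathcal{V}_c$ to the full domain $\func{dom}(\mathbb{D}^\alpha)$ and the verification that $\mathfrak{D}^\alpha\widetilde f$ vanishes off $\mathbb{Z}_p$ are both correct additions that the paper leaves to the reader.
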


\subsection{Nearest neighbour random walk on the rooted tree $\mathbb{T}%
_{p}^{o}$}

As an illustration of Theorem \ref{thm:II} we construct a random walk on the
rooted tree associated with $\mathbb{Z}_p$ whose boundary process coincides
with the isotropic process driven by the operator $C\cdot \mathbb{D}^{\alpha
},$ where $C=p^{-\alpha }(1-p^{-\alpha })$.

The Abelian group $\mathbb{Z}_{p}$ can be identified with the boundary of
the tree $\mathbb{T}_{p}^o$ with root $o$ where every vertex $v$ has $p$
forward neighbours. In our identification, this is the tree of balls of the
ultra-metric space $(\mathbb{Z}_{p},d_{p})$ with root $o$ corresponding to
the whole of $\mathbb{Z}_{p}$ and the ultra-metric $d_{p}(x,y)=\left\Vert
x-y\right\Vert_{p}.$ See Figure~4 above, where $p=2$. We fix a constant $%
c\in (0,1)$ and consider the nearest neighbour random walk on $\mathbb{T}%
_{p}^o$ with%
\begin{equation}
p(v,v^{-})=1-c\text{ \ \ and\ \ }p(v^{-},v)=\left\{ 
\begin{array}{cc}
1/p & \text{if }v^{-}=o \\ 
c/p & \text{otherwise}%
\end{array}%
\right. \,.  \label{our r.w. on z_p}
\end{equation}%
Using \cite[Thm. 1.38 and Prop. 9.3 ]{W-Markov} one can compute precisely
the Green function $G(v,o)$, the hitting probability $F(v,o)$ and other
quantities associated with our random walk. In particular, choosing $%
c=(1+p^{-\alpha })^{-1},$ we obtain%
\begin{equation}
F(v,o)=p^{-\alpha \left\vert v\right\vert }\text{ \ \ and\ \ }G(v,o)=\frac{%
p^{-\alpha \left\vert v\right\vert }}{1-p^{-\alpha }},
\label{F(v,o) and G(v,o)}
\end{equation}%
where $\left\vert v\right\vert $ is the graph distance from $v$ to $o.$ We
see that the Green function vanishes at infinity, whence the random walk is
Dirichlet regular.

The transition probabilities are invariant with respect to all automorphisms
of the tree. Every such automorphism must fix $o$ and every level of the
tree. Let $\nu =\nu _{o}$ be the limit distribution on $\partial \mathbb{T}%
_{p}^o$ of the random walk starting at $o$. Then also $\nu$ is invariant
under the automorphism group of the tree (whose action extends to the
boundary). In particular it is invariant under the action of $\mathbb{Z}%
_{p}. $ Thus, under the identification of $\partial \mathbb{T}_{p}^o$ with $%
\mathbb{Z}_{p},$ we have that $\nu =\mu _{p}\,$, the normalized Haar measure
of $\mathbb{Z}_{p}\,$.

We now look at the boundary process induced by our random walk as a jump
process on $\mathbb{Z}_{p}\,$. By Theorem \ref{thm:I}, the boundary process
arises as an isotropic jump process with the reference measure $\mu _{p}.$
Let $\mathcal{L}$ be its Laplacian. By Corollary \ref{eigenvalues -boundary
process}, the set $\func{spec}\mathcal{L}$ coincides with the range of the
function $v\mapsto 1/G(v,o)$, $v\in \mathbb{T}_{p}^{o}\,$, together with $%
\{0\}$. In view of the above formula for $G(v,o)$ this means that 
\begin{equation*}
\func{spec}\mathcal{L}=\{0,(1-p^{-\alpha }),p^{\alpha }(1-p^{-\alpha
}),p^{2\alpha }(1-p^{-\alpha }),\dots \}.
\end{equation*}%
Remember that%
\begin{equation*}
\func{spec}\mathbb{D}^{\alpha }=\{0,p^{\alpha },p^{2\alpha },\dots \}=\frac{%
p^{\alpha }}{1-p^{-\alpha }}~\func{spec}\mathcal{L}.
\end{equation*}%
Since both $\mathbb{D}^{\alpha }$ and $\mathcal{L}$ have the same
orthonormal basis of eigenfunctions, we conclude that they are proportional,
that is,%
\begin{equation}
\mathbb{D}^{\alpha }=\frac{p^{\alpha }}{1-p^{-\alpha }}~\mathcal{L}.
\label{Laplacians}
\end{equation}%
Thus, finally we come to the following conclusion

\begin{proposition}
\label{bdry process for d_alpha}The boundary process $\{X_{t}\}_{t>0}$
associated with the random walk defined in \emph{(\ref{our r.w. on z_p})}
with parameter $c=(1+p^{-\alpha })^{-1}$ and the isotropic jump process $%
\{X_{t}^{\alpha }\}_{t>0}$ driven by the operator $\mathbb{D}^{\alpha }$ are
related by the linear time change $X_{t/C}=X_{t}^{\alpha }$, where $%
C=p^{-\alpha }(1-p^{-\alpha }).$
\end{proposition}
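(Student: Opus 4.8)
The plan is to deduce the asserted time change entirely from the operator identity (\ref{Laplacians}), which has already been established above; the only remaining content is the standard passage from a scalar multiple of the generator to a linear time change of the associated Markov process.

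First I would recast (\ref{Laplacians}) in a form adapted to the constant $C$. Since $C=p^{-\alpha}(1-p^{-\alpha})$, one has $1/C=p^{\alpha}/(1-p^{-\alpha})$, so that (\ref{Laplacians}) is equivalent to $\mathcal{L}=C\,\mathbb{D}^{\alpha}$, where $\mathcal{L}$ is the Laplacian of the boundary process $\{X_{t}\}$ (identified via Theorem \ref{thm:I}) and $\mathbb{D}^{\alpha}$ is the Laplacian driving $\{X_{t}^{\alpha}\}$. The legitimacy of (\ref{Laplacians}) as an identity of \emph{operators} (and not merely of their spectra) rests on the fact, recorded just above, that $\mathcal{L}$ and $\mathbb{D}^{\alpha}$ are non-negative definite self-adjoint operators on $L^{2}(\mathbb{Z}_{p},\mu_{p})$ sharing the common orthonormal eigenbasis $\{f_{k}(\cdot+a)\}$; I would simply invoke this.

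Next I would pass to the semigroups by functional calculus. Writing $\mathbb{P}_{\alpha}^{t}=e^{-t\mathbb{D}^{\alpha}}$ for the semigroup of $\{X_{t}^{\alpha}\}$ and $e^{-t\mathcal{L}}$ for the semigroup of the boundary process $\{X_{t}\}$, the identity $\mathcal{L}=C\,\mathbb{D}^{\alpha}$ gives
\begin{equation*}
e^{-t\mathcal{L}}=e^{-tC\,\mathbb{D}^{\alpha}}=\mathbb{P}_{\alpha}^{Ct}\qquad\text{for all }t\geq 0,
\end{equation*}
so that the transition operator of $\{X_{t}\}$ at time $t$ coincides with that of $\{X_{t}^{\alpha}\}$ at time $Ct$.

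Finally I would conclude. A continuous-time Markov process is determined in law by its transition semigroup together with its initial distribution, and both processes here live on $(\mathbb{Z}_{p},\mu_{p})$ with the same starting law; hence the displayed identity says exactly that $\{X_{t}\}_{t>0}$ and $\{X_{Ct}^{\alpha}\}_{t>0}$ have the same finite-dimensional distributions. Substituting $t\mapsto t/C$ yields $X_{t/C}=X_{t}^{\alpha}$, which is the assertion. There is no genuine obstacle in this argument: the only point requiring care is the correct inversion of the scalar, namely that multiplying the generator by $C$ corresponds to \emph{dividing} the time parameter by $C$, so that the proportionality constant $p^{\alpha}/(1-p^{-\alpha})=1/C$ of (\ref{Laplacians}) produces precisely the factor $C$ in the time change.
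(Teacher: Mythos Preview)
Your proposal is correct and coincides with the paper's approach: the paper derives (\ref{Laplacians}) from the identification of the common eigenbasis and the spectra, then states the proposition as an immediate conclusion (\textquotedblleft Thus, finally we come to the following conclusion\textquotedblright), leaving implicit precisely the semigroup/time-change passage that you have spelled out.
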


The equation (\ref{Laplacians}) implies that the jump kernels $\mathbb{J}%
_{\alpha }(x,y)$ and $\Theta _{o}(x,y)$ of the operators $\mathbb{D}^{\alpha
}$ and $\mathcal{L}$, respectively, are related by 
\begin{equation}
\mathbb{J}_{\alpha }(x,y)=\frac{p^{\alpha }}{1-p^{-\alpha }}~\Theta
_{o}(x,y).  \label{jump kernels}
\end{equation}%
We now show how to compute the Na\"{\i}m kernel 
\begin{equation*}
\Theta _{o}(x,y)=\frac{1}{G(o,o)F(o,v)F(v,o)},\text{ \ \ where }v=x\wedge y,
\end{equation*}%
directly, using the data of (\ref{F(v,o) and G(v,o)}). We do not yet have $%
F(o,v).$ We shall compute 
\begin{equation*}
N(v)=\frac{1}{F(o,v)F(v,o)}.
\end{equation*}%
Since it depends only on the level $k$ of $v,$ we consider an arbitrary
geodesic ray $[o=v_{0},v_{1},...]$ and set up a linear recursion for $%
N(v_{k}).$ Denoting by $w_{1}$ an arbitrary neighbour of $o$ different from $%
v_{1}$ and applying \cite[Prop. 9.3(b)]{W-Markov} and (\ref{F(v,o) and
G(v,o)}), we obtain 
\begin{equation*}
F(o,v_{1})=\frac{1}{p}+\frac{p-1}{p}F(w_{1},o)F(o,v_{1})=\frac{1}{p}+\frac{%
(p-1)p^{-\alpha }}{p}F(o,v_{1}),
\end{equation*}%
whence

\begin{equation*}
F(o,v_{1})=\frac{p^{\alpha }}{p^{\alpha +1}-p+1}.
\end{equation*}%
Thus, we get the initial values 
\begin{equation*}
N(v_{0})=1\quad \text{and}\quad N(v_{1})=p^{\alpha +1}-p+1.
\end{equation*}%
Next, for $k\geq 1,$ we let $w_{k+1}$ be a forward neighbour of $v_{k}$
different from $v_{k+1}.$ Applying once again \cite[Prop. 9.3(b)]{W-Markov}
and (\ref{F(v,o) and G(v,o)}), we obtain%
\begin{eqnarray*}
F(v_{k},v_{k+1}) &=&\frac{p^{\alpha }}{p(p^{\alpha }+1)}+\frac{%
(p-1)p^{\alpha }}{p(p^{\alpha }+1)}F(w_{k+1},v_{k})F(v_{k},v_{k+1}) \\
&&+\frac{1}{p^{\alpha }+1}F(v_{k-1},v_{k})F(v_{k},v_{k+1}).
\end{eqnarray*}%
We insert the value $F(w_{k+1},v_{k})=p^{-\alpha }$ and divide by 
\begin{equation*}
F(o,v_{k+1})=F(o,v_{k})F(v_{k},v_{k+1})=F(o,v_{k-1})F(v_{k-1},v_{k})F(v_{k},v_{k+1}).
\end{equation*}%
Then we get%
\begin{equation*}
\frac{1}{F(o,v_{k})}=\frac{p^{\alpha }}{p(p^{\alpha }+1)}\frac{1}{%
F(o,v_{k+1})}+\frac{p-1}{p(p^{\alpha }+1)}\frac{1}{F(o,v_{k})}+\frac{1}{%
p^{\alpha }+1}\frac{1}{F(o,v_{k-1})}.
\end{equation*}%
Now we multiply both sides with $1/F(v_{k},o)=p^{\alpha k}$ and get%
\begin{equation*}
N(v_{k})=\frac{1}{p(p^{\alpha }+1)}N(v_{k+1})+\frac{p-1}{p(p^{\alpha }+1)}%
N(v_{k})+\frac{p^{\alpha }}{p^{\alpha }+1}N(v_{k-1}).
\end{equation*}%
This is a homogeneous second order linear recursion with constant
coefficients. Its characteristic polynomial has roots $1$ and $p^{\alpha
+1}. $ Therefore 
\begin{equation*}
N(v_{k})=A+Bp^{(\alpha +1)k}.
\end{equation*}%
Inserting the initial values we easily find the values of $A$ and $B.$ In
order to get the Na\"{\i}m kernel, we have to multiply by $%
1/G(o,o)=1-p^{-\alpha }.$ Thus, we get%
\begin{equation*}
\Theta _{o}(x,y)=\frac{(1-p^{-\alpha })(p-1)}{p^{\alpha +1}-1}+\frac{%
(1-p^{-\alpha })(p^{\alpha +1}-p)}{p^{\alpha +1}-1}p^{(\alpha +1)k}=\frac{%
1-p^{-\alpha }}{p^{\alpha }}\,\mathbb{J}_{\alpha }(x,y)\,,
\end{equation*}%
as desired.

\subsection{The random walk corresponding to $\mathfrak{D}^{\protect\alpha }$
on $\mathbb{Q}_{p}\,$}

We can combine the preceding considerations with the material of Lemma \ref%
{lem:lim} and Theorem \ref{thm:non-cp-RW} concerning the duality with random
walks in the non-compact case. It is now easy to understand the random walk
corresponding to the fractional derivative on the whole of $\mathbb{Q}_p\,$.

The tree associated with $\mathbb{Q}_{p}$ is the homogenous tree $T=\mathbb{T%
}_{p}$ with degree $p+1$. We have to choose a reference end $\varpi $. Then
we can identify its lower boundary $\partial ^{\ast }\mathbb{T}_{p}$ with
the field of $p$-adic numbers. With respect to $\varpi $, every vertex $v$
has its predecessor $v^{-}$ and $p$ successors. Every subtree $%
T_{v}=T_{\varpi ,v}$ is isomorphic with the rooted tree $\mathbb{T}_{p}^{o}$
considered above in the compact case of the $p$-adic integers. In
particular, we choose the root vertex $o$ such that $\partial T_{o}=\mathbb{Z%
}_{p}\,$. See Figure~5 above, where $p=2$.

We now define the random walk on $\mathbb{T}_p$ as in (\ref{our r.w. on z_p}%
), but with predecessors referring to $\varpi\,$: 
\begin{equation}
p(v,v^{-})=1-c \quad\text{and\ \ } p(v^{-},v) = c/p\,, \quad \text{where\ \ }
c = (1+p^{-\alpha})^{-1}.  \label{our r.w. on q_p}
\end{equation}%
For the following quantities, see e.g. \cite[pp. 423-424]{W-lamp}. For all $%
v \in \mathbb{T}_p\,$, 
\begin{equation*}
F(v,v^-) = p^{-\alpha}\,,\ \ F(v^-,v) = p^{-1}\,,\ \ G(v,v) = \frac{%
1+p^{-\alpha}}{1-p^{-\alpha-1}}\quad \text{and}\quad \nu_v(\partial T_v) = 
\frac{1-p^{-\alpha}}{1-p^{-\alpha-1}}.
\end{equation*}
This yields that the reference measure $\mu$ of the boundary process with
respect to $\varpi$, as given by Lemma \ref{lem:lim}, is the standard Haar
measure of $\mathbb{Q}_p\,$.

We compute $\vartheta =(1-p^{-\alpha })/(1+p^{-\alpha }).$ Furthermore, let
us set $\mathfrak{h}(v)=d(v,v\curlywedge o)-d(o,v\curlywedge o)$ (where $d$
is the graph metric). This is the \emph{horocycle number} of $v$. That is,
the vertices with $\mathfrak{h}(v)=k$, $k\in \mathbb{Z}$, are the elements
in the $k$-th generation $H_{k}$ of the tree (see Figure~5), and $\partial
T_{v}$ corresponds to a ball with radius $p^{-k}$ in the standard
ultra-metric of $\mathbb{Q}_{p}\,$. Then 
\begin{equation*}
K(v,\varpi )=p^{\alpha \,\mathfrak{h}(v)}\quad \text{and}\quad \mathsf{m}%
(v)=p^{(\alpha -1)\mathfrak{h}(v)}.
\end{equation*}%
Putting things together, we get 
\begin{equation*}
\phi (v)=\frac{1+p^{-\alpha }}{1-p^{-\alpha }}\,p^{-\alpha \,\mathfrak{h}%
(v)}\quad \text{and}\quad \mathfrak{j}(v)=\frac{(1-p^{-\alpha })^{2}}{%
(1+p^{-\alpha })(1-p^{-\alpha -1})}\,p^{(\alpha +1)\,\mathfrak{h}(v)}
\end{equation*}%
Retranslating this into $p$-adic notation, we conclude that the intrinsic
metric and jump kernel of the boundary process with respect to $\varpi $ are
given by 
\begin{eqnarray*}
d_{\phi }(x,y) &=&\frac{1-p^{-\alpha }}{1+p^{-\alpha }}\,\left\Vert
x-y\right\Vert _{p}^{\alpha }\quad \text{and} \\
J(x,y) &=&\frac{(1-p^{-\alpha })^{2}}{(1+p^{-\alpha })(1-p^{-\alpha -1})}\,%
\frac{1}{\left\Vert x-y\right\Vert _{p}^{\alpha +1}}=\frac{1-p^{-\alpha }}{%
p^{\alpha }+1}\,J_{\alpha }(x,y)\,,
\end{eqnarray*}%
where $J_{\alpha }$ is the jump kernel associated with $\mathfrak{D}^{\alpha
}$. So at last, we get the following.

\begin{proposition}
\label{bdry process} The boundary process $\{X_{t}\}_{t>0}$ with respect to
the reference end $\varpi $ associated with the random walk \emph{(\ref{our
r.w. on q_p})} on $\mathbb{T}_{p}$ and the isotropic jump process $%
\{X_{t}^{\alpha }\}_{t>0}$ driven by the operator $\mathfrak{D}^{\alpha }$
on $\mathbb{Q}_{p}$ are related by the linear time change $X_{t/C^{\ast
}}=X_{t}^{\alpha }$, where $C^{\ast }=(1-p^{-\alpha })/(p^{\alpha }+1)$.
\end{proposition}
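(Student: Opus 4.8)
The plan is to reduce the assertion to the single operator identity $\mathcal{L}=C^{\ast }\,\mathfrak{D}^{\alpha }$ between the two generators, after which the linear time change is a soft consequence of the spectral calculus. All of the genuinely computational work has already been carried out in the lines preceding the statement, where the jump kernel of the boundary process was shown to satisfy $J(x,y)=\frac{1-p^{-\alpha }}{p^{\alpha }+1}\,J_{\alpha }(x,y)=C^{\ast }\,J_{\alpha }(x,y)$; the remaining task is to assemble these ingredients and to justify the passage from proportional generators to time-changed processes.

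First I would confirm that the random walk (\ref{our r.w. on q_p}) satisfies the hypotheses of Theorem \ref{thm:non-cp-RW}. Transience and the inequality $F(v,v^{-})=p^{-\alpha }<1$ are immediate from the displayed formulas, and Dirichlet regularity holds because the ultra-metric element $\phi (v)$ (equivalently the Martin kernel $K(v,\varpi )$) tends to $\infty $ along $\pi (o,\varpi )$, so that $\nu _{o}(\{\varpi \})=0$ and the extra quadratic boundary term of Theorem \ref{thm:non-cp-RW} vanishes. Consequently the boundary process $\{X_{t}\}$ is exactly the standard $(\mu ,\phi )$-process on $\partial ^{\ast }\mathbb{T}_{p}=\mathbb{Q}_{p}$, with $\mu =\mu _{p}$ and with the ultra-metric element $\phi $ recorded above, and its Dirichlet form is $\mathcal{E}_{J}$ with jump kernel $J$.

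Next, by Theorem \ref{Thm-Dirichlet form/Laplacian} the Laplacian $\mathcal{L}$ of $\{X_{t}\}$ acts on $f\in \mathcal{V}_{c}$ through the singular integral $\mathcal{L}f(x)=\int_{\mathbb{Q}_{p}}(f(x)-f(y))\,J(x,y)\,d\mu _{p}(y)$, whereas the representation (\ref{sing-int})--(\ref{jump-kernel-Q_p}), together with Theorem \ref{Vladimirov Laplacian}, expresses $\mathfrak{D}^{\alpha }$ by the same recipe with kernel $J_{\alpha }$. The proportionality $J=C^{\ast }J_{\alpha }$ therefore yields $\mathcal{L}f=C^{\ast }\mathfrak{D}^{\alpha }f$ for every $f\in \mathcal{V}_{c}$; since both $(\mathcal{L},\mathcal{V}_{c})$ and $(\mathfrak{D}^{\alpha },\mathcal{V}_{c})$ are essentially self-adjoint, this extends to the operator identity $\mathcal{L}=C^{\ast }\mathfrak{D}^{\alpha }$ on $L^{2}(\mathbb{Q}_{p},\mu _{p})$.

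Finally, proportional non-negative self-adjoint generators give, by functional calculus, $e^{-t\mathcal{L}}=e^{-(C^{\ast }t)\mathfrak{D}^{\alpha }}$, i.e. the transition semigroup of $\{X_{t}\}$ at time $t$ coincides with that of $\{X_{t}^{\alpha }\}$ at time $C^{\ast }t$; reparametrising $t\mapsto t/C^{\ast }$ produces the equality of transition semigroups underlying $X_{t/C^{\ast }}=X_{t}^{\alpha }$, as claimed. The main obstacle is not analytic but one of bookkeeping: one must keep the three normalisations consistent — that of the reversing measure $\mathsf{m}$, that of the limit/reference measure $\mu =\nu _{o}$, and that relating the Na\"{\i}m kernel $\Theta _{o}$ to the isotropic jump kernel $J$ — so that a single scalar $C^{\ast }$ emerges, and one must be sure that Dirichlet regularity really removes the killing term so that the boundary process is genuinely the isotropic process and not a subprocess; once these hypotheses are verified, the time-change step is routine.
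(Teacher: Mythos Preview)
Your proposal is correct and follows the paper's approach exactly: the paper's ``proof'' is the chain of computations immediately preceding the proposition, culminating in the jump-kernel identity $J(x,y)=\frac{1-p^{-\alpha}}{p^{\alpha}+1}\,J_{\alpha}(x,y)=C^{\ast}J_{\alpha}(x,y)$, from which the proportionality of generators and hence the linear time change follow just as you spell out. Your write-up merely makes explicit the routine passage from proportional jump kernels to proportional Laplacians and then to the semigroup identity $e^{-t\mathcal{L}}=e^{-(C^{\ast}t)\mathfrak{D}^{\alpha}}$, which the paper leaves implicit.
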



\begin{thebibliography}{10}

\bibitem{AK0}
S.~Albeverio and W.~Karwowski, \emph{Diffusion on {$p$}-adic numbers}, Gaussian
  random fields ({N}agoya, 1990), Ser. Probab. Statist., vol.~1, World Sci.
  Publ., River Edge, NJ, 1991, pp.~86--99.

\bibitem{AK1}
\bysame, \emph{A random walk on $p$-adics: generator and its spectrum},
  Stochastic Processes and their Applications \textbf{53} (1994), 1--22.

\bibitem{AlbevZhao2000}
S.~Albeverio and X.~Zhao, \emph{On the relation between different constructions
  of random walks on p-adics}, Markov Process. Related Fields \textbf{6}
  (2000), 239--255.

\bibitem{AldousEvans}
D.~Aldous and St.~N.~Evans, \emph{Dirichlet forms on totally disconnected
  spaces and bipartite {M}arkov chains}, J. Theoret. Probab. \textbf{12}
  (1999), 839--857.

\bibitem{B27}
A.~Bendikov, B.~Bobikau, and Ch. Pittet, \emph{Some spectral and geometric
  aspects of countable groups}, Random Walks, Boundaries and Spectra, Progress
  in Probability, vol.~{\bf{64}}, Birkh\"auser, Basel, 2011, pp.~227--234.

\bibitem{BBP2010}
A.~Bendikov, B.~Bobikau, and Ch. Pittet, \emph{Spectral properties of a class
  of random walks on locally finite groups}, Groups, Geometry and Dynamical
  Systems \textbf{7} (2013), 791--820.

\bibitem{BGP1}
A.~Bendikov, A.~Grigor'yan, and Ch. Pittet, \emph{On a class of {M}arkov
  semigroups on discrete ultra-metric spaces}, Potential Anal.
  \textbf{37} (2012), 125--169.

\bibitem{BenKru}
A.~Bendikov and P.~Krupski, \emph{On the spectrum of the hierarchical
  {L}aplacian}, ar{X}iv:1308.4883v2, to appear in POTA (2014).

\bibitem{BenSal-some}
A.~Bendikov and L.~Saloff-Coste, \emph{Random walks on some countable groups},
  preprint (2011).

\bibitem{Berg75}
C.~Berg and G.~Forst, \emph{Potential theory on locally compact abelian
  groups}, Springer-Verlag, 1975.

\bibitem{BrofWoess}
S.~Brofferio and W.~Woess, \emph{On transience of card shuffling}, Proc. Amer.
  Math. Soc. \textbf{129} (2001), 1513--1519.

\bibitem{Ca}
P.~Cartier, \emph{Fonctions harmoniques sur un arbre}, Symposia Math.
  \textbf{9} (1972), 203--270.

\bibitem{Car1988}
D.~I. Cartwright, \emph{Random walks on direct sums of discrete groups}, J.
  Theoret. Probab. \textbf{1} (1988), 341--356.

\bibitem{CSW}
D.~I. Cartwright, P.~M. Soardi, and W.~Woess, \emph{Martin and end
  compactifications of non locally finite graphs}, Transactions Amer. Math.
  Soc. \textbf{338} (1993), 670--693.

\bibitem{ChFuYi}
Z.-Q. Chen, M.~Fukushima, and J.~Ying, \emph{Traces of symmetric {M}arkov
  processes and their characterizations}, Ann. Probab. \textbf{34} (2006),
  1052--1102.

\bibitem{CoulhonGP}
T.~Coulhon, A.~Grigor'yan, and Ch. Pittet, \emph{A geometric approach to
  on-diagonal heat kernel low bounds on groups}, Ann. Inst. Fourier (Grenoble).
  \textbf{51} (2001), 1763--1827.

\bibitem{DarlingErdos}
D.~A. Darling and P.~Erd{\"o}s, \emph{On the recurrence of a certain chain},
  Proc. Amer. Math. Soc. \textbf{13} (1962), 447--450.

\bibitem{DMM96}
C.~Dellacherie, S.~Martinez S., and J.~San Martin, J., \emph{Ultrametric matrices and
  induced {M}arkov chains}, Adv. Appl. Math. \textbf{17} (1996), 169--183.
 
\bibitem{DMM09}
\bysame, \emph{Ultrametric and tree potential}, J. Theoret. Probab. \textbf{22}
  (2009), 311--347.

\bibitem{Do}
J.~L.~Doob, \emph{Boundary properties for functions with finite {D}irichlet
  integrals}, Ann. Inst. Fourier (Grenoble) \textbf{12} (1962),
  573--621.

\bibitem{Doug}
J.~Douglas, \emph{Solution of the problem of {P}lateau}, 
Trans. Amer. Math. Soc. \textbf{33} (1931), 263--321.

\bibitem{Evans1989}
St.~N.~Evans, \emph{Local properties of {L}\'evy processes on a totally
  disconnected group}, J. Theoret. Probab. \textbf{2} (1989), 209--259.

\bibitem{Evans93}
\bysame, \emph{Local field {B}rownian motion}, J. Theoret. Probab.
  \textbf{6} (1993), 817--850.

\bibitem{Evans01}
\bysame, \emph{Local fields, {G}aussian measures, and {B}rownian motions},
  Topics in probability and {L}ie groups: boundary theory, CRM Proc. Lecture
  Notes, vol.~28, Amer. Math. Soc., Providence, RI, 2001, pp.~11--50.

\bibitem{Molchanov1978}
N.~Fereig and S.~A. Molchanov, \emph{Random walks on abelian groups with an
  infinite number of generators}, Vestnik Moskov. Univ. Ser. I Mat. Mekh.
  \textbf{5} (1978), 22--29.

\bibitem{FlattoPitt}
L.~Flatto and J.~Pitt, \emph{Recurrence criteria for random walks on countable
  abelian groups}, Illinois J. Math. \textbf{18} (1974), 1--19.

\bibitem{FOT}
M.~Fukushima, Y.~Oshima Y., and M.~Takeda, \emph{Dirichlet forms and symmetric
  {M}arkov processes}, 2nd ed., Walter de Gruyter, Berlin, 2011.

\bibitem{GK}
A.~Georgakopoulos and V.~A. Kaimanovich, \emph{In preparation}.

\bibitem{Gromov}
M.~Gromov, \emph{Asymptotic invariants of infinite groups}, Geometric group
  theory, {V}ol.\ 2 ({S}ussex, 1991), London Math. Soc. Lecture Note Ser.,
  vol.~{\bf 182}, Cambridge Univ. Press, Cambridge, 1993, pp.~1--295.

\bibitem{Haran1990}
S.~Haran, \emph{Riesz potentials and explicit sums in arithmetic}, Invent.
  Math. \textbf{101} (1990), 697--703.

\bibitem{Haran1993}
\bysame, \emph{Analytic potential theory over the {$p$}-adics}, Ann. Inst.
  Fourier (Grenoble) \textbf{43} (1993), 905--944.

\bibitem{HewittRoss}
E.~Hewitt and K.~A. Ross, \emph{Abstract harmonic analysis, 1: Structure of
  topological groups, integration theory, group representations}, Grundlehren
  der math. Wiss., vol.~{\bf 115}, Springer-Verlag, Berlin, 1963.

\bibitem{Hughes04}
B.~Hughes, \emph{Trees and ultrametric spaces: a categorical equivalence}, Adv.
  Math. \textbf{189} (2004), 148--191.

\bibitem{Ismagilov1991}
R.~S. Ismagilov, \emph{On the spectrum of a selfadjoint operator in {$L_2(K)$},
  where {$K$} is a local field; an analogue of the {F}eynman-{K}ac formula},
  Teoret. Mat. Fiz. \textbf{89} (1991), 18--24.

\bibitem{Kasym1981}
M.~A. Kasymdzhanova, \emph{Recurrence of invariant {M}arkov chains on a class
  of abelian groups}, Vestnik Moskov. Univ. Ser. I Mat. Mekh. \textbf{3}
  (1981), 3--7.

\bibitem{KestenSpitzer}
H.~Kesten and F.~Spitzer, \emph{Random walks on countably infinite abelian
  groups}, Acta. Math. \textbf{114} (1965), 237--265.

\bibitem{Ki}
J.~Kigami, \emph{Dirichlet forms and associated heat kernels on the {C}antor
  set induced by random walks on trees}, Adv. Math. \textbf{225} (2010),
  2674--2730.

\bibitem{Kigami2}
\bysame, \emph{Transitions on a noncompact {C}antor set and random walks on its
  defining tree}, Annales de l'Institut Henri Poincar\'e Prob.\& Stat.
  \textbf{49} (2013), 1090--1129.

\bibitem{Kochubei1991}
A.~N. Kochube{\u\i}, \emph{Parabolic equations over the field of {$p$}-adic
  numbers}, Izv. Akad. Nauk SSSR Ser. Mat. \textbf{55} (1991), 1312--1330.

\bibitem{Kochubei2001}
A.~N. Kochubei, \emph{Pseudo-differential equations and stochastics over
  non-{A}rchimedean fields}, Monographs and Textbooks in Pure and Applied
  Mathematics, vol.~{\bf 244}, Marcel Dekker Inc., New York, 2001.

\bibitem{Lawler1995}
G.~F. Lawler, \emph{Recurrence and transience for a card shuffling model},
  Combin. Probab. Comput. \textbf{4} (1995), 133--142.

\bibitem{MRM07}
S.~Martinez, D.~Remenik, and J.~San Martin, \emph{Level-wise approximation of a
  {M}arkov process associated to the boundary of an infinite tree}, J. Theor.
  Probab. \textbf{20} (2007), 561--579.

\bibitem{Figa-Tal1}
M.~Del Muto and A.~Fig{\`a}-Talamanca, \emph{Diffusion on locally compact
  ultrametric spaces}, Expo. Math. \textbf{22} (2004), 197--211.

\bibitem{Figa-Tal2}
\bysame, \emph{Anisotropic diffusion on totally disconnected abelian groups},
  Pacific J. Math. \textbf{225} (2006), 221--229.

\bibitem{Na}
L.~Na\"{\i}m, \emph{Sur le r\^{o}le de la fronti\`{e}re de r. s. {M}artin dans
  la th\'{e}orie du potentiel}, Ann. Inst. Fourier (Grenoble) \textbf{7}
  (1957), 183--281.

\bibitem{PearsonBellissard}
J.~Pearson and J.~Bellissard, \emph{Noncommutative {R}iemannian geometry and
  diffusion on ultrametric {C}antor sets}, J. Noncommut. Geometry
  \textbf{3} (2009), 447--480.

\bibitem{SALOFF-PITTET1999}
Ch. Pittet and L.~Saloff-Coste, \emph{Amenable groups, isoperimetric profiles
  and random walks}, Geometric group theory down under ({C}anberra, 1996), de
  Gruyter, Berlin, 1999, pp.~293--316.

\bibitem{SALOFF-PITTET2000}
\bysame, \emph{On the stability of the behavior of random walks on groups}, J.
  Geom. Anal. \textbf{10} (2000), 713--737.

\bibitem{SALOFF-PITTET2002}
\bysame, \emph{On random walks on wreath products}, Ann. Probab. 
 \textbf{30} (2002), 948--977.

\bibitem{RTV86}
R.~Rammal, G.~Toulouse, and M.~A.~Virasoro, \emph{Ultrametricity for physicists},
  Rev. Mod. Phys. \textbf{58} (1986), 765--788.

\bibitem{ZuGa2}
J.~J. Rodr{\'{\i}}guez-Vega and W.~A. Z{\'u}{\~n}iga-Galindo, \emph{Taibleson
  operators, {$p$}-adic parabolic equations and ultrametric diffusion}, Pacific
  J. Math. \textbf{237} (2008), 327--347.

\bibitem{Saloff-Coste1986}
L.~Saloff-Coste, \emph{Op\'erateurs pseudo-diff\'erentiels sur certains groupes
  totalement discontinus}, Studia Mathematica \textbf{83} (1986),
  205--228.

\bibitem{Saloff-Coste2001}
\bysame, \emph{Probability on groups: random walks and invariant diffusions},
  Notices Amer. Math. Soc. \textbf{48} (2001), 968--977.

\bibitem{Schilling2012}
R.~L. Schilling, R.~Song, and Z.~Vondra{\v{c}}ek, \emph{Bernstein functions},
  second ed., de Gruyter Studies in Mathematics, vol.~{\bf 37}, Walter de
  Gruyter \& Co., Berlin, 2012.

\bibitem{So}
P.~M. Soardi, \emph{Potential theory on infinite networks}, Lecture Notes in
  Mathematics, vol.~{\bf1590}, Springer, Berlin, 1994.

\bibitem{TAI75}
M.~H. Taibleson, \emph{Fourier analysis on local fields}, Princeton Univ.
  Press, 1975.

\bibitem{VaropoulosSaloff-CosteCoulhon}
N.~Th. Varopoulos, L.~Saloff-Coste, and T.~Coulhon, \emph{Analysis and geometry
  on groups}, Cambridge Tracts in Mathematics, vol. 100, Cambridge Univ. Press,
  Cambridge, 1992.

\bibitem{Vladimirov}
V.~S. Vladimirov, \emph{Generalized functions over the field of {$p$}-adic
  numbers}, Uspekhi Mat. Nauk \textbf{43} (1988), 17--53, 239.
English translation in: Russian Math. Surveys \textbf{43} (1988), 19--64. 

\bibitem{VladimirovVolovich}
V.~S. Vladimirov and I.~V. Volovich, \emph{{$p$}-adic {S}chr\"odinger-type
  equation}, Lett. Math. Phys. \textbf{18} (1989), 43--53.

\bibitem{Vladimirov94}
V.~S. Vladimirov, I.~V. Volovich, and E.~I. Zelenov, \emph{p-adic analysis and
  mathematical physics}, Series on Soviet and East European Mathematics,
  vol.~{\bf 1}, World Scientific Publishing Co., Inc., River Edge, NY, 1994.

\bibitem{Vo}
Z.~Vondra\v{c}ek, \emph{A characterization of {M}arkov chains on infinite
  graphs by limiting distributions}, Arch. Math. (Basel) \textbf{65}
  (1995), 449--460.

\bibitem{Woess2000}
W.~Woess, \emph{Random walks on infinite graphs and groups}, Cambridge Tracts
  in Mathematics, vol.~{\bf 138}, Cambridge University Press, Cambridge, 2000.

\bibitem{W-lamp}
\bysame, \emph{Lamplighters, {D}iestel-{L}eader graphs, random walks, and
  harmonic functions}, Combin. Probab. Comput. \textbf{14} (2005), 415--433.

\bibitem{W-Markov}
\bysame, \emph{Denumerable {M}arkov chains. {G}enerating functions, boundary
  theory, random walks on trees}, EMS Textbooks in Mathematics, European
  Mathematical Society (EMS), Z\"urich, 2009.

\bibitem{W-arxiv}
\bysame, \emph{On the duality between jump processes on ultrametric spaces and
  random walks on trees}, arXiv:1211.7216 (2012).

\bibitem{Ya}
M.~Yamasaki, \emph{Discrete potentials on an infinite network}, Mem. Fac. Sci.,
  Shimane Univ. \textbf{13} (1979), 31--44.

\bibitem{ZuGa1}
W.~A. Z{\'u}{\~n}iga-Galindo, \emph{Parabolic equations and {M}arkov processes
  over {$p$}-adic fields}, Potential Anal. \textbf{28} (2008), 185--200.

\end{thebibliography}

\baselineskip 12.5pt

\providecommand{\bysame}{\leavevmode\hbox to3em{\hrulefill}\thinspace}
\providecommand{\MR}{\relax\ifhmode\unskip\space\fi MR }
\providecommand{\MRhref}[2]{%
  \href{http://www.ams.org/mathscinet-getitem?mr=#1}{#2}
}
\providecommand{\href}[2]{#2}

\bigskip

\noindent A. Bendikov: Institute of Mathematics, Wroclaw University \newline
Pl. Grundwaldzki 2/4, 50-384 Wroc{\l }aw, Poland \newline
email: bendikov@math.uni.wroc.pl

\medskip

\noindent A. Grigor'yan: Department of Mathematics, University of Bielefeld, 
\newline
33501 Bielefeld, Germany \newline
email: grigor@math.uni-bielefeld.de

\medskip

\noindent Ch. Pittet: LATP, Universit\'{e} d'Aix-Marseille, 39 rue
Joliot-Curie, \newline
13453 Marseille Cedex 13, France \newline
email: pittet@cmi.univ-mrs.fr

\medskip

\noindent W. Woess: Institut f\"ur Mathematische Strukturtheorie, Technische
Universit\"at Graz \newline
Steyrergasse 30, A-8010 Graz, Austria \newline
email: woess@TUGraz.at

\end{document}